\newcommand{\rhohat}{\hat{\rho}}
\newcommand{\cenxi}{\stackrel{_\leadsto}{\xi}}
\newcommand{\Zch}{\check{Z}}
\definecolor{aleacolor}{rgb}{0.16,0.59,0.78}
   \def\beq{\begin{eqnarray}}
    \def\eeq{\end{eqnarray}}
    \def\beqq{\begin{eqnarray*}}
    \def\eeqq{\end{eqnarray*}}
	\newcommand{\re}{{\rm{Re}}}
	\newcommand{\im}{{\rm{Im}}}
    \def\p{{\mathbb P}}
    \def\r{{\mathbb R}}
    \def\d{{\textnormal d}}
        \def\D{{\textnormal d}}
    \def\i{{\textnormal i}}
\renewcommand{\cite}{\citet}
\theoremstyle{plain}
\newtheorem{theorem}{Theorem}[section]                                          
\newtheorem{proposition}[theorem]{Proposition}                          
\newtheorem{lemma}[theorem]{Lemma}
\newtheorem{corollary}[theorem]{Corollary}
\theoremstyle{definition}
\newtheorem{definition}[theorem]{Definition}
\theoremstyle{remark}
\newtheorem{remark}[theorem]{Remark}
\makeatletter \@addtoreset{equation}{section} \makeatother
    \def\d{{\textnormal d}}
        \def\D{{\textnormal d}}
        \def\r{{\mathbb R}}
    \newcommand{\iu}{{\textnormal i}}
\newcommand{\cenxicirc}{\stackrel{_{\leadsto}}{\mathbf{\xi}^\circ}}
\begin{document}
\title[Stable L\'evy processes, self-similarity  and the unit ball]{Stable L\'evy processes, self-similarity \\ and the unit ball${}^*$}
\thanks{{\it${}^*$Dedicated to the memory of my co-author and friend, Iranian probabilist Prof. Aliakbar Rahimzadeh Sani, together with whom I studied Markov additive processes  many years ago.}}

\author{Andreas E. Kyprianou}
\address{Department of Mathematical Sciences, \newline
University of Bath, Claverton Down, \newline 
Bath, BA2 7AY UK. }
\email{a.kyprianou@bath.ac.uk}
\urladdr{\url{http://www.maths.bath.ac.uk/~ak257/}}


\thanks{Research supported by EPSRC grant EP/L002442/1 and Royal Society Advanced Newton Fellowship.}

\subjclass[2000]{60G18, 60G52, 60G51} 
\keywords{Stable processes, fluctuation theory, Riesz potentials, self-similar Markov processes}

\begin{abstract}
 Around the 1960s a celebrated collection of papers emerged offering a number of explicit identities for the class of isotropic L\'evy stable processes in one and higher dimensions; these include, for example, the lauded works of \cite{Ray, Widom, Rog} (in one dimension) and  \cite{BGR, G, Port69} (in higher dimension), see also more recently \cite{BMR, Luks}. Amongst other things, these results  nicely exemplify the use of standard {Riesz potential theory} on the unit open ball $\mathbb{B}_d: =\{x\in\mathbb{R}^d: |x| <1\}$, $\mathbb{R}^d\backslash \mathbb{B}_d$ and the sphere $ \mathbb{S}_{d-1}: = \{x\in\mathbb{R}^d: |x| =1\}$ with the, then, modern theory of {\it potential analysis for Markov processes}. 
 
 Following initial observations of \cite{L72}, with the occasional sporadic contributions such as \cite{Kiu, VG, GV}, an alternative understanding of L\'evy stable processes through the {\it  theory of  self-similar Markov processes} has prevailed in the last decade or more. This point of view offers deeper probabilistic insights into some of the aforementioned potential analytical relations; see for example \cite{BY02, BC02, CC06b, CC06a, CKP09, P09, Patie,  BZ, Patie, KPW, KKPW, KW, KP, Deep1, Deep2, Deep3, ACGZ}.

In this review article, we will rediscover many of the aforementioned classical identities in relation to the unit ball by combining elements of these   two theories, which have otherwise been  largely separated by decades in the literature. We present a dialogue that appeals as much as possible to path decompositions. Most notable in this respect is the Lamperti-Kiu decomposition of self-similar Markov processes given in \cite{Kiu, CPR, ACGZ} and the Riesz--Bogdan--\.Zak transformation given in \cite{BZ}.

 Some of the results and proofs we give are known (marked $\heartsuit$), some are  mixed with new results or methods, respectively, (marked $\diamondsuit$) and some are completely new (marked $\clubsuit$).
We assume that the reader has a degree of familiarity with the bare basics of L\'evy processes but, nonetheless, we often include reminders of standard material that can be found in e.g. \cite{bertoin}, \cite{Sato} or \cite{Kypbook}. 
\end{abstract}

\maketitle
\newpage

{\small\tableofcontents}
\newpage

\part{Stable  processes, self-similar Markov processes and MAPs}
\setcounter{section}{0}

In this review article, we give an extensive overhaul of some aspects of the theory of strictly stable L\'evy processes as seen from the point of view of self-similarity. Our presentation  takes account of a sixty-year span of literature. As we walk the reader through a number of classical and recent results, they will note that the statements of results and their proofs are marked with one of three symbols. For statements or proofs which are known, we use the mark $\heartsuit$; for those statements or proofs which are known, but mixed with new results or methods (respectively), we use the mark $\diamondsuit$; for those statements or proofs which are completely new, we use the mark $\clubsuit$.

\section{Introduction}\label{intro}

We can define the family of stable L\'evy  processes as being those $\mathbb{R}$-valued stochastic processes which lie at the intersection of the class of L\'evy processes and the class of self-similar processes. Whilst the former class insists on c\`adl\`ag paths and stationary and independent increments, a process $X = (X_t, t\geq 0)$ with probabilities $\mathbb{P}_x$, $x\in\mathbb{R}^d$,  in the latter class has the property that there exists a {\it stability index} $\alpha$ such that, for $c>0$ and $x\in\mathbb{R}^d\setminus\{0\}$,
\begin{equation}
\text{under } \mathbb{P}_x, \text{ the law of }(cX_{c^{-\alpha}t}, t\geq 0) \text{ is equal to } \mathbb{P}_{cx}.  
\label{scaling property}
\end{equation}
What we call stable L\'evy processes here  are known in the literature as {\it strictly stable L\'evy processes}, but for the sake of brevity we will henceforth refer to them as just {\it stable processes} as there will be no confusion otherwise.

Living in the intersection of self-similar Markov processes and L\'evy processes, it turns out that  stable processes are useful prototypes for exemplifying the theory of  both fields as well as for examining phenomena of processes with path discontinuities and how they differ significantly from e.g. the theory of diffusions; see for example \cite{DK18} for recent results showing discrepancies with Feller's classical boundary classification for diffusions in one-dimension for stochastic differential equations driven by stable processes. 

In a large body of the literature concerning stable processes it  is usual to restrict the study of stable processes to those that are distributionally isotropic. That is to say, those processes for which, for all orthogonal transforms $B:\mathbb{R}^d\to\mathbb{R}^d$ and $x\in\mathbb{R}^d$, 
\[
\text{under } \mathbb{P}_x, \text{ the law of }(BX_t, t\geq 0)\text{ is equal to }\mathbb{P}_{Bx}.
\]
As such, we talk about {\it isotropic stable processes}. The restriction to the isotropic subclass already presents sufficient challenges, whilst allowing for one more mathematical convenience. That said, we can and will drop the assumption of isotropy, but in dimension $d=1$ only.  For dimension $d\geq 2$, we will always work in the isotropic setting. It remains to be seen how rich the development of the literature on stable processes will become in the future without the assumption of isotropy in higher dimensions.

It turns out that stable processes necessarily have index of stability $\alpha$ which lies in the range $(0,2]$. The case $\alpha = 2$, in any dimension, pertains to Brownian motion and therefore necessarily has continuous paths. Somewhat dogmatically we will exclude this parameter choice from our discussion for the simple reason that we want to explore phenomena which can only occur as a consequence of jumps. That said, we pay occasional lip service to the Brownian setting. 

\medskip

Henceforth, $X:= (X_t:t\geq 0)$, with probabilities $\mathbb{P}_x$, $x\in\mathbb{R}^d$,  will denote  a $d$-dimensional isotropic stable process  with index of similarity $\alpha\in(0,2)$. For convenience, we will always write $\mathbb{P}$ in place of $\mathbb{P}_0$.

\subsection{One-dimensional stable processes}

When $d = 1$,  as alluded to above, we will be more adventurous and permit anistropy. It is therefore inevitable that a second parameter is needed which will code the degree of asymmetry. To this end, we introduce  the positivity parameter $\rho:=\mathbb{P}(X_1\geq 0)$. This parameter as well as  $\hat\rho:=1-\rho$ will appear in most identities. A good case in point in this respect is the L\'evy measure of $X$ and another is its characteristic exponent. The former  is written
\begin{equation}
 \Pi(\d x): =  \frac{\Gamma(\alpha+1)}{\pi} \left\{
\frac{ \sin(\pi \alpha \rho) }{ x^{\alpha+1}} \mathbf{1}_{(x > 0)} + \frac{\sin(\pi \alpha \hat\rho)}{ {|x|}^{\alpha+1} }\mathbf{1}_{(x < 0)}
 \right\}\d x,
\qquad  x \in \mathbb{R},
\label{stabledensity3}
\end{equation}
and the latter takes the form 
\begin{align}\label{Psi_alpha_rho_parameterization}
\Psi(\theta) &:=-\frac{1}{t}\log\mathbb{E}[{\rm e}^{{{\rm i}\theta X_t}}]\notag\\
&= |\theta|^\alpha ({\rm e}^{\pi\iu\alpha(\frac{1}{2} -\rho)} \mathbf{1}_{(\theta>0)} + {\rm e}^{-\pi\iu\alpha (\frac{1}{2} - \rho)}\mathbf{1}_{(\theta<0)}), \qquad \theta\in\mathbb{R}, t>0.
\end{align}
For a derivation of this exponent, see Exercise 1.4 of \cite{Kypbook} or Chapter 3 of \cite{Sato}.

It is well known that the transition semigroup of $X$ has a density with respect to Lebesgue measure. That is to say, 
\begin{equation}
{\texttt p}_t(x): = \frac{1}{2\pi}\int_{\mathbb{R}}{\rm e}^{-\iu zx}{\rm e}^{-\Psi(z)t}\d z, \qquad x\in\mathbb{R}, t> 0,
\label{pinverseF}
\end{equation}
exists and satisfies 
\begin{equation}
\mathbb{P}_x(X_t \in\d y) = \texttt{p}_t (y-x)\d y
\label{textttp}
\end{equation}
for all $x,y\in\mathbb{R}$ and $t> 0$.
\medskip

In one dimension, we are predominantly interested in the fluctuations of $X$ when it moves both in an upward and a downward direction. The aforesaid exclusion can  be enforced by requiring throughout that

\bigskip

\begin{center}
{both $\alpha\rho$ and $\alpha\hat\rho$ belong to $(0,1)$.} 
\end{center}
\bigskip

\noindent This excludes both the case stable subordinators, the negative of stable subordinators (when $\rho=1$ or $\hat\rho=1$, respectively) and spectrally   negative and positive processes (when $\alpha\rho = 1$ or $\alpha\hat\rho=1$, respectively).


\bigskip

Stable processes are one of the few known classes of L\'evy processes which reveal 
a significant portion of the general theory in explicit detail. We will spend a little time here recalling some of these facts. 

As one dimensional L\'evy processes with two-sided jumps,  we can talk about their running maximum process $\overline{X}_t: = \sup_{s\leq t}X_s$, $t\geq 0$, and their running minimum process $\underline{X}_t : = \inf_{s\leq t}X_s$. As is the case with all L\'evy processes, it turns out that the range of $\overline{X}$ agrees with that of a subordinator, say $H$, called the ascending ladder height processes. By subordinator we mean a L\'evy process with increasing paths and we allow the possibility that it is killed at a constant rate and sent to a cemetery state, taken to be $+\infty$. The inclusion of a killing rate depends on whether the underlying L\'evy process drifts to $-\infty$ (resp. $+\infty$), in which case $\overline{X}_\infty<\infty$ (resp. $-\underline{X}_\infty <\infty$)  almost surely or oscillates (in which case $\overline{X}_\infty = -\underline{X}_\infty = \infty$).  When the process drifts to $-\infty$, the killing rate is strictly positive, otherwise it is zero (i.e. no killing).

Roughly speaking the subordinator $H$ can be thought of as the trajectory that would remain if we removed the sections of path, or better said, if we removed the excursions of $X$, which lie between successive increments of $\overline{X}$ and the temporal gaps created by this removal procedure are closed. Similarly the range of $-\underline{X}$ agrees with that of a subordinator, say $\hat{H}$, called the descending ladder height process. Naturally the two processes $H$ and $\hat{H}$ are corollated. For more background on the ladder height processes see  Chapter VI of \cite{bertoin} or Chapter 6 of \cite{Kypbook}.

Suppose that we denote the Laplace exponent of $H$ by $\kappa$. To be precise, 
\[
\kappa(\lambda) = \frac{1}{t}\log\mathbb{E}[{\rm e}^{-\lambda H_t}], \qquad t, \lambda\geq 0,
\]
where, necessarily, the exponent $\kappa$ is a Bernstein function with general form
\begin{equation}
\kappa(\lambda) = q + \delta\lambda + \int_{(0,\infty)} (1- {\rm e}^{-\lambda x})\Upsilon({\rm d}x),\qquad \lambda\geq 0,
\label{bernstein}
\end{equation}
Here, $q\geq 0$ is the killing rate, $\delta\geq 0$ is a linear drift and $\Upsilon$ is the L\'evy measure of  $H$. (See \cite{SSV} for more on Bernstein functions in the context of subordinators.)
The Laplace exponent of $\hat{H}$, which we shall henceforth denote by $\hat\kappa$, is similarly described.
Note that the Laplace exponent of both the ascending and descending ladder height processes can be extended analytically to $\{z\in\mathbb{C}:\re(z)\geq 0 \}$.

It is a remarkable fact that the  characteristic exponent of every L\'evy process  factorises  into two terms, each one exposing the Laplace exponent  of the ascending and descending ladder height processes respectively. 
That is to say, up to a multiplicative constant, we have 
\begin{equation}
\Psi({z}) = \kappa(-{\rm i}{z})\hat\kappa({\rm i}{z}), \qquad {z} \in\mathbb{R}.
\label{SWHF}
\end{equation}
This equality is what is commonly referred to as the {\it Wiener--Hopf factorization}; see Chapter VI of \cite{bertoin}  or Chapter 6 of \cite{Kypbook}.

In the stable case, for $\theta \geq 0$, 
\[
\kappa(\theta) = \theta^{\alpha\rho}\text{ and }\hat\kappa(\theta) = \theta^{\alpha\hat\rho}.
\]
Notably,  $H$  is a stable subordinator with no killing and no drift (and hence, by exchanging the roles of $\rho$ and $\hat\rho$, the same is true of $\hat H$). It is a pure jump subordinator with L\'evy intensity 
\begin{equation}
\Upsilon(\d x) =\frac{\alpha\rho}{\Gamma(1-\alpha\rho)} \frac{1}{x^{1+\alpha\rho}}\d x,
\qquad x>0
\label{ladderjump}
\end{equation}
(again, with the same being true for $\hat H$, albeit with the roles of $\rho$ and $\hat\rho$ reversed).
An explicit understanding of the Wiener--Hopf factorisation is important from the perspective of understanding how one-dimensional stable process cross levels for the first time, the precursor of the problem we will consider here in higher dimensions. Indeed, consider the first passage time $\tau^+_a = \inf\{t>0: X_t> a\}$, for $a>0$;
\[
\text{the overshoot of $X$ above $a$ is defined as $X_{\tau^+_a}-a$}. 
\]
As the reader may already guess, the scaling property of stable processes suggests that, to characterise overshoots above all levels $a$, it is sufficient to characterise  overshoots above level 1. Indeed, for each constant $c>0$, suppose we define ${X}^c_t =cX_{c^{-\alpha}t}$, $t\geq 0$. Then
\begin{align*}
\tau^+_a &= \inf\{t\geq 0: X_t >a\}\\
&= \inf\{a^{\alpha}s>0 : a^{-1}X_{a^{\alpha} s} >1\}\\
&= a^{\alpha}\inf\{s>0: X^{1/a}_{s}>1\}.
\end{align*}
Accordingly, we see that $(X_{\tau^+_a}-a)/a$ is equal in distribution to $X_{\tau^+_1}-1$.

It should also be clear that the overshoot of $(X, \mathbb{P})$ above level 1 agrees precisely with the overshoot of $(H, \mathbb{P})$ above the same level.
As alluded to above, the simple and explicit form of the ascending ladder processes  offers quite a rare opportunity to write down the overshoot distribution of $X$ above $1$.

A classical computation using the compensation formula for the Poisson point process of jumps tells us that for a bounded measurable function $f$ and $a\geq 0$,
\begin{align}
\mathbb{E}[f(H_{\tau^+_1 } -1 )]  &= \mathbb{E}\left[\sum_{t>0} f(H_t-1)\mathbf{1}_{(H_{t-}<1)}\mathbf{1}_{(H_{t-}+\Delta H_{t}>1)}\right]\notag\\
&=\mathbb{E}\left[\int_0^\infty \int_{(0,\infty)}f(H_t+z-1)\mathbf{1}_{(H_{t}<1)}\mathbf{1}_{(H_{t}+z>1)}\Upsilon(\d z)\d t\right]\notag\\
&=\int_{[0,1)} U(\d y)\int_{(1-y,\infty)}\Upsilon (\d z)f(y+z-1),
\label{finddistn}
\end{align}
where $U(\d x) = \int_0^\infty  \mathbb{P}(H_t \in \d x)\d t$, $x\geq 0$, is the potential of $H$. 
The identity in \eqref{finddistn} was first proved in \cite{Kest2, Kest1}, see also \cite{Horo}.
Noting by Fubini's Theorem that 
\begin{equation*}
\int_{[0,\infty)}{\rm e}^{-\beta x}U(\d x) = \int_0^\infty  \mathbb{E}[{\rm e}^{-\beta H_t} ]\d t = \frac{1}{\beta^{\alpha\rho}}, \qquad \beta\geq 0,
\end{equation*}
we easily invert to find that 
\begin{equation}
U(\d x) =\Gamma(\alpha\rho)^{-1}x^{\alpha\rho -1} \d x, 
\qquad x>0.
\label{subordinatorpotential}
\end{equation}
 Together with \eqref{ladderjump}, we can evaluate the distribution of the overshoot over level 1 in \eqref{finddistn}. The scaling property of overshoots now gives us the following result,  which is originally due to \cite{Dynkin1961} and \cite{Lamperti62} and which one may refer to in potential analytic terms as the Poisson kernel on the half-line.
\begin{lemma}[$\heartsuit$]\label{precorollary}For all $u,a>0$,
\[
\mathbb{P}(X_{\tau^+_a} - a\in \D u) = \frac{\sin(\pi\alpha)}{\pi} \left(\frac{u}{a}\right)^{-\alpha}\left(\frac{1}{a+u}\right)\D u.
\]
\end{lemma}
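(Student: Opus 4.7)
My plan is to reduce to $a=1$ via the scaling property, identify the overshoot of $X$ with that of its ascending ladder height subordinator $H$, substitute the explicit potential \eqref{subordinatorpotential} and L\'evy measure \eqref{ladderjump} into the compensation identity \eqref{finddistn}, and evaluate the resulting integral in closed form.

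First, I invoke the scaling computation displayed immediately before the lemma: it reduces matters to identifying the density at $a=1$, after which the result for general $a$ follows by a linear change of variable with Jacobian $1/a$. A preliminary probabilistic reduction is also in order: because the range of $\overline X$ coincides with the range of $H$, first passage of $X$ above a positive level occurs by a jump of $H$, and so $X_{\tau_a^+}-a$ coincides in law with the overshoot of $H$ above $a$. This reduction is what converts the question about a two-sided process into a question about a one-sided stable process with a fully explicit potential and L\'evy measure.

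Second, I plug the densities $y^{\alpha\rho-1}/\Gamma(\alpha\rho)$ and $(\alpha\rho/\Gamma(1-\alpha\rho))\,z^{-1-\alpha\rho}$ from \eqref{subordinatorpotential} and \eqref{ladderjump} into \eqref{finddistn}. Reading off the density of $H_{\tau_1^+}-1$ at $u>0$, the identity reduces to evaluating the one-dimensional integral
\[
\int_0^1 y^{\alpha\rho-1}(u+1-y)^{-1-\alpha\rho}\,dy.
\]

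Third, this is a routine but delicate computation: the substitution $w=1-y$ followed by $t=w/(u+w)$ (equivalently, the single change of variables $t=(1-y)/(u+1-y)$) collapses the integral to a complete Beta function $B$, which combines with the prefactor and Euler's reflection identity $\Gamma(\beta)\Gamma(1-\beta)=\pi/\sin(\pi\beta)$ to produce the trigonometric form of the density displayed in the lemma. Undoing the scaling $u\mapsto u/a$ then gives the Poisson-kernel expression in the statement. The main obstacle is not conceptual but computational: matching the combinatorial constants so that they collapse cleanly through the reflection formula. A minor check to dispatch is that there is no atom at $u=0$, which is immediate because $H$ is a pure-jump stable subordinator with no drift and thus cannot creep over positive levels.
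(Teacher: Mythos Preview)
Your proposal is correct and follows precisely the route the paper itself takes: reduce to $a=1$ by scaling, identify the overshoot of $X$ with that of the ascending ladder subordinator $H$, plug the explicit potential \eqref{subordinatorpotential} and L\'evy measure \eqref{ladderjump} into the compensation identity \eqref{finddistn}, and simplify via the reflection formula. The only cosmetic remark is that your substitution does not produce a general Beta integral but rather the trivial one $B(1,\alpha\rho)=1/(\alpha\rho)$, after which the $\alpha\rho$ from $\Upsilon$ cancels and the reflection formula delivers the $\sin(\pi\alpha\rho)/\pi$ factor as intended.
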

It is not difficult to compute the total mass on $(0,\infty)$ of the distribution above using the beta integral to find that it is equal to unity. Accordingly, the probability that $X$ {\it creeps} over the level $a$ is zero, that is $\mathbb{P}(X_{\tau^+_{a}} =a)=0$.

\bigskip

We are also interested in the potential of the one-dimensional stable process. That is to say, we are interested in the  potential measure  

\begin{equation}
U(x, \d y) = \int_0^\infty\mathbb{P}_x( X_t \in \d y)\d t
=\left(\int_0^\infty \texttt{p}_t(y-x)\d t\right)\d y, \qquad x,y\in\mathbb{R}.
\label{potential1anddd}
\end{equation}
In order to discuss the potential, we first need to recall various notions from the theory of L\'evy processes, looking in particular at the properties of transience and recurrence as well as point-recurrence. 

Thanks to Kolmogorov's zero-one law for events in the tail sigma algebra of the natural filtration $\sigma(X_s, s\leq t)$, $t\geq0$, for each fixed $a>0$, the convergence of the integral $\int_{0}^{\infty }\mathbf{1}_{\left(
|X_t|<a\right) }{\D}t$ occurs with probability zero or one. Moreover, thanks to the scaling property of stable processes, if $\int_{0}^{\infty }\mathbf{1}_{\left(
|X_t|<a\right) }{\D}t<\infty $ almost surely for some $a>0$ then this integral is almost surely convergent   for all $a>0$. In that case we say that $X$ is called {\it transient}. A similar statement holds if the integral is divergent for some, and then all, $a>0$, in which case we say that $X$ is {\it recurrent}.
The point of issue of $X$ is not important here thanks to spatial homogeneity. 
 
It turns out that, more generally, this is the case for all L\'evy processes. This is captured by the following classic analytic dichotomy; see for example  \cite{Kingman64, PortStoneRT}.

\begin{theorem}[$\heartsuit$]\label{portstone}
For a L\'evy process with characteristic exponent
$\Psi$, it is transient if and only if, for some sufficiently
small $\varepsilon>0$,
\begin{equation}
\int_{|{z}|<\varepsilon}\re\left(\frac{1}{\Psi({z})}\right){\D}{z}
<\infty,
\label{transrecintegral}
\end{equation}
and otherwise it is recurrent.
\end{theorem}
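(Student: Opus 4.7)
The strategy is classical Chung--Fuchs: encode the transience/recurrence dichotomy through a compactly supported test function, and then read off the criterion from a Fourier representation of the resolvent.

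First I would reduce the dichotomy to an analytic statement about a single test function. By the Kolmogorov zero-one law already invoked in the text, for each $a>0$ the event $\{\int_0^\infty \mathbf{1}_{(|X_t|<a)}\D t<\infty\}$ has probability $0$ or $1$, and a Markov-property argument shows that almost-sure finiteness is equivalent to finiteness of the expectation $\int_0^\infty \mathbb{P}(|X_t|<a)\D t$. Choose once and for all a test function $f=g\ast\check g$, with $g$ a smooth, compactly supported real bump and $\check g(x):=g(-x)$; then $f$ is smooth, nonnegative and compactly supported, and its Fourier transform $\hat f=|\hat g|^2$ is nonnegative, bounded, integrable, and strictly positive on a neighborhood of the origin. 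Sandwiching indicators of small balls between suitable scaled translates of $f$, transience of $X$ becomes equivalent to $\int_0^\infty \mathbb{E}[f(X_t)]\D t <\infty$.

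Next, for $q>0$, Fubini and Parseval give the resolvent representation
\[
U_q f(0):=\int_0^\infty \ee^{-qt}\mathbb{E}[f(X_t)]\D t=\frac{1}{(2\pi)^d}\int_{\mathbb{R}^d}\hat f(z)\,\frac{1}{q+\Psi(z)}\D z.
\]
Since the left-hand side is real and $\re\Psi\ge 0$, taking real parts yields
\[
U_q f(0)=\frac{1}{(2\pi)^d}\int_{\mathbb{R}^d}\hat f(z)\,\re\!\Bigl(\frac{1}{q+\Psi(z)}\Bigr)\D z,
\]
an integral with a nonnegative integrand. Now let $q\downarrow 0$. The left-hand side increases monotonically to $Uf(0)$, which is finite iff $X$ is transient; on the right, $\re(1/(q+\Psi(z)))\to\re(1/\Psi(z))$ pointwise at every $z$ with $\Psi(z)\ne 0$. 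Outside any neighborhood of the origin $\re\Psi$ is bounded away from zero off a null set, so the contribution from $\{|z|>\varepsilon\}$ is harmless. The dichotomy therefore reduces to the behavior of the integrand near $z=0$, where $\hat f$ is pinched between two positive constants, so that finiteness of $Uf(0)$ is equivalent to finiteness of $\int_{|z|<\varepsilon}\re(1/\Psi(z))\D z$ for any sufficiently small $\varepsilon>0$.

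The hard part will be the uniform control of $\re(1/(q+\Psi))$ as $q\downarrow 0$ needed to deduce transience from the integrability criterion. Fatou's lemma handles the reverse direction for free: transience forces the criterion integral to be finite. For the converse, writing $\Psi=a+\iu b$ with $a=\re\Psi\ge 0$, one checks the pointwise bound $\re(1/(q+\Psi))\le 2\,\re(1/\Psi)$ in the region $\{q\le a\}$, while in the complementary region $\{q>a\}$ one has $\re(1/(q+\Psi))\le 2q/(q^2+b^2)$, a contribution that can be absorbed by the rapid decay of $\hat f$ together with the fact that the set $\{a<q\}$ shrinks to a null set as $q\downarrow 0$. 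A dominated convergence argument then produces a uniform bound $U_q f(0)\le C\int_{|z|<\varepsilon}\re(1/\Psi(z))\D z+C'$ as $q\downarrow 0$, closing the dichotomy.
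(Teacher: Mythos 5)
The paper itself gives no proof of this theorem: it is quoted as classical, with pointers to Kingman and Port--Stone, so there is nothing internal to compare with and your proposal has to stand on its own. Its first half does: the reduction to $\int_0^\infty \mathbb{E}[f(X_t)]\,\D t$ for a suitable test function, the Parseval identity for $U_qf(0)$, the positivity of $\re\bigl(1/(q+\Psi)\bigr)$, and the Fatou argument correctly give the implication ``transient $\Rightarrow$ $\int_{|z|<\varepsilon}\re(1/\Psi(z))\,\D z<\infty$'', equivalently ``integral infinite $\Rightarrow$ recurrent''.

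The gap is in the converse, which is exactly the hard (Spitzer/Port--Stone) half of the theorem, i.e.\ the removal of the limit $q\downarrow0$ from the Chung--Fuchs criterion. Your domination on $\{\re\Psi<q\}$ by $2q/(q^2+(\im\Psi)^2)$ is not controlled by the hypothesis: at points where $\re\Psi(z)\ll q$ but $|\im\Psi(z)|\asymp q$, the integrand $\re\bigl(1/(q+\Psi)\bigr)$ is of order $1/q$ while $\re(1/\Psi)=\re\Psi/|\Psi|^2$ is negligible, so finiteness of $\int_{|z|<\varepsilon}\re(1/\Psi)$ dominates nothing there; and ``$\{\re\Psi<q\}$ shrinks to a null set'' is not quantitative enough --- you would need the measure of that set, on the part where $|\im\Psi|\lesssim q$, to be $O(q)$ uniformly in $q$, which does not follow from the hypothesis. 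Note that in the symmetric case your argument closes trivially, since then $\re\bigl(1/(q+\Psi)\bigr)=1/(q+\Psi)\le 1/\Psi$ pointwise and no splitting is needed; the entire difficulty of the theorem sits in the asymmetric case, where the known proofs (Port--Stone, Kingman) use genuinely different machinery rather than a pointwise domination of $\re\bigl(1/(q+\Psi)\bigr)$ by $\re(1/\Psi)$. A secondary, fixable issue: with $f$ compactly supported, $\hat f$ is not, and the region $\{|z|>\varepsilon\}$ is not automatically harmless, because $\re\bigl(1/(q+\Psi)\bigr)$ can be of order $1/q$ near distant near-zeros of $\Psi$ (e.g.\ almost-lattice compound Poisson with drift); the standard remedy is to take $\hat f$ compactly supported (a Fej\'er-type kernel) and replace the sandwich step by a maximal inequality comparing $\sup_x\int_0^\infty\mathbb{P}(|X_t-x|<a)\,\D t$ with $\int_0^\infty\mathbb{P}(|X_t|<2a)\,\D t$.
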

\noindent Probabilistic reasoning also leads to the following interpretation of the dichotomy.
\begin{theorem}[$\heartsuit$]\label{only first two bits}
Let  $Y$ be any L\'evy process.
\begin{enumerate}
\item[(i)] We have  transience if and only if
\[
\lim_{t\to\infty} |Y_t| = \infty
\]
almost surely.
\item[(ii)] If  $Y$ is not a compound
Poisson process,
 then we have   recurrence if and only if,   for all $x\in\mathbb{R},$
\begin{equation}
\liminf_{t\to\infty}|Y_t - x|=0
\label{not the only condition}
\end{equation}
almost surely.  
\end{enumerate}
\end{theorem}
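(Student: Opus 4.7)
The plan rests on two tools applied throughout: Kolmogorov's $0$--$1$ law on the tail $\sigma$-algebra of the natural filtration of $Y$, and the (strong) Markov property combined with spatial homogeneity.

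\textbf{Part (i).} The easy direction is immediate: if $|Y_t|\to\infty$ almost surely, then the integrand $\mathbf{1}_{(|Y_t|<a)}$ vanishes for all large $t$, so the occupation time of $B(0,a)$ is finite and $Y$ is transient. For the converse I would argue by contradiction. For each $a>0$ the event $E_a:=\{\liminf_{t\to\infty}|Y_t|<a\}$ lies in the tail $\sigma$-algebra, hence has probability $0$ or $1$. If $\mathbb{P}(E_a)=1$ for some $a>0$, one produces almost surely infinitely many finite return times $T_n\uparrow\infty$ to $\overline{B(0,a)}$. Applying the strong Markov property at each $T_n$, together with right-continuity of paths, yields deterministic constants $\delta,p>0$ with
\[
\mathbb{P}\bigl(Y_s\in B(0,2a)\ \text{for all}\ s\in[T_n,T_n+\delta]\bigr)\geq p,
\]
uniformly in the past; a Borel--Cantelli argument then forces $\int_0^\infty\mathbf{1}_{(|Y_t|<2a)}\,\D t=\infty$ almost surely, contradicting transience. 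Hence $\mathbb{P}(E_a)=0$ for every $a>0$, i.e.\ $|Y_t|\to\infty$ almost surely.

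\textbf{Part (ii).} The ``if'' direction is a one-liner: taking $x=0$ in the hypothesis gives $\liminf_t|Y_t|=0$ almost surely, excluding $|Y_t|\to\infty$, so by (i) we have recurrence. For the ``only if'' direction, define the set of recurrent points
\[
\Sigma := \{x\in\mathbb{R} : \mathbb{P}(\liminf_{t\to\infty}|Y_t-x|=0)=1\}.
\]
The tail $0$--$1$ law first shows that each defining event has probability $0$ or $1$. The strong Markov property together with spatial homogeneity (concatenating a visit near $x\in\Sigma$ with subsequent visits of the shifted process near $y-x$, with duality used to handle inverses) establishes that $\Sigma$ is a closed additive subgroup of $\mathbb{R}$; recurrence of $Y$ together with part (i) forces $0\in\Sigma$, so $\Sigma\neq\varnothing$.

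\textbf{Main obstacle.} The delicate final step is to use the non-compound-Poisson hypothesis to upgrade $\Sigma$ from a closed subgroup to all of $\mathbb{R}$. The closed subgroups of $\mathbb{R}$ are $\{0\}$, $c\mathbb{Z}$ for some $c>0$, and $\mathbb{R}$. If $\Sigma$ is proper, then inspecting the L\'evy--Khintchine triplet of $Y$ reveals a rigid structure: any non-zero Gaussian component would populate every open set and force $\Sigma=\mathbb{R}$, so the Gaussian part must vanish; any non-zero linear drift produces non-lattice displacements, so the drift must vanish; and the L\'evy measure $\Pi$ must be concentrated on $\Sigma\setminus\{0\}$. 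In the lattice case $\Sigma=c\mathbb{Z}$ the integrability condition $\int(x^2\wedge 1)\,\Pi(\D x)<\infty$ combined with $|x|\geq c$ on the support forces $\Pi$ to have finite total mass, making $Y$ compound Poisson; in the degenerate case $\Sigma=\{0\}$ we get $\Pi\equiv 0$ and $Y\equiv 0$, again (trivially) compound Poisson. Both contradict the hypothesis, so $\Sigma=\mathbb{R}$, completing the proof.
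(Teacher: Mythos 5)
The paper itself contains no proof of this theorem: it is stated with the mark $\heartsuit$ as a piece of the classical transience/recurrence theory going back to Kingman and Port--Stone, so there is no in-text argument to compare yours against and your proposal must stand on its own. Most of it does: part (i) (tail $0$--$1$ law, strong Markov property at return times spaced by $\delta$, a uniform lower bound on the conditional probability of lingering in $B(0,2a)$, conditional Borel--Cantelli), the one-line ``if'' direction of (ii) combined with the dichotomy of Theorem \ref{portstone}, the reduction of the ``only if'' direction to showing that the set $\Sigma$ of recurrence points is a closed additive subgroup containing $0$, the classification of closed subgroups of $\mathbb{R}$, and the endgame observation that a L\'evy measure supported in $c\mathbb{Z}\setminus\{0\}$ is automatically finite, are all correct and follow the standard route. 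Treating $Y\equiv 0$ as trivially compound Poisson is a convention, but it is the convention under which the statement is true, so no objection there.

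The genuine gap is in your final paragraph. From the subgroup property of $\Sigma$ alone you cannot ``inspect the L\'evy--Khintchine triplet'': the claims that a non-zero Gaussian part, a non-zero drift, or mass of $\Pi$ outside $\Sigma$ would force $\Sigma=\mathbb{R}$ (or enlarge $\Sigma$ beyond a proper subgroup) all rest on an unstated lemma, namely that for a \emph{recurrent} process every point the process can approach with positive probability at some time --- equivalently, every point of the closed group generated by the supports of the laws of $Y_t$, $t>0$ --- is itself a recurrence point. That ``attainable implies recurrent'' bridge is what converts support information ($\supp(Y_t)=\mathbb{R}$ when $\sigma\neq 0$; $bt\in\supp(Y_t)$ for all $t$ when $\Pi$ is finite, $\sigma=0$ and the drift $b\neq 0$; points of $\supp(\Pi)$ attainable via a single jump from near the origin) into membership of $\Sigma$, and it is where the real probabilistic content of the ``only if'' direction lives; it is also implicitly needed for the inverse/duality step in your subgroup argument. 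The lemma is true and is provable by exactly the device you already deploy in part (i): use recurrence of $0$ to produce infinitely many well-separated stopping times at which the process is near $0$, note a fixed positive probability of being near the target point a fixed time later, and apply conditional Borel--Cantelli. But as written, the deduction that the triplet is ``rigid'' does not follow from what you have established; state and prove this lemma and the argument closes.
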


Back to the stable setting, on account of the fact that 
\[
\int_{(-\varepsilon, \varepsilon)}\re\left(\frac{1}{\Psi({z})}\right)\D {z} \propto 
\int_{(-\varepsilon, \varepsilon)}\frac{1}{|{z}|^\alpha}\D{z},
\]
it follows from Theorem \ref{portstone}  that $X$ is transient whenever $\alpha \in(0,1)$ and recurrent when $\alpha\in[1,2)$. It is worth remarking here that  a L\'evy   process which is recurrent cannot drift to
$\infty$ or $-\infty$, and therefore must oscillate  and we see this consistently with stable processes. 
On the other hand, 
an oscillating process is not necessarily 
recurrent. A nice  example of this phenomenon is 
provided by the case of  a symmetric stable process
 of index $0<\alpha<1$. 

Returning to the issue of  \eqref{potential1anddd}, it is clear that the potential makes no sense for $\alpha\in[1,2)$. That is to say, for each $x\in\mathbb{R}$, $U(x, A)$ assigns infinite mass to each non-empty open set $A$. When $\alpha\in(0,1)$, a general result for L\'evy processes tells us that transience is equivalent to the existence of \eqref{potential1anddd} as a finite measure. In the stable case, we can verify this directly thanks to the following result.

\begin{theorem}[$\heartsuit$]\label{1dfreepotential} Suppose that $\alpha\in(0,1)$. The potential of $X$ is absolutely continuous with respect to Lebesgue measure. Moreover,  $U(x, \d y) = u(y-x)\d y$, $x,y\in\mathbb{R}^d$, where 
\begin{equation}
u(x) = \Gamma(1-\alpha)\left(\frac{\sin(\pi\alpha\rho)}{\pi}\mathbf{1}_{(x>0)}+\frac{\sin(\pi\alpha\hat\rho)}{\pi}\mathbf{1}_{(x<0)}\right)|x|^{\alpha-1}, \qquad x\in\mathbb{R}.
\label{eq:freepotential1d}
\end{equation}

\end{theorem}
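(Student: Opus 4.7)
The plan is to recover $u$ by inverse Fourier transform. From \eqref{pinverseF} and the definition of $U$, one is led formally to
\[
u(x) \;=\; \frac{1}{2\pi}\int_{\mathbb R}\frac{e^{-\iu zx}}{\Psi(z)}\,\d z,\qquad x\neq 0.
\]
Since $1/\Psi(z)$ is not absolutely integrable on $\mathbb R$ (it behaves as $|z|^{-\alpha}$ at infinity), I would make this rigorous by first working with the $q$-resolvent density
\[
u^q(x) \;=\; \int_0^\infty e^{-qt}\texttt{p}_t(x)\,\d t \;=\; \frac{1}{2\pi}\int_{\mathbb R}\frac{e^{-\iu zx}}{q+\Psi(z)}\,\d z,\qquad q>0,
\]
where the Fubini interchange is legitimate because $\re(\Psi(z))=|z|^\alpha\cos(\pi\alpha(\tfrac12-\rho))>0$ under the standing assumption $\alpha\rho,\alpha\hat\rho\in(0,1)$; then I would let $q\downarrow 0$. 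Transience of $X$ for $\alpha\in(0,1)$ (Theorem~\ref{portstone}) ensures that the pointwise limit $u(x)=\lim_{q\downarrow 0}u^q(x)$ exists finitely for $x\neq 0$, which simultaneously delivers the absolute continuity of $U(x,\d y)$ on $\{y\neq x\}$.

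To evaluate the integral, substitute \eqref{Psi_alpha_rho_parameterization} in the form $1/\Psi(z)=|z|^{-\alpha}\exp(-\pi\iu\alpha(\tfrac12-\rho)\,\mathrm{sgn}(z))$ and split over the two half-lines. Each contribution reduces to the classical Mellin--Fourier identity
\[
\int_0^\infty z^{-\alpha}e^{\pm\iu zx}\,\d z \;=\; \Gamma(1-\alpha)\,|x|^{\alpha-1}\exp\!\bigl(\pm\iu\pi(1-\alpha)/2\cdot\mathrm{sgn}(x)\bigr),\qquad 0<\alpha<1,\ x\neq 0,
\]
obtained by rotating the contour onto the imaginary axis (the arc at infinity vanishes by decay of $e^{\pm\iu zx}$ in the appropriate quadrant, and the arc near the origin vanishes thanks to $\alpha<1$) and recognising the resulting Gamma integral. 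Assembling the two pieces via Euler's formula gives, for $x>0$,
\[
u(x) \;=\; \frac{\Gamma(1-\alpha)\,x^{\alpha-1}}{\pi}\cos\!\bigl(\pi\alpha(\tfrac12-\rho)+\pi(1-\alpha)/2\bigr).
\]
The cosine argument simplifies to $\pi/2-\pi\alpha\rho$, producing the factor $\sin(\pi\alpha\rho)$ and hence the claim on $x>0$. The $x<0$ case follows either by replaying the argument or, more quickly, by noting that $-X$ is itself stable of index $\alpha$ with positivity parameter $\hat\rho$, so the density at $x<0$ is the density at $|x|$ for $-X$, i.e.\ the same expression with $\rho$ replaced by $\hat\rho$.

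The single delicate point is the justification of the interchange of integrals, handled by the $q$-resolvent regularisation above. I note in passing that the shape $u(x)=c_\pm|x|^{\alpha-1}$ is actually forced already by the scaling property \eqref{scaling property}, which gives $u(cx)=c^{\alpha-1}u(x)$ for all $c>0$; so the proof genuinely reduces to identifying the two constants $c_\pm$. The contour computation sketched above is the most direct route, but an alternative would be to read off the constants from the ladder potential density in \eqref{subordinatorpotential} via a Wiener--Hopf-style convolution identity for $u$ in terms of the potentials of $H$ and $\hat H$.
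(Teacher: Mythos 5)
Your proposal is correct and takes essentially the same route as the paper: both invert $1/\Psi$ through the Fourier representation of the transition density, split the $z$-integral at the origin, and use the Mellin transform $\int_0^\infty y^{-\alpha}{\rm e}^{\pm\iu y}\,\d y=\Gamma(1-\alpha){\rm e}^{\pm\iu\pi(1-\alpha)/2}$ together with Euler's formula to produce the factor $\sin(\pi\alpha\rho)$, the only differences being that the paper tests $U(0,\d x)$ against a function $f$ where you work pointwise via the $q$-resolvent and let $q\downarrow 0$, and handles $x<0$ by redoing the computation rather than by the $\rho\leftrightarrow\hat\rho$ symmetry. One minor caveat: your appeal to Fubini is not quite right as stated, since $\int_{\mathbb{R}}\bigl(q+\re\Psi(z)\bigr)^{-1}\d z=\infty$ for $\alpha\in(0,1)$, so even with $q>0$ the $z$-integral for $u^q$ is only conditionally convergent and must be read as an improper oscillatory integral (or regularised by a cutoff) --- which is the same level of informality present in the paper's own interchange, so it does not affect the validity of the argument in substance.
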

\begin{proof}[Proof \emph{($\heartsuit$)}] The proof we give here is classical.
Let us first examine the expression for $u$ on the positive half-line. For positive, bounded measurable $f: [0,\infty)\to[0,\infty)$, which satisfies \[\int_{\mathbb{R}} f(x)|x|^{\alpha - 1}\d x<\infty,\]
we have 
 \begin{align}
\int_0^\infty f(x)U(0, \d x)
&=\int_0^\infty \int_{0}^\infty f(x)\mathbb{P}(X_t\in \d x)\d t \notag\\
&=
 \frac{1}{2\pi}\int_0^{\infty} f(x) \int_{\mathbb{R}}\int_{0}^{\infty}  {\rm e}^{-|z|^\alpha t-\i z x} \d t \, \d z \, \d x\notag \\
 &=
 \frac{1}{2\pi}\int_0^{\infty} f(x) \int_{\mathbb{R}} |z|^{-\alpha} {\rm e}^{-\i z x}  \, \d z \, \d x\notag \\
 &= \frac{ {\rm e}^{-\pi\iu\alpha(\frac{1}{2} -\rho)} }{2\pi} \int_0^{\infty} f(x)x^{\alpha-1} \int_{0}^{\infty}  y^{-\alpha}  {\rm e}^{-\i y} \d y \, \d x\notag\\
 &\hspace{0.5cm}-\frac{ {\rm e}^{\pi\iu\alpha(\frac{1}{2} -\rho)} }{2\pi} \int_0^{\infty} f(x)x^{\alpha-1} \int_{0}^{\infty}  y^{-\alpha} {\rm e}^{\i y}  \d y \, \d x.
 \label{doagain1-e}
 \end{align}
Using that the Mellin transform of ${\rm e}^{\pm\iu y}$, $y\geq0$, at $1-\alpha$ is known to be equal to 
$ \Gamma(1-\alpha){\rm e}^{\pm\iu \pi(1-\alpha)/2}$,
  \begin{align*}
  \int_0^\infty f(x)U(0, \d x)
&=
 \Gamma(1-\alpha) \frac{ {\rm e}^{-\pi\iu(\frac{1}{2} -\alpha\rho)} }{2\pi} \int_0^{\infty} f(x)x^{\alpha-1}  \, \d x\\
 &\hspace{0.5cm}-\Gamma(1-\alpha)\frac{ {\rm e}^{\pi\iu(\frac{1}{2} -\alpha\rho)} }{2\pi} \int_0^{\infty} f(x)x^{\alpha-1}  \, \d x\\
 &=\Gamma(1-\alpha) \frac{\sin(\pi\alpha\rho)}{\pi}\int_0^{\infty} f(x)x^{\alpha-1}  \, \d x,
 \end{align*}
as required. A similar proof when $x<0$ gives the second term in \eqref{eq:freepotential1d}.
 \end{proof}

Write $Y$ for a general L\'evy process with law $  {\rm P}$, when issued from the origin.
Transience and recurrence  in the sense of the ${\rm P}$-almost sure convergence or divergence of $\int_{0}^{\infty }\mathbf{1}_{\left(
|Y_t|<a\right) }{\D}t$  is a notion that pertains to the time spent visiting (open) sets. A finer notion of transience and recurrence can be developed in relation to visiting individual points.

We say that a general L\'evy process  {\it can hit a point} $x\in\mathbb{R}$ if 
\[
\mathrm{P}(Y_t = x \text{ for at least one  }t > 0) > 0.
\]
This  notion is of course well defined for all L\'evy processes. In the case of a stable process, the scaling property means that hitting a point $x$ with positive probability is equivalent to hitting any other point with positive probability. It turns out that this is generally the case for all L\'evy processes, with the exception of compound Poisson processes, which may be troublesome in this respect if their jump distribution has lattice support. 
The following theorem, taken from \cite{Kestenpoint}, applies for the general class of L\'evy processes; see also \cite{Bret}. 
\begin{theorem}[$\heartsuit$]
\label{hitting points}
Suppose that a general L\'evy process is not a compound Poisson process and has characteristic exponent $\Psi$. Then it can hit points if and only if
\begin{equation}
\label{local-time}
\int_{\mathbb{R}} \re\left( \frac {1}{1 + \Psi({z})}\right) \D z < \infty. 
\end{equation}
\end{theorem}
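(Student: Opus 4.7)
The plan is to reduce the hitting criterion to a Fourier-analytic statement about the $1$-resolvent measure
\[
U^{1}(\D x) := \int_0^\infty \ee^{-t}\,{\rm P}(Y_t\in\D x)\,\D t,
\]
whose characteristic function, by Fubini and the definition of $\Psi$, is $\widehat{U^{1}}(z) = 1/(1+\Psi(z))$. Because $\re\Psi\ge 0$ and $U^{1}$ is a probability measure, the function $1/(1+\Psi)$ is bounded on $\R$, and a short estimate shows that condition \eqref{local-time} is equivalent to $1/(1+\Psi)\in L^1(\R)$.

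For the sufficient direction, if \eqref{local-time} holds, then Fourier inversion produces a bounded continuous version $u^1$ of the density of $U^1$, with
\[
u^1(x) = \frac{1}{2\pi}\int_{\R} \ee^{-\iu zx}\,\frac{1}{1+\Psi(z)}\,\D z,
\qquad
u^1(0) = \frac{1}{2\pi}\int_{\R}\re\!\left(\frac{1}{1+\Psi(z)}\right)\D z,
\]
where the imaginary part drops out of $u^1(0)$ by the symmetry $\Psi(-z)=\overline{\Psi(z)}$. In particular $0<u^1(0)<\infty$. I would then invoke the strong Markov property at the hitting time $T_x := \inf\{t>0: Y_t = x\}$ to obtain the standard identity $u^1(x-y) = {\rm E}_y[\ee^{-T_x}]\,u^1(0)$, which is available precisely because $u^1$ is continuous. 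Since $u^1$ integrates to $1$ and is continuous, it is strictly positive at some point, so ${\rm P}_y(T_x<\infty)>0$ for at least one pair $(x,y)$, and by spatial homogeneity (using the absence of compound-Poisson pathology) for every such pair.

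For the necessary direction, assume that single points are hit with positive probability. Because $Y$ is not a compound Poisson process, Kesten's original argument shows that the origin is regular for $\{0\}$, and this permits the construction of a positive continuous additive functional $L$ carried by $\{s\ge 0: Y_s = 0\}$ — a local time at $0$. The expected occupation measure of $L$, weighted against $\ee^{-t}\D t$, agrees with $U^1$ up to a finite positive constant, from which one infers that $U^1$ is absolutely continuous with a bounded density $u^1$. A Plancherel argument applied to the symmetrisation $Y-Y'$ (where $Y'$ is an independent copy of $Y$, so that the characteristic exponent is $2\re\Psi$) then identifies the $1$-resolvent density of $Y-Y'$ at the origin with a positive multiple of $\int_{\R}\re(1/(1+\Psi(z)))\D z$, which is therefore finite.

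The hard step is the one in the converse direction that manufactures a bounded density for $U^1$ out of the bare hypothesis that some point is hit: a priori $U^1$ need not even be absolutely continuous. This is exactly where the non-compound-Poisson assumption is indispensable, because it rules out the possibility that $\{t: Y_t = 0\}$ is a discrete random set and thereby underwrites the existence and regularity of the local time $L$. Once $L$ and the boundedness of its density are in place, the remaining passage to the Fourier integrability of $1/(1+\Psi)$ is a routine Plancherel computation on the symmetrised process.
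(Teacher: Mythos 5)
Your argument has genuine gaps in both directions. For sufficiency, everything hinges on the claim that \eqref{local-time} is equivalent to $1/(1+\Psi)\in L^1(\mathbb{R})$, and that claim is false: take $Y_t=ct$ with $c\neq 0$, or any bounded variation process with nonzero drift (e.g.\ drift plus an $\alpha$-stable subordinator, $\alpha<1$). There $\Psi(z)=-\iu cz+o(|z|)$ as $|z|\to\infty$, so $\re\bigl(1/(1+\Psi(z))\bigr)$ decays like $|z|^{-2}$ (or $|z|^{\alpha-2}$) and is integrable, while $\bigl|1/(1+\Psi(z))\bigr|\asymp |z|^{-1}$ is not. These processes hit points, consistent with the theorem, but Fourier inversion does not produce a continuous version of $u^1$; in fact for such processes the $1$-resolvent density is bounded yet discontinuous at the origin, and the identity $u^1(x-y)=\mathrm{E}_y[\ee^{-T_x}]\,u^1(0)$ is only valid for the correct (co-excessive) version of the density, not for a continuous one manufactured by inversion. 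The actual sufficiency proof (Kesten, Bretagnolle; Theorem II.16 of \cite{bertoin}) avoids any $L^1$ bound on $1/(1+\Psi)$: one works with the quadratic form $\langle U^1f,f\rangle$, which by Parseval involves only $\re\bigl(1/(1+\Psi)\bigr)$, applies it to the approximations $f_\epsilon=\epsilon^{-1}\mathbf{1}_{[0,\epsilon]}$ of $\delta_0$, and concludes that $\{0\}$ has positive capacity, hence is not essentially polar.

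The necessity direction fails at the step ``hitting points $\Rightarrow$ $0$ is regular for $\{0\}$ $\Rightarrow$ a local time at $0$ exists.'' The same bounded-variation-with-drift examples hit points with positive probability while $0$ is irregular for $\{0\}$ and the zero set is discrete, so no continuous additive functional carried by it exists; regularity of $0$ for $\{0\}$ is strictly stronger than non-polarity and is not rescued by the non-compound-Poisson hypothesis, whose role is different (it is what allows one to pass from ``some point is non-polar'' to ``every point is non-polar,'' a step your closing appeal to spatial homogeneity also glosses over). The correct converse again runs through capacity/energy: non-polarity of $\{0\}$ means $\delta_0$ has finite $1$-energy, equivalently the co-excessive version of $u^1$ is bounded with its essential supremum at the origin, and Parseval identifies that energy with a constant multiple of $\int_{\mathbb{R}}\re\bigl(1/(1+\Psi(z))\bigr)\D z$. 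For comparison, the paper gives no proof of this statement, citing \cite{Kestenpoint} and \cite{Bret}; reconstructing one genuinely requires this potential-theoretic machinery rather than direct Fourier inversion plus a local-time construction.
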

A straightforward comparison of $\re\left(({1 + \Psi({z})})^{-1}\right)$ with $\mathbf{1}_{(|x|<1)} + \mathbf{1}_{(|x|\geq 1)}|{z}|^{-\alpha}
$ shows that \eqref{local-time} holds if and only if $\alpha\in(1,2)$.
Therefore, referring to Theorem \ref{hitting points},  the  process $X$ can hit points almost surely if and only if $\alpha\in(1,2)$.
Coupled with recurrence, it is thus clear that, when $\alpha\in(1,2)$, $\mathbb{P}(X_t = x \text{ for at least one  }t > 0)=1$ for all $x\in\mathbb{R}$, showing point-recurrence.
This leaves the case of $\alpha = 1$ which is recurrent but not point-recurrent.

\subsection{Higher dimensional stable processes} Recall that in dimension $d\geq 2$, we insist that $X$ is isotropic. This carries the consequence that $X$ is a $\mathbb{R}^d$-valued L\'evy process with jump measure satisfying
\begin{align}\label{jumpmeasure}
\Pi(B)&=\frac{2^{\alpha}\Gamma(({d+\alpha})/{2})}{\pi^{d/2}|\Gamma(-{\alpha}/{2})|}\int_{B}\frac{1}{|y|^{\alpha+d}}{\rm d}y\notag\\
& =
2^{\alpha-1}\pi^{-d}\frac{\Gamma((d+\alpha)/2)\Gamma(d/2)}{\big|\Gamma(-\alpha/2)\big|}\int_{\mathbb{S}_{d-1}}r^{d-1}\sigma_1({\rm d}\theta)\int_0^\infty\mathbf{1}_{B}(r\theta)\frac{ 1}{r^{\alpha+d}}\d r,
\end{align}
for $B\in\mathcal{B}(\mathbb{R})$,
where $\sigma_1(\d \theta)$ is the surface measure on $\mathbb{S}_{d-1}$ normalised to have unit mass and the change in the constant in the second equality comes from the Jacobian when changing from Cartesian to generalised polar coordinates (see \cite{blum}).
Equivalently, this means $X$ is a $d$-dimensional  L\'evy process with characteristic exponent $\Psi(\theta) = -\log\mathbb{E}({\rm e}^{{\rm i}\langle\theta, X_1\rangle})$ which satisfies
\[
\Psi(\theta) = |\theta|^\alpha, \qquad \theta\in\mathbb{R}^d.
\]
Stable processes in dimension $d\geq 2$ are transient in the sense that 
\[
\lim_{t\to\infty}|X_t| = \infty
\]
almost surely, from any point of issue.

Just as in the one-dimensional case, a quantity that will be of specific interest is the potential $U(x,\d y)$, which is defined just as in \eqref{potential1anddd} albeit that, now, $x,y\in\mathbb{R}^d$.
The following is classical, found, for example, in \cite{BGR} if not earlier; see also the discussion in Example 3.4 of \cite{BH} or Section 1.1 of \cite{Landkof}.
\begin{theorem}[$\heartsuit$]\label{freepotential}
For dimension $d\geq 2$, the potential of $X$ is absolutely continuous with respect to Lebesgue measure, in which case, remembering  spatial homogeneity, its density satisfies $U(x, \d y) = u(y-x)\d y$, $x,y\in\mathbb{R}^d$, where 
\[
u(z) = 2^{-\alpha}\pi^{-d/2}\frac{\Gamma((d-\alpha)/2)}{\Gamma(\alpha/2)}|z|^{\alpha -d}, \qquad z\in\mathbb{R}^d. 
\]
\end{theorem}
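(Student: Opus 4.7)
The plan is to mirror the one-dimensional proof of Theorem \ref{1dfreepotential}, replacing the Mellin-style trick used there by the Fourier representation of the Riesz kernel on $\mathbb{R}^d$. For a nonnegative, bounded, compactly supported test function $f$ on $\mathbb{R}^d$, I would write
\[
\int_{\mathbb{R}^d} f(x)\, U(0,\d x) = \int_0^\infty \int_{\mathbb{R}^d} f(x)\, \texttt{p}_t(x)\, \d x\, \d t,
\]
expand $\texttt{p}_t$ by the $d$-dimensional analogue of \eqref{pinverseF} using the symbol $\Psi(z)=|z|^\alpha$, and then exchange the order of integration so that the inner $t$-integral collapses to $\int_0^\infty e^{-t|z|^\alpha}\d t = |z|^{-\alpha}$. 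This reduces matters to identifying the distributional Fourier transform of $|z|^{-\alpha}$ on $\mathbb{R}^d$, which is the classical Riesz potential computation.

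To carry out that computation I would use the subordination identity
\[
|z|^{-\alpha} = \frac{1}{\Gamma(\alpha/2)} \int_0^\infty s^{\alpha/2-1} e^{-s|z|^2}\, \d s,
\]
and another application of Fubini, together with the Gaussian Fourier transform
\[
\int_{\mathbb{R}^d} e^{-\mathrm{i}\langle z,x\rangle} e^{-s|z|^2}\, \d z = \left(\frac{\pi}{s}\right)^{d/2} e^{-|x|^2/(4s)}.
\]
The remaining one-dimensional integral in $s$ is turned into a Gamma function by the substitution $u = |x|^2/(4s)$, producing the factor $\Gamma((d-\alpha)/2)$, where the fact that $\alpha \in (0,2)$ and $d\geq 2$ ensures the integrand is locally integrable at both endpoints. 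Collecting the constants $(2\pi)^{-d}$, $\pi^{d/2}$, $2^{d-\alpha}$, and $\Gamma((d-\alpha)/2)/\Gamma(\alpha/2)$ gives precisely the claimed prefactor $2^{-\alpha}\pi^{-d/2}\Gamma((d-\alpha)/2)/\Gamma(\alpha/2)$.

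The main obstacle is the justification of the first Fubini swap, because $|z|^{-\alpha}$ is not integrable near the origin as a function (only as a tempered distribution). I would handle this either by first smuggling in a smooth compactly supported test function $f$ whose Fourier transform decays fast enough to tame the singularity at $z=0$, or by introducing a small killing $e^{-\varepsilon t}$ to obtain the $\varepsilon$-resolvent, performing the Fourier computation there, and then letting $\varepsilon \downarrow 0$ via monotone convergence. Local finiteness of the potential (equivalently absolute convergence of the left-hand side on compact sets away from the singularity $x=0$) is guaranteed by transience, which in turn follows from Theorem \ref{portstone} because $\int_{|z|<\varepsilon}|z|^{-\alpha}\d z < \infty$ exactly when $\alpha<d$, a condition automatically satisfied for $d\geq 2$ and $\alpha\in(0,2)$.
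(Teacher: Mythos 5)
Your proposal is correct, but it takes a genuinely different route from the paper. What you do is, in effect, the $d$-dimensional analogue of the paper's proof of the one-dimensional Theorem \ref{1dfreepotential}: pass to Fourier space, integrate out $t$ to reduce to the tempered-distribution Fourier transform of $|z|^{-\alpha}$, and evaluate that via the Gamma-integral representation $|z|^{-\alpha}=\Gamma(\alpha/2)^{-1}\int_0^\infty s^{\alpha/2-1}{\rm e}^{-s|z|^2}\d s$ together with the Gaussian Fourier transform; your constants do recombine to $2^{-\alpha}\pi^{-d/2}\Gamma((d-\alpha)/2)/\Gamma(\alpha/2)$. The paper instead never enters Fourier space: it uses the probabilistic subordination $X_t=\sqrt{2}B_{S_t}$ with $S$ an $(\alpha/2)$-stable subordinator, conditions on $S_t$, and invokes the explicit subordinator potential \eqref{subordinatorpotential} and the Gaussian transition density, so every integrand is a nonnegative probability density and Fubini is free. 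Your Gamma-integral identity is exactly the analytic shadow of that subordinator-potential step, so the two arguments are structurally parallel, but yours carries the extra burden of interpreting $\int{\rm e}^{-{\rm i}z\cdot x}|z|^{-\alpha}\d z$ distributionally; your remedies (Schwartz test functions whose transforms decay rapidly, or an $\varepsilon$-killing) are standard and adequate, though note that the obstruction is the lack of integrability of $|z|^{-\alpha}$ at infinity rather than at the origin, since $\alpha<d$ makes the origin harmless, and the $\varepsilon$-resolvent alone does not cure the behaviour at infinity, so the test-function decay is what really does the work. What each approach buys: the paper's subordination argument is shorter, avoids distributional technicalities, and recycles \eqref{subordinatorpotential}; your Fourier route is more self-contained on the analytic side, makes the Riesz-kernel constant transparent, and extends verbatim to any symbol whose reciprocal is locally integrable, which is why it also covers $d=1$, $\alpha\in(0,1)$, $\rho=1/2$ as noted in the paper's remark.
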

\begin{remark} Combining the above theorem with Theorem \ref{1dfreepotential}, for $\rho =1/2$, one can in fact state Theorem \ref{freepotential}  more generally with the qualification that $d>\alpha$. The reader will also note that the proof given for Theorem \ref{freepotential} below works equally well when $d = 1$, $\alpha\in(0,1)$ and  $\rho =1/2$.
\end{remark}
\begin{proof}[Proof of Theorem \ref{freepotential} \emph($\heartsuit$)]
The proof we give here is also classical and  taken from  p187 of \cite{BH}.
Fix $\alpha\in(0,2)$ and 
suppose that $(S_t, t\geq 0)$ is a stable subordinator with index $\alpha/2$. If we write $(B_t, t\geq 0)$ for a standard $d$-dimensional Brownian motion, then it is known that $X_t: = \sqrt{2}
B_{S_t}$, $t\geq 0$,is a stable process with index $\alpha$. Indeed, its stationary and independent increments and scaling, in the sense of \eqref{scaling property},  are inherited directly from those of $S$ and $B$,  and are easy to verify. Note, moreover, that
\[
\mathbb{E}[{\rm e}^{{\rm i}\langle\theta,  X_t\rangle}] = \mathbb{E}\left[{\rm e}^{-|\theta|^2 S_t}\right] = {\rm e}^{-|\theta|^\alpha t}, \qquad \theta\in\mathbb{R}^d.
\]
Now note that, for bounded and measurable $f:\mathbb{R}^d\to[0,\infty)$,  which satisfies $\int_{\mathbb{R}^d}f(x)|x|^{\alpha-1}\d x<\infty$, 
\begin{align*}
\mathbb{E}\left[\int_0^\infty f(X_t)\d t\right]
&=\mathbb{E}\left[\int_0^\infty f(B_{S_t})\d t\right]\\
&=\int_0^\infty \d s  \int_0^\infty \d t\,\mathbb{P}(S_t\in \d s)\int_{\mathbb{R}} \mathbb{P}(B_s\in \d x) f(\sqrt{2}x)\\
&=\frac{1}{\Gamma(\alpha/2)\pi^{d/2}2^d} \int_\mathbb{R}\d y \int_0^\infty \d s \,{\rm e}^{- |y|^2/4 s} s^{-1+(\alpha-d)/2 }f(y)\\
&= \frac{1}{2^{\alpha}\Gamma(\alpha/2)\pi^{d/2}} \int_\mathbb{R}\d y \, |y|^{(\alpha-d)} \int_0^\infty \d u \, e^{-u} u^{-1+(d-\alpha/2)}f (y)\\
&=\frac{\Gamma((d-\alpha)/2)}{2^{\alpha }\Gamma(\alpha/2)\pi^{d/2}}\int_\mathbb{R}\d y \, |y|^{(\alpha-d)} f(y),
\end{align*}
where we have used the expression for the potential of $S$ as in \eqref{subordinatorpotential} (albeit replacing the index $\alpha\rho$ there by $\alpha$).
This completes the proof. 
\end{proof}

The final thing to mention in this section is the issue of hitting points for stable processes in dimension $d\geq2$.
It is known that if the condition \eqref{local-time} fails, then  points cannot be hit from Lebesgue-almost every point of issue.
Note in higher dimensions that the effect of the Jacobian comes into play when we estimate the integral in \eqref{local-time}. Indeed, one can easily make the comparison with the integral
\[
\int_{|x|>1} \frac{1}{|z|^\alpha}\d z \asymp \int_{1}^\infty\int_{\mathbb{S}_{d-1}} \frac{1}{r ^\alpha}r^{d-1}\sigma_1(\theta)\d z 
 \asymp \int_{1}^\infty r ^{d-\alpha -1}\d r= \infty,
\]
as $d\geq2$. Here, $a\asymp b$ means that $a/b$ is bounded form above and below by a strictly positive constant. Hence, for stable process, points cannot be hit from Lebesgue-almost every point of issue. However, with a little work one can upgrade this to the statement that points cannot be hit from any point of issue. The subtleties of this can be found for, example, in Chapter 8, Section 43 of \cite{Sato}.

\section{Positive self-similar Markov processes and stable processes}

In this section we introduce one of the key mathematical tools that we shall use to analyse stable processes: {\it positive self-similar Markov processes}. We shall often denote this class by {\it pssMp} for convenience. Shortly we will give the definition of these processes and their pathwise characterisation as space-time-changed L\'evy processes through the {\it Lamperti transform}. Thereafter, we spend the rest of the section exploring a number of examples of pssMp which can be constructed through path transformations of stable processes. Each of these examples of pssMp turn out to be intimately connected, through the Lamperti transform, to a different L\'evy process belonging to the (extended) hypergeometric class. 

\subsection{The Lamperti transform}\label{sect:Lamperti}

Let us begin with a definition of the fundamental class of processes that will dominate our analysis.

\begin{definition}\label{pssMpdef}\rm
A $[0,\infty)$-valued Feller   process $Z = (Z_t , t\geq 0)$ 
is called a {\it positive self-similar Markov process} if there
exists a constant $\alpha > 0$ such that, for any ${x}> 0$ and  $c >0$,
\begin{equation}\label{13scale}
\mbox{the law of $( c  Z_{ c ^{-\alpha}t}, t\ge0)$ under ${P}_{x}$ is
${P}_{ c  {x}}$,}
\end{equation}
where    ${P}_{x}$ is the law of $Z$ when issued from ${x}$. 
In that case, we refer to $\alpha$ as the {\it index of self-similarity}. (The reader should note that some authors prefer to refer to $1/\alpha$ as the index of self-similarity.)
\end{definition}

In his landmark paper, \cite{L72} showed that there is  a natural bijection between the class of exponentially killed L\'evy processes and positive self-similar Markov processes, up to a naturally defined lifetime, 
\[
\zeta = \inf\{t>0 : Z_t = 0\},
\] 
the first moment $X$ visits the origin. 
 Roughly speaking, this bijection shows that the property of self-similarity is interchangeable with the property of having stationary and independent increments through an appropriate space-time transformation.
 Below, we state this bijection as a theorem. 
  
  Let us first introduce some more notation.  Throughout this section, we shall use $\Xi: = (\Xi_t , t\geq 0)$ 
to denote a one-dimensional L\'evy process (not necessarily issued from the origin)
which is killed and sent to the cemetery state $-\infty$ at an independent and exponentially distributed random time, $\mathbf{e} = \inf\{t> 0 :\Xi_t  = -\infty\}$, with rate in $[0,\infty)$. As usual, we understand $\mathbf{e}$ in the {\it broader sense} of an exponential distribution, so that if its rate is $0$, then $\mathbf{e} = \infty$ with probability one, in which case there is no killing.

We will be interested in applying a time change to the process $\Xi$ by using its integrated exponential process, $I:  =\{I_t: t\geq 0\}$, where
\begin{equation}
I_t= \int_0^{t} {\rm e}^{\alpha \Xi_s}{\rm d}s, \qquad t\geq 0.
\label{withoutalpha}
\end{equation}
As the process $I$ is  increasing, we may define its limit, $I_\infty:=\lim_{t\uparrow\infty}I_t$,  whenever it exists. We are also interested in the inverse process of $I$: 
\begin{equation}
\varphi(t) = \inf\{s>0 :  I_s> t\},\qquad t\geq 0.
\label{inverse}
\end{equation}
As usual, we work with the convention  $\inf\emptyset =\infty$.

The following theorem introduces the celebrated Lamperti transformation\footnote{As a referee pointed out, different authors use different nomenclature for the Lamperti transformation. For example one may choose to call \eqref{13lampertirep} the Lamperti transform of $\Xi$. We prefer to use a slightly looser use of `Lamperti transformation' to mean the bijection between the class of positive self-similar Markov processes and (killed) L\'evy processes.},
which characterises all positive self-similar Markov processes. It was originally proved in \cite{L72}; see also Chapter 13 of \cite{Kypbook}. We omit the proof here as it is long and a distraction from our main objectives.

\begin{theorem}[The Lamperti transform $\heartsuit$]\label{iso-2} Fix $\alpha>0$.
\begin{itemize}
\item[(i)] If $(Z, P_x)$, $x>0$, is a positive self-similar Markov process with index of self-similarity $\alpha$, then up to absorption at the origin, it can   be represented as follows: 
\begin{equation}
Z_t\mathbf{1}_{(t<\zeta)}  = \exp\{\Xi_{\varphi(t)}\}, \, \qquad t\geq 0,
\label{13lampertirep}
\end{equation}
such that $\Xi_0 = \log x$ and  either
\begin{itemize}
\item[(1)]  $P_x(\zeta
=\infty) =1$  for all ${x}>0$, in which case, $\Xi$ is a L\'evy process satisfying $\limsup_{t\uparrow\infty}\Xi_t = \infty$,
\item[(2)] $P_x(\zeta <\infty \text{ and }Z_{\zeta-}=0)=1$ for all ${x}>0$, in which case $\Xi$ is a L\'evy process satisfying $\lim_{t\uparrow\infty}\Xi_t = -\infty$, or
\item[(3)] $P_x(\zeta<\infty \text{ and }Z_{\zeta-}>0)=1$ for all ${x}>0$, in which case $\Xi$ is a L\'evy process  killed at an independent and exponentially distributed random time. 
\end{itemize}
In all cases, we may identify $\zeta =  I_\infty$.

\item[(ii)] Conversely, for each $x>0$, suppose that 
$\Xi$ is a given (killed) L\'evy process, issued from $\log x$. Define
\begin{equation*}
Z_t 
  = \exp\{\Xi_{\varphi(t)}\} \mathbf{1}_{(t< I_\infty)}, \qquad t\geq 0.
\end{equation*}
Then $Z$ defines a positive self-similar Markov process up to its absorption time $
\zeta =  I_\infty
$, which satisfies $Z_0=x$ and 
 has index $\alpha$.

\end{itemize}

\end{theorem}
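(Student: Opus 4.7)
The plan is to move between $Z$ and $\Xi$ by means of the additive functional
$$
A_t = \int_0^t Z_s^{-\alpha}\,\d s, \qquad \tau(t) = \inf\{s \ge 0 : A_s > t\},
$$
which is the natural inverse of the clock $I$ appearing in \eqref{withoutalpha}. On a formal level, setting $\Xi_t = \log Z_{\tau(t)}$ and $Z_t = \exp\{\Xi_{\varphi(t)}\} \mathbf{1}_{(t< I_\infty)}$ gives the two sides of the bijection, so the real work is to verify that these formulas map the axiomatic property (self-similarity plus Markov) to the axiomatic property (stationary and independent increments) and back, and to analyse the three possible absorption regimes at $0$.

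For the forward direction (i), I would first check that $\Xi_t = \log Z_{\tau(t)}$ is well-defined on $[0,\zeta)$ and a.s.\ càdlàg, then establish stationary and independent increments. Independence of increments is inherited from the strong Markov property of $Z$ applied at the stopping time $\tau(s)$. Stationarity is where the scaling property \eqref{13scale} does its work: if $Z'$ is $Z$ restarted at $z$ after $\tau(s)$ and $Z''$ is an independent copy started from $1$, then writing $Z'_u \eqd z\,Z''_{z^{-\alpha}u}$ and changing variables $v = z^{-\alpha} u$ yields
$$
\int_0^u (Z'_r)^{-\alpha}\,\d r = \int_0^{z^{-\alpha} u}(Z''_v)^{-\alpha}\,\d v,
$$
so the inverse clocks satisfy $\tau'(r) = z^\alpha \tau''(r)$ and hence
$$
\log Z'_{\tau'(r)} - \log z = \log Z''_{\tau''(r)}.
$$
This identifies the law of $\Xi_{s+r} - \Xi_s$ with the law of $\Xi_r$ under $P_1$, giving stationarity. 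The trichotomy (1)--(3) then follows from the identity $\zeta = I_\infty$, which is immediate from $A_\zeta = I_\infty$: case (1) corresponds to $I_\infty = \infty$ without killing, which by standard Lévy process dichotomies is equivalent to $\limsup \Xi = \infty$; case (2) corresponds to $I_\infty < \infty$ without killing, which requires $\Xi_t \to -\infty$; case (3) corresponds to $\Xi$ being exponentially killed, yielding $I_\infty < \infty$ with the exponential $e^{\Xi_{\mathbf{e}-}}$ strictly positive.

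For the converse (ii), starting from a (possibly killed) Lévy process $\Xi$ issued from $\log x$, the strong Markov property of $\Xi$ transfers under the deterministic time substitution $\varphi$ to the strong Markov property of $Z$, since $\{\varphi(t) \le s\} = \{I_s \ge t\}$ is in the natural filtration of $\Xi$. Self-similarity is a direct substitution: the map $\Xi \mapsto \Xi + \log c$ sends $I_s$ to $c^\alpha I_s$, hence $\varphi(t)$ to $\varphi(c^{-\alpha} t)$, so $cZ_{c^{-\alpha} t} = \exp\{\Xi_{\varphi(c^{-\alpha} t)} + \log c\}$ has the law of $Z$ under $P_{cx}$. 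The Feller property of $Z$ on $[0,\infty)$, with $0$ as a trap, is the routine final check. The point I expect to cost the most effort is verifying that the time-changed process $\Xi$ in (i) really is càdlàg and quasi-left-continuous across the moment of absorption, and that the three alternatives above are genuinely exclusive and exhaustive — this is where one must carefully distinguish a continuous decay to $-\infty$ of $\Xi$ (case (2)) from a killing jump of $\Xi$ from a finite value (case (3)), and relate both to the behaviour of $Z$ at $\zeta$.
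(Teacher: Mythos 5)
Your proposal is sound, but there is nothing in the paper to compare it against: Theorem \ref{iso-2} is stated with a $\heartsuit$ and its proof is explicitly omitted as ``long and a distraction'', with the reader referred to \cite{L72} and Chapter 13 of \cite{Kypbook}. Your sketch is precisely the classical Lamperti argument found in those references --- the additive functional $A_t=\int_0^t Z_s^{-\alpha}\,\d s$ and its inverse, the strong Markov property combined with the scaling property to obtain stationary and independent (possibly killed) increments, the trichotomy read off from the finiteness of $I_\infty$ and the presence of killing, and the converse via the time change $\varphi$ (which, despite your calling it ``deterministic'', you correctly treat as a family of stopping times through $\{\varphi(t)\le s\}=\{I_s\ge t\}$) --- so it agrees with the standard route the paper points to rather than offering a different one.
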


It is tempting to immediately think of a stable process as an example of a positive self-similar Markov process, but, with the exception of a stable subordinator (which has been ruled out of this discussion), it fails against the criteria of positivity.  In fact, the case of  a stable subordinator  is precisely the example (and the only example) of a self-similar Markov processes  which was given in  \cite{L72}. It is possible, however,  to construct examples of positive self-similar Markov processes from path transformations of stable processes. 

In all of the examples that follow to the end of this section, we will take  $X$, with probabilities $\mathbb{P}_x$, $x\in\mathbb{R}$, to be a stable process with two-sided jumps.

\subsection{Stable processes killed on entering $(-\infty,0)$}\label{killed*}
This first example was introduced in detail in \cite{CC06b}; see also \cite{KRS}. To some extent, the former of these two references  marks the beginning of the modern treatment of stable processes through the theory of self-similar Markov processes.

Let us define, for ${x}>0$,
\begin{equation}
Z_t = X_t\mathbf{1}_{(\underline{X}_t \geq 0)}, \qquad t\geq 0,
\label{establishss}
\end{equation}
where $X$ is a stable process. It is straightforward to show that the pair $
((X_t,  \underline{X}_t), t\geq 0)
$
is a strong Markov process. Moreover, if we denote its probabilities by $\{\mathbb{P}_{({x}, s)}: \infty >{x}\geq s>-\infty\}$, then, for all $c>0$ and $\infty>{x}\geq s>-\infty$, 
\begin{equation}
\text{the law of }(c(X_{c^{-\alpha } t}, \underline{X}_{c^{-\alpha }t}), t\geq 0)\text{ under } \mathbb{P}_{({x}, s)}\text{ is }\mathbb{P}_{(c{x}, cs)}.
\label{pairscaling}
\end{equation}
See for example Exercise 3.2 in \cite{Kypbook}.
We see that, for ${x},c>0$, under $\mathbb{P}_x = \mathbb{P}_{(x,x)}$, 
\[
cZ_{c^{-\alpha}t} = cX_{c^{-\alpha}t}\mathbf{1}_{( \underline{X}_{c^{-\alpha}t} \geq 0)}, \qquad t\geq 0,
\]
and, thanks to the scaling \eqref{pairscaling}, this is equal in law to  $(Z,\mathbb{P}_{cx})$. With a little more work, it is not difficult to show that $Z$ also inherits the Markov property and the Feller property from $X$. It follows that \eqref{establishss} is a positive self-similar Markov process.
Note in particular, this example  falls into category (3) of Theorem \ref{iso-2} on account of the fact that stable processes pass almost surely into the lower half-line with a jump. Its Lamperti transform should therefore reveal a L\'evy process which is killed at a strictly positive rate. 

\begin{theorem}[$\heartsuit$]\label{Psi*thrm}
For the pssMp constructed by killing a stable process on first entry to $(-\infty,0)$, the underlying L\'evy process, $\xi^*,$ that appears through the Lamperti transform has characteristic exponent\footnote{Here and elsewhere, we use the convention that the characteristic exponent of a L\'evy process $Y$ with law ${\rm P}$ is given by $\Psi(z):=-\log{\rm E}[{\rm e}^{{\rm i}z Y_t}] $, $z\in\mathbb{R}$, $t>0$.} given by 
\begin{equation}
\Psi^*({z}) =  \frac{\Gamma(\alpha - \i {z})}{\Gamma(\alpha\hat\rho  -\i {z}  )}\times\frac{\Gamma(1+\i {z})}{\Gamma(1-\alpha\hat\rho +\i {z})}, \qquad {z}\in\mathbb{R}.
\label{psi*exponent}
\end{equation}
\end{theorem}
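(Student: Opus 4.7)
The plan is to exploit the Lamperti transformation to identify the characteristic triple $(b^{*}, 0, \Pi^{*}, q)$ of the killed L\'evy process $\xi^{*}$ directly from the L\'evy measure of $X$, and then to evaluate the resulting characteristic-exponent integrals in closed form via Beta-function identities.

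First I would identify $\Pi^{*}$. Under the Lamperti transform $Z_{t} = \exp(\xi^{*}_{\varphi(t)})$, a jump of $X$ of size $y$ from level $x>0$ corresponds to a jump of $\xi^{*}$ of size $\log(1+y/x)$, while the time change $\varphi$ introduces a multiplicative factor $e^{\alpha \xi^{*}}$ in the jump intensity per unit Lamperti time. Starting from $\Pi(\d y) = c^{+}|y|^{-\alpha-1}\mathbf{1}_{(y>0)}\d y + c^{-}|y|^{-\alpha-1}\mathbf{1}_{(y<0)}\d y$ with $c^{\pm} = \Gamma(\alpha+1)\sin(\pi\alpha\rho^{\pm})/\pi$ (where $\rho^{+}=\rho$, $\rho^{-}=\hat\rho$), the change of variables $\eta = \log(1+y/x)$ together with scaling makes the $x$-dependence cancel, as it must for $\xi^{*}$ to be a L\'evy process, and yields
\[
\Pi^{*}(\d\eta) \;=\; c^{+}\,\frac{e^{\eta}\,\mathbf{1}_{(\eta>0)}}{(e^{\eta}-1)^{\alpha+1}}\,\d\eta \;+\; c^{-}\,\frac{e^{\eta}\,\mathbf{1}_{(\eta<0)}}{(1-e^{\eta})^{\alpha+1}}\,\d\eta.
\]

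Next I would identify the killing rate. Since $Z$ is killed the instant $X$ first jumps below zero, and by scaling it suffices to consider the starting level $x=1$, the rate is
\[
q \;=\; \int_{-\infty}^{-1}\Pi(\d y) \;=\; \frac{c^{-}}{\alpha} \;=\; \frac{\Gamma(\alpha)\sin(\pi\alpha\hat\rho)}{\pi} \;=\; \frac{\Gamma(\alpha)}{\Gamma(\alpha\hat\rho)\Gamma(1-\alpha\hat\rho)},
\]
using the reflection formula $\Gamma(s)\Gamma(1-s)=\pi/\sin(\pi s)$. This already matches the right-hand side of the claimed identity evaluated at $z=0$, a useful sanity check. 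Since $X$ has no Gaussian component, neither does $\xi^{*}$; any drift coefficient $b^{*}$ can then be read off from the small-jump asymptotics of the integral representation.

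The main computational step is to evaluate
\[
\Psi^{*}(z) \;=\; q + ib^{*}z + \int_{\mathbb{R}}\bigl(1 - e^{iz\eta} + iz\eta\,\mathbf{1}_{(|\eta|<1)}\bigr)\,\Pi^{*}(\d\eta).
\]
The substitution $u = e^{-\eta}$ on $(0,\infty)$ and $u = e^{\eta}$ on $(-\infty,0)$ turns each half of the integral into a Beta integral of the shape $\int_{0}^{1} u^{a-1}(1-u)^{b-1}\d u = \Gamma(a)\Gamma(b)/\Gamma(a+b)$ with $a,b$ affine in $iz$, $\alpha$, $\rho$. Combining the two halves, cancelling the singular contributions of the compensator against the boundary divergences at $u=1$, and repeatedly applying the reflection and duplication identities for $\Gamma$, one recovers exactly the Gamma-function product stated.

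The hard part is precisely this final evaluation: individually, each half-line integral diverges at $u=1$, so the compensator terms and the two sides must be kept together and the convergent pieces identified before the Beta integral can be applied. A convenient technical device is to first compute $\Psi^{*}(z)$ for $z$ on a horizontal line in the complex plane where all integrals converge absolutely (e.g.\ $\mathrm{Im}(z)$ in a suitable strip), obtain the Gamma-function expression there, and then extend by analyticity to $z \in \mathbb{R}$. An alternative route, which sidesteps this bookkeeping, is to identify the Wiener--Hopf factors of $\xi^{*}$: the ascending ladder height of $\xi^{*}$ should correspond under Lamperti to the ascending ladder height of $X$ (a stable subordinator with exponent $\theta^{\alpha\rho}$), which would lead directly to the factorisation $\Psi^{*}(z) = \kappa^{*}(-iz)\hat\kappa^{*}(iz)$ with the two factors matching the two Gamma ratios in the statement.
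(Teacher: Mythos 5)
The paper does not actually prove Theorem \ref{Psi*thrm}: it is quoted from \cite{CC06b}, where the characteristics of $\xi^*$ are extracted by computing the infinitesimal generator of the killed stable process through the Lamperti time change. Your identification of the jump measure (it matches \eqref{pi*}) and of the killing rate $\Gamma(\alpha)/(\Gamma(\alpha\hat\rho)\Gamma(1-\alpha\hat\rho))$ is correct, and the check against $\Psi^*(0)$ is a good one. The genuine gap in your main route is the linear term: the centering/drift coefficient $b^*$ in the L\'evy--Khintchine representation is an independent characteristic of $\xi^*$, not a functional of $\Pi^*$ and $q$, and it cannot be ``read off from the small-jump asymptotics of the integral representation''. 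Knowing $\Pi^*$ and $q$ determines $\Psi^*$ only up to an additive term $-{\rm i}b^*z$, and pinning down $b^*$ is precisely the delicate part of the known proofs. For $\alpha<1$ you could close this by observing that the stable process killed at $\tau^-_0$ has pure-jump paths of bounded variation, so $\xi^*$ inherits zero drift and your Beta-integral evaluation of $q+\int(1-{\rm e}^{{\rm i}z\eta})\Pi^*(\d\eta)$ would suffice; but for $\alpha\in[1,2)$ the centering constant relative to the cutoff $\mathbf{1}_{(|\eta|<1)}$ is nonzero and must be computed (for instance from the action of the generator of $X$ on functions of $\log x$), which your sketch never does.

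Your proposed shortcut through the Wiener--Hopf factorisation is also flawed as stated: the ascending ladder height of $\xi^*$ is \emph{not} the Lamperti image of the ascending ladder height of $X$. The transformation distorts space logarithmically and time through the clock $\varphi$, and ladder structures are not preserved in law; indeed the first factor of \eqref{psi*exponent} corresponds to $\kappa^*(\lambda)=\Gamma(\alpha+\lambda)/\Gamma(\alpha\hat\rho+\lambda)$, a killed subordinator with $\kappa^*(0)>0$, not the stable exponent $\lambda^{\alpha\rho}$, and the second factor is $\Gamma(1+\lambda)/\Gamma(1-\alpha\hat\rho+\lambda)$, not $\lambda^{\alpha\hat\rho}$. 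There is a correct version of this idea: by Lemma \ref{Utail}, the renewal measure of the ascending ladder height of $\xi^*$ can be read off from the probability that the stable process passes above a level before first entering $(-\infty,0)$, i.e.\ from Rogozin's two-sided exit law; but in this paper that law is \emph{derived from} Theorem \ref{Psi*thrm} (Lemma \ref{twosidedexitnoovershoot} and Theorem \ref{2sidedtriple}), so within the present framework that route is circular unless the exit law is imported from elsewhere.
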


Since $\Psi^*(0) =\Gamma(\alpha)/(\Gamma(\alpha\hat\rho)\Gamma(1-\alpha\hat\rho))>0$, we conclude that $\xi^*$ is a killed L\'evy process.  Remarkably, Theorem \ref{Psi*thrm} provides an explicit example of a Wiener--Hopf factorisation, with the two relevant factors placed on either side of the multiplication sign in \eqref{psi*exponent}.
Moreover, the process $\xi^*$, often referred to as a {\it Lamperti-stable} process (see e.g. \cite{CPP11}), also has the convenient property that its L\'evy measure is absolutely continuous with respect to Lebesgue measure, and its density takes the explicit form
\begin{equation}
\pi^{*}(x) =  \frac{\Gamma(1+\alpha)}{\Gamma(\alpha\rho)\Gamma(1-\alpha\rho)}
 \frac{{\rm e}^{x}}{({\rm e}^{x}-1)^{1+\alpha}}, \qquad x>0,
 \label{pi*}
\end{equation}

\subsection{Censored stable process}\label{CSPsection} Recall  that $X$ is a stable process with two-sided jumps. Define the
occupation time of $(0,\infty)$ for $X$,
\[ A_t = \int_0^t \mathbf{1}_{(X_s > 0)} \, \d s ,\qquad t\geq 0,
 \]
and let 
\begin{equation}
\gamma(t) = \inf\{ s \ge 0 : A_s > t \},\qquad t\geq 0,
\label{inverseA}
\end{equation} be its right-continuous inverse.
Define a process $(\Zch_t)_{ t \ge 0}$ by setting $\Zch_t = X_{\gamma(t)}$, $t\geq 0$. This is the process
formed by erasing the negative components of the trajectory of $X$ and shunting together the remaining positive sections of path.\footnote{Censored stable processes were introduced in \cite{KPW}. In that paper, there was discussion of another family of path adjusted stable processes which are also called censored stable processes; see \cite{BBC}.}

We now make zero into an absorbing state. Define the stopping time
\begin{equation}
\tau^{\{0\}} = \inf\{ t > 0 : \Zch_t = 0 \} 
\label{censoredstable}
\end{equation}
and the process
\[ Z_t = \Zch_t \mathbf{1}_{(t < \tau^{\{0\}})} , \qquad t \ge 0 , \]
which is absorbed at zero.
We call the process $Z$ the {\it censored stable process}. Our claim is that this process is a positive self-similar Markov process.

We now consider the scaling property. For each $c>0$, define the rescaled process $(\check{Z}^c_t, t \ge 0)$
by $ \check{Z}^c_t = c\check{Z}_{c^{-\alpha} t}$, and, correspondingly, let $\gamma^c$ be defined such that 
\begin{equation}
 \int_0^{\gamma^c(t)} \mathbf{1}_{( X^c_s > 0)}\, \D  s = t,
 \label{changevariable}
 \end{equation}
 where $X^c_t = c X_{c^{-\alpha } t}$, $t\geq 0$.
By changing variable with $u = c^{-\alpha}s$ in (\ref{changevariable}) and   noting that $A_{\gamma(c^{-\alpha}t)} = c^{-\alpha}t$, a short
calculation shows that
\[ 
c^{\alpha} \gamma(c^{-\alpha}t) =  \gamma^c(t) . 
\]
For each $x,c>0$, we have under $\mathbb{P}_x$, 
\[
c\Zch_{c^{-\alpha}t}
  = c X_{\gamma(c^{-\alpha} t)}
=c X_{c^{-\alpha} \gamma^c(t)}
= X^c_{ \gamma^c(t)},\qquad t\geq 0.
\]
The right hand side above is equal in law to the process $(\check{Z}, \mathbb{P}_{cx})$,
which establishes self-similarity of $\check{Z}$. 
Note, moreover, that, for all $c>0$, if $T^c_0$ is the time to absorption in $\{0\}$ of $\check{Z}$, then 
\begin{equation}
T^c_0 =
\inf\{t> 0 : \check{Z}_{c^{-\alpha }t} = 0\} = c^\alpha \inf\{s>0 : \check{Z}_s = 0\} = c^\alpha \tau^{\{0\}}.
\label{stopscale}
\end{equation}
It follows that, for all $x,c>0$, under $\mathbb{P}_x$, 
$c Z_{c^{-\alpha }t}  = c \check{Z}_{c^{-\alpha }t}\mathbf{1}_{(c^{-\alpha }t< \tau^{\{0\}})}
$, $t\geq 0$, which is equal in law to $Z$ under $\mathbb{P}_{cx}$.

As a killed, time-changed Markov process, the censored stable process remains in the class of Markov processes.  It remains to show that $Z$  is Feller. Once again, we claim that the latter is easily verified through  Feller property of $X$. The reader is referred to Chapter 13 of \cite{KPW}, where the notion of the censored stable process in the self-similar Markov setting was first introduced.

We now consider the pssMp $Z$ more closely for different
values of $\alpha\in(0,2)$. Denote by $\stackrel{_\leadsto}{\xi} = \{\stackrel{_\leadsto}{\xi}_t : t\geq 0\}$  the L\'evy process associated to the censored stable process through the Lamperti transform. As mentioned previously, we know that, for $\alpha \in (0, 1]$, the stable process $X$ cannot hit points.  This implies that 
 $\tau^{\{0\}} = \infty$ almost surely, 
and so, in this case, $Z = \Zch$ and $\cenxi$ experiences no killing. 
Moreover, when $\alpha\in(0,1)$, the process $X$ is transient which implies that $Z$ has almost surely finite occupancy of any bounded interval, and hence $\lim_{t\to\infty}\cenxi_t = \infty$.
When $\alpha= 1$, the process $X$ is recurrent which, using similar reasoning to the previous case, implies that $\limsup_{t\to\infty}\cenxi_t = -\liminf_{t\to\infty}\cenxi_t = \infty$.
Meanwhile, for $\alpha \in (1,2)$,
$X$ can hit every point. Hence, we have, in particular, that  
 $\tau^{\{0\}} < \infty$. However, on account of the fact that the stable process must always cross thresholds by a jump and never continuously, the process $X$ must make infinitely many jumps across zero during any arbitrarily small period of time immediately prior to hitting zero. Therefore, for $\alpha\in(1,2)$,  $Z$ approaches zero continuously.

Calculating the characteristic exponent of $\cenxi$ is non-trivial, but was carried out by \cite{KPW}, leading to the following result.

\begin{theorem}[$\heartsuit$]\label{censoredpsithrm} For the pssMp constructed by censoring the stable process in $(-\infty,0)$, the underlying L\'evy process $\cenxi$ that appears through the  Lamperti transform has characteristic exponent given by 
\begin{equation}
 \stackrel{_{\leadsto}}{\Psi}(z)=
   \frac{\Gamma(\alpha\rho - \iu{z})}{\Gamma(-\iu{z})}
    \frac{\Gamma(1 - \alpha\rho + \iu{z})}{\Gamma(1 - \alpha + \iu{z})},\qquad z\in\mathbb{R}.
\label{wigglePsi}
\end{equation}
\end{theorem}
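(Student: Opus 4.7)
The plan is to realise $\cenxi$ as the superposition of an unkilled copy of $\xi^*$ and an independent compound Poisson process of censoring jumps, and then to match the resulting characteristic exponent with \eqref{wigglePsi}. The two pssMp of Section \ref{killed*} and the present section agree up to the first time the underlying stable process $X$ jumps from $(0,\infty)$ into $(-\infty,0)$: at that instant the killed process dies, while the censored process $Z$ simply waits (in real time) until $X$ re-enters $(0,\infty)$ and then resumes from the re-entry position. Via the Lamperti correspondence of Theorem \ref{iso-2}, this replaces the killing of $\xi^*$ by an additional jump of size $\log(X_{\sigma_0}/X_{\tau_0^--})$, where $\tau_0^-:=\inf\{t>0:X_t<0\}$ and $\sigma_0:=\inf\{t>\tau_0^-:X_t>0\}$, computed, by scaling, under $\mathbb{P}_1$. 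The strong Markov property at $\tau_0^-$ and the scaling property then guarantee that these censoring jumps form a compound Poisson component of $\cenxi$ with rate equal to the killing rate $q^*:=\Psi^*(0)=\Gamma(\alpha)/(\Gamma(\alpha\hat\rho)\Gamma(1-\alpha\hat\rho))$ and with some fixed jump distribution $F$. Adding the exponents of the two independent components gives
\[
\stackrel{_{\leadsto}}{\Psi}(z)=\Psi^*(z)-q^*\hat F(z),\qquad z\in\mathbb{R},
\]
where $\hat F(z):=\int_{\mathbb{R}}\ee^{\iu zy}F(\D y)$.

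To pin down $F$ explicitly, I would proceed in two stages. First, using the L\'evy system (compensation) formula together with the explicit L\'evy density \eqref{stabledensity3} and the known potential density of $X$ killed upon entering $(-\infty,0)$, one obtains the joint law of the entry pair $(X_{\tau_0^--},X_{\tau_0^-})$ under $\mathbb{P}_1$. Second, conditionally on $X_{\tau_0^-}=-u$ for some $u>0$, the strong Markov property at $\tau_0^-$ reduces the distribution of $X_{\sigma_0}$ to the distribution of the first point at which $X$ enters $(0,\infty)$ when started from $-u$. Reflecting to the dual stable process (swapping $\rho$ and $\hat\rho$), this last distribution is the overshoot of the dual over level $u$ from $0$, which a minor adaptation of Lemma \ref{precorollary} delivers explicitly. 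Integrating out the two intermediate quantities by means of classical beta integrals then yields a closed form for $F$ and hence for $\hat F(z)$.

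The main obstacle is the algebraic simplification of $\Psi^*(z)-q^*\hat F(z)$ into the four-Gamma product \eqref{wigglePsi}. Starting from the explicit $\Psi^*$ of Theorem \ref{Psi*thrm}, one needs to show that the trigonometric prefactors produced by the beta integrals recombine, via the reflection identity $\Gamma(w)\Gamma(1-w)=\pi/\sin(\pi w)$, into the Gamma ratios appearing on either side of the multiplication sign in \eqref{wigglePsi}. The cleanest route, I expect, is to recognise the factorisation \eqref{wigglePsi} directly as the Wiener--Hopf factorisation \eqref{SWHF} of $\cenxi$, with ascending and descending ladder height Laplace exponents
\[
\kappa(\theta)=\frac{\Gamma(\alpha\rho+\theta)}{\Gamma(\theta)}\quad\text{and}\quad\hat\kappa(\theta)=\frac{\Gamma(1-\alpha\rho+\theta)}{\Gamma(1-\alpha+\theta)},
\]
respectively. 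If these identifications can be extracted by a direct path argument on $\cenxi$ (for instance, by analysing its excursions from its running supremum and infimum via the excursion theory of the underlying stable process), then the product form \eqref{wigglePsi} drops out of \eqref{SWHF} without the need for any brute-force Gamma-function manipulation.
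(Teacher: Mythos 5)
A first remark: the paper itself does not prove this theorem — it explicitly defers to \cite{KPW} ("Calculating the characteristic exponent of $\cenxi$ is non-trivial, but was carried out by \cite{KPW}"). Your architecture is precisely that of the cited proof: realise $\cenxi$ as an unkilled copy of $\xi^*$ plus an independent compound Poisson process of censoring jumps with rate $q^*=\Psi^*(0)$, so that $\stackrel{_{\leadsto}}{\Psi}(z)=\Psi^*(z)-q^*\hat F(z)$, and identify the jump law $F$ from the pair (relative overshoot into $(-\infty,0)$, position of re-entry into $(0,\infty)$), the latter being an overshoot law of the type in Lemma \ref{precorollary}. So the plan is sound and is the "right" route. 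Two caveats on the probabilistic step: the i.i.d./independence structure of the censoring jumps relative to the resurrected $\xi^*$ is not delivered by the strong Markov property at $\tau_0^-$ alone (that only handles the post-overshoot piece $\log(X_{\sigma_0}/(-X_{\tau_0^-}))$); one also needs the L\'evy-system/compensation argument showing that, given the path up to $\tau_0^--$, the relative overshoot $-X_{\tau_0^-}/X_{\tau_0^--}$ has a fixed law, which you invoke only to get the joint entry law under $\mathbb{P}_1$. This is fixable, but it is where the care lies in \cite{KPW}.

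The genuine gap is at the decisive analytic step, which is the entire content of the theorem. You leave the computation of $\hat F$ and the simplification of $\Psi^*(z)-q^*\hat F(z)$ to the four-Gamma product as "expected to work"; in \cite{KPW} this is a substantial hypergeometric/beta-integral computation, not a routine reflection-formula exercise. Worse, your proposed fallback — reading \eqref{wigglePsi} as the Wiener--Hopf factorisation \eqref{SWHF} with $\kappa(\theta)=\Gamma(\alpha\rho+\theta)/\Gamma(\theta)$ and $\hat\kappa(\theta)=\Gamma(1-\alpha\rho+\theta)/\Gamma(1-\alpha+\theta)$ — is exactly the temptation the paper warns against immediately after the theorem: this identification is only correct for $\alpha\in(0,1]$ (display \eqref{factorisation<1}); for $\alpha\in(1,2)$ the factorisation takes the different form \eqref{factorisation>1}, and indeed your $\hat\kappa$ cannot be the Laplace exponent of a (killed) subordinator there since $\hat\kappa(0)=\Gamma(1-\alpha\rho)/\Gamma(1-\alpha)<0$. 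Even in the regime where the identification is valid, deducing \eqref{wigglePsi} from it would require an independent, path-level identification of the ladder height processes of $\cenxi$ together with uniqueness of the factorisation — none of which is supplied — so as it stands the "cleanest route" is circular for $\alpha\le 1$ and false for $\alpha>1$. The direct computation of $\hat F$ therefore cannot be avoided, and it is missing.
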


One may now verify directly from the expression in the previous theorem for $\stackrel{_{\leadsto}}{\Psi}$ that $\cenxi$ drifts to $\infty$, oscillates, drifts to $-\infty$, respectively as $\alpha \in(0,1)$, $\alpha = 1$ and $\alpha\in(1,2)$. 

It would be tempting here to assume that, as with the exponent $\Psi^*$, the Wiener--Hopf factorisation is clearly visible in \eqref{wigglePsi}. This turns out to be a little more subtle than one might think. \cite{KPW} showed that, when $\alpha\in(0,1]$,  the factorisation, indicated by the multiplication sign below, does indeed take the expected form 
\begin{equation}
\stackrel{_{\leadsto}}{\Psi}(z)=
  \frac{\Gamma(\alpha\rho - \iu{z})}{\Gamma(-\iu{z})}\times
    \frac{\Gamma(1 - \alpha\rho + \iu{z})}{\Gamma(1 - \alpha + \iu{z})},\qquad z\in\mathbb{R}.
    \label{factorisation<1}
\end{equation}
When $\alpha\in(1,2)$, this is not the case. The factorisation in this regime, again indicated by the multiplication sign below, takes the form 
\begin{equation}
\stackrel{_{\leadsto}}{\Psi}(z)=
(1-\alpha+\iu z)  \frac{\Gamma(\alpha\rho - \iu{z})}{\Gamma(1-\iu{z})}\times
(\iu z)    \frac{\Gamma(1 - \alpha\rho + \iu{z})}{\Gamma(2 - \alpha + \iu{z})},\qquad z\in\mathbb{R}.
\label{factorisation>1}
\end{equation}

\subsection{Radial part of an isotropic stable process}\label{radius}

Suppose now we consider  an isotropic $d$-dimensional stable process $X$ with index $\alpha\in (0,2)$.  In particular, we are interested in the process defined by its radial part, i.e.
\[
R_t=|X_t|, \qquad t\ge 0,
\]
where $|\cdot|$ denotes the Euclidian norm.  

Similarly as in the censored stable case,  we make zero into an absorbing state since the process $R$ may be recurrent and  hit zero. Define the stopping time
\begin{equation}
\tau^{\{0\}} = \inf\{ t > 0 : R_t = 0 \} 
\end{equation}
and the process
\[
Z_t=R_t\mathbf{1}_{(t<\tau^{\{0\}})}, \qquad t\ge 0,
\]
which is absorbed at zero whenever $R$ hits 0 for the first time.

It follows from isotropy of $X$  that the process $Z=(Z_t, t\ge 0)$ is  Markovian. Moreover, the process $Z$ inherits the scaling property from $X$. Once again, with the Feller property of $Z$ inherited from the same property of $X$, we have the implication that the radial part of an isotropic stable processes  killed when it hits zero   is a pssMp with index $\alpha$. 

We now consider the process $Z$ more closely for different values of $d$ and $\alpha$, and denote by $\xi=(\xi_t , t\ge 0)$  its associated L\'evy process through the Lamperti transform.
From the discussion at the end of Section \ref{intro}, we know that, for $d\ge \alpha$, the stable process $X$ cannot hit points. This implies that $\tau^{\{0\}}=\infty$ almost surely, and so, in this case, $Z$ and $\xi$ experience no killing. 
Moreover, when $\alpha<d$, the process $X$ is transient implying that $Z$ and $\xi$ drift to $\infty$. When $d=\alpha=1$, the process $X$ is recurrent which implies that the L\'evy process $\xi$ oscillates.
In the remaining case, i.e. $d=1$ and $\alpha\in(1,2)$, the process $X$ is recurrent and can hit every point, in other words, $\tau^{\{0\}}<\infty$ almost surely. Since $X$ must make infinitely many jumps across zero during any arbitrarily   small period of time immediately prior to hitting zero, the process $Z$ approaches zero continuously implying that $\xi$ drifts to $-\infty$.

 The identification of the underlying L\'evy process through the Lamperti transform   was proved in \cite{CPP11}, albeit for the case that $\alpha\leq d$. 
The result is in fact true for $\alpha\in(0,2)$, albeit there being no proof to refer to. This will appear in a forthcoming book  \cite{KPbook}.

\begin{theorem}[$\heartsuit$]\label{radialpsi}
For the pssMp constructed using the radial part of an isotropic $d$-dimensional stable process, the underlying L\'evy process, $\xi$ that appears through the Lamperti has characteristic exponent given by 
\begin{equation}
\Psi(z) = 2^\alpha\frac{\Gamma(\frac{1}{2}(-{\rm i}z +\alpha ))}{\Gamma(-\frac{1}{2}{\rm i}z)}\frac{\Gamma(\frac{1}{2}({\rm i}z +d))}{\Gamma(\frac{1}{2}({\rm i}z +d-\alpha))}, \qquad z\in\mathbb{R}.
\label{a}
\end{equation}
\end{theorem}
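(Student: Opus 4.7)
The strategy is to identify $\Psi$ via its characterisation as the generalised eigenvalue attached to the scaling eigenfunctions $h_z(x) := x^{\iu z}$, $z \in \mathbb{R}$, of the infinitesimal generator of $Z$. A direct consequence of the Lamperti representation (\ref{13lampertirep}), the time-change relation $\D \varphi(t)/\D t = Z_t^{-\alpha}$ that follows from (\ref{inverse}), and the fact that $y \mapsto \mathrm{e}^{\iu z y}$ is a generalised eigenfunction of the generator of $\xi$ with eigenvalue $-\Psi(z)$, is the identity
\begin{equation}\label{plan_eig}
\mathcal{L}_Z (h_z)(x) \;=\; -\Psi(z)\, x^{\iu z - \alpha}, \qquad x > 0,
\end{equation}
where $\mathcal{L}_Z$ denotes the infinitesimal generator of $Z$. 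Conversely, (\ref{plan_eig}) uniquely determines $\Psi$, so the task reduces to computing the left-hand side in closed form.

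Because $Z$ is obtained from $X$ by killing at zero, we have $\mathcal{L}_Z (h_z)(r) = \mathcal{L}_X(\tilde h_z)(re)$ for any unit vector $e \in \mathbb{S}_{d-1}$, where $\tilde h_z(x) := |x|^{\iu z}$ and $\mathcal{L}_X = -(-\Delta)^{\alpha/2}$ is the fractional Laplacian with normalising constant $c_{d,\alpha} = 2^\alpha \pi^{-d/2}\Gamma((d+\alpha)/2)/|\Gamma(-\alpha/2)|$ dictated by (\ref{jumpmeasure}). Substituting $y = ru$ in the principal-value integral representing $\mathcal{L}_X \tilde h_z(re)$ and using isotropy gives
\[
\mathcal{L}_X (\tilde h_z)(re) \;=\; r^{\iu z - \alpha}\, c_{d,\alpha}\, \mathrm{P.V.}\!\int_{\mathbb{R}^d} \frac{|e + u|^{\iu z} - 1}{|u|^{d+\alpha}}\, \D u,
\]
confirming the scaling predicted by (\ref{plan_eig}) and reducing the problem to a single integral evaluation, independent of $r$ and $e$. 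This integral is a disguise of the Riesz composition formula
\[
(-\Delta)^{\alpha/2} |x|^{\beta - d} \;=\; 2^\alpha\, \frac{\Gamma(\beta/2)\, \Gamma(\tfrac{1}{2}(d - \beta + \alpha))}{\Gamma(\tfrac{1}{2}(d - \beta))\, \Gamma(\tfrac{1}{2}(\beta - \alpha))}\, |x|^{\beta - d - \alpha},
\]
valid in a distributional sense for complex $\beta$ in a suitable strip, and easily derived via the Fourier transform of Riesz kernels. Setting $\beta = d + \iu z$ and reading off the constant yields exactly (\ref{a}). Alternatively, one may evaluate the singular integral directly in polar coordinates $u = \rho \omega$ as the composition of a Beta-type angular integral and a Mellin-type radial integral, which collapses to the same gamma quotient.

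The main obstacle is one of rigour: $\tilde h_z$ is neither integrable nor smooth at the origin, so (\ref{plan_eig}) and the Riesz composition formula hold only distributionally. The standard remedy is to verify the identity on smooth, compactly supported approximations of $\tilde h_z$ and pass to the limit, or equivalently to analytically continue in $\beta$ from a regime where the defining singular integral converges absolutely. The regime $\alpha > d$ (i.e.\ $d=1$, $\alpha \in (1,2)$), not addressed in \cite{CPP11}, presents no additional difficulty: both sides of (\ref{plan_eig}) are meromorphic in $(\alpha, z)$, so analytic continuation from the open regime $\alpha < d$ already treated in \cite{CPP11} settles the remaining case.
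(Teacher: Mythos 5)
The paper does not actually prove Theorem \ref{radialpsi}: it is marked as known, the case $\alpha\le d$ is attributed to \cite{CPP11}, and the remaining case is deferred to the forthcoming book \cite{KPbook}. So your proposal must be judged on its own terms, and on those terms it is essentially sound and lands exactly on \eqref{a}. Your route --- reading off $\Psi(z)$ as the eigenvalue produced when the generator of $X$ acts on the radial power $|x|^{{\rm i}z}$, combined with the Lamperti time change $\d\varphi(t)/\d t=Z_t^{-\alpha}$ and the Riesz identity $(-\Delta)^{\alpha/2}|x|^{\beta-d}=2^{\alpha}\frac{\Gamma(\beta/2)\,\Gamma((d-\beta+\alpha)/2)}{\Gamma((d-\beta)/2)\,\Gamma((\beta-\alpha)/2)}|x|^{\beta-d-\alpha}$ evaluated at $\beta=d+{\rm i}z$ --- is genuinely different from the route in \cite{CPP11}, where the characteristics of $\xi$ are extracted directly from the Lamperti transform and the jump structure of the stable process. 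The eigenfunction/Mellin-multiplier argument is shorter and treats all $d\ge 1$ and $\alpha\in(0,2)$ on the same footing; its price is precisely the domain issue you flag, since $|x|^{{\rm i}z}$ is not in the Feller domain and the identity must be read through a Dynkin-type limit or an approximation argument. Your constants are consistent: with $c_{d,\alpha}$ as dictated by \eqref{jumpmeasure} one gets $\mathcal{L}_X|x|^{{\rm i}z}=-\Psi(z)|x|^{{\rm i}z-\alpha}$ with $\Psi$ exactly as in \eqref{a}, matching the convention $\Psi(z)=-\frac{1}{t}\log\mathbb{E}[{\rm e}^{{\rm i}z\xi_t}]$.

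Two points should be tightened. First, the closing analytic-continuation remark conflates two statements: the identity between the principal-value integral and the gamma quotient is an analytic identity in the exponent (and in $\alpha$), and it does extend to ${\rm Re}\,\beta=d$ and to $\alpha>d$ without difficulty; but the probabilistic assertion --- that the L\'evy process appearing in the Lamperti transform of the killed radial process has this exponent --- is not something you can continue in $\alpha$ from \cite{CPP11}; it must be established by the same approximation/Dynkin argument for each $\alpha$, which fortunately works uniformly on $(0,2)$, so the appeal to \cite{CPP11} is unnecessary rather than load-bearing. Second, in the only regime where killing occurs ($d=1$, $\alpha\in(1,2)$), you should check explicitly that killing at the origin does not perturb the generator computation at $x>0$: this requires $\mathbb{P}_x(\tau^{\{0\}}\le t)=o(t)$ as $t\downarrow 0$, which does hold (for instance by convolving the law of $\tau^{\{0\}}$ with the transition density at the origin, giving a bound of order $t^{1+1/\alpha}$), but it does not follow from the crude estimate $\mathbb{P}_x(\sup_{s\le t}|X_s-x|\ge|x|)=O(t)$. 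With these two items addressed, your sketch is a complete and correct alternative proof of the statement the paper leaves to the literature.
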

\begin{remark}
By setting $z = 0$ in \eqref{a}, we see easily that $\Psi(0) = 0$ and hence $\xi$ is a conservative L\'evy process, i.e. it does not experience killing.
\end{remark}

\section{Stable processes as self-similar Markov processes}

Unlike the previous section, we are interested here in self-similar Markov processes that explore larger Euclidian domains than  the half-line. More precisely, we are interested in the class  of stochastic processes that respect Definition \ref{pssMpdef}, albeit the state-space is taken as $\mathbb{R}$ or, more generally, $\mathbb{R}^d$. Like the case of pssMp, it is possible to describe general {\it self-similar Markov processes},  or {\it ssMp} for short, via a space-time transformation to another family of stochastic processes. Whereas pssMp are connected to L\'evy processes via the Lamperti space-time transformation, ssMp turn out to be connected to a family of stochastic processes whose dynamics can be described by a L\'evy-type process with Markov modulated characteristics or  {\it Markov additive process} ({\it MAP} for short). As with the previous chapter, our interest in ssMp comes about through their relationship with stable processes and their path transformations. 

We first build up the relationship between ssMp, MAPs and stable processes in the one-dimensional setting. Although we won't really apply this theory directly,  it sets the scene to  consider  the relationship in higher dimension. We conclude this section by presenting a remarkable space-time transformation for stable processes, the so-called Riesz--Bogdan--\.Zak transform, that can be easily explained using their representation as self-similar Markov processes. As we shall see later,  the Riesz--Bogdan--\.Zak transform is one of the tools that  allows us to take a new perspective on the classical fluctuation identities in relation to $\mathbb{S}_{d-1}$.

\subsection{Discrete modulation MAPs and the Lamperti--Kiu transform}\label{MAPsection0}
We are interested here in one-dimension, specifically   {\it real self-similar Markov processes} (rssMp). As alluded to above, a rssMp with {\it self-%
similarity index} $\alpha > 0$ is a Feller process, $Z = (Z_t, t\geq 0)$,  on $\mathbb{R}$ such that the origin is an absorbing state, which has the property that  its  probability laws
$P_x$, $x \in \mathbb{R}$, satisfy the
{\it scaling property} that for all $x \in \mathbb{R} \setminus \{0\}$
and $c > 0$,
\begin{equation} \text{the law of }(c Z_{t c^{-\alpha}}, t \ge 0)
  \text{ under } P_x \text{ is } P_{cx} . 
  \label{ssmpdef}
  \end{equation}

In the spirit of the Lamperti transform of the previous chapter, we are able to identify each rssMp with a so-called (discretely modulated) Markov additive process via a transformation of space and
time, known as the  
{\it Lamperti--Kiu representation}. We shall shortly describe this transformation in detail. However, first we must make clear what we mean by a  Markov additive process. 

\begin{definition} Let $E$ be a finite state space such that $|E| = N$. 
A Feller process, $(\xi,J) = ((\xi_t, J_t), t\geq 0)$, with  probabilities  $\mathbf{P}_{x,i}$, $x\in\mathbb{R}$, $i\in E$, and cemetery state $(-\infty,\dagger)$ is called a
{\it Markov additive process} 
if $(J_t, t \ge 0)$ is a continuous-time Markov chain on $E$ with cemetery state $\{\dagger\}$, and the pair $(\xi, J)$ is such  that 
for any $i \in E$, $s,t \ge 0$:
\begin{eqnarray}
\label{e:MAP}
& \text{ given $\{J_t = i, t<\varsigma\}$, 
the pair $(\xi_{t+s}-\xi_{t}, J_{t+s})$ is independent of
$(\xi_u, u\leq s)$,}\notag\\
&\text{ and has the same distribution as $(\xi_{s}-\xi_{0}, J_{s})$
given $\{J_{0} = i\}$,}
\end{eqnarray}
where $\varsigma = \inf\{t>0 : J_t =\dagger\}$.
\end{definition}

\noindent 
For $x\in\mathbb{R}$,  write $\mathbf{P}_{x,i} = \mathbf{P}( \cdot \,\vert\, \xi_0 = x, J_0 = i)$.
We adopt a similar convention
for expectations.

The following proposition gives a characterisation of MAPs in terms of a mixture of L\'evy processes, a Markov chain and a family of additional jump distributions.

\begin{proposition}[$\heartsuit$]\label{regimeswitch}
  The pair $(\xi,J)$ is a Markov additive process if and only if, for each $i,j\in E$, 
  there exist a sequence of iid L\'evy processes
  $(\xi^{i,n})_{n \ge 0}$ and  a sequence of iid random variables
  $(\Delta_{i,j}^n)_{n\ge 0}$, independent
  of the chain $J$, such that if $\sigma_0 = 0$
  and $(\sigma_n)_{n \ge 1}$ are the
  jump times of $J$ prior to $\varsigma$, the process $\xi$ has the representation
  \[
   \xi_t = \mathbf{1}_{(n > 0)}( \xi_{\sigma_n -} + \Delta_{J(\sigma_n-), J(\sigma_n)}^n )+ \xi^{J(\sigma_n),n}_{t-\sigma_n},
    \qquad t \in [\sigma_n, \sigma_{n+1}),\, n \ge 0.
     \]
     and $\xi_\varsigma  = -\infty$.
\end{proposition}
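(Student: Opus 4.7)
The sufficiency direction is a direct verification, so I would dispose of it first. Given the pathwise recipe, at any time $t$, say $t \in [\sigma_n, \sigma_{n+1})$ with $J_t = i$, the future of $(\xi, J)$ after time $t$ is built from $J$ restarted in state $i$ at time $t$ (using the strong Markov property of the chain), the L\'evy process $\xi^{i,n}$ after time $t - \sigma_n$ (using the stationary independent increments of L\'evy processes), and the further independent iid families $\xi^{j,m}$, $m > n$, and $\Delta^m_{j,k}$, $m \geq n + 1$. Since these constituents depend on $i$ only through their initial-state label, the law of $(\xi_{t+\cdot} - \xi_t, J_{t+\cdot})$ conditional on $\{J_t = i\}$ coincides with the law of $(\xi_\cdot - \xi_0, J_\cdot)$ under $\mathbf{P}_{0,i}$, which is exactly \eqref{e:MAP}.

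For necessity, I would condition on the trajectory of $J$ and extract the pieces one at a time using the strong Markov property. The chain $J$ is assumed c\`adl\`ag with a discrete state space, so its jump times $(\sigma_n)_{n \geq 1}$ are a well-defined increasing sequence of stopping times for the natural filtration of $(\xi, J)$, and $\sigma_n \to \varsigma$. On the stochastic interval $[\sigma_n, \sigma_{n+1})$ the chain is constant, say $J \equiv i$. Applying the MAP property \eqref{e:MAP} at rational times $t \in [\sigma_n, \sigma_{n+1})$ and passing to a limit shows that the process
\[
\zeta^{(n)}_s := \xi_{(\sigma_n + s) \wedge \sigma_{n+1}-} - \xi_{\sigma_n}, \qquad s \geq 0,
\]
has, conditional on $\{J_{\sigma_n} = i\}$ and on $\sigma_{n+1} - \sigma_n$, the law of a L\'evy process indexed by $i$ run up to time $\sigma_{n+1} - \sigma_n$. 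Since $\sigma_{n+1} - \sigma_n$ is $\mathrm{Exp}(q_i)$-distributed and independent of the post-$\sigma_n$ increments of $\xi$ (by a standard thinning of the chain against its own exponential holding time), one may extend $\zeta^{(n)}$ beyond $\sigma_{n+1} - \sigma_n$ by splicing on an independent copy of the same L\'evy law, to produce a genuine L\'evy process $\xi^{i,n}$ with $\zeta^{(n)}_s = \xi^{i,n}_s$ on $[0, \sigma_{n+1} - \sigma_n)$.

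For the jump at the switching time, I would apply the strong Markov property of $(\xi, J)$ at $\sigma_n$: the random variable $\Delta^n_{J(\sigma_n-), J(\sigma_n)} := \xi_{\sigma_n} - \xi_{\sigma_n -}$ must, given $(J(\sigma_n-), J(\sigma_n)) = (i, j)$, have a law depending only on $(i, j)$ (not on $n$ or on the earlier path), since otherwise conditioning on $\{J_{\sigma_n} = j\}$ would give a post-$\sigma_n$ law depending on the past, contradicting \eqref{e:MAP}. Iterating the strong Markov property across successive $\sigma_n$ yields that the triples $(\xi^{J(\sigma_n),n}, \Delta^{n+1}_{J(\sigma_n), J(\sigma_{n+1})}, \sigma_{n+1}-\sigma_n)$ are conditionally independent given $J$, and that, once one fixes the labels $(i, j)$, the variables $\xi^{i,n}$ and $\Delta^n_{i,j}$ can be taken to be iid across $n$ and independent of $J$.

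The main technical obstacle I anticipate is the bookkeeping around the switching times, specifically: (i) combining the conditional L\'evy law on the random interval $[\sigma_n, \sigma_{n+1})$ with an independent extension to obtain a genuine L\'evy process $\xi^{i,n}$ without disturbing the joint law of $(\xi, J)$ on its original domain; and (ii) verifying that the jumps $\Delta^n_{i,j}$ are truly independent of the holding-time structure of $J$. Both are handled by the standard exponential-thinning argument that decouples the exponential holding times of $J$ from the independent L\'evy and jump components, after which the MAP property forces any residual dependence to vanish.
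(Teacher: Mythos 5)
The paper does not in fact prove Proposition \ref{regimeswitch}: it is stated as a known result ($\heartsuit$) with the proof deferred to the cited references (Asmussen, etc.), so there is no in-text argument to compare yours against. Your proposal follows the standard textbook route --- sufficiency by direct verification of \eqref{e:MAP} using memorylessness of the holding times and the stationary, independent increments of the $\xi^{i,n}$; necessity by conditioning on the path of $J$, extracting a conditional L\'evy process on each sojourn interval and an extra jump at each switch, and splicing on independent copies to complete the $\xi^{i,n}$ --- and that overall plan is sound.

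Two steps, however, are argued too loosely to stand as written. First, your justification that $\Delta^n := \xi_{\sigma_n}-\xi_{\sigma_n-}$ has a law depending only on $(J_{\sigma_n-},J_{\sigma_n})$ --- ``since otherwise conditioning on $\{J_{\sigma_n}=j\}$ would give a post-$\sigma_n$ law depending on the past'' --- does not work: $\Delta^n$ is measurable with respect to the path up to $\sigma_n$, so the fresh-start property applied at $\sigma_n$ constrains only the strictly post-$\sigma_n$ increments and says nothing about $\Delta^n$. The correct argument applies \eqref{e:MAP} (extended to stopping times via the Feller, hence strong Markov, property, which you do invoke) at the start of the sojourn $\sigma_{n-1}$ and then at deterministic times $t$ on the event that the sojourn exceeds $t$: this shows that the triple consisting of the switching jump, the new state and the residual holding time is independent of the within-sojourn path up to $t$ with a law not depending on $t$, whence $\Delta^n$ is independent of the sojourn data and its law is a function of the pair of states only. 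Second, the asserted independence of $\sigma_{n+1}-\sigma_n$ from the post-$\sigma_n$ increments of $\xi$ is itself part of the content of the proposition and cannot be dispatched by citing ``standard thinning''; it follows from the same computation, namely iterating the Markov property over a partition of $[0,s]$ on the event that $J$ makes no jump, so that the conditional increments factorise and the conditional law given a sojourn longer than $s$ does not depend on $s$. With these repairs (plus the routine enlargement of the probability space needed to manufacture iid arrays $(\xi^{i,n})$, $(\Delta^n_{i,j})$ genuinely independent of $J$), your argument is the standard one found in the references the paper cites.
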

 
 MAPs, sometimes called Markov modulated processes or semi-Markov processes, have traditionally found a home in queueing theory, in particular the setting of fluid queues. A good reference for the basic theory of MAPs, in continuous time, including the result above, can be found in e.g. \cite{AsmussenQueue, AA}, with more specialised results found in e.g. \cite{Cinlar,  Cinlar2, Cinlar1, Kaspi}. The literature for discrete-time MAPs is significantly more expansive. A base reference in that case is again \cite{AsmussenQueue}, but also the classical text \cite{Prabhu}. 

\bigskip 

We are now ready to describe the connection between MAP and rssMp which are absorbed at the origin.
The next theorem generalises its counterpart for positive self-similar Markov processes, namely Theorem \ref{iso-2} and is due to \cite{CPR} and \cite{KKPW}.

\begin{theorem}[Lamperti--Kiu transform $\heartsuit$]\label{LKtheorem}
  Fix $\alpha>0$. The process $Z$  is a rssMp with index $\alpha$ if and only if  there exists a (killed) MAP, $(\xi, J)$, on  $\mathbb{R}\times\{-1,1\}$ and  
  \[ Z_t =  {\rm e}^{\xi_{\varphi(t)} } 
    J_{\varphi(t)}  , \qquad  0 \le t < I_\varsigma, \]
  where 
  \begin{equation}\label{e:Lamp time change}
  \varphi(t) = \inf \biggl\{ s > 0 : \int_0^s {\rm e}^{\alpha \xi_u}
  \, \d u > t  \biggr\} , \qquad  0 \le t < I_\varsigma,
\end{equation}
where $I_\varsigma = \int_0^\varsigma {\rm e}^{\alpha\xi_s}{\rm d}s$ is the lifetime of $Z$ until absorption at the origin. 
Here, we interpret $\exp\{-\infty\}\times\dagger:=0$ and $\inf\emptyset := \infty$.
\end{theorem}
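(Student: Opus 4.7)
Since the statement is a bijection, the plan is to exhibit the two constructions and verify that each respects the relevant structural properties. Given the MAP $(\xi, J)$, I define $Z$ via the formula in the statement and check scaling and the Markov property. Conversely, given an rssMp $Z$ absorbed at $0$ at time $\zeta$, I set
\[
A_s = \int_0^s |Z_u|^{-\alpha}\, \d u, \quad s<\zeta, \qquad \varsigma := A_{\zeta-},
\]
and define, for $s<\varsigma$,
\[
\xi_s := \log|Z_{A^{-1}(s)}|, \qquad J_s := \mathrm{sgn}(Z_{A^{-1}(s)}),
\]
sent to $(-\infty,\dagger)$ at $\varsigma$. A change of variables shows $A_t=\varphi(t)$ and $Z_t = \exp(\xi_{\varphi(t)})J_{\varphi(t)}$, so that the two constructions are mutual inverses, and the proof reduces to verifying the Markov/scaling structure on each side.

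\textbf{MAP $\Rightarrow$ rssMp (sufficiency).} The scaling property is essentially automatic. Under $\mathbf{P}_{x+\log c, i}$ one has $\int_0^s \exp(\alpha(\xi_u+\log c))\,\d u = c^\alpha \int_0^s \exp(\alpha \xi_u)\,\d u$, so the corresponding time change satisfies $\varphi'(t)=\varphi(tc^{-\alpha})$ and
\[
\exp(\xi'_{\varphi'(t)})\,J'_{\varphi'(t)} = c\exp(\xi_{\varphi(tc^{-\alpha})})J_{\varphi(tc^{-\alpha})} = c\,Z_{tc^{-\alpha}}.
\]
The strong Markov property of $Z$ descends from that of $(\xi,J)$ because, for each fixed $t$, $\varphi(t)$ is a stopping time in the natural filtration of $(\xi,J)$ and $Z_t$ depends on $(\xi,J)$ only through the pair $(\xi_{\varphi(t)},J_{\varphi(t)})$; thus conditioning on $\mathcal{F}^Z_t$ reduces to conditioning on $\mathcal{F}^{(\xi,J)}_{\varphi(t)}$. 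Feller continuity is inherited from $(\xi,J)$ together with the continuity of the additive functional.

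\textbf{rssMp $\Rightarrow$ MAP (necessity).} The Markov property of $(\xi,J)$ is clear, since $A^{-1}(s)$ is a $Z$-stopping time and $(\xi_s,J_s)$ is a deterministic functional of $Z_{A^{-1}(s)}$. The core task is to verify the characterising identity \eqref{e:MAP}: given $\{J_s=i\}$, the future $(\xi_{s+u}-\xi_s,J_{s+u})_{u\ge 0}$ must have the law of $(\xi_u-\xi_0,J_u)_{u\ge 0}$ under $\mathbf{P}_{0,i}$, independently of the past. By the strong Markov property of $Z$ at $A^{-1}(s)$, the post-$A^{-1}(s)$ trajectory of $Z$ has law $P_{i\exp(\xi_s)}$. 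Applying the scaling property \eqref{ssmpdef} with $c=\exp(-\xi_s)$ rescales this to $P_i$, and the additive functional $A$ transforms covariantly (by the same computation used in the sufficiency step), so the induced increment $(\xi_{s+u}-\xi_s,J_{s+u})$ has precisely the law of $(\xi_u,J_u)$ under $\mathbf{P}_{0,i}$. Specialising to $u=0$ shows that $J$ alone is a Markov chain on $\{-1,1\}$ with cemetery $\dagger$.

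\textbf{Main obstacle.} The delicate point is controlling the time change $A$ near sign changes of $Z$ and near absorption at $0$. One must rule out pathological accumulation of sign changes so that $A$ and $A^{-1}$ are well-behaved and $J$ is a genuine c\`adl\`ag chain, and one must identify $\varsigma$ with either an exponential killing time of $J$, $+\infty$, or the first-passage time to zero of $\xi$ --- the three cases being analogous to (1)--(3) of Theorem \ref{iso-2} and corresponding to $Z$ living forever, hitting zero continuously, or hitting zero by a jump. Once the sign-change structure is in hand, the decomposition of Proposition \ref{regimeswitch} assembles the MAP from its L\'evy pieces between sign changes together with the jump distributions $\Delta^n_{i,j}$, and self-similarity of $Z$ ensures each piece has the claimed translation-invariant law depending only on the current sign.
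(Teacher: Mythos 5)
The paper itself offers no proof of this theorem: it is stated with the mark $\heartsuit$ and attributed to \cite{CPR} and \cite{KKPW}, so your attempt can only be measured against the standard argument in those references. Your skeleton does match it: in the sufficiency direction, scaling via the covariance of the additive functional under $\xi\mapsto\xi+\log c$, and in the necessity direction, the strong Markov property of $Z$ at the stopping time $A^{-1}(s)$ followed by an application of \eqref{ssmpdef} with the (random, $\mathcal{F}_{A^{-1}(s)}$-measurable) factor $c={\rm e}^{-\xi_s}$ to reduce to $P_{\pm1}$ and deduce \eqref{e:MAP}. Two small glosses in that skeleton deserve attention: the scaling property is stated for deterministic $c$, so its use with a random $c$ must be justified by conditioning; and in the sufficiency direction the Markov property of $Z$ does not follow merely because ``$Z_t$ depends on $(\xi,J)$ only through $(\xi_{\varphi(t)},J_{\varphi(t)})$'' --- you need the homogeneity in \eqref{e:MAP} (invariance of the law of increments under shifts of $\xi$) together with the covariance of the time change to see that the conditional law of the future of $Z$ depends only on ${\rm e}^{\xi_{\varphi(t)}}J_{\varphi(t)}$.

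The genuine gap, however, is that your ``Main obstacle'' paragraph names the decisive work instead of doing it, and that work is the substance of the theorem. Concretely: (a) you must show that the switch times of $J$ cannot accumulate at a finite MAP-time, i.e.\ that when sign changes of $Z$ accumulate (which, by right-continuity and the Markov property, can only happen at the absorption time $\zeta$), the clock $\int_0^s|Z_u|^{-\alpha}\,\d u$ diverges as $s\uparrow\zeta$; without this, $J$ is not a c\`adl\`ag chain and the representation $Z_t={\rm e}^{\xi_{\varphi(t)}}J_{\varphi(t)}$ breaks down --- this is exactly the mechanism by which, e.g., the stable process with $\alpha\in(1,2)$, which crosses the origin infinitely often before $\tau^{\{0\}}$, still yields a well-defined MAP with $\varsigma=\infty$. (b) In the case where $Z$ is absorbed by a jump to $0$, you must identify $\varsigma$ as an independent exponential killing time of the chain; as in case (3) of Theorem \ref{iso-2}, the lack-of-memory property has to be extracted from the Markov and scaling properties of $Z$, it is not automatic. (c) Deducing from \eqref{e:MAP} the concrete structure of Proposition \ref{regimeswitch} --- i.i.d.\ L\'evy segments between switch times whose law depends only on the current sign, and i.i.d.\ switch-time jumps $\Delta^n_{i,j}$ independent of the chain --- is precisely the verification that $(\xi,J)$ is a MAP in the sense used throughout the paper, and you assert rather than prove the required conditional independence. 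You should also note that the theorem as stated includes the degenerate cases excluded later by \eqref{additionalassumption} (at most one sign change, killed modulator), so the construction must accommodate a killed $J$ as well. As it stands the proposal is a correct outline of the known proof with its hardest steps deferred, not a proof.
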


Intuitively speaking, the relationship of the MAP $(\xi, J)$ to the rssMp $Z$ is that, up to a time change, $J$ dictates the sign of $Z$, whereas $\exp\{\xi\}$ dictates the radial distance of $Z$ from the origin.

\medskip

By comparing Definition \ref{pssMpdef} with the definition in \eqref{ssmpdef}, it is already clear that pssMp is a rssMp. We consider the former to be a degenerate case of the latter. It turns out that there are other `degenerate' cases in which a rssMp can  change sign at most once. 

In the forthcoming discussion, we want to rule out these and other cases in which  $J$ is killed. Said another way, we shall henceforth only consider rssMp which have the property that 
\begin{equation}
P_x( \exists t > 0: Z_t Z_{t -} < 0 ) = 1
 \text{ for all }x\neq 0.
\label{additionalassumption}
\end{equation}
This means that if we define 
\[
\zeta = \inf\{t>0 : Z_t = 0\}, 
\]
then $Z_{\zeta-} =0$ when $\zeta<\infty$.

\subsection{More on discretely modulated MAPs}\label{MAPsection}

The Lamperti--Kiu transform for rssMp can be seen to mirror the Lamperti transform for pssMp even more closely when one considers how mathematically similar MAPs are to L\'evy processes. We spend a little time here dwelling on this fact. This will also be of use shortly when we look at some explicit examples of the Lampert--Kiu transform. We recall that \eqref{additionalassumption} is henceforth in effect.

For each $i \in E$, it will be convenient to define  $\Psi_i$ as the characteristic exponent of a  L\'evy process whose law is  common
  to each of  the processes $\xi_i^n$, $n\geq 1$, that appear in the definition of Proposition \ref{regimeswitch}.  Similarly, for each $i,j \in E$, define $\Delta_{i,j}$ to
  be a random variable having the common law of the $\Delta_{i,j}^n$ variables.

Henceforth, we confine ourselves
to irreducible (and hence ergodic) Markov chains $J$.
Let the state space $E$ be the finite set $\{1, \cdots ,N\}$, for some $N \in \mathbb{N}$.
Denote the transition rate matrix of the chain $J$ by
${\boldsymbol{Q}} = (q_{i,j})_{i,j \in E}$.
For each $i \in E$, the characteristic exponent of the L\'evy process $\xi_i$
will be written $\Psi_i$. 
For each pair of $i,j \in E$,
define
the Fourier transform $G_{i,j}(z) = \mathbf{E}({\rm e}^{\i z \Delta_{i,j}})$
 of the jump
distribution $\Delta_{i,j}$. Write ${\boldsymbol{G}}(z)$ for the $N \times N$ matrix
whose $(i,j)$-th element is $G_{i,j}(z)$. 
We will adopt the convention that $\Delta_{i,j} = 0$ if
$q_{i,j} = 0$, $i \ne j$, and also set $\Delta_{ii} = 0$ for each $i \in E$.

Thanks to Proposition \ref{regimeswitch},  we can use the components in the previous paragraph to write down an  analogue of the characteristic exponent of a L\'evy process.  Define the matrix-valued function
\begin{equation}\label{e:MAP F}
 {\bf \Psi}(z) = {\rm diag}(- \Psi_1(z), \cdots, -\Psi_N(z))
  + {\boldsymbol{Q}} \circ {\boldsymbol{G}}(z),
\end{equation}
for all $z \in \mathbb{R}$,
where $\circ$ indicates elementwise multiplication, also called
Hadamard multiplication.
It is then known
that
\begin{equation}
\label{charmxexp}\mathbf{E}_{0,i}\left[ {\rm e}^{\i z \xi_t} ; J_t=j\right] = \bigl({\rm e}^{\boldsymbol{\Psi}(z) t}\bigr)_{i,j} , \qquad i,\,j \in E,  t\geq 0,z \in \mathbb{R}. 
\end{equation}
See for example Section XI.2.2c of \cite{AsmussenQueue}
For this reason, $\boldsymbol{\Psi}$ is called the {\it characteristic matrix exponent} of
the MAP $(\xi, J)$.

\bigskip

As is the case with the characteristic exponent of a L\'evy process, the characteristic matrix exponent $\boldsymbol{\Psi}(z)$  may be extended as an analytic function on  $\mathbb{C}$ to a larger domain than $\mathbb{R}$.  As a matrix, it displays a Perron--Frobenius type decomposition. We have from Section XI.2c of \cite{AsmussenQueue} the following result.

 \begin{proposition}[$\heartsuit$]\label{whatprecedes}
Suppose that $z \in \mathbb{C}$ is such that $\boldsymbol{F}(z): = \boldsymbol{\Psi}(-\i z)$ is defined.
Then, the matrix $\boldsymbol{F}(z)$ has a real simple eigenvalue $\chi(z)$,
which is larger than the real part of all  other eigenvalues.
Furthermore, the
corresponding
right-eigenvector $\boldsymbol{v}(z) = (v_1(z), \cdots, v_N(z))$ has strictly positive entries and may be
 normalised such that
\begin{equation}
  \boldsymbol{\pi}\cdot\boldsymbol{v}(z) = 1. \label{e:h norm}
\end{equation}
\end{proposition}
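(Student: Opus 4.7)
The plan is to recognize that $\boldsymbol{F}(z)$ is, up to an additive scalar multiple of the identity, an irreducible non-negative matrix, and then to invoke the classical Perron--Frobenius theorem. The statement is really a result for real $z$ in the convergence strip where the Laplace-type quantities $\mathbf{E}[{\rm e}^{z\Delta_{i,j}}]$ and $-\Psi_i(-\i z)$ are finite, so I would work under that restriction throughout.

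First I would unpack the entries of $\boldsymbol{F}(z) = \boldsymbol{\Psi}(-\i z)$. For $i\neq j$ the $(i,j)$-entry is $q_{i,j}\,\mathbf{E}\bigl[{\rm e}^{z\Delta_{i,j}}\bigr]$, which is a non-negative real number since $q_{i,j}\geq 0$ and the Laplace transform of the jump distribution is positive wherever it is defined. The diagonal entries are $-\Psi_i(-\i z)+q_{i,i}$, which are real (though possibly negative). Thus $\boldsymbol{F}(z)$ is a Metzler matrix: real, with non-negative off-diagonal entries.

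Next I would pick $c=c(z)$ large enough that every diagonal entry of $\boldsymbol{M}(z):=\boldsymbol{F}(z)+cI$ is non-negative, so that $\boldsymbol{M}(z)$ is entrywise non-negative. The crucial observation is that the zero/non-zero pattern of the off-diagonal entries of $\boldsymbol{M}(z)$ is identical to that of $\boldsymbol{Q}$, because $\mathbf{E}[{\rm e}^{z\Delta_{i,j}}]>0$ whenever the corresponding rate $q_{i,j}$ is positive. Since $J$ is irreducible, so is $\boldsymbol{Q}$, and therefore so is $\boldsymbol{M}(z)$. The classical Perron--Frobenius theorem (see e.g.\ Section I.6 of \cite{AsmussenQueue}) applied to $\boldsymbol{M}(z)$ yields a simple real eigenvalue equal to its spectral radius, strictly greater in real part than every other eigenvalue, with a strictly positive right-eigenvector $\boldsymbol{v}(z)$. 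Subtracting $c$ recovers for $\boldsymbol{F}(z)$ a simple real eigenvalue $\chi(z)$ that strictly dominates the real parts of the other eigenvalues, with the same positive eigenvector $\boldsymbol{v}(z)$.

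For the normalisation, the stationary distribution $\boldsymbol{\pi}$ of the irreducible chain $J$ has strictly positive entries, and $\boldsymbol{v}(z)$ does too, so $\boldsymbol{\pi}\cdot\boldsymbol{v}(z)>0$ and we may rescale $\boldsymbol{v}(z)$ so that \eqref{e:h norm} holds; the simplicity of $\chi(z)$ makes this rescaling well-defined. The only mildly delicate point is the reduction to Perron--Frobenius in its standard form, i.e.\ verifying that the Metzler/irreducibility structure really does survive the substitution $z\mapsto -\i z$; once this bookkeeping is in place, nothing beyond the classical theorem is required.
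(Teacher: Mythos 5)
Your argument is correct and is essentially the standard proof: the paper itself offers no proof, simply citing Section XI.2c of \cite{AsmussenQueue}, where the result is established by exactly your route (observe that $\boldsymbol{F}(z)$ is real with non-negative off-diagonal entries inheriting the irreducibility pattern of $\boldsymbol{Q}$, shift by $c\mathbf{I}$, and apply Perron--Frobenius). Your remark that the statement should be read for real $z$ in the strip where the transforms converge is also the right reading of the proposition.
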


It will also be important for us to understand how one may establish Esscher-type changes of measure for MAPs.
The following result is also discussed in Chapter XI.2 of \cite{AsmussenQueue}, Chapter IX of \cite{AA} or \cite{JR}.

\begin{proposition}[$\heartsuit$]
\label{p:mg and com}
Let $\mathcal{G}_{t} = \sigma\{(\xi_s, J_s): s\leq t\}$, $t\geq 0$, and
\begin{equation} M_t(x,i) :=  {\rm e}^{\gamma (\xi_t-x) - \chi(\gamma)t}
    \frac{v_{J_t}(\gamma)}{v_{i}(\gamma)} ,
  \qquad t \ge 0, x\in\mathbb{R}, i\in E,
  \label{MAPCOM}\end{equation}
for some $\gamma\in\mathbb{R}$ such that $\chi(\gamma)$  is defined.
Then,
  $(M_t$, $t\geq 0)$, is a unit-mean martingale with respect to $({\mathcal G}_t, t\geq 0)$. Moreover, under the change of measure 
  \[
  \left.\frac{{\rm d}\mathbf{P}^\gamma_{x,i}}{{\rm d}\mathbf{P}_{x,i}}\right|_{\mathcal{G}_t} = M_t (x,i),\qquad t\geq 0,
  \]
  the process $(\xi,J)$ remains in the class of MAPs, and its  matrix characteristic exponent given by 
  \begin{equation}
  \boldsymbol{\Psi}_\gamma(z) = \boldsymbol{\Delta}_{\boldsymbol v}(\gamma)^{-1}\boldsymbol{\Psi}(z-\i\gamma)\boldsymbol{\Delta}_{\boldsymbol v}(\gamma) - \chi(\gamma)\mathbf{I}.
  \label{MEsscher}
  \end{equation}
  Here,  $\mathbf{I}$ is the identity matrix and $\boldsymbol{\Delta}_{\boldsymbol v}(\gamma) = {\rm diag}(\boldsymbol{v}(\gamma))$. 
  \end{proposition}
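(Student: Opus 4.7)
The argument splits naturally into three parts: unit mean, martingale property, and identification of the transformed dynamics. The central analytic input is \eqref{charmxexp} specialised at $z = -\iu\gamma$, which reads
\begin{equation*}
\mathbf{E}_{0,i}\bigl[e^{\gamma \xi_t}; J_t = j\bigr] = \bigl(e^{\boldsymbol{F}(\gamma) t}\bigr)_{i,j},
\end{equation*}
combined with Proposition \ref{whatprecedes}, which gives $\boldsymbol{F}(\gamma)\boldsymbol{v}(\gamma) = \chi(\gamma)\boldsymbol{v}(\gamma)$ (with $v_i(\gamma) > 0$ so that $\boldsymbol{\Delta}_{\boldsymbol v}(\gamma)$ is invertible), and hence
\begin{equation*}
\sum_{j\in E}\bigl(e^{\boldsymbol{F}(\gamma)t}\bigr)_{i,j}\, v_j(\gamma) = e^{\chi(\gamma)t}\, v_i(\gamma).
\end{equation*}
Using that, under $\mathbf{P}_{x,i}$, the spatial coordinate $\xi_t - x$ has the law of $\xi_t$ under $\mathbf{P}_{0,i}$, this identity immediately yields $\mathbf{E}_{x,i}[M_t(x,i)] = 1$.

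For the martingale property I would invoke the defining property \eqref{e:MAP} of a MAP: on $\{J_t = j\}$ and conditionally on $\mathcal{G}_t$, the shifted process $((\xi_{t+s}-\xi_t, J_{t+s}), s \ge 0)$ is independent of $\mathcal{G}_t$ and distributed as $((\xi_s - \xi_0, J_s), s \ge 0)$ under $\mathbf{P}_{0,j}$. Writing
\begin{equation*}
M_{t+s}(x,i) = M_t(x,i)\cdot e^{\gamma(\xi_{t+s}-\xi_t) - \chi(\gamma) s}\,\frac{v_{J_{t+s}}(\gamma)}{v_{J_t}(\gamma)},
\end{equation*}
taking the $\mathbf{P}_{x,i}$-conditional expectation given $\mathcal{G}_t$ and applying the eigenvector identity above with $j = J_t$ delivers $\mathbf{E}_{x,i}[M_{t+s}(x,i)\mid\mathcal{G}_t] = M_t(x,i)$.

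To show that $(\xi, J)$ remains a MAP under $\mathbf{P}^\gamma$ I would verify the analogue of \eqref{e:MAP} by a direct disintegration: for bounded $H$ measurable with respect to $\mathcal{G}_t$ and bounded measurable $F$ on post-$t$ paths, rewrite
\begin{equation*}
\mathbf{E}^\gamma_{x,i}\bigl[H\cdot F((\xi_{t+u}-\xi_t, J_{t+u})_{u \le s})\bigr] = \mathbf{E}_{x,i}\bigl[H\, M_t\cdot F\cdot (M_{t+s}/M_t)\bigr].
\end{equation*}
Since $M_{t+s}/M_t$ depends only on $J_t$ and the post-$t$ increments, decomposing on $\{J_t = j\}$ and applying the MAP property under $\mathbf{P}_{x,i}$ gives the required factorisation with post-$t$ law $\mathbf{P}^\gamma_{0,j}$. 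For the exponent itself, the same bookkeeping produces
\begin{equation*}
\mathbf{E}^\gamma_{x,i}\bigl[e^{\iu z(\xi_t - x)}; J_t = j\bigr] = e^{-\chi(\gamma)t}\,\frac{v_j(\gamma)}{v_i(\gamma)}\bigl(e^{\boldsymbol{\Psi}(z - \iu\gamma) t}\bigr)_{i,j},
\end{equation*}
and one recognises the right-hand side as the $(i,j)$ entry of $\exp\{t\boldsymbol{\Psi}_\gamma(z)\}$ with $\boldsymbol{\Psi}_\gamma$ as in \eqref{MEsscher}, using that $\exp\{t\boldsymbol{\Delta}_{\boldsymbol v}(\gamma)^{-1} A\boldsymbol{\Delta}_{\boldsymbol v}(\gamma)\} = \boldsymbol{\Delta}_{\boldsymbol v}(\gamma)^{-1}\exp\{tA\}\boldsymbol{\Delta}_{\boldsymbol v}(\gamma)$ and that the scalar $-\chi(\gamma)\mathbf{I}$ commutes with every matrix.

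The main obstacle, in my view, is purely notational bookkeeping: unlike the classical L\'evy Esscher transform, where only a scalar exponential martingale is required, here the Perron--Frobenius eigenvector $\boldsymbol{v}(\gamma)$ must enter through the ratio $v_{J_t}(\gamma)/v_i(\gamma)$ precisely so that the density becomes a martingale (the spectral gap in Proposition \ref{whatprecedes} is exactly what makes this work), and one must then track carefully how this ratio generates the similarity conjugation $\boldsymbol{\Delta}_{\boldsymbol v}(\gamma)^{-1}(\cdot)\boldsymbol{\Delta}_{\boldsymbol v}(\gamma)$ in the new exponent, while the spectral normalisation produces the accompanying scalar shift $-\chi(\gamma)\mathbf{I}$.
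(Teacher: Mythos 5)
Your argument is correct in all three steps: the unit-mean computation via the eigenvector identity $\boldsymbol{F}(\gamma)\boldsymbol{v}(\gamma)=\chi(\gamma)\boldsymbol{v}(\gamma)$ applied to \eqref{charmxexp} at $z=-\i\gamma$, the multiplicative decomposition of $M_{t+s}$ combined with \eqref{e:MAP} for the martingale property, the disintegration showing the tilted process is again a MAP, and the similarity conjugation $\boldsymbol{\Delta}_{\boldsymbol v}(\gamma)^{-1}\exp\{t\boldsymbol{\Psi}(z-\i\gamma)\}\boldsymbol{\Delta}_{\boldsymbol v}(\gamma)$ together with the scalar shift $-\chi(\gamma)\mathbf{I}$ giving \eqref{MEsscher}. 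Note, however, that the paper offers no proof of this proposition at all: it is marked as known and deferred to Chapter XI.2 of Asmussen, Chapter IX of Asmussen--Albrecher and the reference \cite{JR}, so there is no in-paper argument to compare against. What you have written is essentially the canonical Markov-additive Esscher argument those references use, here made self-contained; its value is that it makes explicit exactly which inputs are needed, namely the Perron--Frobenius data of Proposition \ref{whatprecedes} (including strict positivity of the entries of $\boldsymbol{v}(\gamma)$, so that $\boldsymbol{\Delta}_{\boldsymbol v}(\gamma)$ is invertible), the spatial homogeneity of the ordinate, and the validity of \eqref{charmxexp} at the complex argument $z-\i\gamma$, which is precisely the hypothesis that $\chi(\gamma)$ is defined. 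Two small points worth flagging if you were to write this up: the standing assumptions of this section (irreducible, unkilled modulator $J$, cf. \eqref{additionalassumption}) are what make the Perron--Frobenius step legitimate, and in the martingale step you should say explicitly that the conditional expectation given $\mathcal{G}_t$ of the post-$t$ factor is computed on the event $\{J_t=j\}$ using the Markov additive property, exactly as you indicate.
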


Just as is the case with the Esscher transform for L\'evy processes, a primary effect of the exponential change of measure is to alter the long-term behaviour of the process. This is stipulated by the strong law of large numbers for MAPs (see again Chapter XI.2 of \cite{AsmussenQueue}) and the behaviour of the leading eigenvalue $\chi$ as a function of $\gamma$, for which the proposition below is lifted from Proposition 3.4 of \cite{KKPW}.

\begin{proposition}[$\heartsuit$]\label{convexeval}
Suppose that $\chi$ is defined in some open interval $D$ of $\mathbb{R}$,
then, it 
is smooth and convex on $D$.
\end{proposition}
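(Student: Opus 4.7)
The plan is to handle smoothness and convexity separately, leveraging the Perron--Frobenius structure of $\boldsymbol{F}(\gamma) = \boldsymbol{\Psi}(-\mathrm{i}\gamma)$ from Proposition \ref{whatprecedes}.

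For smoothness, I would first observe that each entry of $\boldsymbol{F}(\gamma)$ is smooth (in fact real-analytic) on $D$, since the entries are built from Laplace exponents $\Psi_i(-\mathrm{i}\gamma)$ and matrix entries $G_{i,j}(-\mathrm{i}\gamma)$, which are moment generating functions and hence analytic wherever they are finite. Consequently, the characteristic polynomial
\[
p(\gamma,\lambda) = \det\bigl(\boldsymbol{F}(\gamma) - \lambda \mathbf{I}\bigr)
\]
is jointly smooth in $(\gamma, \lambda)$. Because $\chi(\gamma)$ is a \emph{simple} eigenvalue, $\partial_\lambda p(\gamma, \chi(\gamma)) \neq 0$, so the implicit function theorem produces a smooth local solution $\gamma \mapsto \chi(\gamma)$ agreeing with the Perron eigenvalue; the fact that the simple-eigenvalue property is preserved along $D$ lets us glue these local smooth branches together.

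For convexity, the key is to represent $\chi$ as a limit of convex functions. Using Proposition \ref{whatprecedes}, write $\boldsymbol{u}(\gamma)$ for the corresponding strictly positive left-eigenvector of $\boldsymbol{F}(\gamma)$, normalised so that $\boldsymbol{u}(\gamma)\cdot \boldsymbol{v}(\gamma)=1$. Standard spectral theory gives the Perron expansion
\[
\mathrm{e}^{\boldsymbol{F}(\gamma)t} \;=\; \mathrm{e}^{\chi(\gamma)t}\,\boldsymbol{v}(\gamma)\boldsymbol{u}(\gamma)^{\!\top} + O\bigl(\mathrm{e}^{(\chi(\gamma)-\delta)t}\bigr),\qquad t\to\infty,
\]
for some $\delta>0$. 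Combining this with the identity \eqref{charmxexp}, and initialising $J_0$ from the stationary distribution $\boldsymbol{\pi}$, one deduces
\[
\chi(\gamma) \;=\; \lim_{t\to\infty}\frac{1}{t}\log \mathbf{E}_{0,\boldsymbol{\pi}}\bigl[\mathrm{e}^{\gamma \xi_t}\bigr],
\]
where the right-hand side is well-defined and finite because $\boldsymbol{u}(\gamma)$ and $\boldsymbol{v}(\gamma)$ have strictly positive entries. For each fixed $t>0$, Hölder's inequality shows $\gamma \mapsto \log \mathbf{E}_{0,\boldsymbol{\pi}}[\mathrm{e}^{\gamma \xi_t}]$ is convex on $D$ (it is a logarithmic moment generating function). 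The pointwise limit of convex functions is convex, so $\chi$ is convex on $D$.

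The main obstacle I anticipate is the smoothness argument: one must verify carefully that the simple eigenvalue selected by the Perron--Frobenius structure at each $\gamma \in D$ corresponds to the same analytic branch, so that the local smooth solutions from the implicit function theorem piece together into a globally smooth function on $D$. This uses the fact that the spectral gap (simple, strictly dominant) in Proposition \ref{whatprecedes} is maintained throughout $D$, and hence the Perron branch cannot collide with any other eigenvalue as $\gamma$ varies. The convexity argument is then essentially automatic given the probabilistic representation.
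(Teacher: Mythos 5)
Your proposal is correct. Note that the paper itself offers no proof of this proposition: it is stated as known ($\heartsuit$) and lifted from Proposition 3.4 of \cite{KKPW}, and your two-step argument -- smoothness via the implicit function theorem applied to the characteristic polynomial at the simple Perron root, and convexity by identifying $\chi(\gamma)$ as $\lim_{t\to\infty}t^{-1}\log \mathbf{E}_{0,\boldsymbol{\pi}}[\mathrm{e}^{\gamma\xi_t}]$, a pointwise limit of log-moment generating functions which are convex by H\"older -- is essentially the standard argument used in that source. Three small points you should make explicit. First, Proposition \ref{whatprecedes} only supplies the strictly positive \emph{right} eigenvector; the positive left eigenvector $\boldsymbol{u}(\gamma)$ comes from applying the same Perron--Frobenius reasoning to $\boldsymbol{F}(\gamma)^{\top}$, which is legitimate because $J$ is irreducible and $\boldsymbol{F}(\gamma)$ has nonnegative off-diagonal entries, so $\mathrm{e}^{\boldsymbol{F}(\gamma)t}$ has strictly positive entries. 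Second, the error term in your spectral expansion must tolerate Jordan blocks among the subdominant eigenvalues (polynomial-in-$t$ prefactors); this is harmless since their real parts are strictly below $\chi(\gamma)$, but as written the $O(\mathrm{e}^{(\chi(\gamma)-\delta)t})$ bound quietly assumes it. Third, invoking \eqref{charmxexp} at $z=-\mathrm{i}\gamma$ requires the analytic extension to be valid on $D$, which is precisely what ``$\boldsymbol{F}(\gamma)$ is defined'' means (finiteness of the relevant exponential moments of the increments and of the $\Delta_{i,j}$); with that noted, the H\"older step and the passage to the limit are fully justified, and any strictly positive initial distribution (not only $\boldsymbol{\pi}$) would do.
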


\noindent Note that, since $\boldsymbol{\Psi} (0)= \boldsymbol{Q}$, it is always the case that $\chi(0) = 0$ and $\boldsymbol{v}(0) = (1,\cdots, 1)$. Hence, for $D$ as in the previous proposition, we must necessarily have $0\in D$, in which case $\chi'(0)$ is well defined and finite. 
When this happens, the strong law of large numbers takes the form of the almost sure limit 
\begin{equation}
\lim_{t\to\infty}\frac{\xi_t}{t} = \chi'(0)
\label{SLLNMAP}
\end{equation}
and we call $\chi'(0)$  the {\it drift} of the MAP.

When, moreover, $\gamma\in D$ is a non-zero root of $\chi$, convexity dictates that $\gamma>0$ and $\chi'(\gamma)>0$ when $\chi'(0)<0$ and $\gamma<0$ and $\chi'(\gamma)<0$ when $\chi'(0)>0$. If $\chi'(0)=0$ then no such root $\gamma$ exists. A natural consequence of the change of measure in Proposition \ref{p:mg and com} is that, under $\mathbf{P}^\gamma_{i,x}$, the MAP $(\xi, J)$ aquires a new drift, which, by inspection, must be equal to $\chi'(\gamma)$.
It follows that, when $\gamma<0$, the drift of $(\xi, J)$ switches from a positive to a negative value and when $\gamma>0$, the drift switches from negative to positive.

\subsection{One-dimensional stable process and its $h$-transform.}\label{stablerssmp}

The most obvious example of a rssMp, which is not a pssMp, is a two-sided jumping stable process killed when it first hits the origin (if at all). The qualification of hitting the origin is  an issue if and only if $\alpha\in(1,2)$, as otherwise the stable process almost surely never hits the origin. Nonetheless we consider both regimes in this section.  We name the underlying process that emerges through the Lamperti--Kiu transform a {\it Lamperti-stable MAP}. For this fundamental example, we can compute the associated characteristic matrix exponent explicitly.  Recall that the state space of the underlying modulating chain in the Lampert-stable MAP is $\{-1,1\}$. Accordingly we henceforth arrange any matrix $\boldsymbol A$ pertaining to this MAP with the ordering
\[
\left(
\begin{array}{cc}
A_{1,1} & A_{1,-1}\\
A_{-1,1} & A_{-1,-1}
\end{array}
\right).
\]
\cite{CPR} and \cite{KKPW} showed the following.

\begin{lemma}[$\heartsuit$]
The characteristic matrix exponent of the Lamperti-stable MAP is given by
\begin{equation}
  \boldsymbol{\Psi}(z) =\left[
  \begin{array}{cc}
    - \dfrac{\Gamma(\alpha-\i z)\Gamma(1+\i z)}
      {\Gamma(\alpha\hat\rho- \i z)\Gamma(1-\alpha\hat\rho+ \i z)}
    & \dfrac{\Gamma(\alpha-\i z)\Gamma(1+\i z)}
      {\Gamma(\alpha\hat\rho)\Gamma(1-\alpha\hat\rho)}
    \\
    &\\
    \dfrac{\Gamma(\alpha-\i z)\Gamma(1+\i  z)}
      {\Gamma(\alpha\rho)\Gamma(1-\alpha\rho)}
    & - \dfrac{\Gamma(\alpha-\i z)\Gamma(1+\i z)}
      {\Gamma(\alpha\rho-\i z)\Gamma(1-\alpha\rho+\i z)}
  \end{array} 
  \right],
  \label{MAPHG}
\end{equation}
for $z\in\mathbb{R}$. Moreover, the relation (\ref{charmxexp}) can be analytically extended in $\mathbb{C}$ so that $\re(\i z)\in (-1,\alpha)$.
\end{lemma}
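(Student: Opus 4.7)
The plan is to compute all four entries of the $2\times 2$ matrix $\boldsymbol{\Psi}(z)$ by identifying, in the sense of Proposition \ref{regimeswitch} and \eqref{e:MAP F}, the two L\'evy characteristic exponents $\Psi_{\pm 1}$, the transition rates $q_{\pm 1,\mp 1}$ and the two sign-changing jump laws $\Delta_{\pm 1,\mp 1}$ induced by the Lamperti--Kiu transform of $X$. By the reflection symmetry $X\mapsto -X$, which swaps $\rho$ and $\hat\rho$, it suffices to identify the $(1,1)$ and $(1,-1)$ entries.

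For the $(1,1)$ entry, the key observation is that between consecutive sign changes of $X$, the Lamperti--Kiu time change coincides exactly with the Lamperti transform applied to the process $(X_t\mathbf{1}_{(\underline X_t\geq 0)},t\geq 0)$ of Section \ref{killed*}. Hence the L\'evy process driving $\xi$ while $J=1$, together with its killing at rate $q_{1,-1}$ out of state $1$, has total characteristic exponent equal to $\Psi^{*}(z)$ of Theorem \ref{Psi*thrm}. Consequently $\boldsymbol{\Psi}_{1,1}(z)=-\Psi_1(z)+q_{1,1}=-\Psi_1(z)-q_{1,-1}=-\Psi^{*}(z)$, matching the top-left entry of \eqref{MAPHG}. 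In particular this forces
\[
q_{1,-1}=\Psi^{*}(0)=\frac{\Gamma(\alpha)}{\Gamma(\alpha\hat\rho)\Gamma(1-\alpha\hat\rho)},
\]
a value which can be checked independently using \eqref{stabledensity3}: the total $X$-time intensity of sign-changing jumps from height $u>0$ is $\Gamma(1+\alpha)\sin(\pi\alpha\hat\rho)/(\pi\alpha u^{\alpha})$, and the factor $u^{\alpha}$ coming from $\d t=\exp(\alpha\xi_s)\d s$ turns this into a position-independent MAP-time rate which coincides with $\Psi^{*}(0)$ after the reflection formula.

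For the $(1,-1)$ entry, I would compute $G_{1,-1}(z)=\mathbf{E}[\exp(\i z\,\Delta_{1,-1})]$ with $\Delta_{1,-1}=\log(-X_\tau/X_{\tau-})$ taken at any sign-changing jump time $\tau$. By self-similarity this ratio is independent of $X_{\tau-}$; conditioning $\Pi|_{(-\infty,0)}$ from \eqref{stabledensity3} on producing a crossing from height $u>0$, the density of $y=-X_\tau$ on $(0,\infty)$ is proportional to $(u+y)^{-(1+\alpha)}$, so the law of $s=y/u$ has density $\alpha(1+s)^{-(1+\alpha)}$. The Mellin transform evaluates by a standard Beta integral valid for $-1<\re(\i z)<\alpha$:
\[
G_{1,-1}(z)=\alpha\int_0^{\infty}\frac{s^{\i z}}{(1+s)^{1+\alpha}}\,\d s=\frac{\Gamma(1+\i z)\Gamma(\alpha-\i z)}{\Gamma(\alpha)}.
\]
Multiplying by $q_{1,-1}$ reproduces the top-right entry of \eqref{MAPHG}, and the $(-1,-1)$ and $(-1,1)$ entries follow by the same argument with the roles of $\rho$ and $\hat\rho$ reversed. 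The stated strip $\re(\i z)\in(-1,\alpha)$ for the analytic extension of \eqref{charmxexp} is exactly the strip of joint holomorphy of $\Gamma(\alpha-\i z)$ and $\Gamma(1+\i z)$, which coincides with the convergence region of the Beta integral above.

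The chief obstacle is the bookkeeping between $X$-time and MAP-time: one must carefully justify, via the compensation formula applied to the Poisson point process of jumps of $X$ exactly in the spirit of the computation leading to \eqref{finddistn}, that the position-dependent crossing rate in $X$-time is straightened out by the Lamperti--Kiu time change into the constant MAP-time rate $q_{1,-1}$, and that the conditional jump distribution genuinely disentangles from $X_{\tau-}$. Once this reconciliation is in hand, the remainder of the argument reduces to a single Beta-integral computation together with an appeal to Theorem \ref{Psi*thrm}.
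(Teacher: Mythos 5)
Your proposal is correct, and it is worth noting that the paper itself does not prove this lemma: it is stated with the mark $\heartsuit$ and attributed to \cite{CPR} and \cite{KKPW}, whose derivation your argument essentially reproduces. The two computations at the heart of your write-up check out: the identification of the diagonal entry as $-\Psi^*(z)$ via Theorem \ref{Psi*thrm} (using $q_{1,1}=-q_{1,-1}$ and the fact that, run until the first sign change, the Lamperti--Kiu time change agrees with the Lamperti transform of the process of Section \ref{killed*}), the switch rate $q_{1,-1}=\Psi^*(0)=\Gamma(\alpha)\sin(\pi\alpha\hat\rho)/\pi$ agreeing with the time-changed crossing intensity $\Gamma(1+\alpha)\sin(\pi\alpha\hat\rho)u^{-\alpha}/(\pi\alpha)\cdot u^{\alpha}$, and the Beta integral giving $G_{1,-1}(z)=\Gamma(1+\i z)\Gamma(\alpha-\i z)/\Gamma(\alpha)$, whence the off-diagonal entry; the second row follows, as you say, because $-X$ is an $(\alpha,\hat\rho)$ stable process. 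Two points deserve a little more care than you give them. First, the claim that the sign-changing jump law disentangles from $X_{\tau-}$ and that the time-changed crossing rate is a genuine constant MAP-time switch rate should be established by the compensation formula for the Poisson point process of jumps of $X$; this is exactly the computation the paper carries out in higher dimensions in Theorem \ref{MAPjumps}, and the one-dimensional analogue is what is needed here — you correctly flag this as the main technical step. Second, the analytic extension of \eqref{charmxexp} to $\re(\i z)\in(-1,\alpha)$ is not merely a matter of the entries of $\boldsymbol{\Psi}$ being holomorphic on that strip: one must also know that $\mathbf{E}_{0,i}[{\rm e}^{\beta\xi_t};J_t=j]<\infty$ for $\beta\in(-1,\alpha)$, which follows from the finiteness of the corresponding exponential moments of the Lamperti-stable process $\xi^*$ and of the crossing ratio (the same Beta integral), but this should be said rather than inferred from holomorphy alone.
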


Without checking the value of $\chi'(0)$,   we are able to deduce the long term behaviour of the Lamperti-stable MAP from the transience/recurrence properties of the stable process.

We know that when $\alpha\in(1,2)$, the stable process is recurrent and $\mathbb{P}_x(\tau^{\{0\}}<\infty) =1$ for all $x\neq 0$. In that case, the Lamperti--Kiu representation dictates that 
\[
\lim_{t\to\infty}\xi_t = -\infty.
\]
When $\alpha\in(0,1)$, we also know that   $\lim_{t\to\infty}|X_t| = \infty$ almost surely, irrespective of the point of issue. Once again, the Lamperti--Kiu transform tells us that
  \[
\lim_{t\to\infty}\xi_t = \infty.
\]
FInally, when $\alpha = 1$, we have that  $\limsup_{t\to\infty}|X_t| = \infty$ and  $\liminf_{t\to\infty}|X_t| = 0$, which tells us that $\xi$ oscillates.

\medskip

There is a second example of a rssMp that we can describe to the same degree of detail as stable processes in terms of the underlying MAP. This comes about by a change of measure, which corresponds to a Doob $h$-transform to the semigroup of a two-sided jumping stable process killed on first hitting the origin if $\alpha\in(1,2)$. As it is instructive for future discussion, we give a proof of the following result which is originally from \cite{CPR}, for $\alpha\in (1,2)$, and \cite{KRS}, for $\alpha\in(1,2)$. Our proof differs slightly from its original setting.

\begin{proposition}[$\heartsuit$]\label{h-ssMp} Suppose that $(X,\mathbb{P}_x)$, $x\in\mathbb{R}$, is a one-dimensional stable process with two-sided jumps. Let $\mathcal{F}_t: = \sigma(X_s, s\leq t)$, $t\geq 0$. The following constitutes a change of measure 
\begin{equation}
\left.\frac{{\rm d}\mathbb{P}^\circ_x}{{\rm d}\mathbb{P}_x}\right|_{\mathcal{F}_t} = \frac{h(X_t)
}{h(x)
}\mathbf{1}_{(t<\tau^{\{0\}})},\qquad t\geq 0,
\label{updownCOM7}
\end{equation}
in the sense that the right-hand side is a martingale, where 
\begin{equation}
h(x) =\left(\sin(\pi\alpha\hat{\rho})\mathbf{1}_{(x\geq 0)} +
\sin(\pi\alpha{\rho})\mathbf{1}_{(x<0)}\right) |x|^{\alpha -1}
\label{constantsinh}
\end{equation}
and $\tau^{\{0\}} = \inf\{t>0: X_ t= 0\}$. Moreover, $(X, \mathbb{P}^\circ_x)$, $x\in\mathbb{R}\backslash\{0\}$ is a rssMp with matrix exponent given by 
\begin{equation}
\boldsymbol{\Psi}^\circ(z) =
 \left[
  \begin{array}{cc}
    - \dfrac{\Gamma(1-{\rm i}z)\Gamma(\alpha+{\rm i}z)}
      {\Gamma(1-\alpha{\rho}-{\rm i}z)\Gamma(\alpha{\rho}+ {\rm i}z)}
    & \dfrac{\Gamma(1-{\rm i}z)\Gamma(\alpha+{\rm i}z)}
      {\Gamma(\alpha{\rho})\Gamma(1-\alpha{\rho})}
    \\
    &\\
    \dfrac{\Gamma(1-{\rm i}z)\Gamma(\alpha+ {\rm i}z)}
      {\Gamma(\alpha\hat{\rho})\Gamma(1-\alpha\hat{\rho})}
    & - \dfrac{\Gamma(1-{\rm i}z)\Gamma(\alpha+{\rm i}z)}
      {\Gamma(1-\alpha\hat{\rho}-{\rm i}z)\Gamma(\alpha\hat{\rho}+{\rm i}z)}
  \end{array}
  \right],
  \label{Fcirc7}
\end{equation}
  for $\re({\rm i}{z}) \in (-\alpha,1)$.
\end{proposition}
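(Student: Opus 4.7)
The plan is to recognise the Doob $h$-transform in \eqref{updownCOM7} as the image, under the Lamperti--Kiu transform of Theorem \ref{LKtheorem}, of an Esscher-type change of measure on the underlying Lamperti-stable MAP $(\xi,J)$ with exponential parameter $\gamma=\alpha-1$. The key observation is that, writing $X_t = J_{\varphi(t)}\,\mathrm{e}^{\xi_{\varphi(t)}}$ for $t<\tau^{\{0\}}$, the function $h$ factorises as
\[
h(X_t) \;=\; v_{J_{\varphi(t)}}\,\mathrm{e}^{(\alpha-1)\xi_{\varphi(t)}},
\]
with $v_1:=\sin(\pi\alpha\hat\rho)$ and $v_{-1}:=\sin(\pi\alpha\rho)$. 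Thus the density ratio in \eqref{updownCOM7} is precisely the time change via $\varphi$ of the unit-mean martingale $M_t$ of Proposition \ref{p:mg and com}, provided one can verify that $\chi(\alpha-1)=0$ and that $(v_1,v_{-1})^{\top}$ is the corresponding right-eigenvector.

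The critical step—and the main obstacle—is this algebraic verification, as it is the only place where the specific arithmetic of the hypergeometric matrix exponent \eqref{MAPHG} is exploited. Substituting $z=-\mathrm{i}(\alpha-1)$ into \eqref{MAPHG}, the gamma arguments telescope to $1$ and $\alpha$ in the numerators and to $1-\alpha\rho,\alpha\rho,1-\alpha\hat\rho,\alpha\hat\rho$ in the denominators; after applying the reflection formula $\Gamma(s)\Gamma(1-s)=\pi/\sin(\pi s)$, one finds
\[
\boldsymbol{F}(\alpha-1) \;=\; \frac{\Gamma(\alpha)}{\pi}\left[\begin{array}{cc} -\sin(\pi\alpha\rho) & \sin(\pi\alpha\hat\rho)\\[2pt] \sin(\pi\alpha\rho) & -\sin(\pi\alpha\hat\rho)\end{array}\right],
\]
which annihilates $(v_1,v_{-1})^{\top}$, confirming both $\chi(\alpha-1)=0$ and the eigenvector claim. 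Positivity of $v$ and the normalisation \eqref{e:h norm} are then automatic.

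With this established, Proposition \ref{p:mg and com} produces a unit-mean $\mathcal{G}_t$-martingale; transporting along the Lamperti--Kiu time change yields the $\mathcal{F}_t$-martingale \eqref{updownCOM7} on $\{t<\tau^{\{0\}}\}$, with the indicator handled automatically because when $\alpha\in(1,2)$ we have $h(0)=0$ and because when $\alpha\in(0,1)$ the process does not hit zero by Theorem \ref{hitting points}. Self-similarity and the Markov property transfer cleanly under the change of measure (since $h$ is homogeneous of degree $\alpha-1$), so $(X,\mathbb{P}^{\circ}_x)$ is again an rssMp by Theorem \ref{LKtheorem}. Finally, to obtain \eqref{Fcirc7}, apply \eqref{MEsscher}: the vanishing $\chi(\alpha-1)=0$ removes the identity correction, the shift $z\mapsto z-\mathrm{i}(\alpha-1)$ performs the replacements $\alpha\mapsto 1$, $1\mapsto\alpha$, $\alpha\hat\rho\mapsto 1-\alpha\rho$, $1-\alpha\hat\rho\mapsto \alpha\rho$ in the gamma arguments of \eqref{MAPHG}, and the conjugation by $\boldsymbol{\Delta}_v(\alpha-1)$ rescales the off-diagonals by $v_{-1}/v_1=\sin(\pi\alpha\rho)/\sin(\pi\alpha\hat\rho)$ and its reciprocal; a final application of the reflection formula matches the constants in \eqref{Fcirc7} exactly. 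The analyticity strip $\re(\mathrm{i}z)\in(-\alpha,1)$ is obtained by shifting by $\alpha-1$ the strip $\re(\mathrm{i}z)\in(-1,\alpha)$ of the Lamperti-stable MAP.
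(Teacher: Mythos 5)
Your proposal is correct and follows essentially the same route as the paper: identify \eqref{updownCOM7} as the Esscher transform of Proposition \ref{p:mg and com} at $\gamma=\alpha-1$ for the Lamperti-stable MAP, verify $\chi(\alpha-1)=0$ with eigenvector $(\sin(\pi\alpha\hat\rho),\sin(\pi\alpha\rho))^{\top}$, transfer the martingale through the Lamperti--Kiu time change, check self-similarity via homogeneity of $h$, and read off \eqref{Fcirc7} from \eqref{MEsscher}. The only cosmetic difference is that the paper locates the zero eigenvalue by computing $\det\boldsymbol{\Psi}(z)$ while you evaluate $\boldsymbol{F}(\alpha-1)$ directly and check it annihilates the positive vector, which amounts to the same verification.
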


\begin{proof}[Proof \emph{($\diamondsuit$)}]
First we verify that the right-hand side of \eqref{updownCOM7} is a martingale. 
We can compute explicitly the eigenvector $\boldsymbol{v}(\gamma)$ for the matrix $\boldsymbol{F}(\gamma): = \boldsymbol{\Psi}(-\i \gamma)$ at the  particular value of $\gamma =(\alpha -1)$. Note that $\gamma \in (-1,\alpha)$. A straightforward computation using the reflection formula for gamma functions shows that, for $\re({\rm i}z)\in (-1,\alpha)$,
\begin{align*}
{\rm det}\boldsymbol\Psi(z)  =&\frac{\Gamma(\alpha-{\rm i}z)^2\Gamma(1+{\rm i}z)^2}{\pi^2}\\
&\times\left\{
  \sin(\pi(\alpha\rho- {\rm i}z))\sin(\pi(\alpha\hat\rho- {\rm i}z)) 
  - \sin(\pi \alpha \rho) \sin(\pi\alpha\hat\rho)\right\},
\end{align*}
which has a root at ${\rm i}z = \alpha-1$. In turn, this implies that $\chi(\alpha-1)=0$. One also easily checks that 
\[
\boldsymbol{v}(\alpha -1) \propto\left[
\begin{array}{c}
\sin(\pi\alpha\hat\rho)\\
\sin(\pi\alpha\rho)
\end{array} \right].
\]
We claim that  with $\gamma = \alpha -1$, the change of measure (\ref{MAPCOM}) corresponds precisely to (\ref{updownCOM7}) when $(\xi, J)$ is the MAP underlying the stable process. To see this, first note that the time change $\varphi(t)$ is a stopping time and so we consider the change of measure (\ref{MAPCOM}) at this stopping time. In this respect, thanks to the Lamperti-Kiu transform, we use   $\exp\{\xi_{\varphi(t)}\} = |X_t|$, $J_{\varphi(t)} = {\rm sign}(X_t)$ and  ratio of constants, coming from \eqref{constantsinh}, as they appear in the expression for (\ref{updownCOM7}) matches the term $v_{J_{\varphi(t)}}(\gamma)/v_{J_0}(\gamma)$. Theorem III.3.4 of \cite{JS} ensures that we still have a martingale change of measure after the time-change.

Next we  address the claim that $(X, \mathbb{P}^\circ_x)$, $x\in\mathbb{R}\backslash\{0\}$, is a rssMp. This can be done much in the spirit of the computations in Sections \ref{killed*}, \ref{CSPsection} and \ref{radius}, noting that  the stopping time $\tau^{\{0\}}$ scales with the scaling of $X$ in a similar way to \eqref{stopscale}. Indeed, if, for each $c>0$, we let $X^c_t = cX_{c^{-\alpha}t}$, $t\geq 0$, and write $\tau^{\{0\}}_c = \inf\{t>0: X^c_t = 0\}$, then we have 
\[
\tau^{\{0\}}_c = c^{\alpha }\inf\{c^{-\alpha}t>0: c X_{c^{-\alpha} t} = 0\} = c^{\alpha} \tau^{\{0\}}
\]
and, for bounded measurable $f$, $ x\in\mathbb{R}\backslash\{0\}$ and $t\geq 0$,
\begin{align}
\mathbb{E}^\circ_x[f(X^c_s: s\leq t)]&= \mathbb{E}_x\left[\frac{h(X_{c^{-\alpha}t})}{h(x)}f(cX_{c^{-\alpha}s}: s\leq t)\mathbf{1}_{(c^{-\alpha}t<\tau^{\{0\}})}\right]\notag\\
&=\mathbb{E}_x\left[\frac{h(X^c_{t})}{h(cx)}f(X^c_s: s\leq t)\mathbf{1}_{(t<\tau_c^{\{0\}})}\right]\notag\\
&=\mathbb{E}_{cx}\left[\frac{h(X_{t})}{h(cx)}f(X_s: s\leq t)\mathbf{1}_{(t<\tau^{\{0\}})}\right].
\label{useagaintoprovessMp}
\end{align} 
In other words, the law of $(X^c,\mathbb{P}^\circ_x)$ agrees with $(X,\mathbb{P}^\circ_x)$ for $x\in\mathbb{R}\backslash\{0\}$.

Given the identification of the change of measure as an Esscher transform to the underlying MAP, it is now a straightforward to check from (\ref{MEsscher}) that the  MAP associated to the process $(X, \mathbb{P}^\circ_x)$, $x\in\mathbb{R}\backslash\{0\}$, agrees with   $\boldsymbol\Psi^\circ(z)$, for  $\re(\i z)\in(-\alpha,1)$, where we have again used the reflection formula for the gamma function to deal with the terms coming from $\boldsymbol\Delta_{\boldsymbol\upsilon}(\alpha-1)$.
\end{proof}

Intuitively speaking, when $\alpha\in(0,1)$, the change of measure (\ref{updownCOM7}) rewards paths that visit close neighbourhoods of the origin and penalises paths that wander large distances from the origin. Conversely, when $\alpha\in(1,2)$, the change of measure does the opposite. It penalises those paths that approach the origin and rewards those that stray away from the origin. In fact, it has been shown in \cite{KRS}  that, for $\alpha\in(0,1)$, in the appropriate sense, the change of measure is equivalent to conditioning the stable process to continuously absorb at the origin, and when  $\alpha\in(1,2)$, in \cite{CPR, KKPW} it is shown that the change of measure is equivalent to conditioning the stable process to avoid the origin.

\subsection{Self-similar Markov and stable processes in $\mathbb{R}^d$}\label{stable_D_rep}

The notion of a self-similar process (ssMp) in higher dimensions is equally well defined as in the one-dimensional setting, with \eqref{ssmpdef} as the key defining property, albeit that, now, the process is $\mathbb{R}^d$-valued. The identification of all $\mathbb{R}^d$-valued self-similar Markov processes as a space-time change of a Markov Additive Process also carries through, providing we understand the notion of a MAP in the appropriate way in higher dimensions; see \cite{Cinlar, Cinlar2, Cinlar1,  Kaspi} for some classical theoretical groundwork on this class. 
\begin{definition}
  An $\mathbb{R}\times E$ valued Feller process $(\xi,\Theta)=((\xi_t,\Theta_t): t \geq 0)$ with probabilities $\mathbf{P}_{x,\theta}$, $x\in\mathbb{R}$, $\theta\in E$, and cemetery state $(-\infty, \dagger)$ is called a \emph{Markov additive process} (MAP) if $\Theta$ is a Feller process on $E$ with cemetery state $\dagger$ such that, for every bounded measurable function $f:(\mathbb{R}\cup\{-\infty\})\times (E\cup\{\dagger\})\rightarrow \mathbb{R}$ with $f(-\infty,\dagger)=0$, $t,s\geq 0$ and $(x,\theta)\in \mathbb{R}\times E$, on $\{t<\varsigma\}$,
  \[
    \mathbf{E}_{x,\theta}[f(\xi_{t+s}-\xi_t,\Theta_{t+s})|\sigma((\xi_u, \Theta_u), u\leq t)] = \mathbf{E}_{0,\Theta_t}[f(\xi_{s},\Theta_{s})],
  \]
  where $\varsigma = \inf\{t>0 : \Theta_t = \dagger\}$.
\end{definition}
In one dimension we have worked with the case that the role of $\Theta$ is played by Markov chain $J$ on $E=\{-1,1\}$. This choice of $J$ feeds into the positive or negative positioning of a self-similar Markov process through the Lamperti--Kiu transform with $\xi$ helping to describe the radial distance from the origin.  
In higher dimensions we will still use $\xi$ to help describe a radial distance from the origin and, by taking $E=\mathbb{S}_{d-1} : = \{x\in\mathbb{R}^d: |x| = 1\}$, the process $\Theta$ will help describe the  spatial orientation. 
In general, $\Theta$ (or $J$) is called the {\it modulator} and $\xi$ the {\it ordinator}

The following theorem is the higher dimensional analogue of Theorem \ref{LKtheorem} and is attributed to \cite{Kiu} with additional clarification from \cite{ACGZ}, building on the original work of   \cite{L72}; see also \cite{GV, VG}. As with Theorem \ref{LKtheorem}, we omit its proof. 

\begin{theorem}[Generalised Lamperti--Kiu transform $\heartsuit$]\label{genLK}   Fix $\alpha>0$. The process  $Z$ is a {ssMp} with index $\alpha$ if and only if
there exists a (killed) MAP, $(\xi, \Theta)$ on $\mathbb{R}\times\mathbb{S}_{d-1}$ such that
   \begin{equation}     
   Z_t:=  {\rm e}^{\xi_{\varphi(t)}}\Theta_{\varphi(t)} \qquad t\geq 0,
  , \qquad  t \leq I_\varsigma, 
  \label{d_polarssMprepresentation}
  \end{equation}
  where 
 \[
  \varphi(t) = \inf \biggl\{ s > 0 : \int_0^s {{\rm e}^{\alpha \xi_u}}
  \, \d u > t  \biggr\}, \qquad  t \leq I_\varsigma,
\] 
and $I_\varsigma = \int_0^\varsigma {\rm e}^{\alpha\xi_s}{\rm d}s$ is the lifetime of $Z$ until absorption at the origin.
Here, we interpret $\exp\{-\infty\}\times\dagger:=0$ and $\inf\emptyset := \infty$.
\end{theorem}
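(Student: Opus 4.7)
The plan is to mirror the proofs of Theorems \ref{iso-2} and \ref{LKtheorem}, replacing the positive/negative decomposition of the one-dimensional case by a polar decomposition into a radial (ordinator) part and an angular (modulator) part valued in $\mathbb{S}_{d-1}$. Both directions reduce to a careful bookkeeping of the time change against the scaling and Markov properties.

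\emph{Forward direction.} Suppose $Z$ is a ssMp with index $\alpha$. Define $R_t = |Z_t|$ and $\Theta_t = Z_t/R_t$ on $\{t < \zeta\}$, and introduce the additive functional $A_s = \int_0^s R_u^{-\alpha}\,du$, which is almost surely finite thanks to the scaling property. Let $\varphi$ be its right-continuous inverse and set $\xi_t = \log R_{\varphi(t)}$, $\Theta^\star_t = \Theta_{\varphi(t)}$. To verify the MAP property, I would fix $t, s \ge 0$ and apply the strong Markov property of $Z$ at the stopping time $\varphi(t)$ combined with the scaling property \eqref{ssmpdef} with $c = e^{-\xi_t}$: conditionally on $\{\varphi(t) < \zeta, \xi_t = y, \Theta^\star_t = \theta\}$, the shifted trajectory $(Z_{\varphi(t)+r})_{r \ge 0}$ has the same law as $(e^y \widetilde Z_{e^{-\alpha y} r})_{r \ge 0}$, where $\widetilde Z$ is an independent copy of $Z$ issued from $\theta \in \mathbb{S}_{d-1}$. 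A change of variables in the definition of $A$ yields $\varphi(t+s) - \varphi(t) = e^{\alpha y}\widetilde\varphi(s)$, and hence
\begin{align*}
\xi_{t+s} - \xi_t &= \log\bigl(R_{\varphi(t+s)}/R_{\varphi(t)}\bigr) = \log \widetilde R_{\widetilde\varphi(s)} = \widetilde\xi_s, \\
\Theta^\star_{t+s} &= \Theta_{\varphi(t+s)} = \widetilde\Theta_{\widetilde\varphi(s)} = \widetilde\Theta^\star_s.
\end{align*}
Since the joint law of $(\widetilde\xi_s, \widetilde\Theta^\star_s)$ depends only on $\theta$ and not on $y$, this is exactly the defining MAP property. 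The Feller property of $(\xi, \Theta^\star)$ is then inherited from that of $Z$ by standard arguments.

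\emph{Backward direction.} Given a MAP $(\xi, \Theta)$ on $\mathbb{R} \times \mathbb{S}_{d-1}$, I would define $Z$ by \eqref{d_polarssMprepresentation} and verify it is a ssMp. The Markov and Feller properties of $Z$ are transported from those of $(\xi, \Theta)$ through the time change $\varphi$ in essentially the same way as in Theorem \ref{iso-2}. For the scaling property, observe that starting $(\xi, \Theta)$ from $(\log c + x, \theta)$ rather than $(x, \theta)$ replaces $\xi$ by $\log c + \xi$, thereby scaling $e^{\xi}$ by $c$ and the integrand $e^{\alpha\xi}$ defining $\varphi$ by $c^{\alpha}$; hence $\varphi$ rescales by $c^{-\alpha}$. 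Composing these two effects gives precisely the identity that the law of $(cZ_{c^{-\alpha}t}, t \ge 0)$ under $P_{x_0}$ equals the law of $Z$ under $P_{cx_0}$.

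\emph{Main obstacle.} The substantive work is in rigorously commuting the strong Markov property at $\varphi(t)$ with the scaling property and in tracking the resulting time-change identities at the level of distributions rather than merely pathwise. A secondary subtlety, absent in the one-dimensional Theorem \ref{LKtheorem}, is that $\mathbb{S}_{d-1}$ is a continuum rather than a two-point set, so one no longer has the clean piecewise decomposition of Proposition \ref{regimeswitch} into independent L\'evy excursions welded together by a discrete jump chain; the MAP property must be established directly from its definition as a conditional increment-stationarity statement for $(\xi, \Theta)$. Handling the lifetime and absorption at $0$ (equivalently, the cemetery state $(-\infty, \dagger)$) requires the same interpretation conventions $e^{-\infty}\times\dagger := 0$ and $\inf\emptyset := \infty$ already used in the one-dimensional result.
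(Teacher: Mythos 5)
The paper gives no proof of Theorem \ref{genLK}: it is stated with attribution to \cite{Kiu} and \cite{ACGZ} (building on \cite{L72}) and the proof is explicitly omitted, so there is no in-paper argument to compare against. Your sketch follows precisely the canonical route taken in those references and in the one-dimensional Theorems \ref{iso-2} and \ref{LKtheorem} -- polar decomposition, the additive functional $\int_0^s |Z_u|^{-\alpha}\,\d u$ and its inverse, the strong Markov property at the stopping time $\varphi(t)$ combined with scaling with $c={\rm e}^{-y}$ for the forward direction, and undoing the time change for the converse -- and is correct in outline. The points you leave as ``bookkeeping'' are exactly the long technical parts the paper chooses not to reproduce; be aware, though, that the finiteness of $A_s$ for $s<\zeta$ is not a consequence of scaling as you assert but of the path being bounded away from the origin on compact time intervals before absorption, and that the Feller property of $(\xi,\Theta)$ and the structure of the killing (absorption continuously, by a jump, or not at all) are the places where the clarifications of \cite{ACGZ} are genuinely needed.
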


Note that, in the representation \eqref{d_polarssMprepresentation}, the time to absorption in the origin,
\[
\zeta = \inf\{t>0 : Z_t = 0\},
\] 
satisfies $\zeta = I_{\varsigma}$.

\medskip

For each $x\in\mathbb{R}^d\backslash\{0\}$, the {\it skew product decomposition} (for $d\geq 2$), is the unique representation 
\begin{equation}
x = (|x|, {\rm Arg}(x)), 
\label{skew}
\end{equation}
where ${\rm Arg}(x) = x/|x|$ is a vector on $\mathbb{S}_{d-1} $, the sphere of unit radius embedded in $d$-dimensional Euclidian space. Conversely any $x\in(0,\infty)\times\mathbb{S}_{d-1}$ taking the form \eqref{skew} belongs to $\mathbb{R}^d$. The representation \eqref{d_polarssMprepresentation} therefore gives  us a $d$-dimensional skew product decomposition of self-similar Markov processes.

Recall that a measure $\mu$ on $\mathbb{R}^d$ is isotropic  if  for $B\in\mathcal{B}(\mathbb{R}^{d})$, $\mu(B)=\mu(U^{-1}B)$  for every orthogonal $d$-dimensional matrix $U$. In this spirit, we can thus define an isotropic ssMp, $Z = (Z_t, t\geq 0)$  to have the property that, for every  orthogonal $d$-dimensional matrix $U$ and $x\in\mathbb{R}^d$, the law of $(U^{-1}Z, {P}_x)$ is equal to that of $(Z,{P}_{U^{-1}x})$. 

In light of the skew product decomposition in \eqref{d_polarssMprepresentation}, it is natural to ask how the property of isotropy on $Z$ interplays with  the underlying MAP $(\xi,\Theta)$. The theorem and the corollary that follows below, are a rewording of discussion found in \cite{ACGZ} with proofs that are not exactly the same as what is alluded to there but capture the same spirit. 


\begin{theorem}[$\diamondsuit$]
Suppose that $Z$  is a ssMp, with underlying MAP $(\xi, \Theta)$. Then $Z$ is isotropic if and only if $((\xi,U^{-1}\Theta), \mathbf{P}_{x,\theta})$ is equal in law to $((\xi,\Theta), \mathbf{P}_{x,U^{-1}\theta})$,   for every orthogonal $d$-dimensional matrix $U$ and $x\in\mathbb{R}^d$, $
\theta \in\mathbb{S}_{d-1}$. 
\end{theorem}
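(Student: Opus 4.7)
The plan is to leverage the bijectivity of the generalised Lamperti--Kiu correspondence from Theorem~\ref{genLK} together with the two elementary facts that an orthogonal transformation $U$ (i) preserves the Euclidean norm, so $|U^{-1}y| = |y|$, and (ii) acts linearly, so commutes with scalar multiplication. In particular, from the skew product \eqref{skew} we have ${\rm Arg}(U^{-1}y) = U^{-1}{\rm Arg}(y)$ and $|U^{-1}y|={|y|}$ for every $y\in\mathbb{R}^d\setminus\{0\}$. The main work is simply to transport these identities through the representation \eqref{d_polarssMprepresentation}.

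For the forward implication, assume $Z$ is isotropic. Fix an orthogonal matrix $U$ and $(x,\theta)\in\mathbb{R}\times\mathbb{S}_{d-1}$, and let $Z$ be built from $(\xi,\Theta)$ under $\mathbf{P}_{x,\theta}$, so that $Z_0 = {\rm e}^x\theta$. Because $U^{-1}$ is linear,
\[
U^{-1}Z_t = {\rm e}^{\xi_{\varphi(t)}}\bigl(U^{-1}\Theta_{\varphi(t)}\bigr),\qquad t\leq I_\varsigma,
\]
and crucially the time change $\varphi$ depends on $\xi$ alone, hence is unaffected by $U$. Thus $U^{-1}Z$ is precisely the ssMp arising via the generalised Lamperti--Kiu transform from the pair $(\xi,U^{-1}\Theta)$, and it is issued from ${\rm e}^x U^{-1}\theta$. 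By the assumed isotropy, $U^{-1}Z$ under $P_{{\rm e}^x\theta}$ has the same law as $Z$ under $P_{{\rm e}^x U^{-1}\theta}$, which in turn is the ssMp arising from $(\xi,\Theta)$ under $\mathbf{P}_{x,U^{-1}\theta}$. Since Theorem~\ref{genLK} asserts a bijection between ssMp and (killed) MAPs on $\mathbb{R}\times\mathbb{S}_{d-1}$, the equality of laws of the two ssMps forces equality of the laws of the corresponding MAPs, i.e.\ $((\xi,U^{-1}\Theta),\mathbf{P}_{x,\theta}) \stackrel{d}{=}((\xi,\Theta),\mathbf{P}_{x,U^{-1}\theta})$.

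For the converse, assume the displayed MAP invariance. Given $y\in\mathbb{R}^d\setminus\{0\}$, set $x=\log|y|$ and $\theta={\rm Arg}(y)$, so that $Z$ under $P_y$ equals ${\rm e}^{\xi_{\varphi}}\Theta_{\varphi}$ for $(\xi,\Theta)$ issued from $(x,\theta)$. Applying $U^{-1}$ as above,
\[
U^{-1}Z_t = {\rm e}^{\xi_{\varphi(t)}}\bigl(U^{-1}\Theta_{\varphi(t)}\bigr),\qquad t\leq I_\varsigma,
\]
which, by hypothesis, has the same law as ${\rm e}^{\xi_{\varphi(t)}}\Theta_{\varphi(t)}$ under $\mathbf{P}_{x,U^{-1}\theta}$; but since $U^{-1}y = {\rm e}^x U^{-1}\theta$, the latter process is exactly $Z$ under $P_{U^{-1}y}$. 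Equivalently, $(U^{-1}Z,P_y)\stackrel{d}{=}(Z,P_{U^{-1}y})$, which is the definition of isotropy. (The origin plays no role in either direction because it is absorbing under $P_0$ and there is nothing to check.)

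The only mildly delicate point is to convince oneself that the time change $\varphi$ genuinely does not interfere with the argument; this reduces to the observation that $\varphi$ is a functional of $\xi$ alone through $\int_0^\cdot {\rm e}^{\alpha\xi_u}\,\d u$, and $\xi$ is left untouched by the action of $U$. Beyond that, the whole proof is a clean application of the Lamperti--Kiu bijection, so I would expect no serious technical obstacle; the argument is essentially a diagram chase between the skew-product representation and the orthogonal group action.
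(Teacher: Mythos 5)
Your argument is correct and follows essentially the same route as the paper: transport the orthogonal action through the skew-product Lamperti--Kiu representation, using that $U^{-1}$ acts only on the angular component while $\xi$, and hence the time change, is untouched. The only cosmetic difference is that for the ``only if'' direction the paper makes the inversion of the correspondence explicit by writing $(\xi_t,\Theta_t)=(\log|Z_{A(t)}|,{\rm Arg}(Z_{A(t)}))$ with $A$ the inverse of $\varphi$, whereas you invoke the bijectivity of Theorem \ref{genLK} to pass from equality in law of the ssMps to equality in law of the underlying MAPs --- the same point, argued in the reverse direction.
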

\begin{proof}[Proof \emph{($\diamondsuit$)}]Suppose first that $Z$ is an isotropic ssMp.
On the event $\{t<\zeta\}$, since 
\[
\int_0^{\varphi(t)} {\rm e}^{\alpha \xi_u}\d u = t\quad\text{ and hence } \quad\frac{\d \varphi(t)}{\d t} = {\rm e}^{-\alpha \xi_{\varphi(t)}} = |Z_t|^{-\alpha},
\]
we have that 
\begin{equation}
\varphi(t) = \int_0^t|Z_u|^{-\alpha}\d u.
\label{seelater}
\end{equation}
Let us introduce its right continuous inverse, on $\{t<\varsigma\}$, as follows
\begin{equation}
{A}(t) =\inf\left\{s>0:  \int_0^s|Z_u|^{-\alpha}\d u>t\right\}.
\label{A}
\end{equation}
{Hence, we see that,} on $\{t<\varsigma\}$,
\begin{equation}
(\xi_t, \Theta_t) = (\log |Z_{{A}(t)}|,\,{\rm Arg}(Z_{{A}(t)}) ), \qquad t\geq 0,
\label{thetarotinv}
\end{equation}
where the random times ${A}(t)$, {for $t\leq \varsigma$}, are stopping times in the natural filtration of $Z$.

Now suppose that $U$ is any orthogonal $d$-dimensional matrix and let $Z' = U^{-1}Z$.
Since $Z$ is isotropic and since  $|Z'| =|Z| $, and ${\rm Arg}(Z') = U^{-1}{\rm Arg}(Z) $, we see from \eqref{thetarotinv} that,  for $x\in\mathbb{R}$ and  $\theta\in\mathbb{S}_{d-1}$
\begin{align}
((\xi,U^{-1} \Theta), \mathbf{P}_{\log |x|,\theta})  &= ((\log |Z_{{A}(t)}|,\,U^{-1}{\rm Arg}(Z_{{A}(t)}) ), {P}_x)\notag\\
&\stackrel{d}{=} ((\log |Z_{{A}(t)}|,\,{\rm Arg}(Z_{{A}(t)}) ), {P}_{U^{-1}x})\notag\\
&=((\xi, \Theta), \mathbf{P}_{\log |x|,U^{-1}\theta})
\label{bothways}
\end{align}
and the ``only if'' direction is proved.

For the converse statement, suppose that the left-hand side and right-hand side in \eqref{bothways} are equal for all  orthogonal $d$-dimensional matrices $U$, $x\in\mathbb{R}$ and $\theta\in\mathbb{S}_{d-1}$. Again, setting  $Z' = U^{-1}Z$ and letting $A'$ play the role of \eqref{A} but for $Z'$, we have 
\begin{align}
((\log |Z'_{{A}'(t)}|,\,{\rm Arg}(Z'_{{A}'(t)}) ), {P}_x)&\stackrel{d}{=} ((\log |Z_{{A}(t)}|,\,U^{-1}{\rm Arg}(Z_{{A}(t)}) ), {P}_x) \notag\\
&\stackrel{d}{=} ((\xi,U^{-1} \Theta), \mathbf{P}_{\log |x|,\theta}) \notag\\
&\stackrel{d}{=} ((\xi, \Theta), \mathbf{P}_{\log |x|,U^{-1}\theta})\notag\\
&\stackrel{d}{=} ((\log |Z_{{A}(t)}|,\,{\rm Arg}(Z_{{A}(t)}) ), {P}_{U^{-1}x}).
\end{align}
This concludes the ``if'' part of the proof. 
\end{proof}

\begin{corollary}[$\heartsuit$]\label{|Z|} If $Z$ is an isotropic ssMp, then $|Z|$ is equal in law to a pssMp and hence $\xi$ is a L\'evy process.
\end{corollary}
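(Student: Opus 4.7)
The plan is to use the equivalence from the preceding theorem to show that the ordinator $\xi$ has a law that is completely insensitive to the initial modulator state, and then combine this with the Markov additive structure to upgrade $\xi$ to a genuine L\'evy process. Once this is achieved, writing $|Z_t| = {\rm e}^{\xi_{\varphi(t)}}$ identifies $|Z|$ as the Lamperti transform of a L\'evy process, which is a pssMp by Theorem \ref{iso-2}.

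\textbf{Step 1: strip modulator dependence from the marginal law of $\xi$.} Fix $x \in \mathbb{R}$ and two states $\theta, \theta' \in \mathbb{S}_{d-1}$. Since the orthogonal group acts transitively on $\mathbb{S}_{d-1}$, we may pick an orthogonal $d$-dimensional matrix $U$ with $U^{-1}\theta = \theta'$. The preceding theorem then gives
\[
\bigl((\xi, \Theta), \mathbf{P}_{x,\theta'}\bigr) \stackrel{d}{=} \bigl((\xi, U^{-1}\Theta), \mathbf{P}_{x,\theta}\bigr),
\]
and projecting onto the first coordinate shows that the law of $\xi$ under $\mathbf{P}_{x,\theta}$ does not depend on $\theta$. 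In particular, for every bounded measurable $f$ and every $s \geq 0$, the quantity $\mathbf{E}_{0,\theta}[f(\xi_s)]$ is independent of $\theta \in \mathbb{S}_{d-1}$.

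\textbf{Step 2: deduce stationary independent increments for $\xi$.} By the defining Markov additive property, for all $s,t\geq 0$, bounded measurable $f$, and $(x,\theta) \in \mathbb{R} \times \mathbb{S}_{d-1}$,
\[
\mathbf{E}_{x,\theta}\bigl[f(\xi_{t+s} - \xi_t) \,\bigm|\, \sigma((\xi_u, \Theta_u): u \leq t)\bigr] = \mathbf{E}_{0,\Theta_t}[f(\xi_s)].
\]
By Step 1, the right-hand side does not depend on the value of $\Theta_t$ and is simply the constant $\mathbf{E}_{0,\theta}[f(\xi_s)]$ (for any fixed $\theta$). This simultaneously gives independence of $\xi_{t+s}-\xi_t$ from $\sigma((\xi_u, \Theta_u): u \leq t)$ and a distribution depending only on $s$. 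Hence $\xi$ has stationary and independent increments, and being Feller by assumption it is a (possibly killed) L\'evy process.

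\textbf{Step 3: identify $|Z|$ as a pssMp.} Since $\Theta$ takes values on $\mathbb{S}_{d-1}$, the generalised Lamperti--Kiu representation \eqref{d_polarssMprepresentation} yields $|Z_t| = {\rm e}^{\xi_{\varphi(t)}}$ for $t < I_\varsigma$, and $|Z_t|=0$ thereafter. Because the time change $\varphi$ depends only on the path of $\xi$, the process $|Z|$ coincides exactly with the (classical) Lamperti transform of the L\'evy process $\xi$ constructed in Theorem \ref{iso-2}, and is therefore a pssMp of the same index $\alpha$.

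The only real obstacle is conceptual rather than technical: the MAP structure a priori only delivers conditional increment laws that depend on the current modulator state, and one must use the full strength of transitivity of the orthogonal group on $\mathbb{S}_{d-1}$ in order to strip out that dependence uniformly. Once Step 1 is in hand, the L\'evy property and the pssMp conclusion follow with essentially no further work.
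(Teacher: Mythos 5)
Your argument is correct, but it runs in the opposite logical direction to the paper's. You first extract the L\'evy property of $\xi$ directly: transitivity of the orthogonal group on $\mathbb{S}_{d-1}$ plus the equivalence of the preceding theorem strips the modulator dependence from the law of $\xi$, and feeding this into the defining conditional-increment property of the MAP yields stationary and independent increments; the pssMp conclusion for $|Z|={\rm e}^{\xi_{\varphi(\cdot)}}$ then comes for free from part (ii) of the Lamperti theorem (Theorem \ref{iso-2}), Feller property included. The paper instead verifies directly that $|Z|$ satisfies the definition of a pssMp: scaling is inherited from $Z$; the Markov property follows by conditioning at time $t$ and using the same $\theta$-insensitivity to replace ${\rm Arg}(Z_t)$ by the North Pole; and the Feller property is checked by hand via the representation $|x|{\rm e}^{\xi_{\varphi(|x|^{-\alpha}t)}}$ under $\mathbf{P}_{0,{\rm Arg}(x)}$ and dominated convergence --- only then is the L\'evy property of $\xi$ read off from part (i) of Theorem \ref{iso-2}. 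Both proofs hinge on exactly the same input (the isotropy characterisation of the underlying MAP); yours buys the Markov and Feller structure of $|Z|$ wholesale from Theorem \ref{iso-2}(ii), whereas the paper checks it directly and treats the L\'evy property of $\xi$ as the by-product. Two small points worth making explicit in your Step 2: independence of each single increment from $\sigma((\xi_u,\Theta_u),\,u\le t)$ upgrades to joint independence of increments over disjoint intervals by the usual induction, and in the killed case the MAP identity only holds on $\{t<\varsigma\}$, so what you obtain is a (possibly killed) L\'evy process --- which, as you note, is precisely what the Lamperti bijection accommodates.
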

\begin{proof}[Proof \emph{($\diamondsuit$)}] The scaling property of $|Z|$ follows directly from that of $Z$. Moreover we have, for bounded measurable $g:[0,\infty)\to\mathbb{R}$ and $s,t\geq0$, on $\{t<\zeta\}$,
  \begin{align*}
    {E}_{x}[g(|Z_{t+s}|)\,|\,\sigma(Z_u, u\leq t)] &= \mathbf{E}_{\log |Z_t|, {\rm Arg}(Z_t)}[g({\rm e}^{\xi_{\varphi(s)}})]\\
    &\stackrel{d}{=}\mathbf{E}_{\log |Z_t|, \mathbf{1}}[g({\rm e}^{\xi_{\varphi(s)}})]\\
    &={E}_{|Z_t|\mathbf{1}}[g(|Z_s|)], 
  \end{align*}
where ${\texttt 1} = (1,0,\cdots,0)\in\mathbb{R}^d$ is the ``North Pole'' on $\mathbb{S}_{d-1}$. This ensures the Markov property.

To verify the Feller property, we note that, for $x\in\mathbb{R}^d$,  $(Z,{P}_x)$ is equal in law to 
\[
Z^{(x)}: = |x|{\rm e}^{\xi_{  \varphi(|x|^{-\alpha} t)}   }\Theta_{  \varphi(|x|^{-\alpha} t)}   , \qquad t\leq |x|^{\alpha}\int_0^\varsigma {\rm e}^{\alpha\xi_u}\d u,\qquad 
\]
under $\mathbf{P}_{0, {\rm Arg}(x)}$. Hence for all continuous  $g:[0,\infty)\to\mathbb{R}$ vanishing at $\infty$, 
\[
{E}_x[g(|Z_t|)] = \mathbf{E}_{0, {\rm Arg}(x)}\left[g(|x|{\rm e}^{\xi_{  \varphi(|x|^{-\alpha} t)}   })\right].
\]
The conditions of the Feller property can now be easily verified using dominated convergence.
\end{proof}

\medskip

The most prominent example of a $d$-dimensional ssMp that will be of use to us is of course the isotropic stable process in $\mathbb{R}^d$ itself. 
The description of the underlying MAP is somewhat less straightforward to characterise. We know however that the stable process is a pure jump process. In the spirit of a calculation found in Lemma 2 of \cite{BW}, the  theorem below uses the compensation formula for the jumps of the stable process as a way of capturing the jump dynamics of the MAP. This author has also seen similar computations in working documents of Bo Li from Nankai University, PR China and Victor Rivero from CIMAT, Mexico.

We will use the usual notation in the stable setting. That is, $(\xi,\Theta)$ with probabilities $\mathbf{P}_{x,\theta}$, $x\in\mathbb{R}$, $\theta\in\mathbb{S}_{d-1}$, is the MAP underlying the stable process. We will work with the increments $\Delta\xi_t = \xi_t- \xi_{t-}\in\mathbb{R}$, $t\geq 0$. 

\begin{theorem}[$\diamondsuit$]\label{MAPjumps}  Suppose that $f$ is a positive, bounded measurable function on $[0,\infty)\times\mathbb{R}\times\mathbb{R}\times\mathbb{S}_{d-1}\times\mathbb{S}_{d-1}$ such that $f(\cdot,\cdot,0,\cdot, \cdot) = 0$, then, for all $\theta\in\mathbb{S}_{d-1}$,
\begin{align}
&\mathbf{E}_{0,\theta}\left[\sum_{s>0}f(s,\xi_{s-},\Delta\xi_{s},\Theta_{s-}, \Theta_s)\right]\notag\\
&=\int_0^\infty\int_{\mathbb{R}} \int_{\mathbb{S}_{d-1}}\int_{\mathbb{S}_{d-1}}\int_{\mathbb{R}}V_\theta( \d s, \d x, \d \vartheta)  \sigma_1(\d\phi)  \d y\frac{c(\alpha){\rm e}^{yd}}{|{\rm e}^{y}\phi-   \vartheta|^{\alpha+d}}f(s, x,y,\vartheta, \phi),
\label{MAPdeltatostable}
\end{align}
where 
\[
V_\theta(\d s, 
d x, \d \vartheta) =\mathbf{P}_{0,\theta}(\xi_s \in \d x, \Theta_s\in \d\vartheta)\d s , \qquad x\in\mathbb{R}, \vartheta\in\mathbb{S}_{d-1}, s\geq 0,
\]
is the space-time potential of $(\xi, \Theta)$,
$\sigma_1(\phi)$ is the surface measure on $\mathbb{S}_{d-1}$ normalised to have unit mass and $c(\alpha) =2^{\alpha-1}\pi^{-d}{\Gamma((d+\alpha)/2)\Gamma(d/2)}/{\big|\Gamma(-\alpha/2)\big|}$.
\end{theorem}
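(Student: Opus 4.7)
The plan is to exploit the generalised Lamperti--Kiu decomposition \eqref{d_polarssMprepresentation} to transfer the sum over jumps of the MAP into a sum over jumps of the stable process $X$, apply the classical compensation formula for $X$ using the L\'evy measure \eqref{jumpmeasure}, and finally unwind the resulting integral back into MAP coordinates via a polar change of variable on $\mathbb{R}^d$ combined with the Lamperti--Kiu time change.

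First, I will use the fact that $\varphi$ is continuous and strictly increasing, so that jump times $s$ of $(\xi,\Theta)$ are in bijection with jump times $u=A(s)$ of $X$, with $|X_{u\pm}|=\mathrm{e}^{\xi_{s\pm}}$, $\mathrm{Arg}(X_{u\pm})=\Theta_{s\pm}$, and in particular $\Delta\xi_s=\log(|X_u|/|X_{u-}|)$. Since under the Lamperti--Kiu correspondence $\mathbf{P}_{0,\theta}$ agrees with $\mathbb{P}_\theta$, the left-hand side of \eqref{MAPdeltatostable} equals
\[
\mathbb{E}_\theta\!\left[\sum_{u>0}f\bigl(\varphi(u),\log|X_{u-}|,\log\tfrac{|X_u|}{|X_{u-}|},\mathrm{Arg}(X_{u-}),\mathrm{Arg}(X_u)\bigr)\right].
\]
Because $\varphi(u)$ and $X_{u-}$ are predictable in the natural filtration of $X$, the compensation formula for the Poisson point process of jumps of $X$ then converts this to
\[
\mathbb{E}_\theta\!\left[\int_0^\infty\!\!\mathrm{d}u\int_{\mathbb{R}^d}\!\Pi(\mathrm{d}z)\,f\bigl(\varphi(u),\log|X_u|,\log\tfrac{|X_u+z|}{|X_u|},\mathrm{Arg}(X_u),\mathrm{Arg}(X_u+z)\bigr)\right].
\]

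Next, I change variables from $z\in\mathbb{R}^d$ to $(y,\phi)\in\mathbb{R}\times\mathbb{S}_{d-1}$ via $w:=X_u+z=|X_u|\mathrm{e}^y\phi$, which gives $|z|=|X_u||\mathrm{e}^y\phi-\mathrm{Arg}(X_u)|$ and, by the standard polar factorisation of Lebesgue measure in $\mathbb{R}^d$, $\mathrm{d}z=|X_u|^d\mathrm{e}^{yd}\mathrm{d}y\,\sigma^{\ast}(\mathrm{d}\phi)$; the constant relating the unnormalised $\sigma^{\ast}$ to the normalised $\sigma_1$ gets absorbed into $c(\alpha)$, exactly as in the second equality of \eqref{jumpmeasure}. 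Combined with $\Pi(\mathrm{d}z)\propto|z|^{-\alpha-d}\mathrm{d}z$, this yields
\[
\Pi(\mathrm{d}z)=c(\alpha)\,\frac{\mathrm{e}^{yd}}{|X_u|^\alpha|\mathrm{e}^y\phi-\mathrm{Arg}(X_u)|^{\alpha+d}}\,\mathrm{d}y\,\sigma_1(\mathrm{d}\phi).
\]
The decisive step is the time change $s=\varphi(u)$: by \eqref{seelater} one has $\mathrm{d}u=\mathrm{e}^{\alpha\xi_s}\mathrm{d}s=|X_u|^\alpha\mathrm{d}s$, and the $|X_u|^\alpha$ factor it produces exactly cancels the $|X_u|^{-\alpha}$ generated by the change of variable. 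Since $|X_u|=\mathrm{e}^{\xi_s}$ and $\mathrm{Arg}(X_u)=\Theta_s$, Fubini together with the definition $V_\theta(\mathrm{d}s,\mathrm{d}x,\mathrm{d}\vartheta)=\mathbf{P}_{0,\theta}(\xi_s\in\mathrm{d}x,\Theta_s\in\mathrm{d}\vartheta)\mathrm{d}s$ produces the right-hand side of \eqref{MAPdeltatostable}.

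The main technical obstacle is the careful bookkeeping in the polar change of variable: computing the Jacobian $|X_u|^d\mathrm{e}^{yd}$ correctly, handling the normalisation of $\sigma_1$ so as to recover precisely the constant $c(\alpha)$, and confirming that the cancellation of $|X_u|^{\pm\alpha}$ really does leave no residual spatial factor. Beyond this, the argument is a direct application of the Lamperti--Kiu representation and standard L\'evy-process compensation machinery.
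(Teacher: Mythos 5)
Your proposal is correct and follows essentially the same route as the paper: transfer the jump sum through the Lamperti--Kiu representation, apply the compensation formula for the Poisson point process of jumps of $X$, and then undo the spatial and temporal changes of variable so that the $|X_u|^{\pm\alpha}$ factors from the polar Jacobian and the time change $\d u={\rm e}^{\alpha\xi_s}\d s$ cancel, leaving the potential $V_\theta$. The only difference is bookkeeping: the paper performs the substitution in several smaller steps (scaling the jump size by $|X_{s-}|$, the time change, a passage to Cartesian coordinates and back to skew-product form with $r={\rm e}^y$), whereas you compress these into a single change of variables, which is equivalent.
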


\begin{proof}[Proof \emph{($\diamondsuit$)}]
According to the generalised Lamperti--Kiu transformation \eqref{d_polarssMprepresentation}, we have 
\[
\xi_{t}=\log(|X_{{A}(t)}|/|X_{0}|),\qquad \Theta_{t}=\frac{X_{{A}(t)}}{|X_{{A}(t)}|},\qquad t\geq 0,
\] 
where ${A}(t) = \inf\{s>0: \int^{s}_{0}|X_{u}|^{-\alpha}\d u> t\}$; see also \eqref{A}. Let $f$ be given as in the statement of the theorem. Writing the left-hand side of \eqref{MAPdeltatostable} in terms of the stable process, we have  for all $\theta\in\mathbb{S}_{d-1}$,

\begin{align*}
&\mathbf{E}_{0,\theta}\left[\sum_{s>0}f(s,\xi_{s-},\Delta\xi_{s},\Theta_{s-}, \Theta_s)\right] \\
&= \mathbb{E}_{\theta}\left[\sum_{s>0}f\left(\int^{s}_{0}|X_{u}|^{-\alpha}\d u, \log|X_{s-}|, \log\left(|X_{s}|/|X_{s-}|\right), {\rm Arg}(X_{s-})
, {\rm Arg}(X_s)
\right)\right].
\end{align*}

Next note that, for $t\geq 0$, 
\[
\frac{|X_{s}|}{|X_{s-}|}=\left|\frac{X_{s-}}{|X_{s-}|}+\frac{\Delta X_{s}}{|X_{s-}|}\right| = \left|{\rm Arg}(X_{s-})+\frac{\Delta X_{s}}{|X_{s-}|}\right|
\]
 and
\[
{\rm Arg}(X_s) = \frac{X_{s}}{|X_{s}|}=\dfrac{\dfrac{X_{s-}}{|X_{s-}|}+\dfrac{\Delta X_{s}}{|X_{s-}|}}{\left|\dfrac{X_{s-}}{|X_{s-}|}+\dfrac{\Delta X_{s}}{|X_{s-}|}\right|} = \dfrac{{\rm Arg}(X_{s-})+\dfrac{\Delta X_{s}}{|X_{s-}|}}{\left|{\rm Arg}(X_{s-})+\dfrac{\Delta X_{s}}{|X_{s-}|}\right|}.
\]

The compensation formula for the Poisson random measure of jumps of $X$ now tells us that 
\begin{align*}
&\mathbf{E}_{0,\theta}\left[\sum_{s>0}f(s,\xi_{s-},\Delta\xi_{s},\Theta_{s-}, \Theta_s)\right] \notag\\
&=\mathbb{E}_{\theta}\Bigg[
\int^{\infty}_{0}\hspace{-3pt}\d s\int_{ \mathbb{S}_{d-1}}\hspace{-3pt}{\sigma_1(\d\phi)}\int_{0}^\infty\hspace{-3pt}
\d r\frac{c(\alpha)}{r^{1+\alpha}}\notag\\
&\hspace{1cm}f\Bigg(\varphi(s),\log|X_{s-}|, \log\left|{\rm Arg}(X_{s-})+\frac{r\phi}{|X_{s-}|}\right|, {\rm Arg}(X_{s-}),  \frac{{\rm Arg}(X_{s-})+\frac{r\phi}{|X_{s-}|}}{\left|{\rm Arg}(X_{s-})+\frac{r\phi}{|X_{s-}|}\right|}\Bigg)
\Bigg]\notag\\
&=
\mathbf{E}_{0,\theta}\left[\int^{\infty}_{0}\d v\int_{ \mathbb{S}_{d-1}}{\sigma_1(\d\phi)}\int_{0}^\infty 
\d u\frac{c(\alpha)}{u^{1+\alpha}}f\left(v, \xi_v, \log\left|\Theta_{v}+u\phi\right|, \Theta_v, \frac{\Theta_{v}+u\phi}{\left|\Theta_{v}+u\phi\right|}\right)\right]\notag\\
&=\mathbf{E}_{0,\theta}\left[\int^{\infty}_{0}\d v\int_{ \mathbb{R}^{d}}\d z \frac{\tilde{c}(\alpha)}{|z|^{\alpha+d}}f\left(v,\xi_v, \log\left|\Theta_{v}+z\right|, \Theta_v, \frac{\Theta_{v}+z}{\left|\Theta_{v}+z\right|}\right)\right]\notag\\
&=\mathbf{E}_{0,\theta}\left[\int^{\infty}_{0}\d v\int_{\mathbb{R}^{d}}\d w\frac{\tilde{c}(\alpha)}{|w -  \Theta_{v}|^{\alpha+d}}f\left(v, \xi_v, \log|w|,\Theta_v,  \frac{w}{\left|w\right|}\right)\right]
\end{align*}
where in the second equality, we first make the change of variables $u= r/|X_{s-}|$ and then $v = \int^{s}_{0}|X_{u}|^{-\alpha}\d u$ and in the third equality we convert to Cartesian coordinates with $\tilde{c}(\alpha) = 2^{\alpha-1}\pi^{-d}{\Gamma((d+\alpha)/2)\Gamma(d/2)}/{\big|\Gamma(-\alpha/2)\big|}$.  Converting the right-hand side above back to skew product variables, we thus get
\begin{align}
&\mathbf{E}_{0,\theta}\left[\sum_{s>0}f(s,\xi_{s-},\Delta\xi_{s},\Theta_{s-}, \Theta_s)\right] \notag\\
&=\mathbf{E}_{0,\theta}\left[\int^{\infty}_{0}\d v\int_{\mathbb{S}_{d-1}}\sigma_1(\d\phi)\int_{0}^\infty\d r \frac{c(\alpha)r^{d-1}}{|r\phi-     \Theta_{v}|^{\alpha+d}}f\left(v,  \xi_v,\log r, \Theta_v, \phi\right)\right]\notag\\
&=\mathbf{E}_{0,\theta}\left[\int^{\infty}_{0}\d v\int_{\mathbb{S}_{d-1}}\sigma_1(\d\phi)\int_{\mathbb{R}}\d y \frac{c(\alpha){\rm e}^{yd}}{|{\rm e}^{y}\phi-     \Theta_{v}|^{\alpha+d}}f\left(v, \xi_v, y, \Theta_v, \phi\right)\right]\notag\\
&=\int_0^\infty\int_{\mathbb{R}}\int_{\mathbb{S}_{d-1}}\int_{\mathbb{S}_{d-1}}\int_{\mathbb{R}} V_\theta(\d v,\d x,\d \vartheta ) \sigma_1(\d\phi)\d y 
\frac{c(\alpha){\rm e}^{yd}}{|{\rm e}^{y}\phi-   \vartheta|^{\alpha+d}}f(v, x, y,\vartheta, \phi),
\label{onemorechangeofvariable}
\end{align}
as required.
%
%
\end{proof}

The radial component of an isotropic $d$-dimensional stable process, which can be singled out by Corollary \ref{|Z|}, has already been studied in Theorem \ref{radialpsi}.
%
\medskip

The second example of $d$-dimension ssMp takes inspiration from Proposition \ref{h-ssMp}. In the spirit of \eqref{updownCOM7} we define for an isotropic $d$-dimensional stable process, $(X, \mathbb{P}_x)$, $x\in\mathbb{R}^d\backslash\{0\}$, 
\begin{equation}
\left.\frac{{\rm d}\mathbb{P}^\circ_x}{{\rm d}\mathbb{P}_x}\right|_{\mathcal{F}_t} = \frac{|X_t|^{\alpha -d}
}{|x|^{\alpha -d}
}
, \qquad t\geq 0,
\label{d-circ}
\end{equation}
where $\mathcal{F}_t = \sigma(X_s, s\leq t)$.

\begin{proposition}[$\diamondsuit$] \label{h-ssMp-d}For $d\geq 2$, \eqref{d-circ} constitutes a change of measure, in the sense that the right-hand side is a martingale, and the resulting process $(X, \mathbb{P}^\circ_x)$, $x\in\mathbb{R}^d\backslash\{0\}$ is a ssMp. Moreover, $(|X|, \mathbb{P}^\circ_x)$, $x\in\mathbb{R}^d\backslash\{0\}$ is a pssMp with underlying L\'evy process that has characteristic exponent
\begin{equation}
\Psi^\circ(z) 
= 2^\alpha\frac{\Gamma(\frac{1}{2}(-{\rm i}z +d ))}{\Gamma(-\frac{1}{2}({\rm i}z+\alpha  -d))}\frac{\Gamma(\frac{1}{2}({\rm i}z +\alpha))}{\Gamma(\frac{1}{2}{\rm i}z )}, \qquad z\in\mathbb{R}.
\end{equation}
\end{proposition}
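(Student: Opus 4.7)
My strategy mirrors the one-dimensional treatment of Proposition \ref{h-ssMp}: identify \eqref{d-circ} as an Esscher-type transform acting on the L\'evy process $\xi$ that underlies the radial part of $X$ through the Lamperti transform. By Theorem \ref{radialpsi} and Corollary \ref{|Z|}, $|X|$ is a pssMp whose Lamperti L\'evy process $\xi$ has characteristic exponent $\Psi$ given by \eqref{a}, and the Lamperti identity $|X_t|^{\alpha-d}=\exp((\alpha-d)\xi_{\varphi(t)})$ rewrites the density in \eqref{d-circ} as the pathwise exponential $\exp(\gamma(\xi_{\varphi(t)}-\xi_0))$ with $\gamma=\alpha-d$. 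A direct substitution of $z=-\i\gamma=\i(d-\alpha)$ into \eqref{a} yields
\[
\Psi(-\i\gamma)\;=\;2^\alpha\,\frac{\Gamma(d/2)}{\Gamma((d-\alpha)/2)}\,\frac{\Gamma(\alpha/2)}{\Gamma(0)}\;=\;0,
\]
because $\Gamma(0)=\infty$ while the other three factors are finite. Hence $(\exp(\gamma\xi_s),s\geq 0)$ is a unit-mean exponential martingale for $\xi$, the classical L\'evy Esscher martingale at $\gamma$, and the same appeal to Theorem III.3.4 of \cite{JS} used in the proof of Proposition \ref{h-ssMp} transfers the martingale property through the Lamperti time change to $(\mathcal{F}_t,t\geq 0)$, legitimising \eqref{d-circ}.

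Next I would verify that $(X,\mathbb{P}^\circ_x)$ is a ssMp and is isotropic. The homogeneity $|cy|^{\alpha-d}=c^{\alpha-d}|y|^{\alpha-d}$ makes the rescaling computation \eqref{useagaintoprovessMp} go through verbatim (with $\tau^{\{0\}}=\infty$ almost surely, since $d\geq 2>\alpha$), producing the scaling property of index $\alpha$, while the Markov and Feller properties pass through the $h$-transform in the standard way. For isotropy, since the density depends on $X$ only through $|X|$, for any orthogonal $d$-dimensional matrix $U$, any bounded $\mathcal{F}_t$-measurable $F$ and any $x\neq 0$,
\[
\mathbb{E}^\circ_x[F(UX)]\;=\;\mathbb{E}_x\!\left[\tfrac{|UX_t|^{\alpha-d}}{|Ux|^{\alpha-d}}F(UX)\right]\;=\;\mathbb{E}_{Ux}\!\left[\tfrac{|X_t|^{\alpha-d}}{|Ux|^{\alpha-d}}F(X)\right]\;=\;\mathbb{E}^\circ_{Ux}[F(X)],
\]
using $|Uy|=|y|$ and the isotropy of $\mathbb{P}$. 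Corollary \ref{|Z|} then implies that $(|X|,\mathbb{P}^\circ_x)$ is a pssMp whose underlying L\'evy process is, by construction, precisely the classical Esscher transform of $\xi$ at parameter $\gamma=\alpha-d$.

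The characteristic exponent follows from the one-state specialisation of \eqref{MEsscher},
\[
\Psi^\circ(z)\;=\;\Psi(z-\i\gamma)-\Psi(-\i\gamma)\;=\;\Psi(z-\i(\alpha-d)),
\]
and direct substitution into \eqref{a}, together with the identity $\Gamma(-\tfrac12(\i z+\alpha-d))=\Gamma(\tfrac12(-\i z+d-\alpha))$, produces the claimed expression. The main obstacle is the first step: one must carefully justify reading $\Gamma(\alpha/2)/\Gamma(0)$ as zero --- formally as a limit in the analytic continuation of $\Psi$ --- so as to legitimately identify $\gamma=\alpha-d$ as a root of $z\mapsto\Psi(-\i z)$, and thereafter propagate the Esscher martingale through the Lamperti time change. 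The remaining steps are routine scaling, isotropy and algebraic bookkeeping.
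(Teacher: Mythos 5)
Your proposal is correct and follows essentially the same route as the paper: identify \eqref{d-circ} with the Esscher transform of the radial Lamperti L\'evy process $\xi$ at $\gamma=\alpha-d$ (using that $\Psi(-{\rm i}(\alpha-d))=0$, which is legitimate since the Laplace exponent $\Psi(-{\rm i}z)$ extends to ${\rm Re}(z)\in(-d,\alpha)$ and the reciprocal gamma function vanishes at $0$), transfer the martingale through the time change via Theorem III.3.4 of \cite{JS}, verify the ssMp/pssMp structure as in \eqref{useagaintoprovessMp} and Corollary \ref{|Z|}, and read off $\Psi^\circ(z)=\Psi(z-{\rm i}(\alpha-d))$. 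Your explicit isotropy check is a welcome (if implicit in the paper) addition needed to invoke Corollary \ref{|Z|}.
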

\begin{proof}[Proof \emph{($\diamondsuit$)}]
Recalling that $\Psi$ in \eqref{a}  is the characteristic exponent of the L\'evy process $\xi$ which underlies the radial component of a stable process in $d$-dimensions, we easily verify that $\Psi(-{\rm i}(\alpha -d)) = 0$. It follows that   
$(
\exp\{(\alpha -d)\xi_{t}\}
, t\geq 0)$ is a martingale. Moreover, under the change of measure induced by this martingale,  $\xi$ remains in the class of L\'evy processes, but now with characteristic exponent 
$\Psi^\circ(z) = \Psi(z-{\rm i}(\alpha -d))$,  $z\in\mathbb{R}$.
Noting that $\varphi(t)$ is a stopping time in the filtration of $(\xi, \Theta)$, recalling again Theorem III.3.4 of \cite{JS}, we see that \eqref{d-circ} also represents the aforesaid change of measure.

Following similar reasoning to the proof of Proposition \ref{h-ssMp}, in particular the calculations centred around \eqref{useagaintoprovessMp}, as well as incorporating the conclusion of Corollary \ref{|Z|},  it is not difficult to verify that both 
$(X, \mathbb{P}^\circ_x)$, $x\in\mathbb{R}^d\backslash\{0\}$, is a ssMp and $(|X|, \mathbb{P}^\circ_x)$, $x\in\mathbb{R}^d\backslash\{0\}$, is a pssMp. 
It follows that $\Psi^\circ$ is the characteristic exponent of the L\'evy process that underlies the Lamperti transform of $(|X|, \mathbb{P}^\circ_x)$, $x\in\mathbb{R}^d\backslash\{0\}$.
\end{proof}

The reader again notes that, for $d\geq 2$, the change of measure \eqref{d-circ} rewards paths that remain close to the origin and penalises those that stray far from the origin. Just as  in \cite{KRS} it can be shown that  $\mathbb{P}^\circ_x$, $x\in\mathbb{R}^d\backslash\{0\}$, again corresponds to the law of conditioning the stable process to continuously absorb a the origin. The origins of Proposition \ref{h-ssMp-d} can already be found in \cite{BZ}.

\subsection{Riesz--Bogdan--\.{Z}ak transform}\label{RBZ1d}

The changes of measure, (\ref{updownCOM7}) in one dimension and \eqref{d-circ} in higher dimension, also play an important role in a remarkable space-time path transformation, the Riesz--Bogdan--\.{Z}ak transform. This transformation was first introduced rigorously in \cite{BZ}, although the computational visibility of this path transformation was already implicitly on display in the work of \cite{Riesz}; see the remarks in \cite{BGR}.  Later on in this text, we will use it to analyse a number of  path functionals of stable processes in dimension $d\geq 2$. Despite the fact that we only use Riesz--Bogdan--\.{Z}ak transform in higher dimension, we also state and prove it in dimension $d=1$ for instructional purposes. The following theorem and proof  are lifted directly from \cite{Deep1}.

\begin{theorem}[The one-dimensional  Riesz--Bogdan--\.{Z}ak transform $\heartsuit$]
\label{th:BZ} Suppose that $X$ is a one-dimensional  stable process with two-sided jumps.
 Define
\[
\eta(t) = \inf\{s>0 : \int_0^s |X_u|^{-2\alpha}{\rm d}u >t\}, \qquad t\geq 0.
\]
Then, for all $x\in\mathbb{R}\backslash\{0\}$, $(-1/{X}_{\eta(t)}, \eta(t)< \tau^{\{0\}})$ under $\mathbb{P}_{x}$
is  equal in law to $(X, \mathbb{P}_{-1/x}^\circ)$, where $\mathbb{P}_x$, $x\in\mathbb{R}\backslash\{0\}$, was defined in \eqref{updownCOM7}.
\end{theorem}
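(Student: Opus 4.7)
The plan is to reduce both laws to Lamperti--Kiu representations of a single MAP. First I would write $(X, \mathbb{P}_x)$ in Lamperti--Kiu form $X_t = J_{\varphi(t)} e^{\xi_{\varphi(t)}}$, where $(\xi, J)$ is the Lamperti-stable MAP on $\mathbb{R} \times \{-1,1\}$ issued from $(\log|x|, \mathrm{sign}(x))$ and $\varphi(t) = \inf\{s > 0 : \int_0^s e^{\alpha \xi_u}\, \d u > t\}$. Substituting $v = \varphi(u)$ (so $\d u = e^{\alpha \xi_v}\, \d v$) in the definition of $\eta$ yields
\[
\int_0^s |X_u|^{-2\alpha}\, \d u
= \int_0^{\varphi(s)} e^{-2\alpha \xi_v}\, e^{\alpha \xi_v}\, \d v
= \int_0^{\varphi(s)} e^{-\alpha \xi_v}\, \d v.
\]
Setting $(\tilde\xi, \tilde J) := (-\xi, -J)$ and $\tilde\varphi(t) := \inf\{w > 0 : \int_0^w e^{\alpha \tilde\xi_v}\, \d v > t\}$, this identifies $\varphi \circ \eta = \tilde\varphi$, so that
\[
-\frac{1}{X_{\eta(t)}}
= -J_{\varphi(\eta(t))}\, e^{-\xi_{\varphi(\eta(t))}}
= \tilde J_{\tilde\varphi(t)}\, e^{\tilde\xi_{\tilde\varphi(t)}}.
\]
This is exactly the Lamperti--Kiu representation of the rssMp driven by $(\tilde\xi, \tilde J)$, issued from $\tilde J_0\, e^{\tilde\xi_0} = -\mathrm{sign}(x)/|x| = -1/x$.

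The second step is to identify the MAP $(-\xi, -J)$ with the MAP underlying $(X, \mathbb{P}^\circ)$ of Proposition~\ref{h-ssMp}. Since the law of a MAP on $\mathbb{R} \times \{-1,1\}$ is determined by its matrix characteristic exponent together with its initial state, it suffices to compare exponents. Writing $P = \left(\begin{smallmatrix} 0 & 1 \\ 1 & 0 \end{smallmatrix}\right)$ for the state-swap matrix, an elementary conditioning on the initial state in \eqref{charmxexp} gives that the exponent of $(-\xi, -J)$ is $P\, \boldsymbol{\Psi}(-z)\, P$, whose $(i,j)$-entry equals $\boldsymbol{\Psi}(-z)_{-i,-j}$. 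Substituting $z \mapsto -z$ and swapping rows and columns in \eqref{MAPHG}, then comparing entry by entry against \eqref{Fcirc7}, yields equality with $\boldsymbol{\Psi}^\circ(z)$ on the strip $\re(\mathrm{i} z) \in (-\alpha, 1)$ directly, without invoking any nontrivial gamma-function identity.

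Finally, a lifetime bookkeeping check is needed: one must verify that $\eta(t) < \tau^{\{0\}}$ on the correct $t$-interval and that the resulting lifetime matches what $\mathbb{P}^\circ_{-1/x}$ prescribes. This follows case by case from inspecting the behaviour of $\int_0^\varsigma e^{-\alpha \xi_v}\, \d v$ in the three regimes recorded after the Lamperti-stable matrix exponent: $\xi$ drifts to $+\infty$ when $\alpha \in (0,1)$ (so the transformed process is absorbed at $0$ in finite time, matching conditioning to be continuously absorbed); $\xi$ oscillates when $\alpha = 1$; and $\xi$ drifts to $-\infty$ when $\alpha \in (1,2)$ (so the transformed process never hits $0$, matching conditioning to avoid the origin). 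I expect this lifetime bookkeeping to be the only fiddly part of the proof; the spatial inversion and matrix-exponent identifications are essentially immediate once the sign flip and state swap are made.
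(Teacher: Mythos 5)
Your proposal is correct and takes essentially the same route as the paper's proof: both identify $\varphi\circ\eta$ as the Lamperti--Kiu clock of the negated ordinator, conclude that $-1/X_{\eta(\cdot)}$ is a rssMp whose underlying MAP is the sign-and-state-flipped Lamperti-stable MAP, and then match its matrix exponent against $\boldsymbol{\Psi}^\circ$ in \eqref{Fcirc7}, with the same lifetime check via the a.s.\ divergence of $\int_0^\infty {\rm e}^{-\alpha\xi_u}\,{\rm d}u$ when $\alpha\in(1,2)$. The only cosmetic difference is that the paper first replaces $X$ by $-X$ (interchanging $\rho$ and $\hat\rho$), so that the modulator needs no flip since $1/\hat J=\hat J$, whereas you flip $(\xi,J)$ directly and conjugate the exponent by the state-swap matrix; the two computations coincide entry by entry.
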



\begin{proof}[Proof \emph{($\heartsuit$)}]
First note that, if $X$ is an $(\alpha,\rho)$ stable process, then  $-X$ is a $(\alpha,\hat\rho)$ stable process.  Next, we show that $(-1/{X}_{\eta(t)}, \eta(t)< \tau^{\{0\}})$ is a rssMp with index $\alpha$ by analysing its Lamperti--Kiu decomposition. 

To this end, note that, if $(\hat{\xi}, \hat{J})$ is the MAP that underlies  $\hat{X}: =-X$, then its matrix exponent, say $\hat{\boldsymbol\Psi}(z)$, is equal to (\ref{MAPHG}) with the roles of $\rho$ and $\hat\rho$ interchanged. As $\hat{X}$ is a rssMp, we have 
\[
\hat{X}_t = {\rm e}^{
\hat{\xi}_{\varphi(t)}
}
\hat{J}_{\varphi(t)},\qquad t<\tau^{\{0\}},
\]
where 
\[
\int_0^{\varphi(t)}{\rm e}^{\alpha\hat{\xi}_{s}}{\rm d}s = t. 
\]
Noting that 
\[
\int_0^{\eta(t)} {\rm e}^{-2\alpha \hat{\xi}_{\varphi(u)}}
{\rm d}u = t, \qquad \eta(t)<\tau^{\{0\}},
\]
a straightforward differentiation of the last two integrals shows  that, respectively, 
\[
\frac{{\rm d}\varphi(t)}{{\rm d}t} = {\rm e}^{-\alpha \hat{\xi}_{\varphi(t)}}\text{ and }
\frac{{\rm d}\eta(t)}{{\rm d}t} ={\rm e}^{2\alpha \hat{\xi}_{\varphi\circ\eta(t)}}, \qquad\eta( t)<\tau^{\{0\}}.
\]
The chain rule now tells us that 
\begin{equation}
\frac{{\rm d}(\varphi\circ\eta)(t)}{{\rm d}t} = \left.\frac{{\rm d}\varphi(s)}{{\rm d}s}\right|_{s = \eta(t)}\frac{{\rm d}\eta(t)}{{\rm d}t}  = {\rm e}^{\alpha \hat{\xi}_{\varphi\circ\eta(t)}},
\label{repeatinddim}
\end{equation}
and hence,
\[
\int_0^{\varphi\circ\eta(t)}{\rm e}^{-\alpha \hat{\xi}_u} {\rm d}u = t, \qquad \eta(t)<\tau^{\{0\}}.
\]
The qualification that $\eta(t)<\tau^{\{0\}}$ only matters when $\alpha\in(1,2)$. In that case, the fact that $\mathbb{P}_x(\tau^{\{0\}}<\infty) = 1$ for all $x\in\mathbb{R}$ implies that  $\lim_{t\to\infty}\hat{\xi}_t = -\infty$ almost surely. As a consequence, it follows that $\int_0^\infty{\rm e}^{-\alpha \hat{\xi}_u} {\rm d}u = \infty $ and hence $\lim_{t\to\infty}\varphi\circ\eta(t) = \infty$. That is to say, we have $\lim_{t\to\infty}\eta(t)= \tau^{\{0\}}$.
Noting that $1/\hat{J}_s = \hat{J}_s$, $s\geq 0$, it now follows that 
\[
\frac{1}{\hat{X}_{\eta (t)}} 
=  {\rm e}^{-\hat{\xi}_{\varphi\circ\eta(t)} }\hat{J}_{\varphi\circ\eta (t)} , \qquad t<\tau^{\{0\}},
\]
is the representation of a rssMp whose underlying MAP has matrix exponent given by $\hat{\boldsymbol\Psi}(-z)$, whenever it is well defined. Recalling the definition of $\hat{\boldsymbol\Psi}(z)$, we see that the MAP that underlies $(-1/X_{\eta(t)})_{t\geq 0}$ via the Lamperti--Kiu transform is identically equal in law to the MAP with matrix exponent
given by $\boldsymbol\Psi^\circ$ given in \eqref{Fcirc7}. 
 \end{proof}


Finally, we give the $d$-dimensional version of the Riesz--Bogdan--\.{Z}ak transformation is also available for higher dimensional, albeit isotropic, stable processes. Our proof differs from that of \cite{BZ}, appealing to L\'evy systems rather than potentials. 
Define the  transformation $K:\mathbb{R}^d\backslash\{0\}\to \mathbb{R}^d\backslash\{0\}$, by 
\[
Kx = \frac{x}{|x|^2},\qquad x\in\mathbb{R}^d\backslash\{0\}.
\]
This transformation inverts space through the unit sphere $\{x\in\mathbb{R}^d: |x|=1\}$ and accordingly, it is not surprising that $K(Kx) = x$.
To see how the $K$-transform maps $\mathbb{R}^d\backslash\{0\}$ into itself, write $x\in\mathbb{R}^d\backslash\{0\}$ in skew product form $x = (|x|, {\rm Arg}(x))$, and note that 
\[
K x = (|x|^{-1}, {\rm Arg}(x)), \qquad x\in\mathbb{R}^d\backslash\{0\},
\]
showing that the $K$-transform `radially inverts' elements of $\mathbb{R}^d\backslash\{0\}$ through  $\mathbb{S}_{d-1}$. 


\begin{theorem}[$d$-dimensional Riesz--Bogdan--\.{Z}ak Transform, $d\geq 2$ $\heartsuit$]\label{BZ} Suppose that $X$ is a $d$-dimensional isotropic stable process with $d\geq 2$. 
Define
\begin{equation}
\eta(t) = \inf\{s>0 : \int_0^s |X_u|^{-2\alpha}{\rm d}u >t\}, \qquad t\geq 0.
\label{etatimechange}
\end{equation}
Then, for all $x\in\mathbb{R}^d\backslash\{0\}$, $(KX_{\eta(t)}, t\geq 0)$ under $\mathbb{P}_{x}$ is equal in law to $(X, \mathbb{P}_{Kx}^{\circ})$.
\end{theorem}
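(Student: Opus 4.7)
The plan is to mirror the one-dimensional argument for Theorem \ref{th:BZ}, namely to translate both sides into their generalised Lamperti--Kiu representations (Theorem \ref{genLK}) and then identify the two underlying MAPs by comparing their Lévy systems through Theorem \ref{MAPjumps}.

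First I would write $X_t = {\rm e}^{\xi_{\varphi(t)}}\Theta_{\varphi(t)}$ under $\mathbb{P}_x$ and, mimicking the chain-rule calculation around \eqref{repeatinddim}, differentiate the defining relations of $\varphi$ and $\eta$ to obtain $\frac{d(\varphi\circ\eta)(t)}{dt} = {\rm e}^{\alpha\xi_{\varphi\circ\eta(t)}}$ on $\{\eta(t)<\tau^{\{0\}}\}$. Writing $\tilde\varphi := \varphi\circ\eta$, this means $\int_0^{\tilde\varphi(t)}{\rm e}^{-\alpha\xi_u}\,du = t$, and since $KX_{\eta(t)} = |X_{\eta(t)}|^{-1}\,{\rm Arg}(X_{\eta(t)}) = {\rm e}^{-\xi_{\tilde\varphi(t)}}\Theta_{\tilde\varphi(t)}$, Theorem \ref{genLK} identifies $(KX_{\eta(t)})_{t\ge 0}$ as the ssMp whose underlying MAP is $(-\xi,\Theta)$, started from $(\log|Kx|,{\rm Arg}(Kx))$. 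Meanwhile, by Proposition \ref{h-ssMp-d}, together with Corollary \ref{|Z|} and the fact that the radial exponent \eqref{a} satisfies $\Psi(-{\rm i}(\alpha-d))=0$, the change of measure \eqref{d-circ} is precisely the MAP Esscher transform of Proposition \ref{p:mg and com} at parameter $\gamma = \alpha-d$. Isotropy forces the right-eigenvector $\boldsymbol v(\gamma)$ to be constant on $\mathbb{S}_{d-1}$, so the $\boldsymbol v$-ratios in \eqref{MAPCOM} cancel and the net effect on the Lévy system of $(\xi,\Theta)$ is a multiplicative tilt by ${\rm e}^{(\alpha-d)y}$ at each jump.

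It remains to verify that the two candidate MAPs agree. Both are pure-jump (stable processes carry no Brownian component, and Esscher transformation does not introduce one) and both start from the same point, so it suffices to match their joint jump kernels via Theorem \ref{MAPjumps}. For $(-\xi,\Theta)$, the substitution $y\mapsto -y$ in the kernel yields
\[
\sigma_1(d\phi)\,dy\,\frac{c(\alpha)\,{\rm e}^{-yd}}{|{\rm e}^{-y}\phi-\vartheta|^{\alpha+d}};
\]
using the elementary identity $|{\rm e}^{-y}\phi-\vartheta|^2 = {\rm e}^{-2y}|{\rm e}^y\phi-\vartheta|^2$, which follows by direct expansion because $|\phi|=|\vartheta|=1$, this rewrites as $\sigma_1(d\phi)\,dy\,c(\alpha){\rm e}^{\alpha y}/|{\rm e}^y\phi-\vartheta|^{\alpha+d}$. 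For $(X,\mathbb{P}^\circ)$, the tilt by ${\rm e}^{(\alpha-d)y}$ applied to the kernel of Theorem \ref{MAPjumps} produces exactly the same expression. Hence the two MAPs have identical laws and the theorem follows.

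The main obstacle is the correctness of the Esscher-transform identification in the continuous-modulator setting — in particular, justifying that isotropy trivialises the $\boldsymbol v$-eigenvector on $\mathbb{S}_{d-1}$ so that Proposition \ref{p:mg and com}, as stated for finite $E$, carries through intact. Once this is in hand, the Lévy-system matching is essentially algebraic, and the only delicate bookkeeping is checking that $\lim_{t\to\infty}\eta(t) = \tau^{\{0\}}$ so that the Lamperti--Kiu lifetimes on the two sides correspond; when $d\ge 2$ and $\alpha<d$ this is automatic since $\xi$ drifts to $+\infty$ by Corollary \ref{|Z|} and transience, forcing $\int_0^\infty{\rm e}^{-\alpha\xi_u}\,du=\infty$.
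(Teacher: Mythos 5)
Your first half coincides with the paper's: the chain-rule computation as in \eqref{repeatinddim} identifies $(KX_{\eta(t)},t\geq 0)$ as the ssMp with underlying MAP $(-\xi,\Theta)$, and your kernel-matching algebra (the tilt ${\rm e}^{(\alpha-d)y}$ applied to the kernel of Theorem \ref{MAPjumps}, the substitution $y\mapsto -y$, and the identity $|{\rm e}^{-y}\phi-\vartheta|^2={\rm e}^{-2y}|{\rm e}^{y}\phi-\vartheta|^2$ for $|\phi|=|\vartheta|=1$) is exactly the algebra carried out in the paper's proof. The gap is the step you yourself flag as the main obstacle, and it is genuine: you justify the claim that under \eqref{d-circ} the Lévy system of $(\xi,\Theta)$ is tilted by ${\rm e}^{(\alpha-d)y}$ by invoking Proposition \ref{p:mg and com}, but that proposition (Perron--Frobenius eigenvalue, eigenvector $\boldsymbol v(\gamma)$, matrix exponent \eqref{MEsscher}) is formulated only for MAPs with a \emph{finite} modulating state space, and there is no such spectral apparatus set up in the paper for the modulator $\mathbb{S}_{d-1}$. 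Asserting that ``isotropy forces $\boldsymbol v(\gamma)$ to be constant'' presupposes the very extension that is missing, so as written the crucial identification of the jump structure under $\mathbf{P}^\circ$ is not proved.

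The paper fills precisely this hole without any Esscher/eigenvector machinery: the martingale property of $\mathcal{M}_t={\rm e}^{(\alpha-d)\xi_t}$ follows directly from $\Psi(-{\rm i}(\alpha-d))=0$ for the radial exponent \eqref{a} (Proposition \ref{h-ssMp-d} and Corollary \ref{|Z|}), and then the effect of the change of measure on the jumps is computed by hand: one applies the semimartingale product formula to $\mathcal{M}_t\Sigma_t$, where $\Sigma$ is the sum functional of the jumps as in \eqref{smg}, observes that the martingale integral term vanishes in expectation, and is left with the weight ${\rm e}^{(\alpha-d)(\xi_{s-}+\Delta\xi_s)}$ inside the compensation-formula computation of Theorem \ref{MAPjumps}; this yields \eqref{statedependentjump}, which is the rigorous form of your ``tilt by ${\rm e}^{(\alpha-d)y}$''. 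If you replace your appeal to Proposition \ref{p:mg and com} by this direct computation (or by an independently proved change-of-measure result for Lévy systems with continuum modulation), your argument becomes essentially the paper's proof. A smaller caveat: your assertion that both MAPs are ``pure jump, hence determined by their jump kernels'' needs care, since the ordinate $\xi$ of the radial Lamperti transform is not a pure-jump Lévy process in general; the paper instead argues at the level of the $\mathbb{R}^d$-valued process, whose generator \eqref{circgenerator} shows it is a pure-jump Feller process characterised by the jump rates $\Pi^\circ(x,\cdot)$.
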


\begin{proof}[Proof \emph{($\clubsuit$)}]
As with the proof of Theorem \ref{RBZ1d}, it is straightforward to check that $(KX_{\eta (t)}, t\geq 0)$ is a ssMp. Indeed, in skew product form, 
\[
KX_{\eta(t)} = {\rm e}^{-\xi_{\varphi\circ\eta(t)}}\Theta_{ \varphi\circ\eta(t) }, \qquad t\geq0,
\]
and, just as in the computation \eqref{repeatinddim}, one easily verifies again that
\[
\varphi\circ\eta(t) = \inf\left\{s>0: \int_0^s {\rm e}^{-\alpha \xi_u}\d u >t\right\}.
\]
It is thus clear that $(KX_{\eta (t)}, t\geq 0)$ is a ssMp with underlying MAP equal to $(-\xi, \Theta)$. To complete the proof, it therefore suffices to check that $(-\xi,\Theta)$ is also the MAP which underlies the ssMp $(X, \mathbb{P}^\circ_x)$, $x\in\mathbb{R}^d\backslash\{0\}$. 

To this end, 
we note that $(X, \mathbb{P}^\circ_x)$, $x\in\mathbb{R}^d\backslash\{0\}$, is a pure jump process and hence entirely characterised by its jump rate. To understand why at a heuristic level, note that, as a Feller process, it is 
is in possession of an infinitesimal generator, say $\mathcal{L}^\circ$. Indeed,  standard theory tells us that 
\begin{equation}
\mathcal{L}^\circ f (x) = \lim_{t\downarrow0}\frac{\mathbb{E}^\circ_x[f(X_t)] -f(x)}{t} = 
\lim_{t\downarrow0}\frac{\mathbb{E}_x[|X_t|^{\alpha - d}f(X_t)] -|x|^{\alpha -d}f(x)}{|x|^{\alpha -d}t}, 
\label{easyformg}
\end{equation}
for  twice continuously differentiable and compactly supported functions $f$,
where $x\in\mathbb{R}^d\backslash\{0\}$. That is to say 
\begin{equation}
\mathcal{L}^\circ f(x) = \frac{1}{h(x)}\mathcal{L}(hf)(x),
\label{generatordoobh}
\end{equation}
where $h(x) = |x|^{\alpha - d}$ and $\mathcal{L}$ is the infinitesimal generator of the stable process, which has action
\[
\mathcal{L}f(x) =\texttt{a}\cdot\nabla f(x)+ \int_{\mathbb{R}^d} [f(x+y) -f(x) - \mathbf{1}_{(|y|\leq 1)}y\cdot\nabla f(x)]\Pi(\d y),\qquad x\in\mathbb{R}^d,
\]
for  twice continuously differentiable and compactly supported functions $f$, where $\texttt{a}$ is an appropriately valued vector in $\mathbb{R}^d$.
Straightforward algebra, appealing to the fact that $\mathcal{L}h =0$  shows that, for a twice continuously differentiable and compactly supported functions, $f$, the infinitesimal generator of the conditioned process \eqref{generatordoobh} takes the form
\begin{align}
\mathcal{L}^\circ f(x)= \texttt{a}\cdot\nabla f(x)+ \int_{\mathbb{R}^d} [f(x+y) -f(x) - \mathbf{1}_{(|y|\leq 1)}y\cdot\nabla f(x)]\frac{h(x+y)}{h(x)}\Pi(\d y),
\label{circgenerator}
\end{align}
for $|x|>0,$ where
$\Pi$ is the stable L\'evy measure given in \eqref{jumpmeasure}.
The integral component in  $\mathcal{L}^\circ$ tells us that the instantaneous rate at which jumps arrive for the conditioned process when positioned at $x$ is given by  
\begin{align*}
\Pi^\circ(x, B) &:=\int_B \frac{h(x+y)}{h(x)}\Pi(\d y) \notag\\
\end{align*}
where $|x|>0$ and $B$ is Borel in $\mathbb{R}^d$.

 As a consequence of jump rates entirely characterising $(X, \mathbb{P}^\circ_x)$, $x\in\mathbb{R}^d\backslash\{0\}$,  if $\mathbf{P}^\circ_{r,\vartheta}$, $r>0$ and $\vartheta\in\mathbb{S}_{d-1}$, is the law of the associated  MAP, we should expect to see a similar calculation to \eqref{onemorechangeofvariable} with the same jump rates for the modulator and ordinator, albeit that there is the opposite  sign in the discontinuity of the ordinator.

To examine the discontinuities of the modulator and ordinator of the MAP under $\mathbb{P}^\circ_{x}$, $x\in\mathbb{R}^d\backslash\{0\}$, suppose that $f$ is a positive, bounded measurable function on $[0,\infty)\times\mathbb{R}\times\mathbb{R}\times\mathbb{S}_{d-1}\times\mathbb{S}_{d-1}$ such that $f(\cdot, \cdot,0, \cdot,\cdot) = 0$.
Write
\begin{align}
\mathbf{E}^\circ_{0,\theta}&\left[\sum_{s>0}f(s,\xi_{s-},\Delta\xi_{s},\Theta_{s-}, \Theta_s)\right]\notag\\
&= \lim_{t\to\infty}\mathbf{E}_{0,\theta}\left[\mathcal{M}_t
\sum_{0<s\leq t}f(s,\xi_{s-},\Delta\xi_{s},\Theta_{s-}, \Theta_s)\right]\label{MCT}
\end{align}
where 
$\mathcal{M}_t  = \exp\{(\alpha-d)\xi_t\}$, $t\geq 0$,is the martingale density corresponding to \eqref{d-circ} and the limit is justified by monotone convergence.
Suppose we write $\Sigma_t$ for the sum term in the final expectation above. The semi-martingale change of variable formula (see for example p86 of \cite{protter}) tells us that 
\[
 \mathcal{M}_t\Sigma_t = \mathcal{M}_0(\theta)\Sigma_0 + \int_0^t \Sigma_{s-}\d \mathcal{M}_s + \int_0^t\mathcal{M}_{s-}\d \Sigma_s +  [\mathcal{M}, \Sigma]_t, \qquad t\geq0,
\]
where $[\mathcal{M}, \Sigma]_t$ is the quadratic co-variation term. 
On account of the fact that  $(\Sigma_t, t\geq0)$, has bounded variation, the latter term takes the form $[\mathcal{M}, \Sigma]_t = \sum_{s\leq t}\Delta M_t \Delta \Sigma_ t$. As a consequence,
\begin{equation}
 \mathcal{M}_t\Sigma_t = \mathcal{M}_0(\theta)\Sigma_0 + \int_0^t \Sigma_{s-}\d \mathcal{M}_s + \int_0^t\mathcal{M}_{s}\d \Sigma_s , \qquad t\geq0.
\label{smg}
\end{equation}
Moreover, after taking expectations and then taking limits as $t\to\infty$ with the help of \eqref{MCT} and monotone convergence, as the first in integral in \eqref{smg} is a martingale and $\Sigma_0 =0$, the only surviving terms give us 
\begin{align*}
&\mathbf{E}^\circ_{0,\theta}\left[\sum_{s>0}f(s,,\xi_{s-},\Delta\xi_{s},\Theta_{s-}, \Theta_{s})\right] \\
&=\mathbf{E}_{0,\theta}\left[\sum_{s>0} {\rm e}^{(\alpha-d)(\xi_{s-}+\Delta\xi_s)}
f(s,,\xi_{s-},\Delta\xi_{s},\Theta_{s-}, \Theta_{s})\right]\\
&=\mathbb{E}_{\theta}\Bigg[
\int^{\infty}_{0}\hspace{-3pt}\d s\int_{ \mathbb{S}_{d-1}}\hspace{-3pt}{\sigma_1(\d\phi)}\int_{0}^\infty\hspace{-3pt}
\frac{c(\alpha)\d r}{r^{1+\alpha}} |X_{s-}|^{\alpha - d}
\left|{\rm Arg}(X_{s-}) + \frac{r\phi }{|X_{s-}|^{\alpha - d}}\right|^{\alpha - d}\notag\\
&\hspace{1cm}f\Bigg(\varphi(s), \log|X_{s-}|, \log\left|{\rm Arg}(X_{s-})+\frac{r\phi}{|X_{s-}|}\right|, {\rm Arg}(X_{s-}), \frac{{\rm Arg}(X_{s-})+\frac{r\phi}{|X_{s-}|}}{\left|{\rm Arg}(X_{s-})+\frac{r\phi}{|X_{s-}|}\right|}\Bigg)
\Bigg]\notag\\
\end{align*}
Now picking up from the second equality of \eqref{onemorechangeofvariable} with $f(\cdot, \xi, \Delta, \cdot,\cdot)$ replaced by $\exp((\alpha-d)(\xi+\Delta))f(\cdot,\xi, \Delta,\cdot,\cdot)$, we get

\begin{align*}
&\mathbf{E}^\circ_{0,\theta}\left[\sum_{s>0}f(s,\xi_{s-},\Delta\xi_{s},\Theta_{s-}, \Theta_{s})\right] \notag\\
&=
\mathbf{E}_{0,\theta}\Bigg[\int^{\infty}_{0}\hspace{0pt}\d v\int_{ \mathbb{S}_{d-1}}\hspace{0pt}{\sigma_1(\d\phi)}\int_{0}^\infty \d r 
\frac{c(\alpha)}{r^{1+\alpha}}{\rm e}^{(\alpha-d)\xi_v}\left|\Theta_{v}+r\phi\right|^{\alpha-d}\notag\\
&\hspace{5cm}
f\left(v, \xi_v, \log\left|\Theta_{v}+r\phi\right|,\Theta_v,  \frac{\Theta_{v}+r\phi}{\left|\Theta_{v}+r\phi\right|}\right)\Bigg]\notag\\
&=\mathbf{E}_{0,\theta}\left[\int^{\infty}_{0}\d v\int_{ \mathbb{R}^{d}} \d z \frac{c(\alpha) }{|z|^{\alpha+d}}{\rm e}^{(\alpha-d)\xi_v}
\left|\Theta_{v}+z\right|^{\alpha-d}
f\left(v,\xi_v, \log\left|\Theta_{v}+z\right|, \Theta_v, \frac{\Theta_{v}+z}{\left|\Theta_{v}+z\right|}\right)\right]\notag\\
&=\mathbf{E}_{0,\theta}\left[\int^{\infty}_{0}\d v\int_{\mathbb{S}_{d-1}}\sigma_1(\d\phi)\int_{0}^\infty\d r \frac{c(\alpha)r^{\alpha-1}}{|r\phi-     \Theta_{v}|^{\alpha+d}}
{\rm e}^{(\alpha-d)\xi_v}
f\left(v, \xi_v, \log r, \Theta_v, \phi\right)\right]\notag\\
&=\int_{\mathbb{S}_{d-1}}\int_0^\infty\int_{\mathbb{R}}\int_{\mathbb{S}_{d-1}}\int_{\mathbb{R}}V_\theta(\d v, \d x, \d \vartheta){\rm e}^{(\alpha-d)x}\sigma_1(\d\phi)\d y \frac{c(\alpha){\rm e}^{y\alpha}}{|{\rm e}^{y}\phi-     \vartheta|^{\alpha+d}}f\left(v,x, y, \vartheta, \phi\right)\notag\\
&=\int_{\mathbb{S}_{d-1}}\int_0^\infty\int_{\mathbb{R}}\int_{\mathbb{S}_{d-1}}\int_{\mathbb{R}}V_\theta(\d v, \d x, \d \vartheta){\rm e}^{(\alpha-d)x} \sigma_1(\d\phi)\d w \frac{c(\alpha){\rm e}^{wd}}{|\phi-    {\rm e}^{w} \vartheta|^{\alpha+d}}f\left(v,x,  -w, \vartheta, \phi\right)\notag\\
&=\int_{\mathbb{S}_{d-1}}\int_0^\infty\int_{\mathbb{R}}\int_{\mathbb{S}_{d-1}}\int_{\mathbb{R}}V_\theta(\d v, \d x, \d \vartheta){\rm e}^{(\alpha-d)x}\sigma_1(\d\phi)\d w \frac{c(\alpha){\rm e}^{wd}}{|{\rm e}^{w} \phi-    \vartheta|^{\alpha+d}}f\left(v, x, -w, \vartheta, \phi\right),
\end{align*}
 where we convert to Cartesian coordinates in the second equality and  back skew product variables in the third equality. In the penultimate equality we simply change variables $y = -w$ and in the final equality we note that $|\phi-    {\rm e}^{w} \vartheta|^2 = | {\rm e}^{w} \phi-   \vartheta|^2$ on account of the fact that 
\[
 (\phi-    {\rm e}^{w} \vartheta)\cdot(\phi-    {\rm e}^{w} \vartheta) = 1 - 2{\rm e}^{w} \vartheta\cdot\phi + {\rm e}^{2w} 
=({\rm e}^{w} \phi-    \vartheta)\cdot( {\rm e}^{w}\phi-    \vartheta)
\]

In conclusion, we have 
\begin{align}
&\mathbf{E}^\circ_{0,\theta}\left[\sum_{s>0}f(s,\xi_{s-},\Delta\xi_{s},\Theta_{s-}, \Theta_{s})\right] \notag\\
&=\int_0^\infty\int_{\mathbb{R}}\int_{\mathbb{S}_{d-1}}\int_{\mathbb{S}_{d-1}}\int_{\mathbb{R}}V^\circ_\theta(\d v, \d x, \d \vartheta)\sigma_1(\d\phi)\d w \frac{c(\alpha){\rm e}^{wd}}{|{\rm e}^{w} \phi-    \vartheta|^{\alpha+d}}f\left(v, x, -w, \vartheta, \phi\right),
\label{statedependentjump}
\end{align}
where for $s>0$, $x\in\mathbb{R}$, $\vartheta\in\mathbb{S}_{d-1}$, 
\[
V^\circ_\theta(\d s, \d x, \d \vartheta) = \mathbf{P}^\circ_{0,\theta}(\xi_s \in \d x, \Theta_s\in \d\vartheta)\d s =V_\theta(\d s, \d x, \d \vartheta) {\rm e}^{(\alpha -d)x},
\]
is the space-time potential of $(\xi, \Theta)$,

Comparing the right-hand side of \eqref{statedependentjump} above with that of \eqref{MAPdeltatostable}, it now becomes clear that the jump structure of $(\xi,\Theta)$ under $\mathbf{P}^\circ_{x,\theta}$, $x\in\mathbb{R}$, $\theta\in\mathbb{S}_{d-1}$, is precisely that of $(-\xi,\Theta)$ under $\mathbf{P}_{x,\theta}$, $x\in\mathbb{R}$, $\theta\in\mathbb{S}_{d-1}$.

In conclusion, this is now sufficient to deduce that $(X,\mathbb{P}^\circ_{Kx})$, $|x|>0$, is equal in law to $(KX_{\eta(t)}, t\geq 0)$ under $\mathbb{P}_{x}$, as both are self-similar Markov processes with the same underlying MAP.
\end{proof}

Reviewing the proofs of the previous two theorems we also have the below Corollary at no extra cost. 
\begin{corollary}[$\diamondsuit$]\label{circtominus} When $d =1$, the process $(\xi^\circ, J^\circ)$ is equal in law to $(-\xi,J)$ and, when $d\geq 2$, the process $(\xi^\circ, \Theta^\circ)$ is equal in law to $(-\xi, \Theta)$.
\end{corollary}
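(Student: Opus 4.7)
The claim is extracted directly from the proofs of Theorems \ref{th:BZ} and \ref{BZ}, combined with the bijectivity of the Lamperti--Kiu transform. My plan is as follows. First, I would recall from those proofs that the MAP underlying $(KX_{\eta(t)}, t\geq 0)$ via Lamperti--Kiu has already been read off explicitly: in one dimension it is $(-\hat\xi,\hat J)$, where $(\hat\xi,\hat J)$ is the MAP of $\hat X := -X$, obtained from the identity $1/\hat X_{\eta(t)} = \mathrm{e}^{-\hat\xi_{\varphi\circ\eta(t)}}\hat J_{\varphi\circ\eta(t)}$; in dimension $d\geq 2$ it is $(-\xi,\Theta)$, obtained analogously from $KX_{\eta(t)} = \mathrm{e}^{-\xi_{\varphi\circ\eta(t)}}\Theta_{\varphi\circ\eta(t)}$ together with the chain-rule computation at \eqref{repeatinddim}.

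Second, Theorems \ref{th:BZ} and \ref{BZ} both identify the law of $(KX_{\eta(t)}, t\geq 0)$ under $\mathbb{P}_x$ with that of $(X,\mathbb{P}^\circ_{Kx})$. But $(X,\mathbb{P}^\circ_{\cdot})$ is a ssMp whose underlying MAP, by the naming convention used in the paper, is precisely $(\xi^\circ, J^\circ)$ in one dimension and $(\xi^\circ,\Theta^\circ)$ in dimension $d\geq 2$. Uniqueness of the Lamperti--Kiu representation (Theorems \ref{LKtheorem} and \ref{genLK}) now forces the two MAPs attached to the common ssMp to agree in law. In the higher-dimensional case this already produces $(\xi^\circ,\Theta^\circ)\stackrel{d}{=}(-\xi,\Theta)$, i.e.\ the second statement of the corollary.

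In the one-dimensional case we are left with $(\xi^\circ, J^\circ) \stackrel{d}{=} (-\hat\xi,\hat J)$, and the remaining task is to identify $(-\hat\xi,\hat J)$ with $(-\xi,J)$. This follows because $(\hat\xi,\hat J)$ is nothing but $(\xi,J)$ after a relabelling of the two states of the modulator: $|{-X}|=|X|$ gives $\hat\xi = \xi$ pathwise, while $\mathrm{sign}(-X)=-\mathrm{sign}(X)$ swaps the two states, which on the level of the matrix exponent \eqref{MAPHG} is precisely the interchange of $\rho$ and $\hat\rho$ noted at the start of the proof of Theorem \ref{th:BZ}. Under this canonical identification of the abstract two-state spaces, $(-\hat\xi,\hat J)$ reads as $(-\xi,J)$ in law. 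The one delicate point throughout is careful bookkeeping of this state relabelling; no genuine analytic obstacle arises, which is why the corollary indeed comes ``at no extra cost''.
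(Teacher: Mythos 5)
Your argument is correct and essentially coincides with the paper's own justification: the paper simply observes that the corollary comes ``at no extra cost'' from reviewing the proofs of Theorems \ref{th:BZ} and \ref{BZ}, which is exactly what you do by reading off the MAP underlying the spatially inverted, time-changed process and then invoking the Lamperti--Kiu identification of that process with $(X,\mathbb{P}^\circ_\cdot)$. The only point you treat more explicitly than the paper is the $d=1$ bookkeeping of the modulator relabelling ($\hat J=-J$, equivalently the interchange of $\rho$ and $\hat\rho$ in \eqref{MAPHG}), which the paper glosses over in the same spirit, so no genuinely different route or gap is involved.
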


\part{One dimensional results on (-1,1)}
\setcounter{section}{0}

\section{Wiener--Hopf precursor}

Having developed the relationship between several path functionals of stable processes and the class of pssMp, we shall go to work and show how an explicit understanding of their Lamperti transform leads to a suite of  fluctuation identities for the stable process.  In essence, we will see that all of the identities we are interested in can be rephrased in terms of the  L\'evy processes that underly  the three examples of the Lamperti transform given in Sections \ref{killed*}, \ref{CSPsection} and \ref{radius}. The specific nature of the Wiener--Hopf factorisation for these three classes, together with some associated classical theory for the first passage problem over a fixed level is what gives us access to explicit results. 

Recall that the   Wiener--Hopf factorisation \eqref{SWHF}, when explicit, gives access to the Laplace exponents of the ascending and descending ladder height processes. In turn,  this also gives us access to the ladder height potentials 
\begin{equation}
U(x) = \int_0^\infty \mathbb{P}(H_t \leq x)\d t\quad \text{ and }\quad \hat{U}( x) = \int_0^\infty \mathbb{P}(\hat{H}_t \leq   x)\d t, \qquad x\geq 0.
\label{ladderpotentials}
\end{equation}
whose respective Laplace transforms are $\kappa^{-1}$ and $\hat{\kappa}^{-1}$, assuming that an inversion is possible. The basic pretext of the general Wiener--Hopf theory that becomes of use to us is the so-called triple law at first passage and its various simpler forms; see Chapter 7 of \cite{Kypbook}, Chapter 5 of \cite{D} or \cite{DK}.

\begin{theorem}[$\heartsuit$]
\label{triple}$\mbox{ }$ 
\begin{itemize}
\item[(i)] Suppose that $Y$ is a (killed) L\'evy process, but not a compound Poisson process, and neither $Y$ nor $-Y$ is a subordinator. Write ${\rm P}$ for its law when issued from the origin. Then, 
for each ${a}>0$, we have on $u>0$, $v\geq y$, $y\in[0,{a}]$, $s,t\geq
0$,
\begin{eqnarray}
&& \mathrm{P}(
 Y_{T^+_a}\! -\! {a}\in {\D}u, {a} \!-\! Y_{T^+_a-} \in {\D}v , {a}\!-\! \overline{Y}_{T^+_a
-}\in {\D}y\, ;\, T^+_a<\infty) \notag\\
&&\hspace{5cm} = U( {a}-{\D}y){\hat U}({\D}v-y)\Lambda({\D}u + v),
\label{creep?}
\end{eqnarray}
where $\Lambda$ is the L\'evy measure of $Y$, $\overline{Y}_t = \sup_{s\leq t}Y_s$, $t\geq 0$, and $T^+_a = \inf\{t>0: Y_t>a\}$.

\item[(ii)] In the case that $Y$ is a subordinator we have, for each ${a}>0$,  $u>0$, $y\in[0,{a}]$, $s,t\geq
0$,
\begin{eqnarray}
&& \mathrm{P}(
 Y_{T^+_a}\! -\! {a}\in {\D}u, {a} \!-\! Y_{T^+_a-} \in {\D}v , {a}\!-\! \overline{Y}_{T^+_a
-}\in {\D}y\, ;\, T^+_a<\infty) \notag\\
&&\hspace{6cm} = U( {a}-{\D}y)\Lambda({\D}u + y),
\label{creep?}
\end{eqnarray}
where, again, $\Lambda$ is the L\'evy measure of $Y$.
\end{itemize}
\end{theorem}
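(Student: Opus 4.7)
The plan is to combine the compensation formula for the Poisson point process of jumps of $Y$ with the classical joint potential identity for the pair $(\overline{Y},\overline{Y}-Y)$ that encodes the Wiener--Hopf factorisation \eqref{SWHF}.

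First I would observe that, on the event $\{T^+_a<\infty\}$ with strictly positive overshoot, the crossing of $a$ necessarily occurs at a jump; conversely, any jump time $t$ with $\overline{Y}_{t-}\le a$ and $Y_{t-}+\Delta Y_t > a$ must equal $T^+_a$. Since $Y_{t-}$ and $\overline{Y}_{t-}$ are predictable, the master formula applied to the Poisson point process of pairs $(t,\Delta Y_t)$, with compensator $\mathrm{d}t\,\Lambda(\mathrm{d}z)$, gives for nonnegative measurable $f(u,v,y)$ supported on $u>0$, $v\ge y$, $y\in[0,a]$,
\begin{align*}
&\mathrm{E}[f(Y_{T^+_a}-a,\,a-Y_{T^+_a-},\,a-\overline{Y}_{T^+_a-});\,T^+_a<\infty] \\
&\qquad = \mathrm{E}\!\left[\int_0^\infty\!\!\mathrm{d}t\!\int_{(0,\infty)}\!\Lambda(\mathrm{d}z)\,
f(Y_t+z-a,\,a-Y_t,\,a-\overline{Y}_t)\,\mathbf{1}_{(\overline{Y}_t\le a)}\mathbf{1}_{(Y_t+z>a)}\right],
\end{align*}
the indicators being what isolates the unique jump realising the first crossing.

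Next I would invoke the classical joint potential identity
\[
\int_0^\infty \mathrm{P}(\overline{Y}_t\in\mathrm{d}w,\,\overline{Y}_t-Y_t\in\mathrm{d}\eta)\,\mathrm{d}t = U(\mathrm{d}w)\,\hat U(\mathrm{d}\eta), \qquad w,\eta\ge 0,
\]
with $U,\hat U$ the ladder-height potentials in \eqref{ladderpotentials}. This follows by inverting the bivariate Wiener--Hopf Laplace transform $\mathrm{E}[\exp(-\alpha\overline{Y}_{\mathbf{e}_q}-\beta(\overline{Y}_{\mathbf{e}_q}-Y_{\mathbf{e}_q}))]=\kappa(q,0)\hat\kappa(q,0)/(\kappa(q,\alpha)\hat\kappa(q,\beta))$ and sending $q\downarrow 0$. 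Substituting $w=a-y$, $\eta=v-y$ (so that $Y_t=a-v$) and $z=u+v$ (so that $Y_t+z=a+u$) into the previous display then delivers the factorised triple law of (i) directly.

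For part (ii), the fact that $Y$ is a subordinator forces $\overline{Y}_t=Y_t$ almost surely, so the descent variable $\eta$ is identically zero and $\hat U$ collapses to a unit point mass at the origin. This enforces $v=y$ in the triple and produces the stated formula $U(a-\mathrm{d}y)\Lambda(\mathrm{d}u+y)$. The step I expect to be the main obstacle is the joint potential identity itself: although it is classical, its derivation requires a careful normalisation of local time in the Wiener--Hopf factorisation, a $q\downarrow 0$ limit argument, and some vigilance when $Y$ is not transient so that the identity is only $\sigma$-finite. Once that input is granted, the rest amounts to bookkeeping in a change of variables.
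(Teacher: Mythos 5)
The paper gives no proof of this theorem (it is stated as known, marked $\heartsuit$, and cited to Chapter 7 of \cite{Kypbook} and \cite{DK}), and your argument --- the compensation formula isolating the crossing jump to reduce the triple law to the occupation measure of $(\overline{Y},\overline{Y}-Y)$ restricted below $a$, followed by the bivariate identity $\int_0^\infty \mathrm{P}(\overline{Y}_t\in\mathrm{d}w,\,\overline{Y}_t-Y_t\in\mathrm{d}\eta)\,\mathrm{d}t = U(\mathrm{d}w)\hat{U}(\mathrm{d}\eta)$ obtained from the Wiener--Hopf factorisation by letting $q\downarrow 0$ --- is precisely the standard proof in those references and is correct, up to the same local-time normalisation constant that the paper itself absorbs into \eqref{SWHF}. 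Your treatment of part (ii), where $\overline{Y}=Y$ forces $v=y$ and $\hat{U}$ degenerates to a point mass at the origin, likewise matches the paper's own compensation-formula computation \eqref{finddistn} for the stable subordinator, so there is nothing further to add.
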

The total mass of the right hand side of \eqref{creep?} is not necessarily equal to unity. One must also take account of the probability that the L\'evy process crosses the level $a$ continuously. That is to say, one must also take account of the event of creeping, $\{Y_{\tau^+_a } =a\}$. In the setting that we will consider the above theorem, this is not necessary since L\'evy processes we will work with are derived from the stable process. The property of no creeping for the stable process will translate to the same property for the processes we use.

In the case that the ascending ladder height process is killed, in which case, from \eqref{bernstein}, the killing rate is $\kappa(0)$, we also get a simple formula for the crossing probability; see  for example Proposition VI.17 of \cite{bertoin}. 
\begin{lemma}[$\heartsuit$]\label{Utail} For $a>0$,
\[
\mathbb{P}(\tau^+_a<\infty) = \kappa(0)U(a,\infty).
\]
\end{lemma}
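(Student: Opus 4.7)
The plan is to exploit the identification of $\overline{Y}_\infty$ with the terminal value of the ascending ladder height subordinator. Write $q:=\kappa(0)$; from \eqref{bernstein}, this is the killing rate of $H$. The identity is only informative when $q>0$, which corresponds to $Y$ drifting to $-\infty$ and hence $\overline{Y}_\infty<\infty$ almost surely; when $q=0$ both sides are infinite (resp.\ identically one) in a natural sense and the statement is vacuous, so henceforth assume $q>0$.

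First, since $Y$ drifts to $-\infty$, one has $\{\tau^+_a<\infty\} = \{\overline{Y}_\infty>a\}$. Next, represent $H$ as an unkilled subordinator $H^0$, with Laplace exponent $\kappa(\lambda)-q$, run until an independent exponential time $\mathbf{e}_q$ of rate $q$, at which point it is sent to the cemetery at $+\infty$. The classical Wiener--Hopf identification from fluctuation theory, see e.g.\ Chapter VI of \cite{bertoin} or Chapter 6 of \cite{Kypbook}, then tells us that $\overline{Y}_\infty$ is equal in distribution to $H^0_{\mathbf{e}_q}$. This is the one non-trivial input to the proof, and is the main obstacle if one insists on a self-contained argument; otherwise it can simply be quoted as a standard consequence of the decomposition of the path of $Y$ at its supremum via the local time at the maximum.

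Once this is in hand, the result follows by a short Fubini computation. On the one hand, conditioning on $\mathbf{e}_q$,
\[
\mathbb{P}(\tau^+_a<\infty) = \mathbb{P}(\overline{Y}_\infty>a) = \mathbb{P}(H^0_{\mathbf{e}_q}>a) = \int_0^\infty q\,{\rm e}^{-qt}\,\mathbb{P}(H^0_t>a)\,\d t.
\]
On the other hand, from the definition of $U$ in \eqref{ladderpotentials}, since $H_t = H^0_t$ on $\{t<\mathbf{e}_q\}$ and $H_t=+\infty$ otherwise,
\[
U(a,\infty) = \int_0^\infty \mathbb{P}(H_t\in(a,\infty))\,\d t = \int_0^\infty \mathbb{P}(H^0_t>a,\; t<\mathbf{e}_q)\,\d t = \int_0^\infty {\rm e}^{-qt}\,\mathbb{P}(H^0_t>a)\,\d t.
\]
Multiplying the second display by $q=\kappa(0)$ matches the first, which delivers the stated identity.
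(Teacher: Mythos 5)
Your proposal is correct, and it is essentially the argument the paper has in mind: the lemma is stated without proof and attributed to Proposition VI.17 of \cite{bertoin}, whose standard proof is exactly your route — identify $\{\tau^+_a<\infty\}=\{\overline{Y}_\infty>a\}$, use the Wiener--Hopf/ladder decomposition to get $\overline{Y}_\infty\stackrel{d}{=}H^0_{\mathbf{e}_q}$ with $q=\kappa(0)$, and finish with the Fubini computation identifying $qU(a,\infty)$. The only cosmetic quibble is your parenthetical treatment of $q=0$: there the left-hand side equals one while the right-hand side is the indeterminate $0\cdot\infty$ (since $U(a,\infty)=\infty$), so it is cleaner simply to say the lemma is intended for the killed case, as the paper's surrounding text already indicates; this does not affect the validity of your argument.
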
 
\section{First exit from an interval}\label{stripexit}

Lemma  \ref{precorollary} deals with the event of first exit of a stable process from the interval $(-\infty, a)$, for fixed $a>0$. A natural problem to consider thereafter is the event of first exit of a stable process from a bounded interval. Thanks to scaling, it suffices to consider   $(-1,1) = \mathbb{S}_1$. To this end, let us write as usual
\[
\tau^+_1 = \inf\{t>0 : X_t >1\}\,\, \text{ and }\,\, \tau^-_{-1} = \inf\{t>0 : X_t<-1\}.
\]

\subsection{Two-sided exit problem}
As a warm-up to the main result in this section, let us start by computing a two-sided exit probability. The following result first appeared in \cite{BGR} in the symmetric setting, followed by \cite{Rog} in the non-symmetric setting. Its proof is based on the method in \cite{KW}.

\begin{lemma}[$\heartsuit$]\label{twosidedexitnoovershoot}
For $x\in(-1,1)$,
\[
\mathbb{P}_x(\tau^+_1 <\tau^-_{-1}) =2^{1-\alpha} \frac{\Gamma(\alpha)}{\Gamma(\alpha\rho)\Gamma(\alpha\hat\rho)}\int_{-1}^{x} (1+s)^{\alpha\hat\rho-1}(1-s)^{\alpha\rho-1}\D s.
\]
\end{lemma}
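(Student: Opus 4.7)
The plan is to translate the two-sided exit problem into a first-passage problem for the Lamperti-stable MAP via the Lamperti--Kiu decomposition, then exploit its explicit hypergeometric Wiener--Hopf structure.

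For $x \in (-1,1) \setminus \{0\}$, Theorem~\ref{LKtheorem} yields $X_t = e^{\xi_{\varphi(t)}} J_{\varphi(t)}$, where $(\xi, J)$ is the Lamperti-stable MAP of Section~\ref{stablerssmp} started at $(\log|x|, \mathrm{sign}(x))$. Because $\varphi$ is strictly increasing, the first exit time $\tau^+_1 \wedge \tau^-_{-1}$ from $(-1,1)$ corresponds to $T := \inf\{t > 0 : \xi_t > 0\}$, and the event $\{\tau^+_1 < \tau^-_{-1}\}$ coincides with $\{J_T = +1\}$. Hence the task reduces to computing
\[
\mathbb{P}_x(\tau^+_1 < \tau^-_{-1}) = \mathbf{P}_{\log|x|,\, \mathrm{sign}(x)}(J_T = +1).
\]

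Next, since $\xi$ is a pure-jump process, passage of $\xi$ above $0$ always occurs by a jump, and the compensation formula for the jumps of $(\xi,J)$ expresses the right-hand side as an integral over the pre-passage position $\xi_{T-}$, the pre-modulator $J_{T-}$, and the post-modulator $J_T = +1$. Using the one-dimensional analogue of Theorem~\ref{MAPjumps} --- or equivalently, reading the L\'evy density $\pi^*$ from \eqref{pi*} in each sign-state and the sign-flipping rates off the matrix exponent \eqref{MAPHG} --- this is an integral against the space-time potential of $(\xi, J)$ killed on first hitting $[0,\infty)$. Reverting to linear coordinates via $z = J_{T-}\cdot e^{\xi_{T-}} \in (-1,1)$ turns this into an integral over $(-1,1)$ of the Green kernel of the stable process killed on exit from $(-1,1)$ against the one-sided jump intensity $\propto \sin(\pi\alpha\rho)/\pi \cdot (1-z)^{-\alpha-1}$ for jumps landing above $1$.

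The main obstacle is the explicit identification of this Green kernel (equivalently, the space-time potential of the killed Lamperti-stable MAP). This is precisely the computation made possible by the hypergeometric structure of \eqref{MAPHG} and carried out by the method in \cite{KW}. Once in hand, the integrand assembles, via the reflection identity $\Gamma(\alpha\rho)\Gamma(1-\alpha\rho) = \pi/\sin(\pi\alpha\rho)$, into the beta-type density $(1+s)^{\alpha\hat\rho - 1}(1-s)^{\alpha\rho - 1}$. The prefactor $2^{1-\alpha}\,\Gamma(\alpha)/(\Gamma(\alpha\rho)\Gamma(\alpha\hat\rho))$ then arises from the beta-normalization combined with the rescaling $s \mapsto (1+s)/2$ that converts the classical beta density on $(0,1)$ to the symmetric interval $(-1,1)$.
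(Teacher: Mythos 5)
Your reduction to the Lamperti--Kiu picture fails for $\alpha\in(1,2)$. The representation in Theorem \ref{LKtheorem} describes the stable process only up to $\tau^{\{0\}}$, and by Corollary \ref{pointset} (taking $y=0$ there) the process hits the origin strictly before leaving $(-1,1)$ with positive probability; on that event the ordinator drifts to $-\infty$ without ever crossing $0$, so your $T=\inf\{t>0:\xi_t>0\}$ is infinite, while the stable process nonetheless exits $(-1,1)$ at a later time that the MAP cannot see. Thus $\mathbb{P}_x(\tau^+_1<\tau^-_{-1})=\mathbf{P}_{\log|x|,\mathrm{sign}(x)}(J_T=+1,\,T<\infty)+\mathbb{P}_x(\tau^{\{0\}}<\tau^+_1\wedge\tau^-_{-1})\,\mathbb{P}_0(\tau^+_1<\tau^-_{-1})$, and the second term is strictly positive for $\alpha\in(1,2)$; it cannot be produced inside the MAP framework, since the representation cannot be issued from the origin. (You also exclude $x=0$, which the lemma covers.)

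Even where the reduction is sound ($\alpha\in(0,1]$), the proposal is not a proof: the object you need --- the Green kernel of the stable process killed on exiting $(-1,1)$ (and, for $\alpha>1$, additionally killed at the origin), equivalently the space--time potential of the killed MAP --- is precisely the hard part, and you defer it to ``the method of \cite{KW}''. Within this paper that kernel is Theorem \ref{X (0,1)}, which is derived from the triple law of Theorem \ref{2sidedtriple}, and the present lemma is proved \emph{first} exactly so that it can pin down the constant in that chain; your route therefore inverts the logical order and reduces an easy statement to a harder, unproved one. The paper's argument is far lighter and avoids the two-sided MAP altogether: shift $(-1,1)$ to $(0,2)$ and note that, modulo the Lamperti time change, $\{\tau^+_1<\tau^-_{-1}\}$ is the event that the killed L\'evy process $\xi^*$ of Section \ref{killed*} passes above $\log(2/(1+x))$ before its exponential killing time; Lemma \ref{Utail} gives this probability as $\kappa(0)$ times the tail of the ascending ladder potential, both explicit from the Wiener--Hopf factorisation \eqref{psi*exponent}, and the substitution ${\rm e}^{-y}=(s+1)/2$ yields the stated beta-type integral. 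To salvage your approach you would need to compute the killed potential of the MAP independently of this lemma, or else switch to the one-sided pssMp of Section \ref{killed*} as the paper does, which also removes the $\alpha\in(1,2)$ obstruction because hitting the origin is necessarily preceded by entering $(-\infty,0)$.
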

\begin{proof}[Proof  \emph{($\heartsuit$)}] Denote by $\mathbf{P}^*$ the law of $\xi^*$ (the L\'evy process associated via the Lamperti transformation to the stable process killed on passing below the origin, see Section \ref{killed*}) and, for $b>0$, let 
\[
\tau^{*,+}_b=\inf\{t>0 : \xi^*_t>b\}.
\] 
Recalling that the range of the stable process killed on exiting $[0,\infty)$ agrees with the range of the exponential of the process $\xi^*$, we have, with the help of Lemma \ref{Utail} and Theorem \ref{Psi*thrm},
\begin{eqnarray*}
\mathbb{P}_x(\tau^+_1 <\tau^-_{-1}) &=& \mathbf{P}^*(\tau^{*, +}_{\log(2/(x+1))}<\infty)\\
&=&\frac{\Gamma(\alpha)}{\Gamma(\alpha\rho)\Gamma(\alpha\hat\rho)}\int_{\log(2/(x+1))}^\infty {\rm e}^{-\alpha\hat\rho y}(1-{\rm e}^{-y})^{\alpha\rho-1} \D y\\
\end{eqnarray*}
and the result follows by making the change of variables 
${\rm e}^{-y} = (s+1)/2$.
\end{proof}

Now we turn to a more general identity around the event of two-sided exit. The reason why we have first proved the above Lemma is that we shall use it to pin down an unknown normalising constant. The following theorem and the method of its proof come from \cite{KW}.

\begin{theorem}[$\heartsuit$]\label{2sidedtriple}  For $x\in(-1,1)$, $u>0$, $y\in[0,1-x]$ and $v\in [y,2]$,
\begin{eqnarray*}
\lefteqn{\mathbb{P}_x(X_{\tau^+_{1} } - {1} \in {\D}u, {1} - X_{\tau^+_{1} - } \in
{\D}v,
{1} - \overline{X}_{\tau^+_{1} - }\in {\D}y\, ; \, \tau^+_1<\tau^-_{-1})}&& \notag\\
&&=\frac{\sin (\pi\alpha\rho)}{\pi}\frac{\Gamma(\alpha+1)}{\Gamma(\alpha\rho)\Gamma(\alpha\hat\rho)}
\frac{(1+x)^{\alpha\hat\rho}({1}-x-y)^{\alpha\rho-1}(v-y)^{\alpha\hat\rho-1}({2}-v)^{\alpha\rho}}{({2}-y)^{\alpha}(u+v)^{\alpha+1}}\D u\, \D v\, \D y.
\end{eqnarray*}

\end{theorem}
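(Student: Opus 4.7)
The plan is to translate the two-sided exit problem for $X$ into a first-passage problem for the Lamperti-stable L\'evy process $\xi^*$ of Section \ref{killed*}, apply the general triple law (Theorem \ref{triple}) to $\xi^*$, and pull the resulting identity back through the exponential map.

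First, I set $Y_t = X_t + 1$. Under $\mathbb{P}_x$, $Y$ is a stable process issued from $x+1 \in (0,2)$, the event $\{\tau^+_1 < \tau^-_{-1}\}$ coincides with the event that $Y$ crosses level $2$ before first entering $(-\infty, 0)$, and $Y$ killed on first entry to $(-\infty,0)$ is precisely the pssMp of Section \ref{killed*}. The Lamperti transform gives $Y_t = \exp\{\xi^*_{\varphi(t)}\}$ on $\{t<\tau^-_{-1}\}$, where $\xi^*$ is the L\'evy process of Theorem \ref{Psi*thrm} issued from $\log(x+1)$. With $a^* := \log(2/(x+1))$, write $T^+_{a^*}$ for the first passage time of $\xi^*-\log(x+1)$ above $a^*$, and $(u^*, v^*, y^*)$ for the overshoot, undershoot and $a^*$ minus the running supremum at time $T^+_{a^*}-$. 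Since the Lamperti time change is continuous and strictly increasing,
\[
X_{\tau^+_1} - 1 = 2({\rm e}^{u^*}-1), \qquad 1 - X_{\tau^+_1 -} = 2(1 - {\rm e}^{-v^*}), \qquad 1 - \overline{X}_{\tau^+_1 -} = 2(1 - {\rm e}^{-y^*}),
\]
and the event $\{\tau^+_1 < \tau^-_{-1}\}$ corresponds to $\{T^+_{a^*} < \infty\}$.

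Next, I read off from \eqref{psi*exponent} the Wiener--Hopf factors $\kappa^*(\lambda) = \Gamma(\alpha+\lambda)/\Gamma(\alpha\hat\rho+\lambda)$ and $\hat\kappa^*(\lambda) = \Gamma(1+\lambda)/\Gamma(1-\alpha\hat\rho+\lambda)$, and invert $1/\kappa^*$ and $1/\hat\kappa^*$ via the beta identity $\Gamma(a)\Gamma(b)/\Gamma(a+b) = \int_0^1 t^{a-1}(1-t)^{b-1}\,{\rm d}t$ with the substitution $t = {\rm e}^{-y}$, obtaining the ladder-potential densities
\[
u^*(y) = \frac{{\rm e}^{-\alpha\hat\rho y}(1-{\rm e}^{-y})^{\alpha\rho-1}}{\Gamma(\alpha\rho)}, \qquad \hat u^*(y) = \frac{{\rm e}^{-(1-\alpha\hat\rho)y}(1-{\rm e}^{-y})^{\alpha\hat\rho-1}}{\Gamma(\alpha\hat\rho)}, \qquad y>0.
\]
Together with the L\'evy density $\pi^*$ from \eqref{pi*}, these provide all inputs for Theorem \ref{triple}(i). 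Its hypotheses hold: $\pi^*$ has infinite mass near $0$ (so $\xi^*$ is not compound Poisson), and the two-sided jumping of $X$ forces both $\xi^*$ and $-\xi^*$ to be non-monotone. Non-creeping of stable processes over positive levels transfers through the continuous Lamperti time change to $\xi^*$, so no creeping correction is needed. The triple law therefore delivers
\[
\mathbf{P}^*(u^* \in {\rm d} u^*, v^* \in {\rm d} v^*, y^* \in {\rm d} y^*) = u^*(a^* - y^*)\,\hat u^*(v^* - y^*)\,\pi^*(u^*+v^*)\,{\rm d} u^*\,{\rm d} v^*\,{\rm d} y^*.
\]

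Finally, I change variables from $(u^*, v^*, y^*)$ to $(u, v, y)$ according to the relations above; the Jacobian is $[(u+2)(2-v)(2-y)]^{-1}$. Elementary algebra gives ${\rm e}^{a^* - y^*} = (2-y)/(x+1)$, $1 - {\rm e}^{-(a^* - y^*)} = (1-x-y)/(2-y)$, ${\rm e}^{v^* - y^*} = (2-y)/(2-v)$, $1 - {\rm e}^{-(v^*-y^*)} = (v-y)/(2-y)$, and ${\rm e}^{u^* + v^*} - 1 = (u+v)/(2-v)$; substituting these, the task collapses to gathering powers. The main obstacle is this routine but fiddly bookkeeping: the exponents of $(2-y)$ arising from $u^*(a^*-y^*)$, $\hat u^*(v^*-y^*)$ and the Jacobian must telescope to $-\alpha$, those of $(2-v)$ must combine to $\alpha\rho$, and the $(u+2)$ factor from $\pi^*$ must cancel with the $(u+2)^{-1}$ in the Jacobian. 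The overall constant $\Gamma(1+\alpha)/[\Gamma(\alpha\rho)^2\Gamma(1-\alpha\rho)\Gamma(\alpha\hat\rho)]$ simplifies via the reflection formula $\Gamma(\alpha\rho)\Gamma(1-\alpha\rho) = \pi/\sin(\pi\alpha\rho)$ to the announced prefactor $\sin(\pi\alpha\rho)\Gamma(1+\alpha)/[\pi\Gamma(\alpha\rho)\Gamma(\alpha\hat\rho)]$, and integrating the resulting density in $(u,v,y)$ provides a built-in sanity check against Lemma \ref{twosidedexitnoovershoot}.
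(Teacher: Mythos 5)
Your proposal is correct and follows essentially the same route as the paper: translate the two-sided exit event through the Lamperti transform of Section \ref{killed*} into a first-passage problem for $\xi^*$ over $\log(2/(1+x))$, apply Theorem \ref{triple}(i) with the ladder potential densities inverted from the factors in \eqref{psi*exponent} and the L\'evy density \eqref{pi*}, and change variables back (your overshoot/undershoot relations, Jacobian and exponent bookkeeping all check out). The only difference is one of presentation: the paper applies the triple law only up to a multiplicative constant and pins it down via Lemma \ref{twosidedexitnoovershoot} (which is precisely why that lemma is proved first), whereas you treat the displayed factorisation as exactly normalised and keep the comparison with that lemma as a sanity check, which serves the same purpose.
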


\begin{proof}[Proof \emph{($\heartsuit$)}]
The overshoot and undershoot at first passage over the level $1$ for $X$ on the event $\{\tau^+_1<\tau^-_{-1}\}$ are, up to a linear shift transferring $(-1,1)$ to $(0,2)$ (thanks to stationary and independent increments) and logarithmic change of spatial variable, equal to the overshoot and undershoot at first passage over the  level $\log 2$ for $\xi^*$ on the event this first passage occurs before $\xi^*$ is killed.
Note that, 
 for $x\in(-1,1)$, $u\geq 0$,  $y\in[0,1-x]$ and $v\in[ y, 2]$, with the help of Theorem \ref{triple} (i), up to a multiplicative constant,
\begin{eqnarray*}
\lefteqn{\mathbb{P}_{1+x}\left( \frac{X_{\tau^+_{{2}}}}{2}-{1}>
{u}/2, \, 
{1}-\frac{X_{\tau^+_{{2}}-}}{2}>v/2\, , {1}-
\frac{\overline{X}_{\tau^+_{{2}}-}}{2}> {y}/2, \, \tau^+_2<\tau^-_0\right)}&&\\
&&=\mathbf{P}^{*} \left(\xi^{*}_{\tau^{{*},+}_{{\log (2/(1+x))}}} -{\log (2/(1+x))} > \log\Big((2+u)/2\Big), \right.\\
&&\hspace{1.2cm}{\log (2/(1+x))}- \xi^{*}_{\tau^{{*},+}_{{\log (2/(1+x))}} -} > -\log \Big((2-v)/2\Big),\\
&&\hspace{1.2cm} \, \left.{\log (2/(1+x))} - \overline{\xi^{*}}_{\tau^{{*},+}_{{\log (2/(1+x))}}-}>-\log \Big((2-y)/2\Big), \tau^{*, +}_{\log(2/(1+x))}<\infty\right)\\
&&=\int_{-\log ((2-y)/2)}^{\log (2/(1+x))}\int_{-\log ((2-v)/2)}^\infty\int_{\log((2+u)/2)}^\infty u^{*}( \log{(2/(1+x))}-r)\\
&&\hspace{6cm}\times\,
{\hat u}^{*}(z-r)\pi^{*}(w + z)\mathbf{1}_{(z\geq r)}{\D}w{\D}z{\D}r,
\end{eqnarray*}
where $\pi^{*}$ is the L\'evy density of $\xi^{*}$ and, moreover,  $u^{*}$ and $\hat{u}^{*}$ are the densities of the renewal measures of the ascending and descending ladder height processes, respectively.
Taking derivatives and noting the relative overshoots and undershoots to the upper boundary are unchanged when we shift the interval $(0,2)$ back to $(-1,1)$, we get 
\begin{eqnarray*}\lefteqn{\mathbb{P}_x( X_{\tau^+_{{1}}}-{1}\in
{\D}{u}, \, 
{1}-X_{\tau^+_{{1}}-}\in {\D }v, \, {1}-
\overline{X}_{\tau^+_{{1}}-}\in {\D}{y},\, \tau^+_1<\tau^-_{-1})}&&\\
&&=u^*\left(\log \Big(\frac{2-y}{1+x}\Big)\right)\hat{u}^{*}\left(\log \Big(\frac{2-y}{2-v}\Big)\right)
\pi^{*}\left(\log\Big(\frac{2+u}{2-v}\Big)\right)\frac{\D u\D v\D y}{(2-y)(2-v)(2+u)}
\end{eqnarray*}

Given that the Wiener--Hopf factorisation of $\xi^{*}$  has been described in explicit detail in Theorem  \ref{Psi*thrm}, we can now develop the right-hand side above. To this end, recall that the process $\xi^{*}$ belongs to the class of Lamperti-stable processes. Recall that $\pi^*$ was described in \eqref{pi*}. Moreover, as the Laplace transform of $u^*$ and $\hat{u}^*$ are given by $\kappa^{-1}$ and $\hat{\kappa}^{-1}$, which are described in the factorisation \eqref{psi*exponent}, it is straightforward to check that  the ascending and descending ladder height processes have densities given by  
\[
u^{*}(x) = \frac{1}{\Gamma(\alpha\rho)}{\rm e}^{-\alpha\hat\rho x}(1-{\rm e}^{-x})^{\alpha\rho-1}
\]
and 
\[
\hat{u}^{*}(x) =\frac{1}{\Gamma(\alpha\hat\rho)} {\rm e}^{-(1-\alpha\hat\rho)x}(1-{\rm e}^{-x})^{\alpha\hat\rho-1}, 
\]
for $x\geq 0$,
respectively. 
 Putting everything together, straightforward algebra yields the desired result.\end{proof}

\subsection{Resolvent with killing on first exit of $(-1,1)$.}
Let us  consider the potential of the stable process up to exiting the interval $(-1,1)$,
\begin{eqnarray*}
U^{(-1,1)}( x, \D {y})
&=& \int_0^\infty\mathbb{P}_x(X_t \in \D y, \, t<\tau^+_1\wedge \tau^-_{-1})\,\D t,
\end{eqnarray*}
for $x,y\in(-1,1).$  An explicit identity for its associated density was first given in \cite{BGR} when $X$ is symmetric. Only recently, the non-symmetric case has been given in \cite{PS, KW}.

\begin{theorem}[$\heartsuit$]\label{X (0,1)} 
For $x, y \in(-1,1)$, the measure $U^{(-1,1)}(x, \D y)$ has a density with respect to Lebesgue measure which is almost everywhere equal to   
\begin{equation} u^{(-1,1)}(x,{y}):
  =  \dfrac{2^{1-\alpha}|y-x|^{\alpha-1}}{\Gamma(\alpha\rho)\Gamma(\alpha\hat\rho)}
  \times
  \begin{cases}
    \displaystyle\int_1^{\left|\frac{1-xy}{y-{x}}\right|} 
    (s+1)^{\alpha\rho-1}
    (s-1)^{\alpha\hat\rho-1} \, \d s,
    & {x}\leq   y, \\
    %
       \displaystyle\int_1^{\left|\frac{1-xy}{y-{x}}\right|}
     (s+1)^{\alpha\hat\rho-1}(s-1)^{\alpha\rho-1}
    \, \d s,
    & x > {y}.
  \end{cases}
  \label{u(-1,1)}
\end{equation}
\end{theorem}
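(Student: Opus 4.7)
The plan is to reduce the problem to a resolvent computation for the Lamperti-stable L\'evy process $\xi^*$ and then carry out the resulting convolution integral in closed form, closely paralleling the proof of Theorem \ref{2sidedtriple}. First, by spatial homogeneity, setting $\tilde X_t := X_t + 1$ turns killing $X$ on exit from $(-1, 1)$ into killing $\tilde X$ on exit from $(0, 2)$. Applying the Lamperti transformation of Section \ref{killed*} to $\tilde X$, first-passage of $\tilde X$ above $2$ becomes first-passage of $\xi^*$ above $\log 2$, while the intrinsic exponential killing of $\xi^*$ already encodes first-passage of $\tilde X$ below $0$. Via the Lamperti time change $t = \int_0^{\varphi(t)} {\rm e}^{\alpha \xi^*_s}\D s$, the stable resolvent density transforms into a $\xi^*$-resolvent density: for $x, y \in (-1, 1)$,
\[
u^{(-1,1)}(x, y) = (y+1)^{\alpha - 1}\, v^*\bigl(\log(x+1),\, \log(y+1)\bigr),
\]
where $v^*(r, s)$ is the potential density of $\xi^*$ started at $r$ and killed upon the minimum of its intrinsic exponential lifetime and its first-passage time $\tau^{*,+}_{\log 2}$.

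Second, decomposing a typical $\xi^*$-path up to $\tau^{*,+}_{\log 2}$ at its running supremum (equivalently, time-reversing from $\tau^{*,+}_{\log 2}$) and invoking the Wiener--Hopf factorisation yields the convolution identity
\[
v^*(r, s) = \int_{\max\{r, s\}}^{\log 2} u^*(m - r)\, \hat u^*(m - s)\, \D m, \qquad r,s<\log 2,
\]
where $u^*$ and $\hat u^*$ are the densities of the ascending and descending ladder-height renewal measures of $\xi^*$, both already obtained in explicit gamma--exponential form in the proof of Theorem \ref{2sidedtriple}. Substituting these in, collecting the resulting exponentials, and reverting to the variables $r = \log(x+1)$, $s = \log(y+1)$, the integrand becomes a product of ${\rm e}^{-m}$ with the two bracketed factors $({\rm e}^m - x - 1)^{\alpha\rho - 1}$ and $({\rm e}^m - y - 1)^{\alpha\hat\rho - 1}$, up to an explicit multiplicative prefactor.

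Third, a judicious change of variable transforms this integral into the claimed form. The natural choice, motivated by the observation that $(1-xy)/(y-x)$ is the M\"obius invariant of $\{-1, 1, x, y\}$ produced by the logarithmic map implicit in the Lamperti transform, is to set $\sigma$ equal to an affine combination of ${\rm e}^m$, $x+1$ and $y+1$ (for example $\sigma = (2{\rm e}^m - (x+1) - (y+1))/((y+1) - (x+1))$, or an algebraic equivalent) that sends $m = \max\{r,s\}$ to $\sigma = 1$ and $m = \log 2$ to $\sigma = |(1 - xy)/(y - x)|$. Under this substitution the bracketed factors become $(\sigma + 1)^{\alpha\rho - 1}(\sigma - 1)^{\alpha\hat\rho - 1}$ for $x \leq y$, and the Jacobian, the exponential prefactors, and the pulled-out $(y+1)^{\alpha - 1}$ all combine exactly into the stated constant $2^{1-\alpha}|y - x|^{\alpha - 1}/(\Gamma(\alpha\rho)\Gamma(\alpha\hat\rho))$. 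The case $x > y$ is entirely analogous: the lower integration limit becomes $r$ instead of $s$, which swaps the roles of the two bracketed factors and hence the exponents $\alpha\rho - 1$ and $\alpha\hat\rho - 1$, producing the second case of \eqref{u(-1,1)}.

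The principal obstacle is the algebra of step three: verifying that the substitution absorbs every exponential factor, the Jacobian, and the boundary contributions into the clean rational form with precisely the constant $2^{1-\alpha}$ and the power $|y-x|^{\alpha-1}$. A useful consistency check is to specialise to $\rho = \hat\rho = 1/2$ and recover the classical symmetric formula of Blumenthal--Getoor--Ray, or to integrate $u^{(-1,1)}(x, y)$ against $\D y$ over $(-1,1)$ and compare with the mean exit time computable from Lemma \ref{twosidedexitnoovershoot}.
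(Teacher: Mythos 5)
Your route is sound and, although it rests on the same explicit Wiener--Hopf data for $\xi^*$ as the paper, it is organised genuinely differently. The paper does not compute the killed resolvent in Lamperti coordinates: it takes the already-established triple law of Theorem \ref{2sidedtriple} and runs the compensation formula ``in reverse'' --- since the stable L\'evy density is known, the joint law of the pre-jump position and the overshoot pins down the occupation density $u^{(-1,y)}(x,v)$ of the process killed on exiting $(-1,y)$, which is then translated and scaled to $(-1,1)$. You instead transfer the resolvent itself through the Lamperti time change, $u^{(-1,1)}(x,y)=(y+1)^{\alpha-1}v^*(\log(x+1),\log(y+1))$, and evaluate $v^*$ by splitting at the running supremum, $v^*(r,s)=\int_{r\vee s}^{\log 2}u^*(m-r)\,\hat u^*(m-s)\,\d m$. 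Your route is the more direct one (it does not need Theorem \ref{2sidedtriple} as an input), and in fact both arguments funnel into exactly the same intermediate expression, namely \eqref{evaluate integral} with barrier $1$: $\frac{(1+x)^{\alpha\hat\rho}(1+y)^{\alpha\rho}}{\Gamma(\alpha\rho)\Gamma(\alpha\hat\rho)}\int_{x\vee y}^{1}(z-x)^{\alpha\rho-1}(z-y)^{\alpha\hat\rho-1}(1+z)^{-\alpha}\,\d z$; what the paper's detour buys is that the normalisation has already been fixed upstream via Lemma \ref{twosidedexitnoovershoot}.

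Two details in your plan need tightening. First, the splitting-at-the-supremum identity holds only up to the local-time normalisation constant; with $u^*,\hat u^*$ defined through the exact factorisation \eqref{psi*exponent} (Laplace transforms $1/\kappa$, $1/\hat\kappa$ with $\Psi^*=\kappa\hat\kappa$ and no extra constant) the constant is one, but you should justify this, or pin it a posteriori against Lemma \ref{twosidedexitnoovershoot}. Second, your example substitution cannot be affine in ${\rm e}^m$: with $z={\rm e}^m-1$, an affine map sends the upper endpoint $z=1$ to $(2-x-y)/(y-x)$ rather than $(1-xy)/(y-x)$, and it cannot absorb the factor $(1+z)^{-\alpha}$ in the integrand. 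The substitution that works is fractional-linear: for $x\le y$ take $t$ defined by $t-1=2(1+x)(z-y)/\bigl((1+z)(y-x)\bigr)$, equivalently $t+1=2(1+y)(z-x)/\bigl((1+z)(y-x)\bigr)$, which sends $z=y$ to $t=1$ and $z=1$ to $t=(1-xy)/(y-x)$ and yields exactly the prefactor $2^{1-\alpha}|y-x|^{\alpha-1}$; the case $x>y$ then follows by swapping the roles of the two factors as you indicate.
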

\begin{proof}[Proof \emph{($\heartsuit$)}]
As $X$ cannot creep upwards, splitting over the jumps of $X$, we have for $x\in(-1,1)$, $u>0$, $y\in(x,1)$ and $v\in [y,1)$,
\begin{eqnarray}
\lefteqn{
\mathbb{P}_x(X_{\tau^+_{1} } - {1} \in {\D}u,  X_{\tau^+_{1} - } \in
{\D}v,
\overline{X}_{\tau^+_{1} - }\leq y\, ; \, \tau^+_1<\tau^-_{-1})
}&&\notag\\
&&=\mathbb{E}_x\left[\int_0^\infty\int_\mathbb{R}\mathbf{1}_{(X_{t-}\in \D v, \, \overline{X}_{t-}  \leq y, \, t <\tau^+_{-1}\wedge\tau^-_1)}\mathbf{1}_{(X_{t-} + x -1\in\,\D u)} \,N(\D t, \D x)\right],
\label{usecompform}
\end{eqnarray}
where $N$ is a Poisson random measure on $[0,\infty)\times\mathbb{R}$ with intensity $\D t\times \Pi(\D x)$, representing the arrival of jumps in the stable process, and $\Pi$ is the L\'evy measure given by (\ref{stabledensity3}). It follows from the classical compensation formula for Poisson integrals of this type that 
\begin{eqnarray}
\lefteqn{\mathbb{P}_x(X_{\tau^+_{1} } - {1} \in {\D}u,  X_{\tau^+_{1} - } \in
{\D}v,
\overline{X}_{\tau^+_{1} - }\leq y\, ; \, \tau^+_1<\tau^-_{-1})}&&\notag\\
&&=\mathbb{E}_x\left[\int_0^\infty\mathbf{1}_{( X_{t-}\in \D v, \, \overline{X}_{t-}  \leq y, \, t <\tau^+_{-1}\wedge\tau^-_1)} \,\D t\right]\Pi(1-v+\D u)\notag\\
&&=\Gamma(1+\alpha)  \frac{\sin(\pi \alpha \rho)}{\pi}\mathbb{E}_x\left[\int_0^\infty\mathbf{1}_{(  X_{t}\in\, \D v, \, \overline{X}_{t } \leq y, \,t <\tau^+_{-1}\wedge\tau^-_1)} \,\D t\right]\frac{1}{(1-v+u)^{1+\alpha}}\D u\notag\\
&&=\Gamma(1+\alpha)  \frac{\sin(\pi \alpha \rho)}{\pi}U^{(-1,y)}(x, \D v)\frac{1}{(1-v+u)^{1+\alpha}}\D u,
\label{usecompform2}
\end{eqnarray}
where 
\[
U^{(-1,y)}(x, \D v) = \int_0^\infty\mathbb{P}_x(X_t \in \D v, \, t<\tau^+_y\wedge \tau^-_{-1})\,\D t.
\]

From Theorem \ref{2sidedtriple}, we also have that, for $u>0$ and $v\in(-1,1)$ and $y\in[v\vee x,1)$, 
\begin{align*}
&\mathbb{P}_x(X_{\tau^+_{1} } - {1} \in {\D}u,  X_{\tau^+_{1} - } \in
{\D}v,
\overline{X}_{\tau^+_{1} - }\leq y\, ; \, \tau^+_1<\tau^-_{-1})
\\
&=\frac{\sin (\pi\alpha\rho)}{\pi}\frac{\Gamma(\alpha+1)}{\Gamma(\alpha\rho)\Gamma(\alpha\hat\rho)}
\left\{
\int_{v\vee x}^{y}
\frac{(1+x)^{\alpha\hat\rho}({z}-x)^{\alpha\rho-1}(z-v)^{\alpha\hat\rho-1}({1}+v)^{\alpha\rho}}{({1}+z)^{\alpha}(1-v+u)^{\alpha+1}}\D z\,\right\} \D u\, \D v.
\end{align*}
The consequence of this last observation is that, for $0\leq v\vee x\leq y$,  $U^{(-1,y)}(x, \D v)$ is absolutely continuous with respect to Lebesgue measure and, comparing with \eqref{usecompform2}, its density is given by 
\begin{equation}
u^{(-1,y)}(x, v) = \frac{(1+x)^{\alpha\hat\rho}(1+v)^{\alpha\rho}}{\Gamma(\alpha\rho)\Gamma(\alpha\hat\rho)}\left\{
\int_{v\vee x}^{y}
\frac{({z}-x)^{\alpha\rho-1}(z-v)^{\alpha\hat\rho-1}}{(1+z)^{\alpha}}\, %
\D z\right\}.
\label{evaluate integral}
\end{equation}

To evaluate the integral in (\ref{evaluate integral}) we must consider two cases according to the value of $x$ in relation to $v$. To this end, we first suppose that $x\leq v$. We have 
\begin{eqnarray*}
\lefteqn{(1+x)^{\alpha\hat\rho}(1+v)^{\alpha\rho}\int_v^{y}
\frac{({z}-x)^{\alpha\rho-1}(z-v)^{\alpha\hat\rho-1}}{(1+z)^{\alpha}}\, %
\D z} &&\\
&&=  (v-x)^{\alpha-2}\int_v^y \left[\frac{(1+v)(z-x)}{(1+z)(v-x)}\right]^{\alpha\rho-1}
\left[\frac{(1+x)(z-v)}{(1+z)(v-x)}\right]^{\alpha\hat\rho-1} \frac{(1+x)(1+v)}{(1+z)^2}\D z\\
&&=(v-x)^{\alpha-1}\int_0^{\frac{(1+x)(y-v)}{(1+y)(v-x)}} (s+1)^{\alpha\rho-1}s^{\alpha\hat\rho -1}\D s,
\end{eqnarray*}
where in the final equality we have changed  variables using  $$s ={(1+x)(z-v)}/{(1+z)(v-x)}.$$ To deal with the case $x>v$, one proceeds as above except that the lower delimiter on the integral in (\ref{evaluate integral}) is equal to $x$, we multiply and dive through by $(x-v)^{\alpha -2}$ and one makes the change of variable $s = {(1+v)(z-x)}/{(1+z)(x-v)}$. This gives us an expression for $u^{(-1,y)}$, which, in turn, through translation, thanks to stationary and independent increments, and scaling, can be transformed into the potential density of $u^{(-1,1)}(x,\d y)$. The details are straightforward and left to the reader.
\end{proof}

A useful corollary of this result is the simple point-set exit probability below which proves to be quite useful in the next section. Its first appearance is in \cite{PS}.
\begin{corollary}[$\heartsuit$]\label{pointset}
For $\alpha\in(1,2)$ and  $x,y\in(-1,1)$,
\begin{align}
\mathbb{P}_x&(\tau^{\{y\}}< \tau^+_1\wedge \tau^-_{-1}) \notag\\
&= (\alpha-1) \frac{|y-x|^{\alpha-1}}{(1-y^2)^{\alpha-1}}
\times
\begin{cases}
      \displaystyle\int_1^{\left|\frac{1-xy}{x-y}\right|}
    (s+1)^{\alpha\rho-1}
    (s-1)^{\alpha\hat\rho-1} \, {\rm d} s,
    & {x}\leq   y, \\
         \displaystyle\int_1^{\left|\frac{1-xy}{x-y}\right|}
    (s-1)^{\alpha\rho-1} (s+1)^{\alpha\hat\rho-1}
    \, {\rm d} s,
    & x > {y}.
  \end{cases}
  \label{pointsetprob}
\end{align}
\end{corollary}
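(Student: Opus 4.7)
The plan is to reduce the corollary to a ratio of potential densities. Specifically, I aim to establish the Green-function style identity
\begin{equation*}
u^{(-1,1)}(x, y) = \mathbb{P}_x(\tau^{\{y\}} < \tau^+_1 \wedge \tau^-_{-1}) \cdot u^{(-1,1)}(y, y), \qquad x, y \in (-1,1),
\end{equation*}
and then evaluate the diagonal density $u^{(-1,1)}(y, y)$ as a boundary limit of the explicit formula \eqref{u(-1,1)}. Dividing the two recovers the stated hitting probability.

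To justify the identity, I would exploit the fact that, for $\alpha \in (1,2)$, the singleton $\{y\}$ is regular for itself (per the discussion at the end of Section \ref{intro}), so one may construct a continuous additive functional $L^y$, a local time at $y$, normalised so that its Revuz measure is the Dirac mass at $y$. Writing $\tau = \tau^+_1 \wedge \tau^-_{-1}$, the Revuz correspondence for the killed process gives $\mathbb{E}_x[L^y_\tau] = u^{(-1,1)}(x, y)$ up to a fixed normalisation common to all $x$. Since $L^y$ grows only on $\{s : X_s = y\}$, we have $L^y_{\tau^{\{y\}}} = 0$ on $\{\tau^{\{y\}} < \tau\}$, and the strong Markov property at $\tau^{\{y\}}$ yields
\begin{equation*}
\mathbb{E}_x[L^y_\tau] = \mathbb{P}_x(\tau^{\{y\}} < \tau)\, \mathbb{E}_y[L^y_\tau],
\end{equation*}
which is precisely the claimed identity after applying Revuz once more.

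Next, I would take the one-sided limit $x \uparrow y$ in \eqref{u(-1,1)}. The upper endpoint satisfies $M := (1-xy)/(y-x) \uparrow \infty$ with $M(y-x) \to 1-y^2$, while the integrand is asymptotically $s^{\alpha-2}$ as $s \to \infty$, so
\begin{equation*}
\int_1^M (s+1)^{\alpha\rho-1}(s-1)^{\alpha\hat\rho-1}\,\D s \sim \frac{M^{\alpha-1}}{\alpha-1}, \qquad M \to \infty.
\end{equation*}
Multiplying by $(y-x)^{\alpha-1}$ and the prefactor $2^{1-\alpha}/(\Gamma(\alpha\rho)\Gamma(\alpha\hat\rho))$ in \eqref{u(-1,1)} yields
\begin{equation*}
u^{(-1,1)}(y, y) = \frac{2^{1-\alpha}(1-y^2)^{\alpha-1}}{(\alpha-1)\,\Gamma(\alpha\rho)\Gamma(\alpha\hat\rho)};
\end{equation*}
the symmetric limit $x \downarrow y$ produces the same value, as it must. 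Forming the ratio $u^{(-1,1)}(x,y)/u^{(-1,1)}(y,y)$ and splitting into the two cases $x \le y$ and $x > y$ reproduces exactly the two branches of \eqref{pointsetprob}.

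The main obstacle is the careful construction and correct normalisation of the local time $L^y$ and the Revuz identification $\mathbb{E}_x[L^y_\tau] = u^{(-1,1)}(x,y)$; this is standard Hunt-process potential theory but is the only non-computational ingredient. A technically equivalent route is to approximate $\delta_y$ by $(2\epsilon)^{-1}\mathbf{1}_{B_\epsilon}$, where $B_\epsilon = (y-\epsilon, y+\epsilon)$, and verify that $(2\epsilon)^{-1}\mathbb{E}_x[\int_0^{\tau \wedge \tau^{\{y\}}} \mathbf{1}_{B_\epsilon}(X_s)\,\D s] \to 0$ as $\epsilon \downarrow 0$, which again hinges on regularity of $\{y\}$ for itself. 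Once the Green-function identity is in hand, all remaining steps are elementary asymptotics of the integral in \eqref{u(-1,1)}.
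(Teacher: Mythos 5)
Your proposal is correct and follows essentially the same route as the paper: the identity $u^{(-1,1)}(x,y)=\mathbb{P}_x(\tau^{\{y\}}<\tau^+_1\wedge\tau^-_{-1})\,u^{(-1,1)}(y,y)$ (which the paper simply invokes as a standard potential-theoretic fact) combined with evaluating the diagonal value $u^{(-1,1)}(y,y)$ as the limit $x\to y$ of \eqref{u(-1,1)}. Your local-time/Revuz justification and the explicit asymptotics $\int_1^M(s+1)^{\alpha\rho-1}(s-1)^{\alpha\hat\rho-1}\,\d s\sim M^{\alpha-1}/(\alpha-1)$ merely fill in the details the paper leaves to the reader, and your value of $u^{(-1,1)}(y,y)$ is correct.
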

\begin{proof}[Proof \emph{($\heartsuit$)}]
We appeal to a standard technique and note that, for $x,y\in (-1,1)$
\[
 u^{(-1,1)}(x,y)  = \mathbb{P}_x(\tau^{\{y\}}<\tau^+_1\wedge\tau^-_{-1}) u^{(-1,1)}(y,y),
\]
where we may use L'H\^opital's rule to compute $
u^{(-1,1)}(y,y) = \lim_{x\uparrow y}u^{(-1,1)}(x,y).
$
The details are straightforward and left to the reader.
 \end{proof}

 \section{First entrance into a bounded interval}\label{stripentrance}
 In Section \ref{stripexit} we looked at the law of the stable process as it first exits $(-1,1)$. In this section, we shall look at the law of the stable process as it first {\it enters} $(-1,1)$. Accordingly, we introduce the first hitting time of the interval
$(-1,1)$,
\[ \tau^{(-1,1)} = \inf\{ t > 0 : X_t \in (-1,1) \} .
\]
\subsection{First entry point in  $(-1,1)$} 
The following result was first proved in the symmetric case in \cite{BGR} and in the non-symmetric case in \cite{KPW, PS}. The proof we give is that of the first of  these three references.

\begin{theorem}[$\heartsuit$]\label{interval hitting} Suppose $0<\alpha\rhohat,\alpha\rho<1$.
Let $x > 1$. Then, when $\alpha \in (0,1]$,
\begin{eqnarray}
  \lefteqn{\p_{x}\bigl(X_{\tau^{(-1,1)}} \in \d y, \, \tau^{(-1,1)} < \infty\bigr)}&&\notag\\
  &&= \frac{\sin(\pi\alpha\rhohat)}{\pi}
    (1+x)^{\alpha\rho}(1+y)^{-\alpha\rho}
    (x-1)^{\alpha\rhohat}
    (1-y)^{-\alpha\rhohat}
    (x-y)^{-1}\d y,
    \label{striplowalpha}
\end{eqnarray}
for $y \in (-1,1)$. When $\alpha \in (1,2)$,
\begin{eqnarray}
 \lefteqn{\hspace{-4pt}\p_x(X_{\tau^{(-1,1)}} \in \d y)}&&\notag\\
 &&\hspace{-8pt}= \frac{\sin(\pi\alpha\rhohat)}{\pi}
   (1+y)^{-\alpha\rho}
    (1-y)^{-\alpha\rhohat}
    \Bigg((x-1)^{\alpha\rhohat}
     (1+x)^{\alpha\rho}(x-y)^{-1}
 \notag\\
  &&\hspace{3.5cm} - (\alpha-1)
        \int_1^{x} (t-1)^{\alpha\rhohat-1} (t+1)^{\alpha\rho-1}\, \d t\Bigg)\,\d y,
\end{eqnarray}
for $y \in (-1,1)$. 
\label{stripbigalpha}
\end{theorem}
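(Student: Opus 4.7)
The approach is to use the Riesz--Bogdan--\.{Z}ak transform (Theorem \ref{th:BZ}) to interchange first entry with first exit, and then use the $h$-transform \eqref{updownCOM7} to translate the resulting $\mathbb{P}^\circ$-quantity back to the original law $\mathbb{P}$. The key geometric observation is that the spatial inversion $z\mapsto -1/z$ maps $(-1,1)\setminus\{0\}$ bijectively onto $(-\infty,-1)\cup(1,\infty)$, so first entry of $X$ into $(-1,1)$ from $x>1$ corresponds, under the transform, to first exit of the transformed process from $(-1,1)$ starting at $-1/x\in(-1,0)$. Since the time change $\eta$ preserves the range of paths, writing $T:=\tau^+_1\wedge\tau^-_{-1}$, Theorem \ref{th:BZ} yields
\[
\mathbb{P}_x(X_{\tau^{(-1,1)}}\in\d y,\,\tau^{(-1,1)}<\infty) = \mathbb{P}^\circ_{-1/x}(X_T\in\d(-1/y),\,T<\infty),
\]
with an additional Jacobian factor of $1/y^2$ when passing to Lebesgue densities.

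Applying Doob's optional stopping to \eqref{updownCOM7} at $T$ gives
\[
\mathbb{P}^\circ_{-1/x}(X_T\in\d z,\,T<\infty) = \frac{h(z)}{h(-1/x)}\,\mathbb{P}_{-1/x}\bigl(X_T\in\d z,\,T<\tau^{\{0\}}\bigr).
\]
For $\alpha\in(0,1]$, $X$ cannot hit the origin and the indicator $\mathbf{1}_{(T<\tau^{\{0\}})}$ is automatic, so the task reduces to the law of the first exit position of $X$ from $(-1,1)$ started at $-1/x$, which one extracts by integrating out the undershoot and supremum variables in Theorem \ref{2sidedtriple} (together with its reflected analogue for exits through $-1$). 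For $\alpha\in(1,2)$ the strong Markov property at $\tau^{\{0\}}$ splits
\[
\mathbb{P}_{-1/x}(X_T\in\d z,\,T<\tau^{\{0\}}) = \mathbb{P}_{-1/x}(X_T\in\d z) - \mathbb{P}_{-1/x}(\tau^{\{0\}}<T)\,\mathbb{P}_0(X_T\in\d z),
\]
where $\mathbb{P}_{-1/x}(\tau^{\{0\}}<T)$ is read off from Corollary \ref{pointset} with $y=0$, and $\mathbb{P}_0(X_T\in\d z)$ is precisely the $\alpha\in(0,1]$-style exit law computed above, now from the symmetric point $0$.

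Assembling these pieces, one substitutes $z=-1/y$ with its Jacobian $1/y^2$, inserts the ratio $h(z)/h(-1/x)$, and simplifies using the reflection formula $\Gamma(\alpha\rho)\Gamma(1-\alpha\rho)=\pi/\sin(\pi\alpha\rho)$ and its $\hat\rho$-counterpart. The factors $(1\pm y)^{-\alpha\rho},(1\pm y)^{-\alpha\hat\rho}$ and $(x\pm 1)^{\alpha\rho},(x\pm 1)^{\alpha\hat\rho}$ appearing in the statement arise from combining the exit density with the Jacobian, while $(x-y)^{-1}$ emerges from writing $|z-(-1/x)|/y^2=(x-y)/(xy^2)$. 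The main obstacle is the final bookkeeping, especially for $\alpha\in(1,2)$: one must show that the subtracted contribution from paths hitting $0$ before exiting collapses exactly to $-(\alpha-1)\int_1^x(t-1)^{\alpha\hat\rho-1}(t+1)^{\alpha\rho-1}\,\d t$, which follows by integrating the explicit (and symmetric) $\mathbb{P}_0$-exit density against the hitting probability in Corollary \ref{pointset} and recognising the resulting integral through the natural change of variables. One then verifies as a consistency check that the claimed total mass equals $1$ for $\alpha\in(1,2)$ (reflecting point-recurrence) and is strictly less than $1$ for $\alpha\in(0,1)$ (reflecting transience).
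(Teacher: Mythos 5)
Your proposal is correct, but it follows a genuinely different route from the paper. The paper's proof works with the censored stable process: it recasts first entrance into $(0,a)$ as first passage below a level for $\cenxi$, then applies Theorem \ref{triple} with the descending ladder potential and L\'evy densities read off from the factorisations \eqref{factorisation<1} and \eqref{factorisation>1}, finishing the case $\alpha\in(1,2)$ by an integration by parts. You instead invert space: by the one-dimensional Riesz--Bogdan--\.{Z}ak transform, first entry into $(-1,1)$ from $x>1$ becomes first exit of $(X,\mathbb{P}^\circ_{-1/x})$ from $(-1,1)$, and undoing the $h$-transform \eqref{updownCOM7} at the exit time $T$ (plus, for $\alpha\in(1,2)$, the strong Markov property at $\tau^{\{0\}}$ together with Corollary \ref{pointset} at $y=0$) reduces everything to the exit-position law of the stable process from $(-1,1)$. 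The bookkeeping does close: with $h(-1/x)=\sin(\pi\alpha\rho)x^{1-\alpha}$ the factor $x^{1-\alpha}$ from Corollary \ref{pointset} cancels, and the Jacobian $y^{-2}$ combined with $h(-1/y)$ collapses the transformed exit density exactly to the stated expressions, the correction term emerging as $(\alpha-1)\int_1^x(t-1)^{\alpha\rhohat-1}(t+1)^{\alpha\rho-1}\,\d t$. There is no circularity, since Theorem \ref{2sidedtriple}, Corollary \ref{pointset}, Theorem \ref{th:BZ} and Proposition \ref{h-ssMp} are established independently of the present theorem; indeed your argument is precisely the kind of derivation the paper invites in the remark following Theorem \ref{potential int}. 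What the paper's proof buys is self-containedness (only the Wiener--Hopf data of $\cenxi$ is needed, and the entrance law comes out directly); what yours buys is economy and a probabilistic reading of the $\alpha\in(1,2)$ correction as the contribution of paths hitting the origin before exiting. Two steps need to be made explicit to complete your write-up: (i) the marginal exit law $\mathbb{P}_w(X_{\tau^+_1\wedge\tau^-_{-1}}\in\d z)$ for $w\in(-1,1)$, $|z|>1$, which you invoke but which requires integrating out the undershoot and supremum variables in Theorem \ref{2sidedtriple} (or, more cheaply, applying Theorem \ref{triple} to $\xi^*$ keeping only the overshoot marginal); and (ii) the validity of the change of measure \eqref{updownCOM7} at the stopping time $T$, which is standard but should be justified, noting also that on the $\mathbb{P}^\circ$-absorption event one has $T=\infty$, so the events $\{T<\infty\}$ and $\{\tau^{(-1,1)}<\infty\}$ do correspond under the transform.
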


\begin{proof}[Proof \emph{($\heartsuit$)}]
Just as with the proof of    Theorem \ref{2sidedtriple}, the proof here relies on reformulating the problem at hand in terms of an underlying positive self-similar Markov process. In this case, we will appeal to the censored stable process defined in Section \ref{CSPsection}. In particular, this means that we will prove Theorem \ref{interval hitting} by first proving an analogous result for the interval $(0,a)$.

Define $\tau^{(0,a)} = \inf\{t>0: X_t\in (0,a)\}$. Thanks to stationary and independent increments, it suffices to consider the distribution of $X_{\tau^{(0,a)}}$. The key observation that drives the proof is that, when $X_0 = x>a>0$,  on $\{\tau^{(0,a)}<\infty\}$, 
\[
X_{\tau^{(0,a)}} \equiv x\exp\{\stackrel{_{\leadsto}}{\xi}_{\stackrel{_{\leadsto}}{\tau}^-_{\log (a/x)}}\},
\]
where  $\stackrel{_{\leadsto}}{\xi}$ is the L\'evy process described in Theorem \ref{censoredpsithrm} and
\[
\stackrel{_{\leadsto}}{\tau}^-_{\log a} = \inf\{t>0 : \,\stackrel{_{\leadsto}}{\xi}_t < \log (a/x)\}.
\]  Note, moreover, that $\{\tau^{(0,a)}<\infty\}$ and $\{\stackrel{_{\leadsto}}{\tau}^-_{\log (a/x)}<\infty\}$ are almost surely the same event. If we denote the law of $\stackrel{_{\leadsto}}{\xi}$ by $\stackrel{_{\leadsto}}{\mathbf{P}}$, then, for $\alpha\in(0,2)$ and $y\in (0,a)$, 
\begin{eqnarray*}
\lefteqn{\p_x\bigl(X_{\tau^{(0,a)}} \leq y, \, \tau^{(0,a)} < \infty\bigr)}&&\\
&&=\stackrel{_{\leadsto}}{\mathbf{P}}\left(\log(a/x) -\stackrel{_{\leadsto}}{\mathbf{\xi}}_{\stackrel{_{\leadsto}}{\tau}^-_{\log (a/x)}} \geq \log (a/y),\, \stackrel{_{\leadsto}}{\tau}^-_{\log (a/x)}<\infty \right),
\end{eqnarray*}
and hence
\begin{eqnarray}
\label{logdiff}
\lefteqn{\hspace{-0.7cm}\p_x\bigl(X_{\tau^{(0,a)}} \in \D y, \, \tau^{(0,a)} < \infty\bigr)}&&\notag\\
&&\hspace{-0.7cm}= \frac{1}{y}\left.\frac{\D }{\D z}\stackrel{_{\leadsto}}{\mathbf{P}}\left( \log(a/x) -\stackrel{_{\leadsto}}{\mathbf{\xi}}_{\stackrel{_{\leadsto}}{\tau}^-_{\log (a/x)}}  \leq  z ,\, \stackrel{_{\leadsto}}{\tau}_{\log (a/x)}<\infty \right)\D y\right|_{z= \log (a/y)}
\end{eqnarray}
Note that the dual of the process $\stackrel{_{\leadsto}}{\xi}$ has characteristic exponent given by Theorem \ref{censoredpsithrm}. 
With Theorem \ref{triple} in mind, we can check that in the case $\alpha\in(0,1]$, the factorisation \eqref{factorisation<1}  gives us a  potential density and L\'evy density of the descending ladder process that take the form
\begin{equation}
 \frac{1}{\Gamma(\alpha\rhohat)}
      (1-e^{-x})^{\alpha\rhohat-1} e^{-(1-\alpha)x},\qquad  x>0,
      \label{potentialdescend}
      \end{equation}
      and 
      \[
      - \frac{1}{\Gamma(-\alpha\rhohat)}
      e^{\alpha x} (e^x-1)^{-(\alpha\rhohat+1)}, \qquad x>0,
\]
respectively.

In the case that $\alpha\in(1,2)$, the Wiener--Hopf  factorisation is given by \eqref{factorisation>1} and one again easily checks that  potential density and L\'evy density of the descending ladder process that take the form
\[
\frac{\Gamma(2-\alpha)}{\Gamma(1-\alpha\rho)}
    + \frac{1-\alpha\rhohat}{\Gamma(\alpha\rhohat)}
    \int_x^\infty e^{\alpha\rho z} (e^z - 1)^{\alpha\hat\rho-2}\, \d z ,
\qquad     x > 0, 
    \]
     and 
     \[
      \frac{e^{(\alpha - 1)x}
      (e^x-1)^{-(\alpha\rhohat+1)}}{\Gamma(1-\alpha\rhohat)}
      \bigl( \alpha - 1 + (1-\alpha\rho)e^x \bigr) ,
  \qquad    x > 0,
\]
respectively.

We may now appeal to the two  parts of Theorem \ref{triple} (i) to develop the  right hand side of (\ref{logdiff})  by considering the first passage problem of the ascending ladder process of $-\stackrel{_{\leadsto}}{\xi}$ over the threshold $\log(x/a)$. After a straightforward computation, the identity (\ref{striplowalpha}) emerges for $\alpha\in(0,1]$ once we use stationary and independent increments to shift the interval $(0,2)$ to $(-1,1)$. The case $\alpha\in (1,2)$ requires  the evaluation of an  extra term. More precisely, from the second part of Theorem \ref{triple} (ii), we get
\begin{eqnarray}\label{eq20}
\lefteqn{\p_x(X_{\tau^{(0,a)}} \in \d y)} &&\notag\\
&&= \frac{\sin(\pi\alpha\rhohat)}{\pi}
   y^{-\alpha\rho}
    (a-y)^{-\alpha\rhohat} 
    \Bigg((x-a)^{\alpha\rhohat}
     x^{\alpha\rho-1} y(x-y)^{-1}
 \notag\\
  &&\hspace{1.8cm} {} - a^{\alpha-1}(\alpha\rho-1)
    \int_0^{1-\frac{a}{x}} t^{\alpha\rhohat-1} (1-t)^{1-\alpha}\, \d t\Bigg)\,\d y.
\end{eqnarray}
By the substitution $t=(s-1)/(s+1)$, we deduce
\[
\begin{split}
    \int_0^{1-\frac{a}{x}}& t^{\alpha\rhohat-1} (1-t)^{1-\alpha}\, \d t\\
   & =2^{1-\alpha}\left(
    \int_1^{\frac{2x}{a}-1} (s-1)^{\alpha\rhohat-1} (s+1)^{\alpha\rho -1}\, \d s \right.\\
    &\left.\hspace{2cm}-\int_1^{\frac{2x}{a}-1} (s-1)^{\alpha\rhohat} (s+1)^{\alpha\rho-2}\, \d t\right).    
    \end{split}
\]
Now evaluating the second  term on the right hand side above via integration by parts and substituting back into (\ref{eq20}) yields the required law, again, once we use stationary and independent increments to shift the interval $(0,2)$ to $(-1,1)$.
\end{proof}

 We also have the following straightforward corollary from \cite{KPW} that gives the probability process never hits   the interval $(-1,1)$, in the case when $\alpha\in (0,1)$. The result can be deduced from  Theorem \ref{interval hitting} by integrating out $y$ in expression (\ref{striplowalpha}), however, we present a more straightforward proof based on Lemma \ref{Utail}. 
 
\begin{corollary}[$\heartsuit$]\label{interval hitting prob}
When $\alpha \in (0,1)$ and $0<\alpha\rhohat,\alpha\rho<1$, for $x > 1$,
\begin{equation}
\p_x(\tau^{(-1,1)} = \infty)
  =2^{1-\alpha} 
 \frac{\Gamma(1-\alpha\rho)}{\Gamma(\alpha\rhohat)\Gamma(1-\alpha)}
 \int_1^x (s-1)^{\alpha\hat\rho -1}(s+1)^{\alpha\rho-1}\d s.
 \label{readerexercise}
  \end{equation}
  For $x<-1$, $\p_x(\tau^{(-1,1)} = \infty) = \p_{-x}(\tau^{(-1,1)} = \infty)|_{\rho\leftrightarrow\hat\rho}$, where $\rho\leftrightarrow\hat\rho$ means the roles of $\rho$ and $\hat\rho$ have been interchanged.
\end{corollary}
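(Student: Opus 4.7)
The plan is to convert the first-entrance problem for the stable process into a first-passage problem for the L\'evy process $\stackrel{_{\leadsto}}{\xi}$ underlying the censored stable process of Section \ref{CSPsection}, and then to apply Lemma \ref{Utail}. By translation invariance, $\p_x(\tau^{(-1,1)} = \infty) = \p_{x+1}(\tau^{(0,2)} = \infty)$; setting $y = x+1 > 2$, the same Lamperti--censoring identification used in the proof of Theorem \ref{interval hitting} gives $\{\tau^{(0,2)} < \infty\} = \{\stackrel{_{\leadsto}}{\tau}^-_{\log(2/y)} < \infty\}$, where $\stackrel{_{\leadsto}}{\tau}^-_b := \inf\{t > 0 : \stackrel{_{\leadsto}}{\xi}_t < b\}$.

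Since $\alpha \in (0,1)$, the L\'evy process $\stackrel{_{\leadsto}}{\xi}$ drifts to $+\infty$, so $-\stackrel{_{\leadsto}}{\xi}$ drifts to $-\infty$, and the ascending ladder height process of $-\stackrel{_{\leadsto}}{\xi}$---equivalently the descending ladder height process of $\stackrel{_{\leadsto}}{\xi}$---is killed at a strictly positive rate. Applying Lemma \ref{Utail} to $-\stackrel{_{\leadsto}}{\xi}$ at the positive level $\log(y/2)$ yields
\[
\p\bigl(\stackrel{_{\leadsto}}{\tau}^-_{\log(2/y)} < \infty\bigr) = \hat\kappa(0)\,\hat{U}\bigl(\log(y/2), \infty\bigr),
\]
where $\hat\kappa$ and $\hat{U}$ are the Laplace exponent and potential measure of the descending ladder height process of $\stackrel{_{\leadsto}}{\xi}$. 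The factorisation \eqref{factorisation<1} gives $\hat\kappa(0) = \Gamma(1-\alpha\rho)/\Gamma(1-\alpha)$, while the Laplace inversion already carried out in the proof of Theorem \ref{interval hitting} supplies the potential density $\hat u^*(t) = \Gamma(\alpha\hat\rho)^{-1}(1-e^{-t})^{\alpha\hat\rho-1} e^{-(1-\alpha)t}$, $t>0$.

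To finish, I will use the standard identity $\hat\kappa(0)\hat U(0,\infty) = 1$ for killed subordinators to rewrite
\[
\p_x(\tau^{(-1,1)} = \infty) = 1 - \hat\kappa(0)\hat U(\log(y/2), \infty) = \hat\kappa(0)\int_0^{\log(y/2)}\hat u^*(t)\,dt,
\]
and then change variable $s = 2e^t - 1$, so that $dt = ds/(s+1)$, $1 - e^{-t} = (s-1)/(s+1)$, and $e^{-(1-\alpha)t} = (2/(s+1))^{1-\alpha}$, with the limits becoming $s \in (1,x)$. Direct bookkeeping produces the exponents $(s-1)^{\alpha\hat\rho-1}$ and $(s+1)^{\alpha\rho-1}$ together with the prefactor $2^{1-\alpha}\Gamma(1-\alpha\rho)/(\Gamma(\alpha\hat\rho)\Gamma(1-\alpha))$ of the corollary. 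The case $x < -1$ follows immediately by applying the same reasoning to $-X$, which is $(\alpha,\hat\rho)$-stable.

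The main obstacle is making sure the Wiener--Hopf bookkeeping is correct: one must identify $\hat\kappa(0)$ and $\hat u^*$ from the \emph{correct} factor of \eqref{factorisation<1} (as opposed to the ascending factor, which vanishes at the origin, consistent with $\stackrel{_{\leadsto}}{\xi}$ drifting to $+\infty$), and the restriction $\alpha \in (0,1)$ is essential so that the factorisation is the clean one in \eqref{factorisation<1} rather than the perturbed form \eqref{factorisation>1}. The remaining computation is routine substitution.
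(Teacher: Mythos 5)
Your proposal is correct and follows essentially the same route as the paper's own proof: both reduce the question to a first-passage problem for the L\'evy process underlying the censored stable process, apply Lemma \ref{Utail} with the killing rate $\Gamma(1-\alpha\rho)/\Gamma(1-\alpha)$ and the descending ladder potential density \eqref{potentialdescend}, complement using the normalisation of the killed ladder potential (the paper's ``beta integral'' step), and finish with an elementary change of variables. The only differences are cosmetic (starting the L\'evy process at the origin versus at $\log(x/a)$, and the precise substitution used), so nothing further is needed.
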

\begin{proof}[Proof \emph{($\heartsuit$)}]Appealing to Lemma \ref{Utail} and recalling \eqref{potentialdescend} we have, for $x>a$, that  
\begin{eqnarray*}
\p_x( \tau^{(0,a)} = \infty)& =&\stackrel{_{\leadsto}}{\mathbf{P}}_{\log (x/a)}({\stackrel{_{\leadsto}}{\tau}^-_{0}} =\infty)  
\\
& =&1 - \frac{\Gamma(1 - \alpha\rho )}{\Gamma(1 - \alpha )}\int_{\log (x/a)}^\infty \frac{1}{\Gamma(\alpha\hat\rho)}{\rm e}^{-(1-\alpha) y} (1-{\rm e}^{-y})^{\alpha\hat\rho-1}\d y \\
&=&
1- \frac{\Gamma(1 - \alpha\rho )}{\Gamma(\alpha\hat\rho)\Gamma(1 - \alpha )}\int_{(x-a)/x}^1 t^{\alpha\hat\rho -1}(1-t)^{-\alpha}\d t,
  \end{eqnarray*}
  where in the last equality we have performed the change of variable $t = 1-{\rm e}^{-y}$. The desired probability  for $x>1$ now follows as a straightforward consequence of the beta integral and using stationary and independent increments to shift the interval $(0,2)$ to $(-1,1)$ as in the proof of Theorem \ref{interval hitting}. 
  We arrive at 
  \[ 
\p_x(\tau^{(-1,1)} = \infty)
  = \frac{\Gamma(1-\alpha\rho)}{\Gamma(\alpha\rhohat)\Gamma(1-\alpha)}
  \int_0^{\frac{x-1}{x+1}} t^{\alpha\rhohat - 1} (1-t)^{-\alpha} \, \d t 
   \]
   and the statement of the theorem for this range of $x$ follows by performing a change of variable $s =(t-1)/(t+1)$.
  The probability for $x<-1$ follows by anti-symmetry.
\end{proof}

\subsection{Resolvent with killing on first entry to $(-1,1)$}

Next, we are interested in the potential 

\[
U^{(-1,1)^{\rm c}}(x, {\rm d}y )  = \int_0^{\infty} \mathbb{P}_x(X_t \in \,\d y, \, t< \tau^{(-1,1)}) \,\D t , \qquad x,y\in(-1,1)^{\rm c}.
\]
The theorem below was first presented only very recently in an incomplete form in \cite{KW} and a complete form in  \cite{PS}. Our proof is different to both of these references. Moreover, in our proof, we see for the first time the use of the Riesz--Bogdan--\.Zak transform.

\begin{theorem}[$\heartsuit$]\label{potential int}
For $y>x>1$, the measure $U^{(-1,1)^{\rm c}}(x, \D y)$  has a density given by 
\begin{align*}
u&^{(-1,1)^{\rm c}}(x,y) \notag \\
 &=\dfrac{2^{1-\alpha}}{\Gamma(\alpha\rho)\Gamma(\alpha\hat\rho)}
\Bigg(|y-x|^{\alpha-1}
\int_1^{\left|\frac{1-xy}{y-{x}}\right|}
     (s+1)^{\alpha\rho-1}(s-1)^{\alpha\hat\rho-1}
    \, \d s 
 \notag  \\
   &\hspace{1cm} -(\alpha-1)_+\int_1^{x}
    (s+1)^{\alpha\rho-1}
    (s-1)^{\alpha\hat\rho-1} \, {\rm d} s
    \int_1^{y}
     (s+1)^{\alpha\hat\rho-1}(s-1)^{\alpha\rho-1}
    \, \d s
    \Bigg),
    \end{align*}
where $(\alpha-1)^+ = \max\{0, \alpha-1\}$.
Moreover, if $x>y>1$ then  
\[
u^{(-1,1)^{\rm c}}(x,y) = u^{(-1,1)^{\rm c}}(y,x)|_{\rho\leftrightarrow\hat\rho}.
\]
 If $x>1$, $y<-1$, then 
\begin{align*}
u&^{(-1,1)^{\rm c}}(x,y)  \\
&=\dfrac{\sin(\alpha\rhohat)}{\sin(\alpha\rho)}\dfrac{2^{1-\alpha}}{\Gamma(\alpha\rho)\Gamma(\alpha\rhohat)}
\Bigg(|y-x|^{\alpha-1}\int_1^{\left|\frac{1-xy}{y-{x}}\right|}
     (s+1)^{\alpha\rho-1}(s-1)^{\alpha\hat\rho-1}
    \, \d s \notag\\
&\hspace{1cm}-(\alpha-1)^+\int_1^{x}
    (s+1)^{\alpha\rho-1}
    (s-1)^{\alpha\hat\rho-1} \, {\rm d} s
    \int_1^{|y|}
     (s+1)^{\alpha\rho-1}(s-1)^{\alpha\hat\rho-1}
    \, \d s\Bigg).
\end{align*}
 Finally, if $x<-1$, then $u^{(-1,1)^{\rm c}}(x,y) = u^{(-1,1)^{\rm c}}(-x,-y)|_{\rho\leftrightarrow\hat\rho}$.
\end{theorem}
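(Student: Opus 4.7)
The plan is to apply the one-dimensional Riesz--Bogdan--\.Zak transformation of Theorem \ref{th:BZ} to convert the exterior resolvent problem into an interior one for the conditioned process $(X,\mathbb{P}^\circ)$, and then use the $h$-transform \eqref{updownCOM7} to reduce this interior problem to the already-computed density $u^{(-1,1)}$ of Theorem \ref{X (0,1)}.

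Fix $|x|>1$, and set $\tilde X_t := -1/X_{\eta(t)}$, where $\eta$ is the time change from Theorem \ref{th:BZ}. Under $\mathbb{P}_x$, $\tilde X$ has the law of $(X,\mathbb{P}^\circ_{-1/x})$; since $|{-1/x}|<1$, $\tilde X$ lives in $(-1,1)$ up to its exit time $\widetilde\sigma$, and $\eta(\widetilde\sigma)=\tau^{(-1,1)}$. Using $ds=|X_s|^{2\alpha}dt=|\tilde X_t|^{-2\alpha}dt$ under the substitution $s=\eta(t)$, one obtains, for bounded measurable $f$ supported on $(-1,1)^{\rm c}$,
\[
\mathbb{E}_x\!\left[\int_0^{\tau^{(-1,1)}}\! f(X_s)\,ds\right] \;=\; \mathbb{E}^\circ_{-1/x}\!\left[\int_0^{\tau^+_1\wedge\tau^-_{-1}}\! f(-1/X_t)\,|X_t|^{-2\alpha}\,dt\right].
\]
Changing variables $y=-1/w$ on the right-hand side (which introduces a Jacobian $|y|^{-2}$ and turns $|w|^{-2\alpha}$ into $|y|^{2\alpha}$) gives
\[
u^{(-1,1)^{\rm c}}(x,y) \;=\; |y|^{2\alpha-2}\,u^{\circ,(-1,1)}(-1/x,-1/y),
\]
where $u^{\circ,(-1,1)}$ denotes the resolvent density of $(X,\mathbb{P}^\circ)$ killed on exiting $(-1,1)$.

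Next, I would reduce $u^{\circ,(-1,1)}$ to $u^{(-1,1)}$ via the $h$-transform. For $\alpha\in(0,1)$, since $\tau^{\{0\}}=\infty$ almost surely, one has directly $u^{\circ,(-1,1)}(z,w)=(h(w)/h(z))\,u^{(-1,1)}(z,w)$. For $\alpha\in(1,2)$, the $h$-martingale only exists up to $\tau^{\{0\}}$, so one must first kill at $\tau^{\{0\}}$ and then compensate via the strong Markov property, yielding
\[
u^{\circ,(-1,1)}(z,w) \;=\; \frac{h(w)}{h(z)}\Big[u^{(-1,1)}(z,w)-\mathbb{P}_z\bigl(\tau^{\{0\}}<\tau^+_1\wedge\tau^-_{-1}\bigr)\,u^{(-1,1)}(0,w)\Big].
\]
Substituting this, with $u^{(-1,1)}$ from Theorem \ref{X (0,1)}, the hitting probability from Corollary \ref{pointset}, and $h$ from \eqref{constantsinh}, everything collapses cleanly: the M\"obius-type argument $(1-(-1/x)(-1/y))/((-1/y)-(-1/x))$ simplifies to $(xy-1)/(y-x)$, the combined prefactor $|y|^{2\alpha-2}\,h(-1/y)/h(-1/x)$ reduces to $x^{\alpha-1}|y|^{\alpha-1}$, multiplied by $\sin(\pi\alpha\hat\rho)/\sin(\pi\alpha\rho)$ precisely when $x$ and $y$ have opposite signs, and the $\tau^{\{0\}}$ correction reproduces exactly the $(\alpha-1)^+$ double-integral subtraction.

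For $y>x>1$ both $-1/x$ and $-1/y$ lie in $(-1,0)$ with $-1/x\leq -1/y$ and the $h$-ratio reduces to a pure power (no sine ratio), giving the first formula; for $x>1$, $y<-1$ one has $-1/x<0<-1/y$ and the $h$-ratio produces the extra $\sin(\pi\alpha\hat\rho)/\sin(\pi\alpha\rho)$, giving the third formula. For $x>y>1$, we have $-1/x>-1/y$, so the $x>y$ branch of Theorem \ref{X (0,1)} applies, which has $\rho,\hat\rho$ swapped in the integrand; this yields the asserted symmetry $u^{(-1,1)^{\rm c}}(x,y)=u^{(-1,1)^{\rm c}}(y,x)|_{\rho\leftrightarrow\hat\rho}$. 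The case $x<-1$ follows by the antisymmetry $X\mapsto -X$, which takes the $(\alpha,\rho)$ stable process to the $(\alpha,\hat\rho)$ one. The main obstacle I expect is the $\alpha\in(1,2)$ bookkeeping: verifying that the strong Markov decomposition at $\tau^{\{0\}}$ is precisely what produces the $(\alpha-1)$ subtraction, and that all powers, Jacobians, and sine factors combine to give the clean symmetric product of single integrals in the statement without any residual terms.
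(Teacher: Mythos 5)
Your proposal is correct and follows essentially the same route as the paper: the one-dimensional Riesz--Bogdan--\.Zak transform converts $u^{(-1,1)^{\rm c}}(x,y)$ into $|y|^{2\alpha-2}$ times the interior resolvent of the conditioned process at $(-1/x,-1/y)$, which is then reduced via the $h$-transform and a strong-Markov (path-counting) decomposition at $\tau^{\{0\}}$ to $u^{(-1,1)}$ from Theorem \ref{X (0,1)} together with the hitting probability of Corollary \ref{pointset}, exactly as in the paper's \eqref{H1.2}--\eqref{feeditin}. The remaining work is the same bookkeeping (Jacobian, $h$-ratio, sine factors, $|Kx-Ky|=|x-y|/|x||y|$) that the paper also carries out, so no gap.
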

\begin{proof}[Proof \emph{($\diamondsuit$)}]
Let us write 
\[
U_\circ^{(-1,1)}(x,\d y) =\int_0^\infty  \mathbb{P}_x^\circ (X_t \in \d y, \, t<\tau^+_1\wedge \tau^-_{-1})\d t, \qquad |x|,|y|<1,
\]
where we recall that the process $(X,\mathbb{P}^\circ_x)$, $x\in\mathbb{R}\backslash\{0\}$, is the result of the change of measure \eqref{updownCOM7} appearing in the Riesz--Bogdan--\.Zak transform, Theorem \ref{th:BZ}.
Let us preemptively assume that $U^{(-1,1)}_\circ(x,\d y)$ has a density with respect to Lebesgue measure, written $u^{(-1,1)}_\circ(x,y)$, $|x|,|y|<1$.

On the one hand, we have, for  $|x|,|y|<1$,
\begin{equation}
u^{(-1,1)}_\circ(x,y) = \frac{h(y)}{h(x)}u^{(-1,1)}_{\{0\}}(x,y), 
\label{H1}
\end{equation}
where $h$ was given in \eqref{constantsinh} and $ u^{(-1,1)}_{\{0\}}(x,y)$ is the assumed density of 
\[
U^{(-1,1)}_{\{0\}}(x, \d y) = \int_0^\infty \mathbb{P}_x(X_t \in \d  y, \, t< \tau^{\{0\}} \wedge \tau^+_1\wedge \tau^-_{-1})\d t.
\]
By path counting and the Strong Markov Property, we have that
\begin{equation}
U^{(-1,1)}(x,\d y) = U^{(-1,1)}_{\{0\}}(x, \d y)  + \mathbb{P}_x(\tau^{\{0\}} <\tau^+_1\wedge \tau^-_{-1})U^{(-1,1)}(0,\d y),
\label{H1.1}
\end{equation}
for $|x|,|y|<1$, where we interpret the second term on the right-hand side as zero if $\alpha\in(0,1]$. Note that the existence of the density in Theorem \ref{X (0,1)} together with the above equality ensures that the densities $u^{(-1,1)}_\circ$ and $u^{(-1,1)}_{\{0\}}$ both exist. Combining \eqref{H1} and \eqref{H1.1}, we thus have 
\begin{equation}
u^{(-1,1)}_\circ(x,y)  =\frac{h(y)}{h(x)}\left(u^{(-1,1)}(x,y) - \mathbb{P}_x(\tau^{\{0\}} <\tau^+_1\wedge \tau^-_{-1})u^{(-1,1)}(0,y)\right).
\label{H1.2}
\end{equation}

On the other hand, the Riesz--Bogdan--\.Zak transform ensures that, for bounded measurable $f$, 
\begin{align}
\int_{(-1,1)}  f(y )u^{(-1,1)}_\circ(x, y)\d y &=\mathbb{E}_{-Kx} \left[\int_0^\infty   f(-KX_{s} ) |X_s|^{-2\alpha}\mathbf{1}_{
(s<\tau^{(-1,1) }
)
}
\d s\right]\notag\\
&=\int_{(-1,1)^{\rm c}} f(-Kz)|z|^{-2\alpha} u^{(-1,1)^{\rm c}}(-Kx, z)\d z\notag\\
&=\int_{(-1,1)} f(y)|y|^{2\alpha} u^{(-1,1)^{\rm c}}(-Kx, -Ky)|y|^{-2}\d y,
\label{preputtogether}
\end{align}
where the density $u^{(-1,1)^{\rm c}}$ is ensured by the density in the integral on the left-hand side above and we have used the easily proved fact that ${\rm d}(Ky) = -  y^{-2}\d y$, where $Ky = y/|y|^2=1/y$. Putting \eqref{preputtogether} and 
\eqref{H1.2} together, noting that $K(Kx) = Kx$ and $|Kx - Ky| =|x-y| /|x||y|$, we conclude that, for $|x|, |y|>1$, 
\begin{align}
&u^{(-1,1)^{\rm c}}(-x, -y)\notag\\
&=u^{(-1,1)^{\rm c}}(x, y)|_{\rho\leftrightarrow \hat\rho}\notag\\
&=|y|^{2\alpha-2}
\frac{h(Ky)}{h(Kx)}\left(u^{(-1,1)}(Kx,Ky) - \mathbb{P}_{Kx}(\tau^{\{0\}} <\tau^+_1\wedge \tau^-_{-1})u^{(-1,1)}(0,Ky)\right).
\label{feeditin}\end{align}
If we now take, for example,  $y>x>1$,  with the help of \eqref{pointsetprob} and  \eqref{u(-1,1)} we can develop \eqref{feeditin} and get
\begin{align*}
&u^{(-1,1)^{\rm c}}(x, y)\\
&=\dfrac{2^{1-\alpha}}{\Gamma(\alpha\rho)\Gamma(\alpha\hat\rho)}
\Bigg(|y-x|^{\alpha-1}
\int_1^{\left|\frac{1-xy}{y-{x}}\right|}
     (s+1)^{\alpha\rho-1}(s-1)^{\alpha\hat\rho-1}
    \, \d s 
   \\
   &\hspace{1cm} -(\alpha-1)_+\int_1^{x}
    (s+1)^{\alpha\rho-1}
    (s-1)^{\alpha\hat\rho-1} \, {\rm d} s
    \int_1^{y}
     (s+1)^{\alpha\hat\rho-1}(s-1)^{\alpha\rho-1}
    \, \d s
    \Bigg),
\end{align*}
where we have used again that $|Kx - Ky| =|x-y| /|x||y|$, in particular that $|1-KxKy| = |1-xy|/|x||y|$.
With some additional minor computations,  the remaining cases follow similarly. The details are left to the reader.
\end{proof}

\begin{remark}\rm
The conditioning with the help of the Strong Markov Property in \eqref{H1.1},  which can be seen as counting paths of the stable process according to when they first exit the interval $(-1,1)$, is a technique that we will see several times in this text for computing potentials. It is a technique that is commonly used in much of the potential analytic literature concerned with first passage problems of stable processes. \cite{Ray} referred to this technique as producing  {\it D\'esir\'e Andr\'e} type equations. This is a somewhat confusing name for something which is, in modern times,  otherwise  associated with a reflection principle for the paths of Brownian motion or random walks. Nonetheless, the commonality to both uses of the name `D\'esir\'e Andr\'e equations' boils down to straightforward {\it path counting}. This is the terminology we prefer to use here. 
\end{remark}

\begin{remark}\rm As an exercise, and to appreciate the spirit in which we present this review article, the reader is now encouraged to return to Corollaries \ref{interval hitting prob} and \ref{pointset} and to affirm the robustness of the use of the Riesz--Bogdan--\.Zak transform by showing the equivalence of the identies \eqref{readerexercise} from that of \eqref{pointsetprob}.
\end{remark}

\section{First hitting of the boundary of the interval $(-1,1)$}

In the previous section, we looked at the law of the time to first hitting the origin for a one-dimensional stable processes when $\alpha\in(1,2)$. Let us define the hitting times 
\[
\tau^{\{b\}} = \inf\{t> 0 : X_t = b\},
\]
for $b\in \mathbb{R}$, and consider the two point hitting problem of evaluating $\mathbb{P}_x(\tau^{\{-1\}}<\tau^{\{1\}})$ for $a,b,x\in\mathbb{R}$. Naturally for this problem to make sense, we need to assume, as in the previous section, that $\alpha\in(1,2)$. The two point hitting problem is a classical problem for Brownian motion. However, for the case of a stable process, on account of the fact that it  may wander either side of the points $a$ and $b$ before hitting  one of them, the situation is significantly different. 
One nice consequence of the two point hitting problem is that it turns out that it gives us easy access to the potential of the stable process up to first hitting of a point.

\subsection{Two point hitting problem}
It turns out that censoring the stable process is a useful way to analyse this problem. Indeed if we write $\cenxi$ for the L\'evy process which drives the Lamperti transformation of the censored stable process (cf. Section \ref{CSPsection}) and denote its probabilities by   $\stackrel{_\leadsto}{\mathbf{P}}_x$, $x\in\mathbb{R}$, then by spatial homogeneity,
\begin{equation}
\mathbb{P}_{x}(\tau^{\{1\}}<\tau^{\{-1\}})=
\mathbb{P}_{1+x}(\tau^{\{2\}}<\tau^{\{0\}}) =  \stackrel{_\leadsto}{\mathbf{P}}_{\log (1+x)}(\stackrel{_\leadsto}{\tau}^{\{\log 2\}}<\infty),
\label{hitone}
\end{equation}
where
\[
\stackrel{_\leadsto}{\tau}^{\{\log 2\}} = \inf\{t>0: \, \cenxi_t =\log 2 \}.
\]
Thus the two-point hitting problem for the stable process is reduced to a single-point hitting problem for the  L\'evy process associated to the censored stable process via the Lamperti transformation. Moreover, the general theory of L\'evy processes for which single points are not polar gives us direction here. Indeed, it is known that the potential $\int_0^\infty \mathbf{P}_x(\cenxi_t \, \in \d y)\d t$ has a density, which, thanks to stationary and independent increments, depends on $x-y$, say $\stackrel{_\leadsto}{u}(x-y)$, and fuels the formula
\begin{equation}
 \stackrel{_\leadsto}{\mathbf{P}}_{\log (1+x)}(\stackrel{_\leadsto}{\tau}^{\{\log 2\}}<\infty)=  \frac{\stackrel{_\leadsto}{u}(-\log((1+x)/2))}{\stackrel{_\leadsto}{u}(0)}.
\label{workoutexplicitly}
\end{equation}
See for example Corollary II.18  of \cite{bertoin}.
We can derive an explicit identity for the potential density $\stackrel{_\leadsto}{u}$, and thus feed it into the right-hand side of \eqref{hitone}, by inverting its Laplace transform. We have 
\begin{equation}
\int_\mathbb{R}{\rm e}^{z x}\stackrel{_\leadsto}{u}(x){\rm d}x = 
\frac{1}{\stackrel{_\leadsto}{\Psi}(z)}=
 \frac{\Gamma(-{z})}{\Gamma(\alpha\rho -{z})}
    \frac{\Gamma(1 - \alpha + {z})}{\Gamma(1 - \alpha\rho + {z})}, 
    \label{invertthisplease}
\end{equation}
for ${\rm Re}(z)\in(0, \alpha-1)$. More generally, $\stackrel{_\leadsto}{\Psi}(-\i z)$ is well defined as a Laplace exponent for ${\rm Re}(z)\in (\alpha\rho-1,\alpha\rho)$, having roots at $0$ and $\alpha-1$. As $-\stackrel{_\leadsto}{\Psi}(-\i z)$ is convex for real $z$, recalling from the discussion following Theorem \ref{censoredpsithrm} that $\stackrel{_\leadsto}{\mathbf{E}}[\stackrel{_\leadsto}{\xi}_1]<0$, we can deduce that
$
{\rm Re}(\Psi(-{\rm i}z)) >0
$
for 
${\rm Re}(z)\in(0, \alpha-1)$.

\begin{theorem}[$\clubsuit$]\label{censoreddensity}
For $x>0$ we have
\begin{align*}
\stackrel{_\leadsto}{u}(x)=-&\frac{1}{\pi}\Gamma(1-\alpha)\frac{\sin(\pi \alpha \rho)}{\pi} \left[ 1-(1-{\rm e}^{-x})^{\alpha-1}\right]\\
&-\frac{1}{\pi}
\Gamma(1-\alpha)  \frac{\sin(\pi \alpha \hat \rho)}{\pi} {\rm e}^{-(\alpha -1)x}
\end{align*}
and for $x<0$ 
\begin{align*}
\stackrel{_\leadsto}{u}(x)=&-
\frac{1}{\pi}\Gamma(1-\alpha)  \frac{\sin(\pi \alpha \rho)}{\pi}\\
&-\frac{1}{\pi}\Gamma(1-\alpha)\frac{\sin(\pi \alpha \hat \rho)}{\pi} \left[1- (1-{\rm e}^{x})^{\alpha-1}\right]{\rm e}^{-(\alpha -1)x}. 
\end{align*}
In particular, 
\[
\stackrel{_\leadsto}{u}(0) =-\frac{1}{\pi} \Gamma(1-\alpha) \left( \frac{\sin(\pi \alpha \rho)}{\pi}+ \frac{\sin(\pi \alpha \hat \rho)}{\pi} \right).
\]
\end{theorem}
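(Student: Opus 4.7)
The plan is to invert the bilateral Laplace transform displayed in \eqref{invertthisplease} by complex contour integration. Fix $c$ in the strip of absolute convergence, so in particular $c\in(0,\alpha-1)$ (recall we are in the regime $\alpha\in(1,2)$ since $\stackrel{_\leadsto}{\mathbf{E}}[\stackrel{_\leadsto}{\xi}_1]<0$ was used above). The inversion formula reads
\[
\stackrel{_\leadsto}{u}(x) = \frac{1}{2\pi\mathrm{i}}\int_{c-\mathrm{i}\infty}^{c+\mathrm{i}\infty} F(z)\,{\rm e}^{-zx}\,{\rm d}z,\qquad F(z):=\frac{\Gamma(-z)\,\Gamma(1-\alpha+z)}{\Gamma(\alpha\rho-z)\,\Gamma(1-\alpha\rho+z)}.
\]
The poles of $F$ are all simple and come only from the numerator: from $\Gamma(-z)$ at $z=0,1,2,\ldots$ and from $\Gamma(1-\alpha+z)$ at $z=\alpha-1-k$, $k=0,1,2,\ldots$ (the denominator has no zeros). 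Stirling's formula gives the uniform tail bound $|F(c'+\mathrm{i}T)|=O(|T|^{-\alpha})$ as $|T|\to\infty$, uniformly for $c'$ in any compact real interval, which allows the contour to be closed by large rectangles on either side; only a finite collection of residues contributes in the limit.

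For $x>0$ I would close the contour to the right, enclosing the poles at $z\in\{1,2,\ldots\}\cup\{\alpha-1\}$. Using the reflection formula $\Gamma(s)\Gamma(1-s)=\pi/\sin(\pi s)$, the factor $1/[\Gamma(\alpha\rho-n)\Gamma(1-\alpha\rho+n)]$ simplifies to $(-1)^{n}\sin(\pi\alpha\rho)/\pi$, which cancels the $(-1)^{n+1}/n!$ coming from the residue of $\Gamma(-z)$. Summing the resulting residues via the elementary binomial identity
\[
\sum_{n=0}^{\infty}\frac{\Gamma(1-\alpha+n)}{n!}\,y^{n}=\Gamma(1-\alpha)(1-y)^{\alpha-1}
\]
with $y=\mathrm{e}^{-x}$ (and subtracting the $n=0$ term) produces the contribution $\frac{\sin(\pi\alpha\rho)}{\pi}\Gamma(1-\alpha)[(1-\mathrm{e}^{-x})^{\alpha-1}-1]$. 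The extra residue at $z=\alpha-1$ is computed from $\Gamma(\alpha\hat\rho)\Gamma(1-\alpha\hat\rho)=\pi/\sin(\pi\alpha\hat\rho)$ and contributes $\frac{\Gamma(1-\alpha)\sin(\pi\alpha\hat\rho)}{\pi}\mathrm{e}^{-(\alpha-1)x}$. Collecting terms with the correct overall sign (the rightward contour is traversed clockwise) yields the claimed formula on $x>0$.

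For $x<0$ I would instead close to the left, enclosing the pole at $z=0$ and the poles at $z=\alpha-1-k$ for $k\geq 1$ (the pole at $z=\alpha-1$ lies on the \emph{right} of~$c$). The residue at $z=0$, again by the reflection formula, contributes the constant $-\Gamma(1-\alpha)\sin(\pi\alpha\rho)/\pi$. Each residue at $z=\alpha-1-k$ carries the factor $\mathrm{e}^{-(\alpha-1-k)x}=\mathrm{e}^{kx}\mathrm{e}^{-(\alpha-1)x}$, and the same binomial identity applied with $y=\mathrm{e}^{x}$ collapses the sum over $k\geq 1$ into $\frac{\Gamma(1-\alpha)\sin(\pi\alpha\hat\rho)}{\pi}\bigl[(1-\mathrm{e}^{x})^{\alpha-1}-1\bigr]\mathrm{e}^{-(\alpha-1)x}$, which together with the $z=0$ residue reproduces the second displayed formula. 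The value $\stackrel{_\leadsto}{u}(0)$ follows by letting $x\downarrow 0$ or $x\uparrow 0$: in both formulas the bracketed factor vanishes and only the two constant contributions survive.

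The step I expect to be most delicate is the standard but slightly fiddly Mellin--Barnes tail analysis: justifying that as one pushes the far vertical edge of the rectangular contour to $\pm\infty$, the horizontal and distant vertical contributions vanish. The cleanest route is to choose the far edges to pass through half-integer points (avoiding the poles), combine Stirling's formula for the gamma ratio with the exponential damping $|\mathrm{e}^{-zx}|=\mathrm{e}^{-x\,\mathrm{Re}(z)}$, and use the $|T|^{-\alpha}$ decay on horizontals; the rest is routine bookkeeping with the two residue sums and the two applications of the binomial identity.
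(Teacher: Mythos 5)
Your proposal is correct and follows essentially the same route as the paper's own proof: Mellin--Barnes inversion of \eqref{invertthisplease} along $\Re(z)=c\in(0,\alpha-1)$, closing right for $x>0$ and left for $x<0$, with the same identification of the simple poles $\{1,2,\dots\}\cup\{\alpha-1\}$ (resp. $\{0\}\cup\{\alpha-1-k,\,k\geq1\}$), the reflection formula for the residues, and the binomial series to resum them. The only differences are cosmetic (rectangular contours through half-integer abscissae versus the paper's semicircular arcs bounded by $CR^{-(\alpha-1)}$), so nothing further is needed.
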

\begin{proof}[Proof \emph{($\clubsuit$)}]
A classical asymptotic result for the gamma function tells us that 
\begin{equation}\label{App1_gamma_asymptotics1}
\frac{\Gamma(z+a)}{\Gamma(z)}=z^a(1+o(1)),
\end{equation}
as $|z|\to +\infty$, uniformly in any sector $|{\textnormal{Arg}}(z)|<\pi-\epsilon$. Accordingly, we have that 
\begin{equation}\label{pot_density_proof_2}
\frac{1}{\stackrel{_\leadsto}{\Psi}(-\i z)}=z^{-\alpha} (1+o(1)), \;\;\; \im(z) \to \infty,
\end{equation}
which is valid uniformly  in any sector $|{\textnormal{Arg}}(z)|<\pi-\epsilon$. This and the fact that there are no poles along the vertical line $c+\i \r$, for $c \in (0,\alpha-1)$, allows us to invert \eqref{invertthisplease} via the integral
\begin{equation}\label{pot_density_proof_3}
\stackrel{_\leadsto}{u}(x)= \frac{1}{2\pi \i} \int_{c+\i \r} \frac{1}{\stackrel{_\leadsto}{\Psi}(-\i z)}{\rm e}^{-zx} \d z.
\end{equation}
We can proceed to give a concrete value to the above integral by appealing to a standard contour integration argument in connection with Cauchy's residue theory.

The function ${1}/{\stackrel{_\leadsto}{\Psi}(-\i z)}$ has simple poles at points 
$$
\{0, 1, 2, \dots \} \cup \{ \alpha-1, \alpha-2, \alpha-3, \dots\}. 
$$
Suppose that $\gamma_R$ is the contour described in Fig. \ref{Sect6.6}. That is $\gamma_R = \{c+{\rm i}x: |x|\leq R\}\cup \{c+R{\rm e}^{{\rm i}\theta}: \theta\in(-\pi/2,\pi/2)\}$,
where we recall $c\in(0,\alpha-1)$.
 \begin{figure}[h!]
\begin{center}
\includegraphics[width = 5cm]{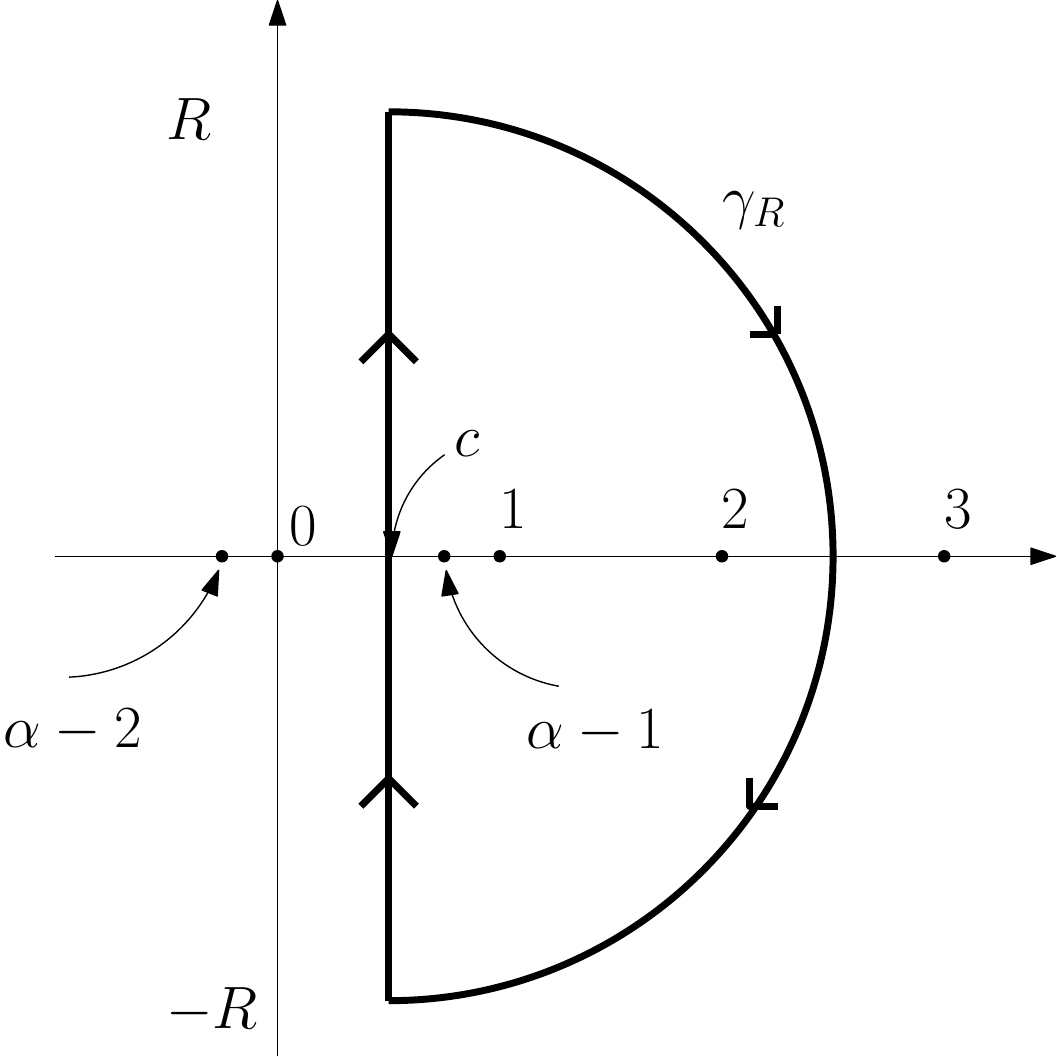}
\end{center}
\caption{The contour $\gamma_R$.}
\label{Sect6.6}
\end{figure}

Residue calculus gives us 
\begin{align}
& \frac{1}{2\pi \i} 
\int_{c+\i x: |x|\leq R} \frac{1}{\stackrel{_\leadsto}{\Psi}(-\i z)}{\rm e}^{-zx} \d z \notag\\
&\hspace{1cm}= -\frac{1}{2\pi \i} 
 \int_{c+R{\rm e}^{{\rm i}\theta}: \theta\in(-\pi/2, \pi/2)} \frac{1}{\stackrel{_\leadsto}{\Psi}(-\i z)}{\rm e}^{-zx} \d z\notag\\
 & \hspace{2cm}- {\rm Res}({1}/{\stackrel{_\leadsto}{\Psi}(-\i z)} :  z  = \alpha -1){\rm e}^{-(\alpha - 1)x}\notag\\
 &\hspace{3cm}-\sum_{1\leq k\leq \lfloor R \rfloor}{\rm Res}({1}/{\stackrel{_\leadsto}{\Psi}(-\i z)}:  z  = k){\rm e}^{-kx} .
 \label{takeRtoinf}
\end{align}

Now fix $x\geq 0$. The uniform estimate \eqref{pot_density_proof_2}, the positivity of $x$ and the length of the arc $\{c+R{\rm e}^{{\rm i}\theta}: \theta\in(-\pi/2, \pi/2)\}$ having length $\pi R$ allows us to estimate
\[
\left|\int_{c+R{\rm e}^{{\rm i}\theta}: \theta\in(-\pi/2, \pi/2)} \frac{1}{\stackrel{_\leadsto}{\Psi}(-\i z)}{\rm e}^{-zx} \d z\right|\leq CR^{-(\alpha-1)}
\]
for some constant $C>0$, 
and hence 
\[
\lim_{R\to\infty}\int_{c+R{\rm e}^{{\rm i}\theta}: \theta\in(-\pi/2, \pi/2)} \frac{1}{\stackrel{_\leadsto}{\Psi}(-\i z)}{\rm e}^{-zx} \d z =0.
\]
 Together with \eqref{pot_density_proof_3}, we can use this convergence and take limits as $R\to\infty$ in \eqref{takeRtoinf} to conclude that
\begin{eqnarray*}
\stackrel{_\leadsto}{u}(x)&=-&{\textnormal{Res}}({1}/{\stackrel{_\leadsto}{\Psi}(-\i z)} : z=\alpha-1){\rm e}^{-(\alpha-1)x}\notag\\
&&-\sum\limits_{k=1}^{\infty} 
{\textnormal{Res}}({1}/{\stackrel{_\leadsto}{\Psi}(-\i z)} :  z=k){\rm e}^{-k x}.
\end{eqnarray*}
To compute the residues, we make straightforward use of the fact that 
${\rm Res}(\Gamma(z): z = -n) = (-1)^n/n!$, for $n\geq 0$. Hence, with the help of the binomial series identity, we finally obtain
\begin{align*}
\stackrel{_\leadsto}{u}(x)&=-\frac{1}{\pi} \sin(\pi \alpha \hat \rho)\Gamma(1-\alpha)  {\rm e}^{-(\alpha-1)x} +  
\frac{1}{\pi} \sin(\pi \alpha \rho)\sum\limits_{k=1}^{\infty}\frac{\Gamma(1-\alpha+k)}{k!}  {\rm e}^{-k x}
\\ \nonumber
&=-\frac{1}{\pi} \sin(\pi \alpha \hat \rho)\Gamma(1-\alpha)  {\rm e}^{-(\alpha-1)x} \\
& \hspace{2cm}+
\frac{1}{\pi} \sin(\pi \alpha \rho)\Gamma(1-\alpha) \left[(1-{\rm e}^{-x})^{\alpha-1}-1 \right],
\end{align*}
which is valid for $x>0$. 

The proof in the case $x<0$ is identical, except that we need to shift the arc in the contour $\gamma_R$ to extend into the negative part of the complex plane. The details are left to the reader, however, it is heuristically clear that one will end up with 
\begin{align*}
\stackrel{_\leadsto}{u}(x)&
=-{\textnormal{Res}}({1}/{\stackrel{_\leadsto}{\Psi}(-\i z)} : z=0)
-\sum\limits_{k=2}^{\infty} 
{\textnormal{Res}}({1}/{\stackrel{_\leadsto}{\Psi}(-\i z)} :  z=\alpha - k){\rm e}^{-(\alpha - k) x}\\
&=
-\frac{1}{\pi}\sin(\pi\alpha\rho)\Gamma(1-\alpha)
+\frac{1}{\pi}\sin(\pi\alpha\hat\rho){\rm e}^{-(\alpha - 1) x}\sum_{j =1}
\frac{\Gamma(1-\alpha+j)}{j!}{\rm e}^{jx },
\end{align*}
which agrees with the expression given in the statement of the theorem, where, again, we use the binomial series expansion.
\end{proof}

The consequence of being able to identify the above potential density explicitly is that we can now give an explicit identity for the two point hitting problem. Without loss of generality, we can always reduce a general choice of $x$  to the case  $x>-1$. Indeed,
if $x<-1$, then 
\[
\mathbb{P}_{x}(\tau^{\{1\}}<\tau^{\{-1\}}) = \hat{\mathbb{P}}_{1-x}(\tau^{\{-1\}}<\tau^{\{1\}}) = 1- \hat{\mathbb{P}}_{1-x}(\tau^{\{1\}}<\tau^{\{-1\}}),
\]
where $(X,\hat{\mathbb{P}})$ is equal in law to  $-X$.
The following result is originally due to \cite{G}, however we give a completely new proof here. 
\begin{theorem}[$\heartsuit$]\label{twopoint}
Suppose that $x>-1$, then 
\begin{align*}
&\mathbb{P}_x(\tau^{\{1\}}<\tau^{\{-1\}})\\
&=  \left\{
\begin{array}{ll}
\dfrac{2^{\alpha-1}\sin(\pi\rho\alpha) - |x-1|^{\alpha-1} \sin(\pi\hat\rho\alpha) +(x+1)^{\alpha-1}\sin(\pi\hat\rho\alpha)}{2^{\alpha -1}(\sin(\pi\rho\alpha)+ \sin(\pi\hat\rho\alpha))} & x>1\\
&\\
\dfrac{2^{\alpha-1}\sin(\pi\rho\alpha) - |x-1|^{\alpha-1} \sin(\pi\rho\alpha) +(x+1)^{\alpha-1}\sin(\pi\hat\rho\alpha)}{2^{\alpha -1}(\sin(\pi\rho\alpha)+ \sin(\pi\hat\rho\alpha))}
&-1< x<1.
\end{array}
\right.
\end{align*}
\end{theorem}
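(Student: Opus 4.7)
The proof is essentially a direct computation combining the identity \eqref{hitone}, the classical potential formula \eqref{workoutexplicitly}, and the explicit potential density supplied by Theorem \ref{censoreddensity}; there is no genuinely hard step, only some bookkeeping in the algebra to match the stated closed form.

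Concretely, the plan is as follows. By spatial homogeneity and the Lamperti representation of the censored stable process recorded in \eqref{hitone} and \eqref{workoutexplicitly}, for every $x>-1$,
\[
\mathbb{P}_x(\tau^{\{1\}}<\tau^{\{-1\}})\;=\;\stackrel{_\leadsto}{\mathbf{P}}_{\log(1+x)}(\stackrel{_\leadsto}{\tau}^{\{\log 2\}}<\infty)\;=\;\frac{\stackrel{_\leadsto}{u}\!\left(-\log\tfrac{1+x}{2}\right)}{\stackrel{_\leadsto}{u}(0)}.
\]
Since $-\log((1+x)/2)$ is positive for $x\in(-1,1)$ and negative for $x>1$, these two ranges of $x$ will correspond precisely to the two branches of the piecewise formula in the theorem, and we handle them separately by invoking the appropriate half of Theorem \ref{censoreddensity}.

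For $-1<x<1$, set $y=-\log((1+x)/2)>0$. Then $e^{-y}=(1+x)/2$ and $1-e^{-y}=(1-x)/2$, so the positive-argument case of Theorem \ref{censoreddensity} gives
\[
\stackrel{_\leadsto}{u}(y)=-\frac{\Gamma(1-\alpha)}{\pi^{2}}\!\left[\sin(\pi\alpha\rho)\!\left(1-\!\left(\tfrac{1-x}{2}\right)^{\!\alpha-1}\right)\!+\sin(\pi\alpha\hat\rho)\!\left(\tfrac{1+x}{2}\right)^{\!\alpha-1}\right].
\]
Dividing by $\stackrel{_\leadsto}{u}(0)=-\pi^{-2}\Gamma(1-\alpha)[\sin(\pi\alpha\rho)+\sin(\pi\alpha\hat\rho)]$ and multiplying the resulting ratio top and bottom by $2^{\alpha-1}$, while noting that $|x-1|=1-x$ on this range, reproduces the second line of the theorem.

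For $x>1$, set $y=-\log((1+x)/2)<0$. Now $e^{y}=2/(x+1)$ and $1-e^{y}=(x-1)/(x+1)$, so the negative-argument case of Theorem \ref{censoreddensity} yields, after the telescoping simplification
\[
\Bigl[1-\Bigl(\tfrac{x-1}{x+1}\Bigr)^{\!\alpha-1}\Bigr]\Bigl(\tfrac{x+1}{2}\Bigr)^{\!\alpha-1}=\Bigl(\tfrac{x+1}{2}\Bigr)^{\!\alpha-1}-\Bigl(\tfrac{x-1}{2}\Bigr)^{\!\alpha-1},
\]
the expression
\[
\stackrel{_\leadsto}{u}(y)=-\frac{\Gamma(1-\alpha)}{\pi^{2}}\!\left[\sin(\pi\alpha\rho)+\sin(\pi\alpha\hat\rho)\!\left(\!\Bigl(\tfrac{x+1}{2}\Bigr)^{\!\alpha-1}\!-\Bigl(\tfrac{x-1}{2}\Bigr)^{\!\alpha-1}\right)\right].
\]
Dividing by $\stackrel{_\leadsto}{u}(0)$, multiplying through by $2^{\alpha-1}$, and using $|x-1|=x-1$ on this range gives the first line of the theorem. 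The only real ``obstacle'' is keeping the two cases straight and applying the correct branch of $\stackrel{_\leadsto}{u}$; all the serious work has been done in proving Theorem \ref{censoreddensity}.
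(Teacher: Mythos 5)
Your proposal is correct and follows essentially the same route as the paper: reduce via spatial homogeneity and \eqref{hitone} to a point-hitting problem for $\cenxi$, apply \eqref{workoutexplicitly}, and then substitute the two branches of the potential density from Theorem \ref{censoreddensity} according to the sign of $-\log((1+x)/2)$, with the same closing algebra. The only cosmetic difference is that the paper phrases the computation with general $z=1+x$, $b=2$ before specialising, which changes nothing of substance.
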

\begin{proof}[Proof \emph{($\clubsuit$)}]
 When $0<z<b$, note that $-\log(z/b)>0$. We therefore use  the first of the two expressions for $\stackrel{_\leadsto}{u}(z)$ in Theorem \ref{censoreddensity} for the identity (\ref{workoutexplicitly}). We have 
\begin{eqnarray}
\lefteqn{\mathbb{P}_z(\tau^{\{b\}}<\tau^{\{0\}})}\notag\\
&&=\frac{\sin(\pi \alpha \rho) \left[ 1-(1-z/b)^{\alpha-1}\right]+
\sin(\pi \alpha \hat \rho)  (z/b)^{\alpha -1}}{(\sin(\pi\rho\alpha)+ \sin(\pi\hat\rho\alpha))}\notag\\
 &&=  \frac{b^{\alpha-1}\sin(\pi\rho\alpha) - (b-z)^{\alpha-1}  \sin(\pi\rho\alpha) +z^{\alpha-1}\sin(\pi\hat\rho\alpha)}{b^{\alpha -1}(\sin(\pi\rho\alpha)+ \sin(\pi\hat\rho\alpha))},
 \label{z<b}
\end{eqnarray}
as required. When $z>b>0$, we have $-\log (z/b)<0$ and we use the second of the two expressions in Theorem \ref{censoreddensity} for the identity (\ref{workoutexplicitly}). In that case, we have 
\begin{eqnarray}
\lefteqn{\mathbb{P}_z(\tau^{\{b\}}<\tau^{\{0\}})}\notag\\
&&= \frac{\sin(\pi \alpha \hat \rho) \left[1- (1-b/z)^{\alpha-1}\right](z/b)^{\alpha -1}+
\sin(\pi \alpha \rho)}{(\sin(\pi\rho\alpha)+ \sin(\pi\hat\rho\alpha))}\notag\\
&&= \frac{b^{\alpha-1}\sin(\pi\rho\alpha) - (z-b)^{\alpha-1} \sin(\pi\hat\rho\alpha)  +z^{\alpha-1}\sin(\pi\hat\rho\alpha)}{b^{\alpha -1}(\sin(\pi\rho\alpha)+ \sin(\pi\hat\rho\alpha))},
\label{z>b}
\end{eqnarray}
and the proof is complete once we set $z=1+x$ and $b =2$.
\end{proof}

\subsection{Resolvent with killing at the origin}
There is one quite nice conclusion we can draw from the two-point hitting problem in relation to the case of hitting a single point. 
Suppose we write $\mathbb{P}^{\{0\}}$ for the law of $X$ killed on hitting the origin and pre-emptively write
\[
u_{\{0\}}(x,b)\d b = \int_0^\infty \mathbb{P}_x(X_t \in \d b, \, t< \tau^{\{0\}})\d t = \int_0^\infty \mathbb{P}^{\{0\}}_x(X_t \in \d b)\d t .
\]
Without loss of generality, assume that $x,b>0$. We have 
\[
\mathbb{P}^{\{0\}}_x(\tau^{\{b\}}<\infty) =\mathbb{P}_z(\tau^{\{b\}}<\tau^{\{0\}})= \frac{u_{\{0\}}(x, b)}{u_{\{0\}}(b,b)},
\]
where in the second equality we have appealed to a classical identity for potential densities, see for example Chapter V of \cite{BG}. The left-hand side above has been computed in \eqref{z<b} and \eqref{z>b} giving us
\[
u_{\{0\}}(x, b) = c_{\{0\}}\left(b^{\alpha-1}\sin(\pi\rho\alpha) - |x-b|^{\alpha-1} s(b-x)  +x^{\alpha-1}\sin(\pi\hat\rho\alpha)\right)
\]
for some constant $c_{\{0\}}\in(0,\infty)$, where
$
s(x) = \sin(\pi\rho\alpha)\mathbf{1}_{(x>0)} +  \sin(\pi\hat\rho\alpha)\mathbf{1}_{(x<0)} .
$

It seems difficult to  pin down the constant $c_{\{0\}}\in(0,\infty)$, which could in principle depend on $b$, on account of the fact that  $u_{\{0\}}(x, b)\d b$ is an example of a potential measure of a transient process, namely $(X_{t\wedge\tau^{\{0\}}}, t\geq 0)$, that has infinite total mass. Indeed, note that 
\begin{equation}
\int_\mathbb{R}u_{\{0\}}(x,b)\d b =  \int_0^\infty \mathbb{P}_x(X_t \in \d b, \, t< \tau^{\{0\}})\d t  = \mathbb{E}_x[\tau^{\{0\}}],
\label{leftvsright}
\end{equation}
where the expectation on the right-hand side is known to be infinite; see for example \cite{KKPW}.

That said, if we put together some of the ingredients we have examined in the preceding computations in the right way, we can deduce the following precise result which does not seem to be known in the existing literature.

\begin{theorem}[$\clubsuit$]\label{resolvenbefore0} The potential with killing at the origin is absolutely continuous such that, for $x, y$ in $\mathbb{R}$ and distinct from the origin,
\[
u^{\{0\}}(x, y) =-\frac{1}{\pi^2}\Gamma(1-\alpha)
\left(|y|^{\alpha-1}s(y)
 - |y-x|^{\alpha-1} s(y-x) +|x|^{\alpha-1}s(-x)
 \right),
\]
 where
$
s(x) = \sin(\pi\alpha\rho)\mathbf{1}_{(x\geq 0)} +  \sin(\pi\alpha\hat\rho)\mathbf{1}_{(x<0)} .
$
\end{theorem}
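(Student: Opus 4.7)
The preceding discussion has already reduced the problem to the determination of a single constant. Indeed, from the two-point hitting formula established in the proof of Theorem~\ref{twopoint}, together with the standard last-exit identity
\[
u^{\{0\}}(x, b) \;=\; u^{\{0\}}(b, b)\,\mathbb{P}_x\bigl(\tau^{\{b\}} < \tau^{\{0\}}\bigr)
\]
(a consequence of the strong Markov property at $\tau^{\{b\}}$), one obtains, for $x, b > 0$,
\[
u^{\{0\}}(x, b) \;=\; c_{\{0\}}\Bigl(b^{\alpha-1}\sin(\pi\alpha\rho) - |x-b|^{\alpha-1}s(b-x) + x^{\alpha-1}\sin(\pi\alpha\hat\rho)\Bigr)
\]
with $c_{\{0\}}$ as yet undetermined; self-similarity forces $c_{\{0\}}$ to be a genuine constant independent of $b$. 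My plan is first to pin down $c_{\{0\}}$ by an independent computation, and then to extend the resulting identity to arbitrary sign configurations of $(x,y)$ via the dualistic symmetry $X \leftrightarrow -X$.

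The identification of $c_{\{0\}}$ is carried out using the Lamperti transform of the censored stable process of Section~\ref{CSPsection}. For $x,y > 0$, the pathwise identity $\Zch_t = X_{\gamma(t)}$ preserves the occupation time of $X$ at every positive level up to $\tau^{\{0\}}$, so a change of variables against $t = I_s = \int_0^s {\rm e}^{\alpha\cenxi_u}\,\d u$ yields
\[
u^{\{0\}}(x, y) \;=\; y^{\alpha-1}\stackrel{_{\leadsto}}{u}\!\bigl(\log(y/x)\bigr), \qquad x, y > 0,
\]
where $\stackrel{_{\leadsto}}{u}$ is the potential density of $\cenxi$ computed in Theorem~\ref{censoreddensity}. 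Setting $x = y$ and reading off $\stackrel{_{\leadsto}}{u}(0) = -\Gamma(1-\alpha)\pi^{-2}\bigl(\sin(\pi\alpha\rho) + \sin(\pi\alpha\hat\rho)\bigr)$, while using $|x-b|^{\alpha-1} \to 0$ as $x \to b$ (valid because $\alpha > 1$), a direct comparison with the displayed formula immediately forces $c_{\{0\}} = -\Gamma(1-\alpha)/\pi^{2}$, which establishes the theorem for $x, y > 0$.

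For the remaining sign configurations I argue as follows. The case $x, y < 0$ follows by rerunning the Lamperti argument for $-X$, which is stable with index $\hat\rho$ in place of $\rho$; the crucial observation is that $\stackrel{_{\leadsto}}{u}(0)$ is already symmetric under $\rho \leftrightarrow \hat\rho$, so $u^{\{0\}}(y, y) = |y|^{\alpha-1}\stackrel{_{\leadsto}}{u}(0)$ for every $y \neq 0$. For $x$ and $y$ of opposite signs I invoke the last-exit identity once more, and express $\mathbb{P}_x(\tau^{\{y\}} < \tau^{\{0\}})$ by translating and rescaling Theorem~\ref{twopoint} so that the pair of target points is mapped onto $\{0, y\}$, supplemented (when needed) by the $X \to -X$ reflection. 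The principal obstacle here is not conceptual but combinatorial: across the four sign configurations of $(x, y)$ one must check that the resulting case-by-case expressions coalesce into the single compact formula written in terms of the indicator $s$. This collapse is driven by the identity $s(z)|_{\rho \leftrightarrow \hat\rho} = s(-z)$, which guarantees that the right-hand side of the theorem is invariant under the simultaneous substitutions $X \mapsto -X$ and $(x,y) \mapsto (-x,-y)$, thus matching the symmetry satisfied by $u^{\{0\}}$ itself.
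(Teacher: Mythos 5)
Your proposal is correct, and it ultimately rests on the same explicit ingredients as the paper's own proof: the potential density $\stackrel{_\leadsto}{u}$ of $\cenxi$ from Theorem~\ref{censoreddensity}, the two-point hitting probabilities behind Theorem~\ref{twopoint}, the strong Markov (last-exit) identity $u^{\{0\}}(x,b)=u^{\{0\}}(b,b)\mathbb{P}_x(\tau^{\{b\}}<\tau^{\{0\}})$, and the $X\mapsto -X$ reflection for the remaining sign configurations. Where you genuinely diverge is in how you reach the pivotal identity $u^{\{0\}}(x,y)=y^{\alpha-1}\stackrel{_\leadsto}{u}(\log(y/x))$ for $x,y>0$: the paper first passes to the conditioned process $(X,\mathbb{P}^\circ_x)$ of \eqref{updownCOM7}, censors that process, and then undoes the induced Esscher transform through the relation $\stackrel{_\leadsto}{u}{}^{\circ}(w)=\stackrel{_\leadsto}{u}(w)\,{\rm e}^{(\alpha-1)w}$, whereas you censor the killed stable process directly, which is shorter and dispenses with the $h$-transform bookkeeping entirely; what your shortcut buys in brevity it pays for in one step you should make explicit, namely that the absorption time of the censored process equals the occupation time $A_{\tau^{\{0\}}}$, which is valid for $\alpha\in(1,2)$ because $X$ approaches the origin continuously (Section~\ref{CSPsection}) and the origin is regular for $(0,\infty)$ — and indeed the whole theorem is implicitly restricted to $\alpha\in(1,2)$, a hypothesis you use (``valid because $\alpha>1$'') but never state. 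Two minor observations: once you have $u^{\{0\}}(y,y)=|y|^{\alpha-1}\stackrel{_\leadsto}{u}(0)$, the self-similarity argument for constancy of $c_{\{0\}}$ becomes superfluous; and your Lamperti identity in fact delivers the full off-diagonal density for $x,y>0$ at no extra cost (this is precisely how the paper proceeds), so routing those values back through the two-point formula is a harmless detour — though that detour is exactly what is needed, for you as for the paper, in the mixed-sign regime $x<0<y$, where your explicit appeal to the affine rescaling of Theorem~\ref{twopoint} together with the reflection and the identity $s(z)|_{\rho\leftrightarrow\hat\rho}=s(-z)$ correctly accounts for the case bookkeeping.
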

\begin{proof}[Proof \emph{($\clubsuit$)}]
Suppose that $f$ is a bounded measurable function in $\mathbb{R}$.
We are interested in the potential measure $U^{\{0\}}(x, \d y)$ which satisfies
\[
\int_{\mathbb{R}} f(y) U^{\{0\}}(x, \d y) = \mathbb{E}_x\left[\int_0^\infty f(X_t)\mathbf{1}_{(t<\tau^{\{0\}})}\, \d t\right].
\]
In particular, recalling the change of measure \eqref{updownCOM7}, we have that 
\[
\int_{\mathbb{R}} f(y)\frac{h(y)}{h(x)} U^{\{0\}}(x, \d y) = \mathbb{E}^\circ_x\left[\int_0^\infty f(X_t)\,\d t\right]
\]

Let us momentarily focus our attention on the setting that $x,y>0$ in $U^{\{0\}}(x, \d y)$. In that case, we can write 
\[
\int_{[0,\infty)} f(y)\frac{h(y)}{h(x)} U^{\{0\}}(x, \d y) = \mathbb{E}^\circ_x\left[\int_0^\infty f(X_t)\mathbf{1}_{(X_t>0)}\, \d t\right]\\
=\mathbb{E}^\circ_x\left[\int_0^\infty f(\stackrel{_{\leadsto}}{Z^\circ_t}) \, \d t\right],
\]
where $\stackrel{_{\leadsto}}{Z^\circ}=(\stackrel{_{\leadsto}}{Z^\circ_t}, t\geq 0)$ is the pssMp which is derived by censoring away the negative sections of path of $(X,\mathbb{P}^\circ_x)$, $x\in\mathbb{R}$ in the spirit of what we have already seen for stable processes, cf. Section \ref{CSPsection}. Suppose that we denote the L\'evy process that underlies $Z$ by $\cenxicirc$, with probabilities $\stackrel{_{\leadsto}}{\mathbf{P}^\circ_{x}}$, $x\in\mathbb{R}$. Taking account of the time change in the Lamperti transform \eqref{13lampertirep}, we thus have on the one hand that 
\begin{align}
\int_{[0,\infty)} f(y) \frac{h(y)}{h(x)}U^{\{0\}}(x, \d y) 
&=\stackrel{_{\leadsto}}{\mathbf{P}^\circ}_{\hspace{-0.1cm}\log x}\left[\int_0^\infty f({\rm e}^{\cenxicirc_t}) {\rm e}^{\alpha\cenxicirc_t}\d t\right]\notag\\
&= \int_{\mathbb{R}}f({\rm e}^z ){\rm e}^{\alpha z} \stackrel{_{\leadsto}}{u}{ }^{\hspace{-0.1cm}\circ}((\log x) -z)\,\d z\notag\\
&=\int_{\mathbb{R}}f(y )y^{\alpha - 1}\stackrel{_{\leadsto}}{u}{ }^{\hspace{-0.1cm}\circ}(\log (x/ y))\, \d y,
\label{U0ucirc}
\end{align}
Where we have pre-emptively assumed that $\cenxicirc$ has a potential density, which we have denoted by $\stackrel{_{\leadsto}}{u}{ }^{\hspace{-0.1cm}\circ}$. This is  a reasonable assumption for the following reasons. 

Taking account of the change of measure \eqref{updownCOM7}, noting that the time change pertaining to the censoring of $(X,\mathbb{P}^\circ_x)$, $x\neq 0$, results in sampling this process at a sequence of stopping times, we can also see that \eqref{updownCOM7} described the change of measure between the censored process $\stackrel{_{\leadsto}}{Z^\circ}$  and the censored stable process discussed in \eqref{CSPsection}. In effect, this is tantamount to a Doob $h$-transform between the two positive-valued processes with $h$ function taking the form $h(x) = x^{\alpha-1}$, $x\geq 0$. In terms of the underlying L\'evy processes $\cenxi$ (for the censored stable process) and $\cenxicirc$ (for $\stackrel{_{\leadsto}}{Z^\circ}$), this Doob $h$-transform acts as an Esscher transform. In particular, we have 
\[
\frac{\d\!\stackrel{_{\leadsto}}{\mathbf{P}^\circ}   }
{\d\!\stackrel{_{\leadsto}}{\mathbf{P}}  }
\Bigg|_{\sigma(\cenxi_s: s\leq t)} = {\rm e}^{(\alpha-1)\cenxi_{t}}, \qquad t\geq0.
\]
It is thus straightforward to show that 
\begin{equation}
\stackrel{_{\leadsto}}{u}{ }^{\hspace{-0.1cm}\circ}(x) = \stackrel{_{\leadsto}}{u}(x) {\rm e}^{(\alpha -1)x}, \qquad x\in\mathbb{R}.
\label{nastyu}
\end{equation}
Consolidating \eqref{U0ucirc} and \eqref{nastyu}, noting in particular from \eqref{updownCOM7} that $h(x) = s(-x)|x|^{\alpha-1}$, where $s(x) = \sin(\pi\alpha\rho)\mathbf{1}_{(x\geq 0)} + \sin(\pi\alpha\rhohat)\mathbf{1}_{(x<0)}$, we thus conclude that 
\begin{equation}
U^{\{0\}}(x, \d y)  = y^{\alpha - 1}\stackrel{_{\leadsto}}{u}(\log (y/x))\,\d y, \qquad x,y>0,
\label{positives}
\end{equation}
where we recall that $\stackrel{_{\leadsto}}{u}$ has been computed explicitly in Theorem \ref{censoreddensity}.

Bringing across the specific form of $\stackrel{_{\leadsto}}{u}$ from Theorem \ref{censoreddensity}, we can now read off that, for $x,y>0$, $U^{\{0\}}(x, \d y)$ is absolutely continuous with density, $u^{\{0\}}(x,y)$, taking the form 
\begin{align*}
&u^{\{0\}}(x,y)\\
&=-\frac{1}{\pi^2}\Gamma(1-\alpha)
\left\{
\begin{array}{ll}
\sin(\pi \alpha \rho) y^{\alpha-1}-\sin(\pi \alpha \rho) (y-x)^{\alpha-1}+  \sin(\pi \alpha \hat \rho) x^{\alpha -1}, & y>x,\\
&\\
\sin(\pi \alpha \rho)y^{\alpha-1}+\sin(\pi \alpha \hat \rho)x^{\alpha-1}- \sin(\pi \alpha \hat \rho)(x-y)^{\alpha-1},&y<x,
\end{array}
\right.
\end{align*}
which is consistent with the statement of the theorem.

Note in particular that, for $y>0$,
\[
u^{\{0\}}(y,y): = \lim_{x\to y} u^{\{0\}}(x,y)= -\frac{1}{\pi^2}\Gamma(1-\alpha)y^{\alpha-1} (\sin(\pi \alpha \rho) +\sin(\pi \alpha \hat \rho) ).
\]
We can use this limit to deal with the case that $x<0<y$. Indeed, the strong Markov property gives us
\[
U^{\{0\}}(x, \d y) =\mathbb{P}_x(\tau^{\{y\}}<\tau^{\{0\}})u^{\{0\}}(y, y)\d y, \qquad y>0.
\]
Hence recalling the expression for $\mathbb{P}_x(\tau^{\{y\}}<\tau^{\{0\}})$ in Theorem \ref{twopoint}, we recover the required identity in the regime that $x<0<y$.

By working instead with $-X$ (or equivalently censoring out the positive parts of the path of $(X,\mathbb{P}^\circ_x)$, $x\in\mathbb{R}$) we easily conclude that the same identities hold when $x,y<0$ and $x>0>y$ simply by interchanging the roles of  $\rho$ and $\hat\rho$.
\end{proof}

\part{Higher dimensional results on $\mathbb{B}_d$}

\setcounter{section}{0}

Before handling any of the promised exit problems, let us start in this and the next section by considering two remarkably simple but effective transformations which invert space through a given sphere in $\mathbb{R}^d$. We will be particularly   interested in   how these spatial transformations can be used to  manipulate integrals  that take the form 
\begin{equation}
U\mu(x) = \int_{D}|x-y|^{\alpha -d}\mu(\d y), \qquad x\in \mathbb{R}^d,
\label{genericpotential}
\end{equation}
(traditionally known as Riesz potentials), 
where  $\mu$ is a finite measure on $D\subseteq\mathbb{R}^d$ and $\alpha\in(0,2).$

\section{Sphere inversions}\label{SI}

In this section, we present  two fundamental geometrical inversions of Euclidian space that are prevalent throughout classical Newtonian and Riesz potential analysis. See for example the original work of \cite{Riesz} or the classic texts of \cite{PortStonebook, Landkof, BH}.
\subsection{Inversion of a sphere through another sphere}
Fix a point $b\in\mathbb{R}^d$ and a value $r>0$. A homeomorphism of $\mathbb{R}^d\backslash\{b\}$  defined by 
\begin{equation}
x^* = b + \frac{r^2}{|x-b|^2}(x-b),
\label{sphereinversion}
\end{equation}
is called an {\it inversion through the sphere} $\mathbb{S}_{d-1}(b,r): = \{x\in\mathbb{R}^d: |x-b| = r\}$. (Note that we reserve the special notation $\mathbb{S}_{d-1} $ to mean $ \mathbb{S}_{d-1}(0,1)$.) Amongst the many properties of this inversion, which we shall now discuss, the most important is that the exterior of $\mathbb{S}_{d-1}(b,r)$ maps to its interior and vice versa; see Fig. \ref{sphereinversionfig}.
\begin{figure}[h!]
\begin{center}
\includegraphics[width = 4cm]{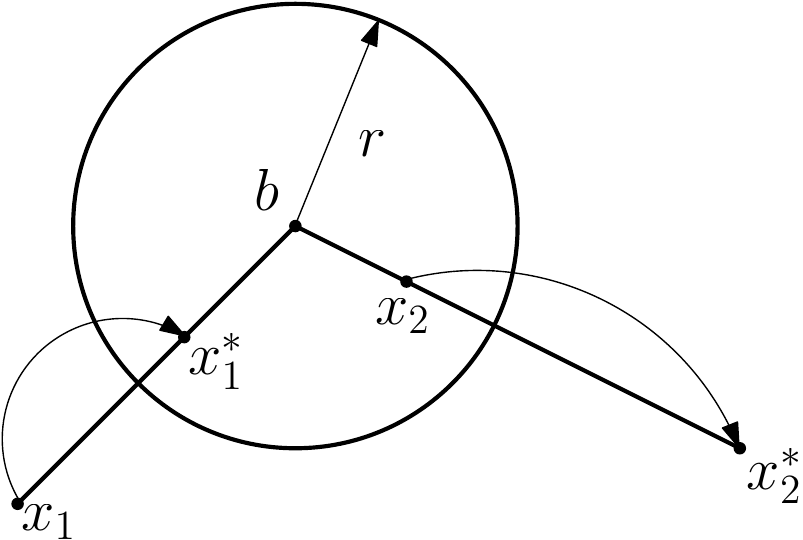}
\end{center}
\caption{Inversion relative to the sphere $\mathbb{S}_{d-1}(b,r)$.}
\label{sphereinversionfig}
\end{figure}

Straightforward algebra also tells us that
\begin{equation}
\label{spheretosphere}
r^2 = |x^*- b||x-b|,
\end{equation}
which, 
in turn, also gives us that  $(x^*)^* = x$, for $x\in\mathbb{R}^d\backslash\{b\}$ and in particular, 
\begin{equation}
\label{selfinverse}
x = b + \frac{r^2}{|x^*-b|^2} (x^*-b).
\end{equation}
Moreover, straightforward algebra using \eqref{sphereinversion} and \eqref{selfinverse} gives us, for $x,y\in\mathbb{R}\backslash\{b\}$,
\begin{equation}
|x^*-y^*| = \sqrt{(x^*-y^*)\cdot(x^*-y^*)}=
 \frac{r^2|x-y|}{|x-b||y-b|}.
\label{difference}
\end{equation}

Another very important fact about inversion through the sphere $\mathbb{S}_{d-1}(b,r)$ is that a sphere which does not pass through or encircle $b$ will always map to another sphere. To see why, suppose that we consider the image of any sphere $\mathbb{S}_{d-1}(c, R)$, for $c\in\mathbb{R}^d$ and $R>0$, for which $|c-b|>R$, and denote its image under inversion through $\mathbb{S}_{d-1}(b,r)$ by $\mathbb{S}^*_d(c,R)$. We can write  
$
\mathbb{S}_{d-1}(c, R) = \{x\in\mathbb{R}^d: |(x-b)-(c-b)|^2 = R^2\},
$
which can otherwise be written as $x\in\mathbb{R}^d$ such that 
\[
|x-b|^2- 2(x-b)\cdot(c-b) + |c-b|^2 = R^2.
\]
 \begin{figure}[h!]
\begin{center}
\includegraphics[width = 4cm]{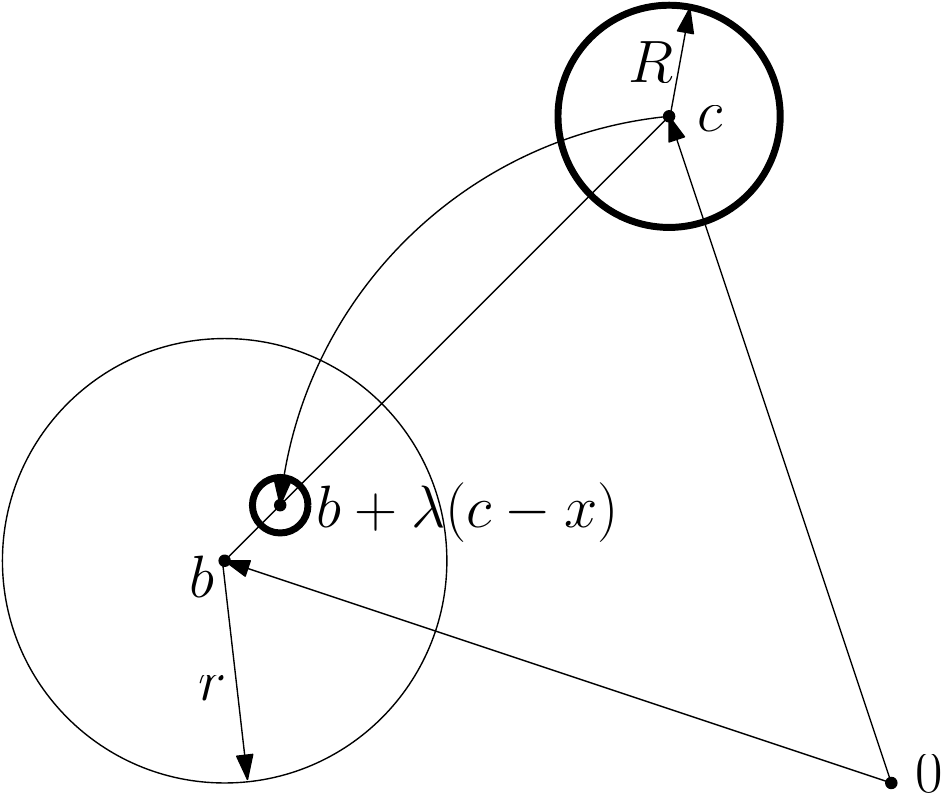}
\end{center}
\caption{The sphere $\mathbb{S}_{d-1}(c,R)$ maps to the sphere $\mathbb{S}^*_d(c,R)$ under inversion through $\mathbb{S}_{d-1}(b,r)$.}
\label{outsideinfig}
\end{figure}

 From \eqref{spheretosphere} and \eqref{selfinverse}, after a little algebra, for  $x\in \mathbb{S}_{d-1}(c, R)$, 
\[
|x^*-b|^{2}- \lambda(x^*-b)\cdot(c-b) + \lambda^2|c-b|^2 = \eta^2, 
\]
where $\lambda = r^2/(|c-b|^2- R^2)$ and $\eta^2 = r^4 R^2/(|c-b|^2- R^2)^2$. That is to say, 
$
\mathbb{S}^*_d(c, R)=\{x^*\in\mathbb{R}^d: |(x^*-(b+\lambda(c-b))|^2 = \eta^2\}
$
 so that $\mathbb{S}^*_d(c, R)$ is mapped to another sphere. 
 
   \begin{figure}[h!]
\begin{center}
\includegraphics[width = 5cm]{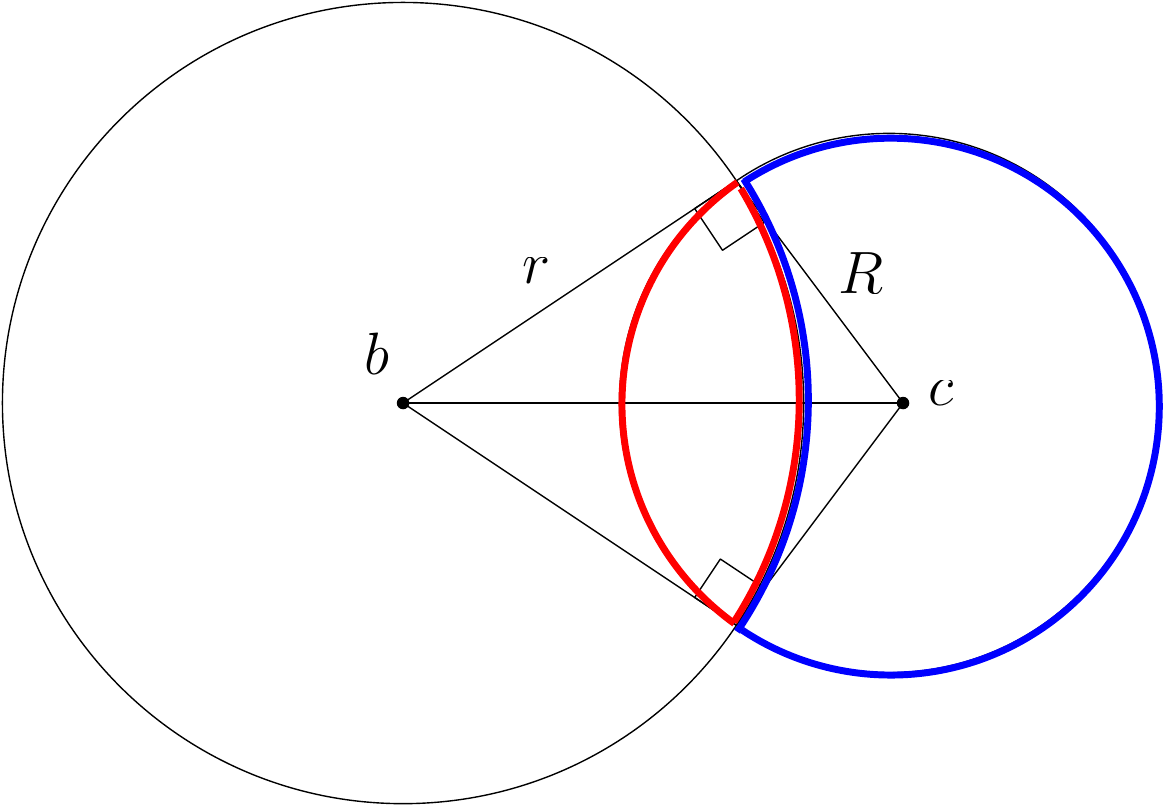}
\end{center}
\caption{The sphere $\mathbb{S}_{d-1}(c,R)$ maps to itself under inversion through $\mathbb{S}_{d-1}(b,r)$ provided the former is orthogonal to the latter, which is equivalent to $r^2+ R^2 = |c-b|^2$. In particular, the area contained in the blue segment (the crescent containing $c$) is mapped to the area in the red segment (the remainder of the ball centred at $c$) and vice versa.}
\label{spheretoitselffig}
\end{figure}
 We note in particular that $\mathbb{S}^*_d(c,R)=\mathbb{S}_{d-1}(c,R)$ if and only if $\lambda = 1$, in other words, $r^2+ R^2 = |c-b|^2$. This is equivalent to requiring that the spheres $\mathbb{S}_{d-1}(c,R)$ and $\mathbb{S}_{d-1}(b,r)$ are orthogonal, and therefore necessarily overlapping. What is additionally interesting about this choice of $\mathbb{S}_{d-1}(c,R)$ is that its interior maps to its interior and its exterior to its exterior.

\subsection{Inversions through another sphere with reflection}\label{SIR}
 A variant of the transformation \eqref{sphereinversion} takes the form 
\begin{equation}
x^\diamond =  b - \frac{r^2}{|x-b|^2}(x-b), 
\label{diamond}
\end{equation}
for a fixed $b\in\mathbb{R}^d$ and $x\in\mathbb{R}^d\backslash\{b\}$,
which similarly has the self-inverse property \eqref{selfinverse}. It is also quite straightforward to show that 
\begin{equation}
\label{spheretospherediamond}
r^2 = |x^\diamond- b||x-b|
\end{equation}
and 
\begin{equation}
|x^\diamond-y^\diamond| =
 \frac{r^2|x-y|}{|x-b||y-b|}
\label{differencediamond}
\end{equation}
 still hold in the spirit of \eqref{spheretosphere} and \eqref{difference}, respectively.

 \begin{figure}[h!]
\begin{center}
\includegraphics[width = 5cm]{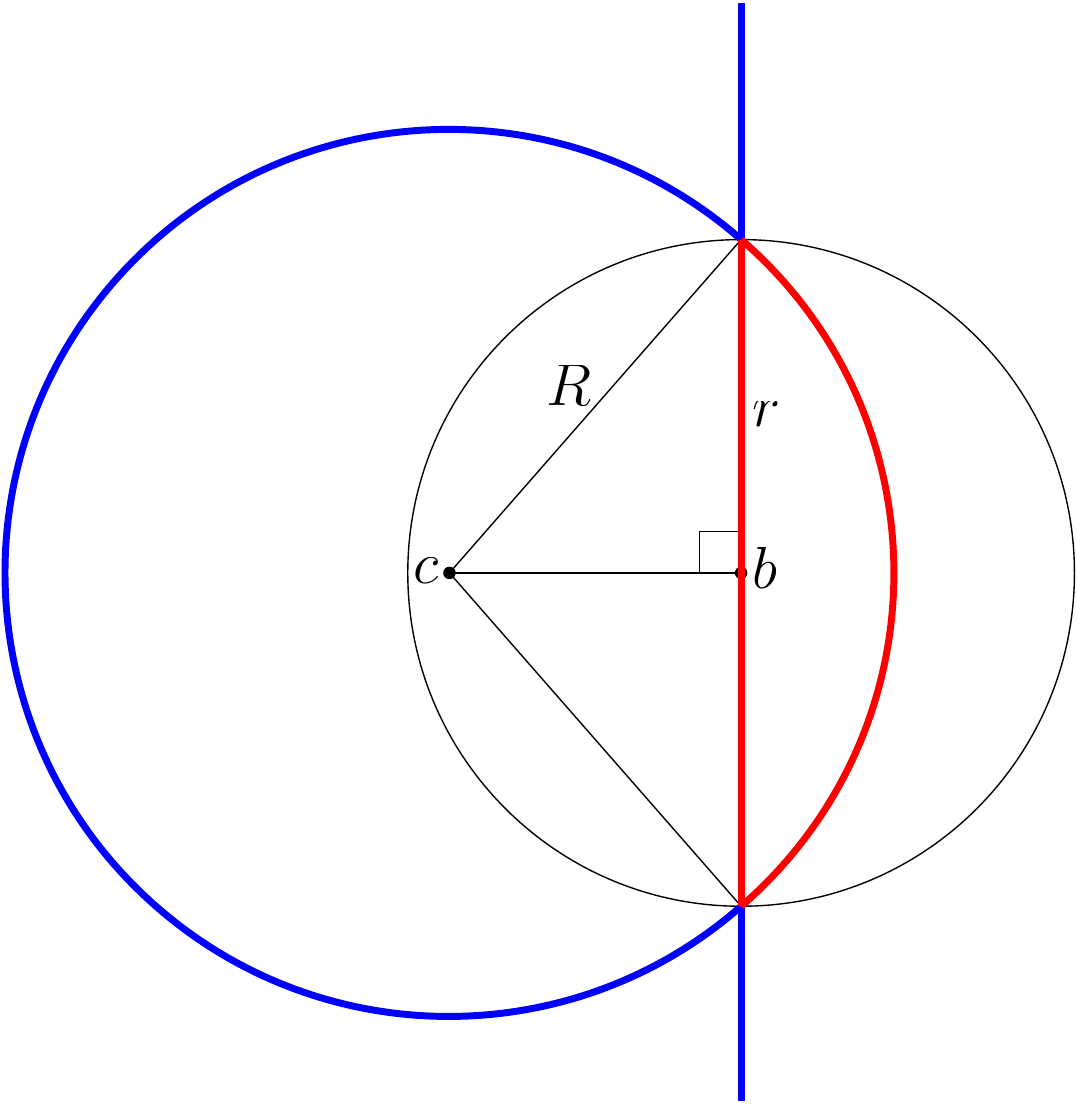}
\end{center}
\caption{The sphere $\mathbb{S}_{d-1}(c,R)$ maps to itself through $\mathbb{S}_{d-1}(b,r)$ via \eqref{diamond}  providing $|c-b|^2 + r^2 = R^2$. However, this time, the exterior of the sphere $\mathbb{S}_{d-1}(c,R)$ maps to the interior of the sphere $\mathbb{S}_{d-1}(c,R)$ and vice versa. For example, the region in the exterior of $\mathbb{S}_{d-1}(c,R)$ contained by blue boundary  (that is the half-space to the left of the vertical line excluding the intersection with the larger ball) maps to the portion of the interior of $\mathbb{S}_{d-1}(c,R)$ contained by the red boundary (that is the intersection of the larger ball with the half-space to the right of the vertical line).}
\label{closespheres}
\end{figure}
Intuitively speaking $x^\diamond$ performs the same sphere inversion as $x^*$, albeit with the additional feature that there is pointwise reflection about $b$. As such, any sphere $\mathbb{S}_{d-1}(c, R)$ will map to another sphere, say $\mathbb{S}_{d-1}^\diamond(c,R)$ so long as $|c-b|<R$. We are again interested in  choices of $c$ and $R$ such that  $\mathbb{S}_{d-1}^\diamond(c,R)=\mathbb{S}_{d-1}(c,R)$. This turns out to be possible so long as $R^2 =r^2+ |c-b|^2$. Moreover, in that case, the interior of $\mathbb{S}_{d-1}(c,R)$ maps to its exterior and its exterior to its interior; see Fig. \ref{closespheres}.

To see how this is possible, we need to prove a new identity for $x^\diamond$. We claim that 
\begin{equation}
|x^\diamond - c|^2 - R^2 = \frac{|x^\diamond-b|^2}{r}(R^2 - |x-c|^2), \qquad x\in\mathbb{R}^d
\label{newidentity}
\end{equation}
Indeed, 
recalling that $|x^\diamond - b||x-b| = r^2$, we can write 
\[
x = b + \frac{(x-b)}{|x-b|}|x-b|\quad \text{ and }\quad x^\diamond =  b- |x^\diamond-b|\frac{(x-b)}{|x-b|}.
\]
Hence, as $|b-c|^2 + r^2 = R^2$ and using again that $|x^\diamond - b||x-b| = r^2$, we have
\begin{align}
|x^\diamond - c|^2 - R^2
&=|(x^\diamond -b)+(b- c)|^2 - R^2\notag\\
 &=|x^\diamond - b|^2   -2|x^\diamond-b|\frac{(x-b)\cdot(b-c)}{|x-b|}-r^2\notag\\
 &=\frac{|x^\diamond - b|^2}{r^2}\left(r^2  -2(x-b)\cdot(b-c)- |x-b|^2\right)  \notag\\
 &=\frac{|x^\diamond - b|^2}{r^2}\left(R^2 - |b-c|^2 -2|x-b|\frac{(x-b)}{|x-b|}\cdot(b-c)- |x-b|^2\right)\notag\\
 &=\frac{|x^\diamond - b|^2}{r^2}\left(R^2 - |x-c|^2\right),
\label{quadraticdistance}
\end{align}
which proves \eqref{newidentity}.

It is now immediately apparent that $|x^\diamond - c|^2< R^2$ if and only if $|x-c|^2> R^2$, and $|x^\diamond - c|^2= R^2$ if and only if $|x-c|^2=R^2$ which is to say that $\mathbb{S}_{d-1}^\diamond(c,R)=\mathbb{S}_{d-1}(c,R)$ and that the interior of $\mathbb{S}_{d-1}(c,R)$ maps to its exterior and its exterior maps to its interior as claimed.

%
%


\section{First hitting of the unit sphere}\label{spherehit}

Let us turn to the first of two natural first passage problems that we will consider in this chapter. This concerns the distribution of the position of $X$ on first hitting of the sphere $\mathbb{S}_{d-1}= \{x\in\mathbb{R}^d: |x| = 1\}$. To this end, let us introduce the notation 
\[
\tau^\odot = \inf\{t>0: |X_t| = 1\}.
\]

\begin{theorem}[$\heartsuit$]\label{hitasphere} Define the function 
\[
h^{\odot}(x, y)=\frac{\Gamma\left(\frac{\alpha+d}{2}-1\right)\Gamma\left(\frac{\alpha}{2}\right)}{\Gamma \left(\frac{d}{2}\right)\Gamma(\alpha-1)}
\frac{||x|^2 - 1|^{\alpha - 1}}{|x-y|^{\alpha + d-2}}
\]
for $|x|\neq 1$, $|y| = 1$. 
Then, if $\alpha \in(1,2)$, 
\[
\mathbb{P}_x(X_{\tau^\odot}\in \d y) = h^{\odot}(x, y)\sigma_1(\d y) \mathbf{1}_{(|x|\neq 1)} + \delta_{x}(\d y)\mathbf{1}_{(|x| = 1)}, \qquad |y| =1,
\]
where $\sigma_1(\d y)$ is the surface measure on $\mathbb{S}_{d-1}$, normalised to have unit total mass.
Otherwise, if $\alpha \in(0,1]$, $\mathbb{P}_x(\tau^\odot=\infty)=1$, for all $|x|\neq 1$.
\end{theorem}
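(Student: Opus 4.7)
The plan is to exploit the Lamperti--Kiu representation of the isotropic stable process $X$ developed in Section I.3: writing $|X_t| = \exp(\xi_{\varphi(t)})$ and $\mathrm{Arg}(X_t) = \Theta_{\varphi(t)}$, the event $\{\tau^\odot < \infty\}$ is equivalent to $\{\xi\text{ hits }0\}$, since first hitting of $\mathbb{S}_{d-1}$ is first hitting of $1$ by $|X|$. By Corollary \ref{|Z|}, $\xi$ is a genuine L\'evy process whose characteristic exponent is the one displayed in Theorem \ref{radialpsi}. The isotropy of $X$ forces the conditional law of $\Theta$ given $\xi$ to be invariant under the subgroup of rotations fixing the initial angle, so the hitting distribution on $\mathbb{S}_{d-1}$ starting from $x=r\mathbf{1}$ (with $\mathbf{1}$ the ``North Pole'') is rotationally symmetric about $\mathbf{1}$ and hence depends on $y$ only through $\mathbf{1}\cdot y$.

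For $\alpha\in(0,1]$ the plan is to apply Kesten's criterion, Theorem \ref{hitting points}, to $\xi$. Using the standard asymptotic $\Gamma(z+a)/\Gamma(z)\sim z^a$ as $|z|\to\infty$ in the formula \eqref{a}, one reads off $\Psi(z)\asymp |z|^\alpha$ for large $|z|$. Hence $\int_{\mathbb{R}}\mathrm{Re}((1+\Psi(z))^{-1})\,\d z$ is finite iff $\alpha>1$, so $\xi$ cannot hit points when $\alpha\le 1$; equivalently $|X|$ cannot hit $1$ from any $|x|\neq 1$, giving $\mathbb{P}_x(\tau^\odot=\infty)=1$.

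For $\alpha\in(1,2)$ the plan is to derive the density by combining free-potential theory with path counting, in the same spirit as the proof of Theorem \ref{potential int}. Specifically, for $y\in\mathbb{R}^d$ with $|y|\ne 1$, conditioning on $X_{\tau^\odot}$ and using the strong Markov property together with Theorem \ref{freepotential} yields the ``D\'esir\'e Andr\'e'' relation
\[
u(y-x)=\int_{\mathbb{S}_{d-1}} \mathbb{P}_x(X_{\tau^\odot}\in \d z)\,u(y-z),\qquad x,y\text{ on opposite sides of }\mathbb{S}_{d-1},
\]
where $u(z)=C_{\alpha,d}\,|z|^{\alpha-d}$. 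Letting $|y|\to 1$ in a suitable direction reduces the identification of $\mathbb{P}_x(X_{\tau^\odot}\in \d z)$ to inverting a Riesz-type convolution on the sphere. The sphere inversion $z\mapsto z^*=z/|z|^2$ from Section III.1, with its key identity $|z^*-w^*|=|z-w|/(|z||w|)$, is the tool of choice: it swaps the interior and exterior of $\mathbb{B}_d$ while preserving the angular structure, and (via the Riesz--Bogdan--\.Zak transform of Theorem \ref{BZ}) intertwines the laws of $X$ and $X^\circ$ in a way that cuts the problem in half. The candidate density $h^\odot(x,y)$ is then confirmed by direct substitution: one verifies
\[
\int_{\mathbb{S}_{d-1}}h^\odot(x,z)\,|y-z|^{\alpha-d}\,\sigma_1(\d z)=|y-x|^{\alpha-d},
\]
which reduces to a classical beta-type integral on $\mathbb{S}_{d-1}$.

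The main obstacle is the sphere integral above: evaluating it requires a careful change of variables exploiting the sphere inversion identity to linearise the denominator $|x-z|^{\alpha+d-2}$ relative to $|y-z|^{d-\alpha}$, at which point it reduces to a one-dimensional beta integral whose value produces the ratio of gamma functions in the statement. The multiplicative constant is pinned down by either (i) taking $|x|\to\infty$ and matching against the asymptotic $|x-y|^{\alpha+d-2}\sim |x|^{\alpha+d-2}$ combined with the transient tail behaviour of $X$, or (ii) specialising to the symmetric case $x=0$ (or its inverted analogue) where $\mathbb{P}_0(X_{\tau^\odot}\in\d y)=\sigma_1(\d y)$ by isotropy.
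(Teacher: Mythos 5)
Your handling of $\alpha\in(0,1]$ is exactly the paper's (Kesten's criterion applied to the radial Lamperti L\'evy process $\xi$, using $\Psi(z)\asymp|z|^\alpha$), and for $\alpha\in(1,2)$ your skeleton --- a Riesz fixed-point equation on the sphere, verified through sphere inversion --- is also the route taken, following Port. But the identity you base it on is false as written: for $x$ and $y$ on opposite sides of $\mathbb{S}_{d-1}$ the strong Markov property only gives
\[
u(y-x)\;=\;u^\odot(x,y)\;+\;\int_{\mathbb{S}_{d-1}}\mathbb{P}_x(X_{\tau^\odot}\in\d z)\,u(y-z),
\]
where $u^\odot$ is the potential density of the process killed at $\tau^\odot$, and the extra term is strictly positive because the stable process jumps across the sphere without hitting it --- hitting $\mathbb{S}_{d-1}$ is precisely the delicate event the theorem is about. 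The clean identity holds only for $y\in\mathbb{S}_{d-1}$ itself, since $X_t\in\d y$ with $|y|=1$ forces $t\geq\tau^\odot$; this is exactly how Lemma \ref{thecaseforuniqueness} sets it up. Your suggestion to ``let $|y|\to1$'' does not repair an equation that is false before the limit, and proving $u^\odot(x,y)\to0$ at the boundary is itself nontrivial (it is essentially the content of the resolvent formula established \emph{after} Theorem \ref{hitasphere}).

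Two further gaps. First, verifying that $h^\odot(x,\cdot)\,\sigma_1$ satisfies the spherical Riesz equation identifies it with $\mathbb{P}_x(X_{\tau^\odot}\in\cdot)$ only if that equation has a unique solution; you never address this, whereas the paper proves uniqueness via an energy/Fourier argument for signed measures on $\mathbb{S}_{d-1}$ (Lemma \ref{thecaseforuniqueness}). Second, your normalisation (ii) is wrong: by transience ($d\geq 2>\alpha$) one has $\mathbb{P}_0(X_{\tau^\odot}\in\d y)=\mathbb{P}_0(\tau^\odot<\infty)\,\sigma_1(\d y)$ with $\mathbb{P}_0(\tau^\odot<\infty)<1$; in fact the gamma-ratio prefactor of $h^\odot$ \emph{is} this hitting probability, which the paper must compute separately (Theorem \ref{everhit}, via residue calculus for the radial L\'evy process, repackaged as the constant \eqref{findtheconstant} through the martingale argument of Theorem \ref{Riesz}). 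Assuming unit mass from the origin would therefore deliver the wrong constant, and the $|x|\to\infty$ matching in (i) requires the same unknown constant in the tail of $\mathbb{P}_x(\tau^\odot<\infty)$, so neither route closes the argument. If instead you evaluate your beta-type spherical integral exactly (the paper does this by inverting \eqref{findtheconstant} through the sphere $\mathbb{S}_{d-1}(x,(|x|^2-1)^{1/2})$, resp.\ the reflected inversion for $|x|<1$), the constant emerges automatically --- but then the value of \eqref{findtheconstant} is the nontrivial input you still owe.
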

This theorem  is due to \cite{Port69} and we largely follow his steps to its proof, with some adaptations to our updated point of view. 
Before proving it, we need to address a number of preliminary results first.  

\subsection{Probability of ever hitting $\mathbb{S}_{d-1}$}

As alluded to above, the next result is due to \cite{Port69}, albeit that we have opted to write the computed probability in terms of the hypergeometric function, ${}_2F_1$, rather than Port's original choice, the Legendre function of the first kind. This is a consequence of our choice to re-prove his result using the perspective of a stable processes as a self-similar Markov process.

\begin{theorem}[$\diamondsuit$]\label{everhit}
For $|x|>0$, if $\alpha\in(1,2)$, then 
\begin{align*}
&\mathbb{P}_x(\tau^\odot<\infty) \\
&= \frac{\Gamma\left(\frac{\alpha+d}{2}-1\right)\Gamma\left(\frac{\alpha}{2}\right)}{\Gamma \left(\frac{d}{2}\right)\Gamma(\alpha-1)}
\left\{
\begin{array}{rl}
{_2F_1((d-\alpha)/2,1 - \alpha/2, d/2;|x|^2)}
& 1>|x|\\
&\\
|x|^{\alpha-d}{_2F_1((d-\alpha)/2,1 - \alpha/2, d/2;|x|^{-2})}&1\leq |x|.
\end{array}
\right.
\end{align*}
Otherwise, if $\alpha\in(0,1]$, then $\mathbb{P}_x(\tau^\odot=\infty) = 1$ for all $|x|\neq 1$.
\end{theorem}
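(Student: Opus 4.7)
The plan is to transfer the problem to the one-dimensional L\'evy process $\xi$ underlying the radial pssMp of Section~\ref{radius}. Since $|X_t| = \exp\{\xi_{\varphi(t)}\}$ under the Lamperti transform, the stable process hits $\mathbb{S}_{d-1}$ precisely when $\xi$ hits $0$, giving
\[
\mathbb{P}_x(\tau^\odot < \infty) = \mathbf{P}_{\log|x|}(\tau^{\{0\}}_\xi < \infty), \qquad |x|\neq 0.
\]
The rest is an application of L\'evy point-hitting theory to $\xi$, whose characteristic exponent $\Psi$ is given by \eqref{a}.

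For $\alpha \in (0,1]$, Stirling's formula applied to the two ratios of Gamma functions in $\Psi$ gives $|\Psi(z)| \sim |z|^\alpha$ as $|z|\to \infty$; since $\xi$ is plainly not compound Poisson, Theorem~\ref{hitting points} combined with the divergence of $\int_\mathbb{R} \re(1/(1+\Psi(z)))\,\d z$ shows that $\xi$ cannot hit any point, whence $\mathbb{P}_x(\tau^\odot = \infty) = 1$ for $|x|\neq 1$.

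For $\alpha \in (1,2)$, the same asymptotic gives convergence of the above integral, so $\xi$ can hit points; as $\xi$ is moreover transient (since $|X_t|\to\infty$), standard theory (e.g.\ Corollary~II.18 of \cite{bertoin}) yields
\[
\mathbf{P}_y(\tau^{\{0\}}_\xi < \infty) = \frac{u(-y)}{u(0)},
\]
where $u$ is the potential density of $\xi$. The core of the argument is to compute $u$ explicitly by Fourier-inverting $1/\Psi$. Writing
\[
u(x) = \frac{2^{-\alpha}}{2\pi}\int_\mathbb{R} \mathrm{e}^{-\i z x} \frac{\Gamma(-\i z/2)}{\Gamma((-\i z+\alpha)/2)}\frac{\Gamma((\i z+d-\alpha)/2)}{\Gamma((\i z+d)/2)}\, \d z,
\]
I would evaluate by a Mellin--Barnes contour deformation: for $x>0$ close in the lower half-plane and collect residues at the simple poles $z=-2\i n$, $n\geq 0$, of $\Gamma(-\i z/2)$; for $x<0$ close in the upper half-plane and collect residues at $z = \i(d-\alpha+2n)$, $n\geq 0$, of $\Gamma((\i z+d-\alpha)/2)$. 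Several applications of the reflection formula $\Gamma(\alpha/2-n)\Gamma(1-\alpha/2+n) = (-1)^n\pi/\sin(\pi\alpha/2)$ tidy each residue, and the resulting series collapse into Gauss hypergeometric $\,{}_2F_1$ functions: for $x>0$,
\[
u(x) = \frac{2^{1-\alpha}\Gamma((d-\alpha)/2)}{\Gamma(\alpha/2)\Gamma(d/2)}\;{}_2F_1\!\left(\tfrac{d-\alpha}{2},\,1-\tfrac{\alpha}{2};\,\tfrac{d}{2};\,\mathrm{e}^{-2x}\right),
\]
and for $x<0$ the same expression multiplied by $\mathrm{e}^{x(d-\alpha)}$ with $\mathrm{e}^{-2x}$ replaced by $\mathrm{e}^{2x}$. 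Evaluating $u(0)$ via Gauss's summation formula, which converges precisely because $c-a-b=\alpha-1>0$, produces the constant $\Gamma((\alpha+d)/2-1)\Gamma(\alpha/2)/(\Gamma(d/2)\Gamma(\alpha-1))$. Substituting $y=\log|x|$ into $u(-y)/u(0)$ separates naturally into the cases $|x|\lessgtr 1$ of the statement.

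The main obstacle will be to make the contour deformation fully rigorous: the large semicircular arcs must contribute nothing, which relies on $|1/\Psi(z)| = O(|z|^{-\alpha})$ uniformly in sectors together with the oscillatory decay of $\mathrm{e}^{-\i z x}$ (a Jordan-lemma argument valid for $x\neq 0$), and one must carefully track the interleaving poles of a product of four Gamma functions. A convenient shortcut for the case $|x|>1$ is provided by the Riesz--Bogdan--\.Zak transform (Theorem~\ref{BZ}) combined with the Doob $h$-transform of Proposition~\ref{h-ssMp-d}: since $K\mathbb{S}_{d-1} = \mathbb{S}_{d-1}$ and $h(y)=|y|^{\alpha-d}$ equals $1$ on $\mathbb{S}_{d-1}$, the definition of $\mathbb{P}^\circ$ gives
\[
\mathbb{P}_x(\tau^\odot<\infty) = |x|^{\alpha-d}\,\mathbb{P}_{Kx}(\tau^\odot<\infty), \qquad |x|>1,
\]
reducing everything to the case $|x|\leq 1$.
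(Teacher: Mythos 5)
Your proposal follows essentially the same route as the paper's proof: reduce via the Lamperti transform to the point-hitting problem for the radial L\'evy process $\xi$, settle the dichotomy in $\alpha$ through the Kesten-type integral criterion and Stirling asymptotics for the gamma ratios, express the hitting probability as a ratio of potential densities via Corollary II.18 of \cite{bertoin}, invert $1/\Psi$ by a residue computation to obtain the ${}_2F_1$ series, normalise with Gauss's summation theorem (valid since $c-a-b=\alpha-1>0$), and use the Riesz--Bogdan--\.Zak/Doob $h$-transform identity $\mathbb{P}_x(\tau^\odot<\infty)=|x|^{\alpha-d}\,\mathbb{P}_{Kx}(\tau^\odot<\infty)$ to pass between the interior and exterior of the ball (the paper computes the case $|x|\geq 1$ directly from the residues and transfers to $|x|<1$; you propose the reverse direction, which is equally valid). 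The one point to repair is your displayed inversion formula: since $\Psi(0)=0$ and $u$ tends to a positive constant at $+\infty$ (as $\xi$ drifts to $+\infty$ when $d>\alpha$), the integral of ${\rm e}^{-{\rm i}zx}/\Psi(z)$ along the real axis is ill-defined at $z=0$, so, as in the paper, one must invert the bilateral Laplace transform along a vertical line strictly inside the strip where the Laplace exponent has positive real part, i.e. between its roots at $0$ and $\alpha-d$; with the contour so placed, your residue bookkeeping and Jordan-type arc estimates go through as planned.
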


\begin{proof}[Proof \emph{($\clubsuit$)}]
From Secion \ref{radius}, we know that $|X|$ is a positive self-similar Markov process. Denote the underlying L\'evy processes associated through the Lamperti transform by $\xi$ with probabilities $\mathbf{P}_x$, $x\in\mathbb{R}$.
\[
\mathbb{P}_x(\tau^\odot<\infty) = \mathbf{P}_{\log|x|}(\tau^{\{0\}}<\infty) = \mathbf{P}_0(\tau^{\{\log (1/|x|)\}}<\infty),
\]
where $\tau^{\{z\}} = \inf\{t>0 : \xi_ t = z\}$, $z\in\mathbb{R}$. 
From this observation, we note the ability of $X$ to hit the sphere $\mathbb{S}_{d-1}$ with positive probability, boils down to the ability of $\xi$ to hit points with positive probability.
In this respect, classical theory of L\'evy processes comes to our rescue again and II.Theorem 16 of \cite{bertoin} tells us that a necessary and sufficient condition boils down to the integrability of $(1+\Psi(z))^{-1}$, where $\Psi$ is the  characteristic exponent of $\xi$,   given by Theorem \ref{radialpsi}. 

Another important asymptotic relation for the gamma function that can be derived from Stirling's formula states that, asymptotically as $y\to\infty$, for $x \in {\mathbb R}$, 
\begin{equation}\label{App1_gamma_asymptotics}
|\Gamma(x+\i y)|=\sqrt{2\pi} 
e^{-\frac{\pi}{2} |y|} |y|^{x-\frac{1}{2}} (1+o(1)),
\end{equation}
 uniformly in any finite interval $-\infty<a\le x\le b<\infty$.  Hence, appealing to  \eqref{App1_gamma_asymptotics},
\begin{equation}
\frac{1}{\Psi(z)} = \frac{\Gamma(-\frac{1}{2}{\rm i}z)}{\Gamma(\frac{1}{2}(-{\rm i}z +\alpha ))}\frac{\Gamma(\frac{1}{2}({\rm i}z +d-\alpha))}{\Gamma(\frac{1}{2}({\rm i}z +d))} \sim |z|^{-\alpha} 
\label{psi-1}
\end{equation}
uniformly on $\mathbb{R}$ as $|z|\to\infty$. We thus conclude that $(1+\Psi(z))^{-1}$ is integrable and each sphere $\mathbb{S}_{d-1}$ can be reached with positive probability from any $x$ with $|x|\neq 1$ if and only if $\alpha\in(1,2)$.
Moreover, when $\alpha\in(1,2)$, Corollary II.18 of \cite{bertoin} again  gives us the identity
\begin{equation}\label{problog}
\mathbb{P}_x(\tau^\odot<\infty)  = \frac{u_\xi(\log(1/|x|))}{u_\xi(0)},
\end{equation}
where, up to a multiplicative constant,  the potential density $u_\xi$ can be computed via a Laplace inversion in the spirit of the computations completed in the proof of Theorem \ref{censoreddensity}.

To this end, 
we note that the  Laplace exponent of $\xi$, $\Psi(-{\rm i}z)$, 
is well defined  for ${\rm Re}(z)\in(-d,\alpha)$ with roots at $0$ and $\alpha -d$. The transience of $X$ for $d\geq 2$ ensures that $\mathbf{E}[\xi_1]>0$ and hence, as $-\Psi(-{\rm i}z)$ is convex for real $z$, we easily deduce that  ${\rm Re}(\Psi(-{\rm i}z))>0$ for ${\rm Re}(z)\in (\alpha -d, 0)$. In particular, it  follows that the Laplace transform of $u_\xi$ is well defined for  ${\rm Re}(z)\in(\alpha-d,0)$ as
\[
\int_{\r}{\rm e}^{z x} u_\xi(x)  
\d x=\int_{0}^\infty {\rm e}^{-\Psi(-{\rm i}z)t} 
\d t= \frac{1}{\Psi(-{\rm i}z)}.
\] 
As a consequence we can compute $u_\xi$ as a Laplace inversion in the form 
\[
u_\xi(x) = \frac{1}{2\pi{\rm i}}\int_{c+{\rm i}\r}\frac{{\rm e}^{-z x}}{{\Psi(-{\rm i}z)}}  
\d z, \qquad x\in\mathbb{R} ,
\]
providing $c\in (\alpha-d,0)$.

As we have seen in the computation of \eqref{pot_density_proof_3}, this integral can be computed using  relatively straightforward residue calculus. Indeed, from \eqref{psi-1} we note that $1/\Psi(-{\rm i}z)$ has simple poles at $\{2n, n\geq 0\}$, and $\{ -2n-(d-
\alpha):n\geq 0\}$.

We can construct a  contour integral, $\gamma_{R} =\{c+{\rm  i}x : |x|\leq R\}\cup\{c + R{\rm e}^{{\rm i}\theta}: \theta\in(\pi/2,3\pi/2)\}$, where $c\in (\alpha-d,0)$; see Fig. \ref{ch9contour}.
\begin{figure}[h!]
\begin{center}
\includegraphics[width = 5cm]{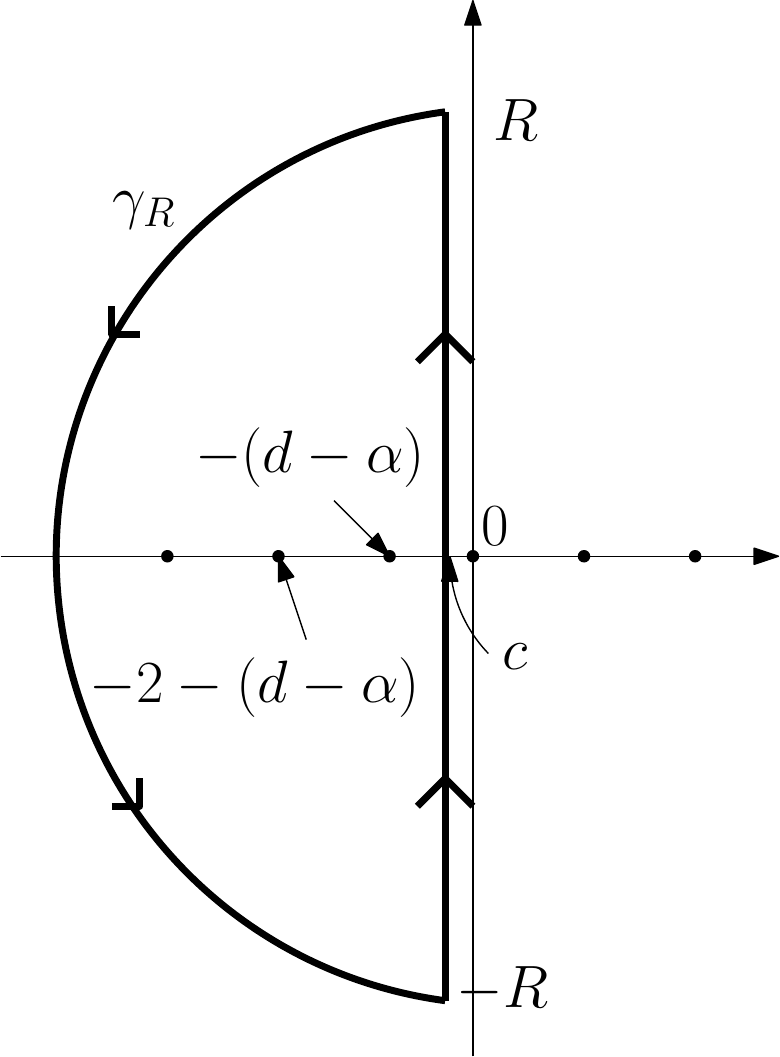}
\end{center}
\caption{The contour integral $\gamma_R$.}
\label{ch9contour}
\end{figure}

Residue calculus now gives us 
\begin{align}
&\frac{1}{2\pi{\rm i}}\int_{c-{\rm i}R}^{c+{\rm i}R}\frac{{\rm e}^{-z x}}{{\Psi(-{\rm i}z)}} \d z\notag\\
&\hspace{1cm}= 
-\frac{1}{2\pi{\rm i}}\int_{c + R{\rm e}^{{\rm i}\theta}: \theta\in(\pi/2,3\pi/2)}\frac{{\rm e}^{-z x}}{{\Psi(-{\rm i}z)}} \d z\notag\\
&\hspace{2cm}
+\sum_{1\leq n\leq  \lfloor R \rfloor} {\rm Res}\left(\frac{{\rm e}^{-zx}}{\Psi(-{\rm i}z)}; z = -2n-(d-\alpha)\right).
\label{takethelimitinR}
\end{align}
Now fix $x\leq 0$. Appealing again to the uniform estimate \eqref{App1_gamma_asymptotics},  the assumption $x\leq0$ and the fact that the arc length of  $\{ c+ R{\rm e}^{{\rm i}\theta}: \theta\in(\pi/2,3\pi/2)\}$ is $\pi R$, we have 
 \[
\left| \int_{c+ R{\rm e}^{{\rm i}\theta}: \theta\in(\pi/2,3\pi/2)} \frac{{\rm e}^{-xz}}{\Psi(-{\rm i}z)}{\rm d}z \right|\leq C R^{-(\alpha-1)} \rightarrow0
 \]
 as $R\rightarrow\infty$ for some constant $C>0$. By taking limits  in \eqref{takethelimitinR}, we now have
\begin{align}
u_\xi(x) =
\sum_{n\geq 1} {\rm Res}\left(\frac{{\rm e}^{-zx}}{\Psi(-{\rm i}z)}; z =  -2n-(d-\alpha)\right).
\label{residuesum}
\end{align}
Using the residues of the gamma function, that is,  for $n= 0,1,\cdots$, ${\rm Res}(\Gamma(z);z= -n) = (-1)^{n}/n!$, we have in  \eqref{residuesum}, for $x\leq 0$,
\begin{align}
u_\xi(x)& =\sum_0^\infty(-1)^{n+1}\frac{\Gamma(n+(d-\alpha)/2)}{\Gamma(-n + \alpha/2 )\Gamma(n + d/2)}\frac{{\rm e}^{2nx}}{n!}\notag\\
&={\rm e}^{x(d-\alpha)} 
\frac{\Gamma((d-\alpha)/2)}{\Gamma(\alpha/2)\Gamma(d/2)}\sum_0^\infty
\frac{((d-\alpha)/2)_n(1 - \alpha/2 )_n}{(d/2)_n}\frac{{\rm e}^{2nx}}{n!}\notag\\
&={\rm e}^{x(d-\alpha)} \frac{\Gamma((d-\alpha)/2)}{\Gamma(\alpha/2)\Gamma(d/2)} {_2F_1((d-\alpha)/2,1 - \alpha/2, d/2; {\rm e}^{2x} )}
\label{feedbackintolog}
\end{align}
where $(a)_n = \Gamma(n+a)/\Gamma(a)$ is the Pochhammer symbol and we have used the relation 
\[
\Gamma(-n+x) = (-1)^{n-1}\frac{\Gamma(-x)\Gamma(1+x)}{\Gamma(n+1-x)}, \qquad n\in\mathbb{N}
\]
 and the recursion formula for gamma functions.

Note in particular, this tells us that 
\begin{align*}
u_\xi(0) &= \frac{\Gamma((d-\alpha)/2)}{\Gamma(\alpha/2)\Gamma(d/2)} {_2F_1((d-\alpha)/2,1 - \alpha/2, d/2; 1 )} \\
&= \frac{\Gamma((d-\alpha)/2)}{\Gamma(\alpha/2)\Gamma(d/2)} 
\frac{\Gamma(d/2)\Gamma(\alpha-1)}{\Gamma(\alpha/2)\Gamma((\alpha+d)/2 -1)}
\end{align*}
Feeding \eqref{feedbackintolog} back in \eqref{problog}, we have thus  established an identity for $\mathbb{P}_x(\tau^\odot<\infty)$ when $1\leq |x|$. To deal with the case $1>|x|$, we can appeal to the Riesz--Bogdan--\.Zak transform to help us.
To this end we note that, for $|x|<1$,
\[
\mathbb{P}_{x/|x|^2}(\tau^\odot<\infty)
 = \mathbb{E}_{x}\left[
 \frac{|X_{\tau^\odot}|^{\alpha-d} }{|x|^{\alpha-d}}
 \mathbf{1}_{(\tau^\odot<\infty)}
 \right]=
 \frac{1 }{|x|^{\alpha-d}}\mathbb{P}_{x}(\tau^\odot<\infty)
\]
and hence, 
\[
\mathbb{P}_{x}(\tau^\odot<\infty) = \frac{\Gamma\left(\frac{\alpha+d}{2}-1\right)\Gamma\left(\frac{\alpha}{2}\right)}{\Gamma \left(\frac{d}{2}\right)\Gamma(\alpha-1)} {_2F_1((d-\alpha)/2,1 - \alpha/2, d/2; |x|^{2} )},
\]
thus completing the proof. 
\end{proof}

%

\subsection{Some Riesz potentials on $\mathbb{S}_{d-1}$}

We continue our analysis, working towards the proof of Theorem \ref{hitasphere}, keeping close to the steps in \cite{Port69}. The proof of the next theorem is nonetheless new.

\begin{theorem}[$\heartsuit$]\label{Riesz} Suppose $\alpha\in (1,2)$.
For all $x\in\mathbb{R}^d$, 
\begin{equation}
\mathbb{P}_x(\tau^\odot<\infty) =\frac{\Gamma\left(\frac{\alpha+d}{2}-1\right)\Gamma\left(\frac{\alpha}{2}\right)}{\Gamma \left(\frac{d}{2}\right)\Gamma(\alpha-1)} \int_{\mathbb{S}_{d-1}}|z-x|^{\alpha -d}\sigma_1(\d z).
\label{everywhere}
\end{equation}
In particular, for $y\in\mathbb{S}_{d-1}$,
\begin{equation}
 \int_{\mathbb{S}_{d-1}}|z-y|^{\alpha - d}\sigma_1(\d z) =\frac{\Gamma \left(\frac{d}{2}\right)\Gamma(\alpha-1)}{\Gamma\left(\frac{\alpha+d}{2}-1\right)\Gamma\left(\frac{\alpha}{2}\right)}.
\label{findtheconstant}
\end{equation}
\end{theorem}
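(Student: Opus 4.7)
The plan is to reduce both identities to a direct computation of the Riesz potential on the sphere. Indeed, the second identity \eqref{findtheconstant} follows immediately from \eqref{everywhere} by sending $|x|\to 1$, since for $|y|=1$ we have $\mathbb{P}_y(\tau^\odot<\infty)=1$ trivially (the process starts on $\mathbb{S}_{d-1}$). So it suffices to prove \eqref{everywhere}, for which Theorem \ref{everhit} already supplies an explicit hypergeometric formula for the left-hand side. The task is then to compute the Riesz integral
\[
I(x) := \int_{\mathbb{S}_{d-1}}|z-x|^{\alpha-d}\sigma_1(\d z)
\]
explicitly and identify it with the same hypergeometric function.

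By rotational symmetry $I(x)$ depends only on $r:=|x|$, so placing $x = r\mathbf{e}_1$ and disintegrating $\sigma_1$ with respect to the polar angle $\theta$ to $\mathbf{e}_1$ gives
\[
I(r) = \frac{\Gamma(d/2)}{\sqrt{\pi}\,\Gamma((d-1)/2)}\int_0^\pi (1-2r\cos\theta + r^2)^{(\alpha-d)/2}\sin^{d-2}\theta\,\d\theta.
\]
The substitutions $t=\cos\theta$ and then $t = 2s-1$ use the algebraic identity $(1+r)^2-4rs = 1 - 2rt + r^2$ to rewrite the inner integrand as $(1+r)^{\alpha-d}(1 - 4rs/(1+r)^2)^{(\alpha-d)/2}$ against $[s(1-s)]^{(d-3)/2}\,\d s$. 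This is the Euler integral representation of $_2F_1$ with parameters $a=(d-\alpha)/2$, $b=c-b=(d-1)/2$, so $c=d-1$; after Legendre's duplication formula $\Gamma(d/2)\Gamma((d-1)/2)= \sqrt{\pi}\,2^{2-d}\Gamma(d-1)$ collapses the prefactor, one obtains
\[
I(r) = (1+r)^{\alpha-d}\,{}_2F_1\!\Big(\tfrac{d-\alpha}{2},\tfrac{d-1}{2};\,d-1;\,\tfrac{4r}{(1+r)^2}\Big).
\]

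The decisive step is the quadratic hypergeometric transformation
\[
{}_2F_1\!\Big(a,b;\,2b;\,\tfrac{4z}{(1+z)^2}\Big)= (1+z)^{2a}\,{}_2F_1\!\Big(a,\, a-b+\tfrac12;\, b+\tfrac12;\, z^2\Big),
\]
applied with $a=(d-\alpha)/2$ and $b=(d-1)/2$, which simplifies the new parameters to $2a=d-\alpha$, $a-b+\tfrac12 = 1-\alpha/2$, $b+\tfrac12 = d/2$, and produces $I(r)= {}_2F_1((d-\alpha)/2,1-\alpha/2; d/2; r^2)$ for $r<1$. Comparing with the expression for $\mathbb{P}_x(\tau^\odot<\infty)$ in Theorem \ref{everhit} pins down the proportionality constant and proves \eqref{everywhere} for $|x|<1$. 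The case $|x|>1$ follows either from the analogous direct computation, or more cheaply from the sphere-inversion relation $|z-x|=|x|\cdot|z-x/|x|^2|$ (valid for $z\in\mathbb{S}_{d-1}$), which yields $I(x) = |x|^{\alpha-d}I(x/|x|^2)$ and matches the $|x|\geq 1$ branch in Theorem \ref{everhit}. The only nontrivial obstacle is locating the quadratic transformation above; everything else is bookkeeping with gamma functions and standard substitutions.
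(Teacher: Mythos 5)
Your proof is correct, but it takes a genuinely different route from the paper. The paper's argument is probabilistic: since $|X_t-z|^{\alpha-d}$ is a martingale for each fixed $z$ (Proposition \ref{h-ssMp-d}), the process $M_t=\int_{\mathbb{S}_{d-1}}|z-X_{t\wedge\tau^\odot}|^{\alpha-d}\sigma_1(\d z)$ is a martingale which, by transience, is uniformly integrable with limit $C\mathbf{1}_{(\tau^\odot<\infty)}$, where isotropy makes $C=\int_{\mathbb{S}_{d-1}}|z-\texttt{1}|^{\alpha-d}\sigma_1(\d z)$ deterministic; preservation of expectation gives \eqref{everywhere} up to the constant $C$, which is then pinned down by letting $x\to0$, so that Theorem \ref{everhit} is only invoked at the single point $x=0$ (where the ${}_2F_1$ factor is $1$), and \eqref{findtheconstant} drops out as $C=1/\mathbb{P}(\tau^\odot<\infty)$. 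You instead evaluate the spherical Riesz potential in closed form: polar disintegration, Euler's integral representation, Legendre duplication, and the quadratic transformation ${}_2F_1(a,b;2b;4z/(1+z)^2)=(1+z)^{2a}{}_2F_1(a,a-b+\tfrac12;b+\tfrac12;z^2)$ give $I(r)={}_2F_1((d-\alpha)/2,1-\alpha/2;d/2;r^2)$ for $r<1$, with $|x|>1$ handled by the inversion identity $|z-x|=|x|\,|z-Kx|$ on $\mathbb{S}_{d-1}$; matching against the full hypergeometric formula of Theorem \ref{everhit} then yields \eqref{everywhere}, and \eqref{findtheconstant} follows at the boundary. Both proofs legitimately rest on Theorem \ref{everhit}, so there is no circularity; the trade-off is that the paper's argument is soft, needs no special-function identities beyond ${}_2F_1(\cdot;0)=1$, and explains structurally why the two sides are proportional, whereas yours requires the (nontrivial to locate, but classical) quadratic transformation and in exchange delivers a closed-form evaluation of the spherical Riesz potential for every $r$, serving as an independent consistency check of Theorem \ref{everhit}. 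One small point of rigour: $\mathbb{P}_y(\tau^\odot<\infty)=1$ for $|y|=1$ is not quite "trivial", since $\tau^\odot$ involves $t>0$; it follows either from the $1\le|x|$ branch of Theorem \ref{everhit} together with Gauss's summation theorem (convergent because $c-a-b=\alpha-1>0$), or from regularity of points for the unbounded-variation Lamperti L\'evy process underlying $|X|$ — or you can bypass the probabilistic boundary statement altogether by taking $r\uparrow1$ in your formula for $I(r)$ and applying Gauss's theorem directly, which is all \eqref{findtheconstant} needs.
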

\begin{proof}[Proof \emph{($\clubsuit$)}]We start by recalling from \eqref{d-circ} and Proposition \ref{h-ssMp-d} that, for all $z\in\mathbb{R}^d$, 
\[
|X_t-z|^{\alpha - d}, \qquad t\geq 0,
\]
is a martingale.  Indeed, note that for $x\neq 0$, $z\in\mathbb{R}^d$,
\[
\mathbb{E}_x[|X_t-z|^{\alpha - d}] = \mathbb{E}_{x-z}[|X_t|^{\alpha -d}] = |x-z|^{\alpha -d},
\]
where we have used that $(|X_t|^{\alpha-d}, t\geq 0)$ is a martingale.
As a consequence, we note that, for $0\leq s\leq t$, 
\begin{align*}
\mathbb{E}_x\left[\left.\int_{\mathbb{S}_{d-1}}|z-X_{t\wedge \tau^\odot}|^{\alpha -d}\sigma_1(\d z) \right| {\mathcal{F}_s}\right]
&= \int_{\mathbb{S}_{d-1}}\mathbb{E}_x\left[\left.|z-X_{t\wedge \tau^\odot}|^{\alpha -d}  \right|{\mathcal{F}_s}\right] \sigma_1(\d z)\\
&=\int_{\mathbb{S}_{d-1}}|z-X_{s\wedge \tau^\odot}|^{\alpha -d}  \sigma_1(\d z),
\end{align*}
thereby suggesting that 
\[
M_t : = \int_{\mathbb{S}_{d-1}}|z-X_{t\wedge \tau^\odot}|^{\alpha -d}\sigma_1(\d z), \qquad t\geq 0,
\]
is a martingale. 
Recalling that $X$ is transient in the sense that $\lim_{t\to\infty}|X_t| = \infty$, we note that, since $d\geq 2>\alpha$, on the event $\{\tau^\odot=\infty\}$,
\[
\lim_{t\to\infty}\int_{\mathbb{S}_{d-1}}|z-X_{t\wedge \tau^\odot}|^{\alpha -d}\sigma_1(\d z)  = 0.
\]
Hence $(M_t, t\geq 0)$ is uniformly integrable with almost sure limit
\begin{align*}
M_\infty: = \lim_{t\to\infty}M_t =\int_{\mathbb{S}_{d-1}}|z-X_{ \tau^\odot}|^{\alpha -d}\sigma_1(\d z)\mathbf{1}_{(\tau^\odot<\infty)}\stackrel{d}{=}C \mathbf{1}_{(\tau^\odot<\infty)},
\end{align*}
where, despite the randomness in $X_{ \tau^\odot}$, by rotational symmetry, 
\begin{equation}
C = \int_{\mathbb{S}_{d-1}}|z-{\texttt 1}|^{\alpha -d}\sigma_1(\d z),
\label{C}
\end{equation}
and ${\texttt 1} = (1,0,\cdots,0)\in\mathbb{R}^d$ is the `North Pole' on $\mathbb{S}_{d-1}$. Note, it is not too difficult to see that $C$ is finite.

Preservation of martingale expectation for our martingale now ensures that 
\[
C\mathbb{P}_x(\tau^\odot<\infty) = \int_{\mathbb{S}_{d-1}}|z-x|^{\alpha -d}\sigma_1(\d z) .
\]
and hence taking the limit as $|x|\to0$, noting the right hand side above tends to unity, rotational symmetry and \eqref{C} ensures that, for all $y$ such that $|y| = 1$,
\[
 C =\int_{\mathbb{S}_{d-1}}|z-y|^{\alpha -d}\sigma_1(\d z)  = \frac{1}{\mathbb{P}(\tau^\odot<\infty) }.
\]
Thanks to scaling and Theorem \ref{everhit}, it is clear that  
\[
\mathbb{P}(\tau^\odot<\infty)=\frac{\Gamma\left(\frac{\alpha+d}{2}-1\right)\Gamma\left(\frac{\alpha}{2}\right)}{\Gamma \left(\frac{d}{2}\right)\Gamma(\alpha-1)} ,
\]
which completes the proof.
\end{proof}

The next result identifies the unique solution to a fixed point Riesz potential  equation as the  probability distribution that appears in Theorem \ref{hitasphere}. It is a key step in \cite{Port69}, but also in other works pertaining to first passage problems for stable processes during this era (as indeed we shall see in forthcoming sections); see e.g. \cite{Ray} for its application in even earlier work, not to mention in the original work of \cite{Riesz}. We have also taken inspiration from \cite{BH}.
\begin{lemma}[$\heartsuit$]\label{thecaseforuniqueness} Suppose that $\alpha\in(1,2)$.
Write $\mu^\odot_x(\d z) = \mathbb{P}_x(X_{\tau^\odot}\in \d z) $ on $\mathbb{S}_{d-1}$ where $x\in\mathbb{R}^d\backslash\mathbb{S}_{d-1}$. Then the measure $\mu_x^\odot$ is the unique solution to 
\begin{equation}\label{fixedpointmeasure}
|x-y|^{\alpha - d} = \int_{\mathbb{S}_{d-1}}| z- y|^{\alpha - d}\mu(\d z), \qquad y\in\mathbb{S}_{d-1}.
\end{equation}
\end{lemma}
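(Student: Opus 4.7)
The plan is to handle existence and uniqueness separately, with both parts driven by the same martingale used in the proof of Theorem \ref{Riesz}. By Proposition \ref{h-ssMp-d} combined with spatial homogeneity of $X$, for each fixed $y \in \mathbb{R}^d$ the process $(|X_t - y|^{\alpha - d}, t \ge 0)$ is a $\mathbb{P}_x$-martingale. For existence, I would apply optional stopping at the bounded time $t \wedge \tau^\odot$ to obtain
\[
|x-y|^{\alpha-d} = \mathbb{E}_x\!\left[|X_{t\wedge\tau^\odot} - y|^{\alpha-d}\right], \qquad y \in \mathbb{S}_{d-1},
\]
and then send $t \to \infty$. On $\{\tau^\odot = \infty\}$, transience (available because $d \ge 2$) forces $|X_t - y|^{\alpha-d} \to 0$, while on $\{\tau^\odot < \infty\}$ the integrand stabilizes at $|X_{\tau^\odot} - y|^{\alpha-d}$. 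Passing to the limit then identifies $\mu_x^\odot$ as a solution of \eqref{fixedpointmeasure}.

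The main obstacle in the existence step is justifying the interchange of limit and expectation, since the integrand has the negative exponent $\alpha - d$ and can blow up near $\{X_{t \wedge \tau^\odot}= y\}$. The cleanest route is to piggy-back on the averaged version of the identity already proved in Theorem \ref{Riesz}: one integrates the pointwise martingale identity against a smooth probability measure concentrated near a given $y \in \mathbb{S}_{d-1}$ and uses that the limit was already justified there in the averaged sense, then peels off the smoothing via dominated convergence. Alternatively, a direct uniform $L^p$ bound (with $p>1$ small enough that $p(\alpha-d)$ still gives an integrable singularity against $\sigma_1$) together with Doob's inequality supplies uniform integrability. Either way the resulting limit is $\int_{\mathbb{S}_{d-1}}|z-y|^{\alpha-d}\mu_x^\odot(\d z) = |x-y|^{\alpha-d}$, and finiteness of the right-hand side automatically ensures that $\mu_x^\odot$ has no atom at $y$.

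For uniqueness, let $\mu$ be any finite signed Borel measure on $\mathbb{S}_{d-1}$ satisfying \eqref{fixedpointmeasure} and set $\nu := \mu - \mu_x^\odot$. Then
\[
\int_{\mathbb{S}_{d-1}}|z-y|^{\alpha-d}\,\nu(\d z) = 0 \qquad \text{for every } y \in \mathbb{S}_{d-1}.
\]
The Riesz kernel $(z,y) \mapsto |z-y|^{\alpha-d}$ is strictly positive definite on finite signed measures of finite energy, since up to a positive multiplicative constant it is the Fourier transform of $|\xi|^{-\alpha}$ (see for example Chapter I of \cite{Landkof}). Integrating the preceding displayed equation against $\nu$ and applying Fubini produces
\[
I_\alpha(\nu) := \iint_{\mathbb{S}_{d-1}\times\mathbb{S}_{d-1}}|z-y|^{\alpha-d}\,\nu(\d z)\,\nu(\d y) = 0.
\]
The bound $\int|z-y|^{\alpha-d}\mu_x^\odot(\d z) = |x-y|^{\alpha-d}$, uniform in $y \in \mathbb{S}_{d-1}$ because $|x| \neq 1$, and the analogous bound for $\mu$, show that $I_\alpha(|\nu|) < \infty$ and so legitimize Fubini. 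Positive definiteness then forces $\nu = 0$, and uniqueness follows. The subtle point is really this finite-energy verification, which is handed to us gratuitously by the fixed-point equation itself.
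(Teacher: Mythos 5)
Your uniqueness half is essentially the paper's own argument: take the difference $\nu$ of two solutions, observe that its potential vanishes on $\mathbb{S}_{d-1}$, verify finite energy directly from the fixed-point equation, and invoke strict positive definiteness of the Riesz kernel (you cite \cite{Landkof}, while the paper re-derives the same fact inline by writing $|z-y|^{\alpha-d}$ through the transition density and Fourier inversion, so that the energy becomes $\int_0^\infty\int_{\mathbb{R}^d}{\rm e}^{-|\theta|^\alpha t}|\phi(\theta)|^2\,\d\theta\,\d t$). One caveat: you announce uniqueness among \emph{signed} measures, but your finite-energy verification uses the equation to bound $\int|z-y|^{\alpha-d}|\mu|(\d z)$, which only works when $\mu\geq 0$; the paper (correctly) confines the uniqueness claim to probability measures. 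Your existence half, by contrast, genuinely departs from the paper: the paper gets \eqref{fixedpointmeasure} in one line from the explicit free potential density of Theorem \ref{freepotential} together with the strong Markov property at $\tau^\odot$, whereas you stop the harmonic martingale $|X_t-y|^{\alpha-d}$ at $t\wedge\tau^\odot$ and pass to the limit. That route is viable in principle (it is the pointwise analogue of the argument in Theorem \ref{Riesz}), but it trades the paper's soft occupation-measure identity for a hard integrability problem.

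That integrability problem is where the gap lies: unlike the $\sigma_1$-averaged martingale of Theorem \ref{Riesz}, the stopped process $|X_{t\wedge\tau^\odot}-y|^{\alpha-d}$ is unbounded (the hitting position can be arbitrarily close to $y$), and neither of your two patches closes it as stated. The $L^p$/Doob suggestion gives no reason why $\sup_t\mathbb{E}_x\bigl[|X_{t\wedge\tau^\odot}-y|^{p(\alpha-d)}\bigr]<\infty$; the remark that $p(\alpha-d)$ is "integrable against $\sigma_1$" implicitly substitutes the uniform measure for the hitting distribution, i.e.\ it presupposes Theorem \ref{hitasphere}, which is what Lemma \ref{thecaseforuniqueness} is meant to deliver. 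The mollification route can be made to work, but "dominated convergence" hides the two real steps: (i) for $\rho_\epsilon$ with bounded density on a cap of radius $\epsilon$ about $y$ one needs the uniform domination $\int|z-y'|^{\alpha-d}\rho_\epsilon(\d y')\leq C\,|z-y|^{\alpha-d}$ with $C$ independent of $\epsilon$ (true, using $\alpha>1$ so that the cap integral of the singularity is of order $\epsilon^{\alpha-1}$, but it must be proved); and (ii) a preliminary Fatou argument to show that $|{\cdot}-y|^{\alpha-d}$ is $\mu^\odot_x$-integrable at all, so that the dominating function is usable — without this the domination is circular. Supplying (i) and (ii), or simply adopting the paper's potential-plus-strong-Markov derivation of existence, would complete the proof.
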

\begin{proof}[Proof \emph{($\heartsuit$)}] 
We begin by recalling the expression for the potential of the stable process in Theorem \ref{freepotential} which states that, due to transience, 
\[
\int_0^\infty \mathbb{P}_x(X_t\in \d y) \d t = C(\alpha)|x-y|^{\alpha - d}\d y, \qquad x,y\in\mathbb{R}^d,
\]
 where $C(\alpha)$ is an unimportant constant in the following discussion. Suppose now that we fix an arbitrary $y\in\mathbb{S}_{d-1}$. Then  a straightforward application of the Strong Markov Property tells us that, for $x\in\mathbb{R}^d\backslash\mathbb{S}_{d-1}$,  
 \begin{align*}
 \int_0^\infty \mathbb{P}_x(X_t\in \d y) \d t & = \mathbb{E}_x\left[\mathbf{1}_{(\tau^{\odot}<\infty)} \int_0^\infty \mathbb{P}_z(X_t\in \d y)|_{z= X_{\tau^\odot}} \d t \right]\notag\\
 & = \mathbb{E}_x\left[\mathbf{1}_{(\tau^{\odot}<\infty)} C(\alpha)| X_{\tau^\odot}- y|^{\alpha - d}  \right]\d y\notag\\
&= C(\alpha)\int_{\mathbb{S}_{d-1}}| z- y|^{\alpha - d} \mathbb{P}_x(X_{\tau^\odot}\in \d z)\d y,
 \end{align*}
which shows that $\mu_x^\odot$ is a solution to \eqref{fixedpointmeasure}.

Let us now address the issue of uniqueness in \eqref{functionalout}. If $\mu(\d y)$ is any other probability measure supported on $\mathbb{S}_{d-1}$, then, for $|x|\neq 1$, 
\begin{align*}
\int_{\mathbb{S}_{d-1}}\int_{\mathbb{S}_{d-1}}| z- y|^{\alpha - d}\mu^\odot_x(\d z)\mu(\d y)
=\int_{\mathbb{S}_{d-1}}|x-y|^{\alpha - d}\mu(\d y)
 \leq (1+|x|)^{\alpha -d},
\end{align*}
where we have used rotational symmetry in the second equality so that the largest value the integrand can take occurs when $y=x/|x|$.

Now  suppose that $\nu$ is a signed measure on $\mathbb{S}_{d-1}$ which satisfies
\[
\int_{\mathbb{S}_{d-1}}\int_{\mathbb{S}_{d-1}}| z- y|^{\alpha - d}\,|\nu|(\d z)\,|\nu|(\d y)<\infty.
\]
Here, we understand $|\nu| = \nu^++\nu^-$, when we represent $\nu= \nu^+- \nu^-$. We claim that 
\begin{equation}
\int_{\mathbb{S}_{d-1}}| z- y|^{\alpha - d}\nu(\d z)=0
\label{signedzero}
\end{equation}
implies that  the measure $\nu\equiv0$.
The significance of this claim is that it would immediately imply that \eqref{fixedpointmeasure} has a unique solution in the class of probability measures.

Verifying the claim \eqref{signedzero} is not too difficult. Indeed, if we write $\texttt{p}(z,t)$ for the transition density of $X$, that is, $\mathbb{P}_x(X_t\in\d y) = \texttt{p}(y-x,t)\d y$, then a standard Fourier inverse tells us that, for $x\in\mathbb{R}^d$, 
\[
\texttt{p}(x,t) = (2\pi)^{-d} \int_{\mathbb{R}^d}{\rm e}^{{\rm i}\theta\cdot x} {\rm e}^{-|\theta|^\alpha t}\d \theta. 
\]
 note that 
\begin{align*}
&C(\alpha)\int_{\mathbb{S}_{d-1}}\int_{\mathbb{S}_{d-1}}| z- y|^{\alpha - d}\,\nu(\d z)\,\nu(\d y) \\
&\hspace{1cm} = \int_{\mathbb{S}_{d-1}}\int_{\mathbb{S}_{d-1}}\int_0^\infty \texttt{p}(z-y,t) \,\d t\,\nu(\d z)\,\nu(\d y)\\
&\hspace{1cm}=\int_{\mathbb{S}_{d-1}}\int_{\mathbb{S}_{d-1}}\int_0^\infty (2\pi)^{-d} \int_{\mathbb{R}^d}{\rm e}^{{\rm i}\theta\cdot (z-y)} {\rm e}^{-|\theta|^\alpha t}\d \theta \,\d t\,\nu(\d z)\,\nu(\d y)\\
&\hspace{1cm}= (2\pi)^{-d}\int_0^\infty  \d t  \int_{\mathbb{R}^d}\d \theta \, {\rm e}^{-|\theta|^\alpha t}\left(\int_{\mathbb{S}_{d-1}}{\rm e}^{-{\rm i}\theta\cdot y}\nu(\d y)\right)
\left(\int_{\mathbb{S}_{d-1}} {\rm e}^{{\rm i}\theta\cdot z}\nu(\d z)\right)\\
&\hspace{1cm}= (2\pi)^{-d}\int_0^\infty  \d t  \int_{\mathbb{R}^d}\d \theta \, {\rm e}^{-|\theta|^\alpha t}|\phi(\theta)|^2,
\end{align*}
where $\phi(\theta) = \int_{\mathbb{S}_{d-1}} {\rm e}^{{\rm i}\theta\cdot z}\nu(\d z)$. The assumption that $\int_{\mathbb{S}_{d-1}}| z- y|^{\alpha - d}\nu(\d z)=0$ thus implies that $\phi\equiv0$, which, in turn, implies $\nu\equiv0$, as claimed, and hence \eqref{fixedpointmeasure} has a unique solution.
\end{proof}

\subsection{Distribution of hitting location on $\mathbb{S}_{d-1}$}

\begin{proof}[Proof of Theorem \ref{hitasphere} \emph{($\heartsuit$)}]
We only prove the first part of the theorem. The last part has been established in Theorem \ref{everhit}.

First assume that $|x|>1$. Starting with the equality \eqref{findtheconstant}
 we want to apply the transformation \eqref{sphereinversion} through the sphere $\mathbb{S}_{d-1}(x, (|x|^2-1)^{1/2})$ remembering that   this transformation maps $\mathbb{S}_{d-1}$ to itself. 
 From \cite{blum} we know that, if we write $y = rA(\theta)$, where $r = |y|>0$ and $A(\theta) = {\rm Arg}(y)$ for parameterisation $\theta= (\theta_1,\cdots, \theta_{d-1})$, where $ \theta_j\in[0,\pi]$ and $\theta_{d-1}\in[0,2\pi)$ then 
\begin{equation}
\d y = r^{d-1}
\mathcal{J}(\theta)\d\theta\d r=\frac{2\pi^{d/2}}{\Gamma(d/2)}r^{d-1}\sigma_1(\d y)\d r,
\label{polardifferential}
\end{equation}
where $\mathcal{J}$ is the part of the Jacobian of $y$ with respect to $(r,\theta)$ which depends on $\theta$ and $\sigma_1(\d y)$ is the surface measure on $\mathbb{S}_{d-1}$ normalised to have unit mass. Suppose now that we write $z = wA(\theta)$ and set $y = Kz = w^{-1}A(\theta)$, then, if we set $r = w^{-1}$,
\begin{equation}
\d y= \left.r^{d-1}
\mathcal{J}(\theta)\right|_{r = w^{-1}}\d\theta \frac{\d r}{\d w}\d w=  w^{-2d} \cdot w^{d-1}\mathcal{J}(\theta)\d w\d\theta = |z|^{-2d} \d z.
\label{useforsphereinversion}
\end{equation}
Now taking account of the fact that, transforming through the sphere $\mathbb{S}_{d-1}(x,(|x|^2-1)^{1/2})$,   $z^* = x+K\tilde{z}$, where $\tilde{z} = (1-|x|^2)^{-1}(z-x)$, we can work with the change of variables
\begin{equation}
\d z^*=  (|x|^2-1)^{2d} |z-x|^{-2d}\prod_{i=1}^d\frac{\d z_i}{(|x|^2-1)} =( |x|^2-1)^{d} |z-x|^{-2d}\d z,
\label{usethat}
\end{equation}
where $z= (z_1,\cdots, z_d)$.
In particular, appealing to 
\eqref{polardifferential} and \eqref{difference}, this tells us that, using obvious notation, 
\[
\frac{2\pi^{d/2}}{\Gamma(d/2)}(r^*)^{d-1}\sigma_1(\d z^*)
\d r^* =  \left\{\frac{2\pi^{d/2}}{\Gamma(d/2)}\frac{(|x|^2-1)^{d-1}}{ |z-x|^{2d-2}}r^{d-1}\sigma_1(\d z)\right\}\left\{\frac{(|x|^2-1)}{ |z-x|^2}\d r\right\}.
\]
In particular,  since $|z|^* = 1$ if and only if $|z|=1$, the change of variable in the surface measure $\sigma_1$ satisfies
\[
\sigma_1(\d z^*) =\frac{(|x|^2-1)^{d-1}}{ |z-x|^{2d-2}}\sigma_1(\d z),\qquad z\in\mathbb{S}_{d-1}.
\] 
Taking account of \eqref{spheretosphere}, this can equivalently be written as 
\begin{equation}
\frac{1 }{|z^*-x|^{d-1}}\sigma_1(\d z^*) = \frac{1}{|z-x|^{d-1}}\sigma_1(\d z) , \qquad z\in\mathbb{S}_{d-1}.
\label{symmetry}
\end{equation}

Returning to  \eqref{findtheconstant}, with the help of \eqref{difference} and \eqref{spheretosphere}  for the transformation \eqref{sphereinversion} in $\mathbb{S}_{d-1}(x, (|x|^2-1)^{1/2})$ and \eqref{symmetry}, this gives us for $x\in\mathbb{R}^d\backslash\mathbb{S}_{d-1}$ and $y\in\mathbb{S}_{d-1}$,
 \begin{align}
\frac{\Gamma \left(\frac{d}{2}\right)\Gamma(\alpha-1)}{\Gamma\left(\frac{\alpha+d}{2}-1\right)\Gamma\left(\frac{\alpha}{2}\right)}
&=\int_{\mathbb{S}_{d-1}}|z^*-x|^{d-1}|z^*-y^*|^{\alpha- d}\frac{\sigma_1(\d z^*)}{|z^*-x|^{d-1}}\notag \\
&= \frac{(|x|^2-1)^{\alpha-1}}{|y-x|^{\alpha-d}}\int_{\mathbb{S}_{d-1}} 
\frac{|z-y|^{\alpha-d}}{|z-x|^{\alpha + d-2}}
\sigma_1(\d z).
\label{onlyonthesphere}
 \end{align}
 Which is to say 
 \[
 |x-y|^{\alpha-d} = \int_{\mathbb{S}_{d-1}}|z-y|^{\alpha-d}
\frac{\Gamma\left(\frac{\alpha+d}{2}-1\right)\Gamma\left(\frac{\alpha}{2}\right)}{\Gamma \left(\frac{d}{2}\right)\Gamma(\alpha-1)}
\frac{(|x|^2-1)^{\alpha-1}}{|z-x|^{\alpha + d-2}}
\sigma_1(\d z),
 \]
 which by uniqueness given in Lemma \ref{thecaseforuniqueness} establishes the statement of the theorem for $|x|>1$.

Finally for the case $|x|<1$, we can appeal to similar reasoning albeit now using $x^\diamond$ to invert through the sphere $\mathbb{S}_{d-1}(x, (1-|x|^2)^{1/2})$. The details are left to the reader.
\end{proof}

\begin{remark}
Although we have excluded the setting of Brownian motion, that is, the case $\alpha = 2$, our analysis can be easily adapted to include it.  In that case,  the conclusion of Theorems \ref{hitasphere} and  \ref{everhit} provide us with the classical Newtonian Poisson potential formula. Indeed, for $|x|<1$,
\begin{equation}
\mathbb{P}_x(X_{\tau^\odot}<\infty)=1= \int_{\mathbb{S}_{d-1}}\frac{( 1-|x|^2)}{|z-x|^{d}}
\sigma_1(\d z).
\label{provesNewton}
\end{equation}
Similarly we can also reproduce the classical conclusion that, for $|x|>1$,
\begin{equation}
\mathbb{P}_x(X_{\tau^\odot}<\infty)= |x|^{2-d} =\int_{\mathbb{S}_{d-1}}\frac{( |x|^2-1)}{|z-x|^{d}}
\sigma_1(\d z).
\label{BMhitballfromoutside}
\end{equation}
\end{remark}

\subsection{Resolvent on $\mathbb{R}^d\backslash\mathbb{S}_{d-1}$}

 Write the associated potential measure as
\[
U^\odot(x,\d y) = \int_0^\infty \mathbb{P}_x(X_t \in \d y, \, t<\tau^\odot)\d t, \qquad x,y\in\mathbb{R}^d\backslash\mathbb{S}_{d-1}.
\]
The next result again is taken from \cite{Port69}. We follow the original proof, with some additional efficiencies introduced thanks to the Riesz--Bogdan--\.Zak transform.

\begin{theorem}[$\heartsuit$] Suppose we write $Q(x) = \mathbb{P}_x(\tau^\odot<\infty)$, for $x\in\mathbb{R}^d$.
Then, for all $x,y\in\mathbb{R}^d\backslash\mathbb{S}_{d-1}$, 
\[
U^\odot(x, \d y)= 2^{-\alpha}\pi^{-d/2}\frac{\Gamma((d-\alpha)/2)}{\Gamma(\alpha/2)}|x-y|^{\alpha -d}\left(1- Q\left(
\frac{y}{|y-x|}\left|x  - \frac{y}{|y|^2}\right|
\right)\right)\d y.
\]
\end{theorem}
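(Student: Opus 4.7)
The plan is to use path counting at the first hitting of $\mathbb{S}_{d-1}$, combined with a sphere inversion of the kind developed in Section~\ref{SI}, to reduce the hitting measure on $\mathbb{S}_{d-1}$ to a uniform average, which we can then identify via Theorem~\ref{Riesz} with the hitting probability $Q$.

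First, by the Strong Markov property applied at $\tau^\odot$ and Fubini, for $x\in\mathbb{R}^d\setminus\mathbb{S}_{d-1}$,
\[
U(x,\d y)=U^\odot(x,\d y)+\int_{\mathbb{S}_{d-1}} U(z,\d y)\,\mathbb{P}_x(X_{\tau^\odot}\in\d z,\tau^\odot<\infty).
\]
Writing $C_{\alpha,d}:=2^{-\alpha}\pi^{-d/2}\Gamma((d-\alpha)/2)/\Gamma(\alpha/2)$ and using Theorem~\ref{freepotential} and Theorem~\ref{hitasphere}, this becomes
\[
U^\odot(x,\d y)=C_{\alpha,d}|y-x|^{\alpha-d}\d y\;-\;C_{\alpha,d}\,\d y\int_{\mathbb{S}_{d-1}}|z-y|^{\alpha-d}h^{\odot}(x,z)\sigma_1(\d z),
\]
so all that remains is to identify the integral $I(x,y):=\int_{\mathbb{S}_{d-1}}|z-y|^{\alpha-d}h^{\odot}(x,z)\sigma_1(\d z)$ with $|y-x|^{\alpha-d}Q(w^*)$, where $w^*$ is the point appearing in the statement.

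The second step is the key sphere inversion. Set $r^2:=\big||x|^2-1\big|$ and consider inversion through the sphere $\mathbb{S}_{d-1}(x,r)$: use the map $*$ of \eqref{sphereinversion} when $|x|>1$, and the reflected map $\diamond$ of \eqref{diamond} when $|x|<1$. In both cases this sphere is orthogonal to $\mathbb{S}_{d-1}$, so by the discussion after \eqref{spheretosphere} (resp.\ after \eqref{newidentity}) the inversion carries $\mathbb{S}_{d-1}$ to itself. Applying \eqref{spheretosphere}/\eqref{spheretospherediamond}, \eqref{difference}/\eqref{differencediamond} and \eqref{symmetry} to the integrand of $I(x,y)$,
\[
|y-z|^{\alpha-d}=\frac{|y-x|^{\alpha-d}|z-x|^{\alpha-d}}{r^{2(\alpha-d)}}\,|y^{*}-z^{*}|^{\alpha-d},\qquad h^{\odot}(x,z)=\frac{\Gamma(\tfrac{\alpha+d}{2}-1)\Gamma(\tfrac{\alpha}{2})}{\Gamma(\tfrac{d}{2})\Gamma(\alpha-1)}\frac{|z^{*}-x|^{\alpha+d-2}}{r^{2(d-1)}},
\]
and $\sigma_1(\d z)=r^{2(d-1)}|z^{*}-x|^{-(2d-2)}\sigma_1(\d z^{*})$ (I am writing $*$ for both inversions). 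The exponents of $|z-x|$ and $|z^{*}-x|$ conspire to cancel thanks to $|z-x|\,|z^{*}-x|=r^{2}$, and after the miraculous cancellation one is left with
\[
I(x,y)=\frac{\Gamma(\tfrac{\alpha+d}{2}-1)\Gamma(\tfrac{\alpha}{2})}{\Gamma(\tfrac{d}{2})\Gamma(\alpha-1)}\,|y-x|^{\alpha-d}\int_{\mathbb{S}_{d-1}}|y^{*}-z^{*}|^{\alpha-d}\sigma_1(\d z^{*}).
\]

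Finally, by Theorem~\ref{Riesz} the remaining spherical integral equals $Q(y^{*})$ (noting $Q$ depends only on $|y^{*}|$), so $I(x,y)=|y-x|^{\alpha-d}Q(y^{*})$. It remains to verify that $|y^{*}|$ coincides with the norm of $\tfrac{y}{|y-x|}\bigl|x-y/|y|^{2}\bigr|$. A short computation (identical in the $*$ and $\diamond$ cases, since the sign of $|x|^{2}-1$ only enters squared) gives
\[
|y^{*}|^{2}=\frac{|x|^{2}|y|^{2}-2\,x\cdot y+1}{|y-x|^{2}}=\frac{|y|^{2}\,|x-y/|y|^{2}|^{2}}{|y-x|^{2}},
\]
which matches the claimed argument of $Q$. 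Substituting back into the expression for $U^\odot(x,\d y)$ yields the theorem. The main obstacle is bookkeeping the sphere-inversion identities so that the powers of $|z-x|$ and $|z^{*}-x|$ collapse cleanly; once the inversion is chosen orthogonal to $\mathbb{S}_{d-1}$, everything else is forced.
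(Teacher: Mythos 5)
Your argument is correct, and for $|x|>1$ it is essentially the paper's proof: path counting via the Strong Markov property at $\tau^\odot$, inversion through the orthogonal sphere $\mathbb{S}_{d-1}(x,(|x|^2-1)^{1/2})$ so that the factors of $|z-x|$ and $|z^*-x|$ cancel via $|z-x|\,|z^*-x|=r^2$, identification of the resulting uniform spherical integral with $Q(y^*)$ through Theorem \ref{Riesz}, and the algebraic identity $|y^*|^2|y-x|^2=|x|^2|y|^2-2x\cdot y+1=|y|^2\left|x-y/|y|^2\right|^2$ together with isotropy of $Q$. Where you genuinely diverge is the case $|x|<1$: the paper disposes of it by the Riesz--Bogdan--\.Zak transform, pulling $u^\odot(x,\cdot)$ back to the already-computed $u^\odot(Kx,\cdot)$ via the Doob $h$-transform \eqref{d-circ} and the change of variables $y=Kz$, whereas you rerun the inversion argument with the reflected map $\diamond$ through $\mathbb{S}_{d-1}(x,(1-|x|^2)^{1/2})$, treating both cases uniformly. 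Your route buys a self-contained, purely geometric proof that never invokes the transform (and your observation that the formula for $|y^\diamond|^2$ coincides with that for $|y^*|^2$ because $|x|^2-1$ enters only through its square is exactly what makes the two cases merge); the small price is that you need the $\diamond$-analogues of \eqref{difference} and of the surface-measure identity \eqref{symmetry}, the latter of which the paper never writes out for $\diamond$ but which holds by the same computation since $\diamond$ is $*$ composed with a point reflection about $x$, which leaves $\sigma_1$-scaling unchanged. The paper's RBZ route, by contrast, showcases the transform and recycles the $|x|>1$ formula with no further spherical integrals.
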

\begin{proof}[Proof \emph{($\diamondsuit$)}]
Let us preemptively assume that 
 $U^\odot(x,\d y)$ has density with respect to Lebesgue measure, written $u^\odot(x,y)$, $x,y\in\mathbb{R}^d\backslash\mathbb{S}_{d-1}$. As alluded to above, straightforward path counting tells us that
 \begin{align*}
 u^\odot(x,y)= C_\alpha |x-y|^{\alpha -d} -C_\alpha \int_{\mathbb{S}_{d-1}}|z-y|^{\alpha-d}h^\odot(x,z) \sigma_1(\d z),
 \end{align*}
 where for convenience we have written $C_\alpha =  2^{-\alpha}\pi^{-d/2}{\Gamma((d-\alpha)/2)}/{\Gamma(\alpha/2)}$.
In order to deal with the integral on the right-hand side above, we need to split our computations into the cases that $|x|<1$ and $|x|>1$. 

First assume that $|x|>1$. We appeal to a sphere inversion of the type \eqref{sphereinversion} via the sphere $\mathbb{S}_{d-1}(x, (|x|^2-1)^{1/2})$ in a manner similar to the computation in \eqref{onlyonthesphere}. Indeed, reading only the second equality of  \eqref{onlyonthesphere}, we see that 
\begin{align*}
&C_\alpha \int_{\mathbb{S}_{d-1}} |z-y|^{\alpha-d}h^\odot(x,z)\sigma_1(\d z)\\
& =C_\alpha\frac{\Gamma\left(\frac{\alpha+d}{2}-1\right)\Gamma\left(\frac{\alpha}{2}\right)}{\Gamma \left(\frac{d}{2}\right)\Gamma(\alpha-1)}
 |x-y|^{\alpha-d} \int_{\mathbb{S}_{d-1}}|z^*-y^*|^{\alpha- d}\sigma_1(\d z^*)\\
 &=C_\alpha |x-y|^{\alpha-d} Q(y^*),
\end{align*}
where in the second equality above, we have used Theorem \ref{Riesz}. Recalling that $y^* = x + (|x|^2-1)(y-x)/|y-x|^2$, a straightforward piece of algebra tells us that 
\[
|y^*|^2|y-x|^2 = |y|^2 \left|x- \frac{y}{|y|^2}\right|^2
\]
and the result follows as soon as we note that isometry implies that $Q(y^*) = Q(|y^*|y/|y|)$.

For the case that $|x|<1$, we can again appeal to the Riesz--Bogdan--\.Zak transform. The time change \eqref{etatimechange} implies that if we write $s = \eta(t)$ in the aforesaid transform, then ${\rm d}s/\d t = |X_s|^{2\alpha}$. Together with the  Doob $h$-transform \eqref{d-circ}, we have, for bounded measurable $f:\mathbb{R}^d\to\mathbb{R}^d$,
\begin{align*}
\int_{\mathbb{R}^d} f(y)\frac{|y|^{\alpha - d}}{|x|^{\alpha - d}}u^\odot(x, y)\d y&=\int_0^\infty \frac{|y|^{\alpha - d}}{|x|^{\alpha - d}}\mathbb{E}_x[f(X_t); \, t<\tau^\odot]\d t \\
&=\mathbb{E}^\circ_x\left[\int_0^\infty f(X_t)\mathbf{1}_{(t<\tau^\odot)}\d t \right]\\
&=\mathbb{E}_{Kx}\left[\int_0^\infty f(KX_s) \mathbf{1}_{(s<\tau^\odot)}|X_s|^{-2\alpha}\d s \right]\\
&=\int_{\mathbb{R}^d}f(Ky)|y|^{-2\alpha}u^\odot(Kx, y)\d y\\
&=\int_{\mathbb{R}^d}f(z)|z|^{2(\alpha-d)}u^\odot(Kx, Kz)\d z
, \qquad y\in\mathbb{R}^d\backslash\mathbb{S}_{d-1},
\end{align*}
where we recall that $\mathbb{E}^\circ_x$ is expectation with respect to $\mathbb{P}^\circ_x$, which was defined in \eqref{updownCOM7}, that  $Kx = x/|x|^2$ and we have used \eqref{useforsphereinversion} in the final equaltiy. We can now appeal to the expression we have just derived for $u^\odot$ previously on account of the fact that $|Kx| = 1/|x|>1 $. Equation \eqref{difference} for the transform $K$ tells us that $|Ky-Kx| = |x-y|/|x||y|$. Hence we have  for $|x|<1$ and $y\in\mathbb{R}^d\backslash\mathbb{S}_{d-1}$,
\begin{align*}
u^\odot(x, y) &= C_\alpha|x|^{\alpha - d}|y|^{\alpha - d}
|Kx-Ky|^{\alpha -d}\left(1- Q\left(
\frac{Ky}{|Ky-Kx|}\left|Kx  - KKy\right|
\right)\right)\\
&=C_\alpha
|x-y|^{\alpha -d}\left(1- Q\left(
\frac{y}{|y-x|}\left|x  -Ky\right|
\right)\right)
\end{align*}
as required.
\end{proof}

\section{First entrance and exit of the unit ball}\label{sphereinout}

\subsection{First passage laws for $\mathbb{B}_d$}Let us start by defining the stopping times 
\[
\tau^\oplus := \inf\{t>0 : |X_t| <1\} \text{ and }\tau^{\ominus}: = \inf\{t>0: |X_t|>1\}.
\]
Recall that $X$ is transient in dimension $d\geq 2$ and hence $\mathbb{P}_x(\tau^\oplus <\infty)<1$ for all $|x|\geq 1$ and $\mathbb{P}_x(\tau^\ominus <\infty)=1$ for all $|x|\leq 1$.

\cite{BGR} give the  distribution of $X_{\tau^\ominus}$ and then use a transformation which is tantamount to that of Riesz--Bogdan--\.{Z}ak,  attributing their method to \cite{Riesz}, to give directly the distribution of $X_{\tau^\oplus}$. We state their result below. 
\begin{theorem}[$\heartsuit$]\label{outofballintoball}Define the function 
\[
g(x,y) = \pi^{-(d/2+1)}\,\Gamma(d/2)\,\sin(\pi\alpha/2)\frac{\left|1 - |x|^2\right|^{\alpha/2}}{\left|1-|y|^2\right|^{\alpha/2}}|x-y|^{-d}
\]
for $x,y\in\mathbb{R}^d\backslash\mathbb{S}_{d-1}$.
\begin{description}
\item[(i)] Suppose that $|x|<1$, then 
\begin{equation}
\mathbb{P}_x(X_{\tau^\ominus}\in \d y) = g(x,y)\d y, \qquad |y|\geq 1.
\label{jumpout}
\end{equation}
\item[(ii)] Suppose that  $|x|>1$, then 
\begin{equation}
\mathbb{P}_x(X_{\tau^\oplus}\in \d y, \, \tau^\oplus<\infty) = g(x,y)\d y, \qquad |y|\leq 1.
\label{jumpin}
\end{equation}
\end{description}
\end{theorem}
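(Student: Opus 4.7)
The plan is to first establish part (i) via a martingale/uniqueness argument in the spirit of Theorem \ref{hitasphere}, and then to deduce part (ii) from (i) by an application of the Riesz--Bogdan--\.{Z}ak transform. Throughout, $\mu^{\ominus}_x(\d y) := \mathbb{P}_x(X_{\tau^\ominus}\in\d y)$ denotes the candidate exit distribution, which is carried on $\{|y|\ge 1\}$ and is in fact concentrated on $\{|y|>1\}$ since the isotropic stable process does not creep over the sphere.

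For (i), I would first observe that, by Proposition \ref{h-ssMp-d} (applied with shifted origin), $(|X_t-z|^{\alpha-d},t\ge 0)$ is a nonnegative martingale for every fixed $z\in\mathbb{R}^d$. Fix $z$ with $|z|>1$. Since $\alpha<d\le |X_{\tau^\ominus\wedge t}-z|^{\alpha-d}$ is bounded above by $(|z|-1)^{\alpha-d}$ on $[0,\tau^\ominus]$ (as the process remains in $\mathbb{B}_d$ and $|z|>1$), and since $\tau^\ominus<\infty$ almost surely (by transience, $|X_t|\to\infty$), optional stopping plus bounded convergence yield
\begin{equation}
\int_{|y|\ge 1}|y-z|^{\alpha-d}\,\mu^{\ominus}_x(\d y) = |x-z|^{\alpha-d}, \qquad |x|<1,\ |z|>1.
\label{pf:mart}
\end{equation}
The second step is to verify that the candidate $g(x,y)\,\d y$ satisfies the same integral equation. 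For this, I would invoke the sphere inversion with reflection $y\mapsto y^\diamond$ from Section \ref{SIR}, taken with $b=x$ and $r=(1-|x|^2)^{1/2}$, so that (by the identity in \eqref{newidentity}) it swaps $\mathbb{B}_d$ with its complement and fixes $\mathbb{S}_{d-1}$. Using \eqref{spheretospherediamond}, \eqref{differencediamond}, and the Jacobian $\d y=(1-|x|^2)^d|y^\diamond-x|^{-2d}\d y^\diamond$ (the analogue of \eqref{usethat}), a direct but bookkeeping-heavy change of variables $\tilde y=y^\diamond,\,\tilde z=z^\diamond$ reduces
\[
\int_{|y|\ge 1}|y-z|^{\alpha-d}g(x,y)\,\d y
\]
to $|x-z|^{\alpha-d}$ times the constant
\[
\pi^{-(d/2+1)}\Gamma(d/2)\sin(\pi\alpha/2)\int_{|\tilde y|\le 1}(1-|\tilde y|^2)^{-\alpha/2}\,|\tilde y-\tilde z|^{\alpha-d}\,\d\tilde y,
\]
and the inner integral is the classical Riesz equilibrium identity for $\mathbb{B}_d$ with respect to the Riesz kernel of order $\alpha$, whose value is $\pi^{d/2+1}/(\Gamma(d/2)\sin(\pi\alpha/2))$. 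This identity is a standard fact from potential theory; I would either cite \cite{Landkof} or derive it by Fubini after writing $|y-z|^{\alpha-d}$ via the free potential density of Theorem \ref{freepotential}.

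The third step is uniqueness: I would verify that any finite signed measure $\nu$ on $\{|y|\ge 1\}$ with $\int|y-z|^{\alpha-d}\nu(\d y)=0$ for all $|z|>1$ must be the zero measure. This is the analogue of the argument at the end of the proof of Lemma \ref{thecaseforuniqueness}: write $|y-z|^{\alpha-d}$ as a time integral of the stable transition density, use its Fourier inversion, interchange integrals, and conclude that the Fourier transform $\phi(\theta)=\int e^{\mathrm{i}\theta\cdot y}\nu(\d y)$ vanishes for all $\theta$. Combining \eqref{pf:mart} with Steps 2 and 3 gives $\mu^{\ominus}_x(\d y)=g(x,y)\,\d y$, proving (i).

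For (ii), I would apply the Riesz--Bogdan--\.{Z}ak transform (Theorem \ref{BZ}): under $\mathbb{P}_x$ with $|x|>1$, the time-changed process $KX_{\eta(\cdot)}$ has law $\mathbb{P}^{\circ}_{Kx}$, and since $K$ swaps $\mathbb{B}_d$ with its complement, the first entry of $X$ into $\mathbb{B}_d$ under $\mathbb{P}_x$ corresponds to the first exit from $\mathbb{B}_d$ under $\mathbb{P}^{\circ}_{Kx}$. Undoing the Doob $h$-transform \eqref{d-circ} at the stopping time $\tau^\ominus$ (via optional stopping applied to $|X_t|^{\alpha-d}/|Kx|^{\alpha-d}$) relates the $\mathbb{P}^{\circ}_{Kx}$-distribution of the exit location to the $\mathbb{P}_{Kx}$-distribution, which is $g(Kx,\cdot)$ by part (i). Applying $K$ to the exit point and the change-of-variables $\d(Kw)=|w|^{-2d}\d w$, together with the identity $|Kw-Kx|=|w-x|/(|x||w|)$ from \eqref{difference}, a short computation shows that the resulting density on $|y|\le 1$ is precisely $g(x,y)$; the symmetry of $g$ under swapping the roles of ``inside'' and ``outside'' is, in fact, built into the formula through the factor $|1-|x|^2|^{\alpha/2}/|1-|y|^2|^{\alpha/2}$.

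The main obstacle will be Step 2: the change-of-variables computation using the $\diamond$ inversion is straightforward but unforgiving, and one must carefully keep track of the exponents on $(1-|x|^2)$, $|y^\diamond-x|$, and the various distance factors. The reduction to the Riesz equilibrium identity is the crux; once that is in place, Steps 1, 3 and 4 are comparatively routine.
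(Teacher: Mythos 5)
Your proposal follows essentially the same route as the paper: the exit law is characterised as the unique probability solution of the Riesz-kernel equation (this is precisely the paper's Lemma \ref{guessandverify}, whose omitted proof mirrors Lemma \ref{thecaseforuniqueness}), the candidate $g$ is verified to solve that equation via the $\diamond$-inversion \eqref{diamond} through $\mathbb{S}_{d-1}(x,(1-|x|^2)^{1/2})$ (compare your reduction with \eqref{1/c}), and part (ii) is deduced from part (i) exactly as you describe, using the Riesz--Bogdan--\.Zak transform of Theorem \ref{BZ}, the $h$-transform \eqref{d-circ} at $\tau^\ominus$, and the change of variables $y=Kz$ with $|Kx-Ky|=|x-y|/|x||y|$. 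Two points of difference are worth flagging. First, your Step 1 is shaky as written: on $\{\tau^\ominus\le t\}$ the stopped variable equals $|X_{\tau^\ominus}-z|^{\alpha-d}$, and since the process can jump arbitrarily close to $z$ this is \emph{not} bounded by $(|z|-1)^{\alpha-d}$, so ``optional stopping plus bounded convergence'' does not go through; you would need a uniform integrability estimate for the overshoot, whereas the paper's route (condition the free potential of Theorem \ref{freepotential} at $\tau^\ominus$ via the strong Markov property, as in the proof of Lemma \ref{thecaseforuniqueness}) yields \eqref{pf:mart} with no such issue. Second, where you propose to quote the Riesz equilibrium identity for $\mathbb{B}_d$ from \cite{Landkof} (your alternative ``Fubini'' sketch is too vague to count as a derivation), the paper computes that constant from scratch: after the first $\diamond$-inversion it performs a second inversion centred at $y^\diamond$ and evaluates the resulting integral using the classical Newtonian Poisson kernel \eqref{Poisson} together with a beta integral. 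Citing the classical identity is legitimate and shortens the verification; the paper's computation keeps the argument self-contained. Your uniqueness step and your treatment of part (ii) coincide with the paper's.
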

The fact that the  Riesz--Bogdan--\.{Z}ak transform lies behind the relationship between the distributions of $X_{\tau^\ominus}$ and $X_{\tau^\oplus}$ should reassure the reader that one uses a single density function to describe both laws. As we shall shortly see, this function is symmetric in relation to the sphere inversion $(x,y)\mapsto (Kx, Ky)$.

We follow the original steps of \cite{BGR}, first  passing  through an intermediate result, which mirrors the approach of  Lemma \ref{thecaseforuniqueness}. We omit its proof as it follows an almost identical thread to that of Lemma \ref{thecaseforuniqueness}.

\begin{lemma}[$\heartsuit$]\label{guessandverify}We have that, for $|x|<1$, uniquely in the class of probability distributions supported on the exterior of $\mathbb{S}_{d-1}$, $\mu^\ominus_x(\d z): = \mathbb{P}_x(X_{\tau^\ominus}\in \d z) $ solves 
\begin{equation}
|x-y|^{\alpha - d} = \int_{|z|\geq 1}| z- y|^{\alpha - d}\mu(\d z), \qquad |y|> 1.
\label{functionalout}
\end{equation}
and, for $|x|>1$, again uniquely in the class of probability distributions supported on the interior of $\mathbb{S}_{d-1}$, $\mu^\oplus_x(\d z) : = \mathbb{P}_x(X_{\tau^\oplus}\in \d z, \, \tau^\oplus<\infty) $ uniquely solves 
\begin{equation}
|x-y|^{\alpha - d} = \int_{|z|\leq 1}| z- y|^{\alpha - d}\mu(\d z), \qquad |y|<  1.
\label{functionalin}
\end{equation}
\end{lemma}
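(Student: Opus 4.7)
The plan is to mirror directly the strategy used in the proof of Lemma \ref{thecaseforuniqueness}, which decomposes into two pieces: (i) existence, obtained by combining the explicit free potential of Theorem \ref{freepotential} with the strong Markov property at the relevant first passage time, and (ii) uniqueness, obtained via a Fourier analytic argument on signed measures.

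For existence with $\mu^\ominus_x$, fix $|x|<1$ and $|y|>1$. Since $X$ is an isotropic stable process in $d\geq 2$, the bounded set $\mathbb{B}_d$ is exited in finite time almost surely, so $\tau^\ominus<\infty$ $\mathbb{P}_x$-a.s. Moreover, any visit to a neighbourhood of $y$ must occur after $\tau^\ominus$. By Theorem \ref{freepotential}, the free potential density is $C(\alpha)|x-y|^{\alpha-d}$ with $C(\alpha) = 2^{-\alpha}\pi^{-d/2}\Gamma((d-\alpha)/2)/\Gamma(\alpha/2)$. Applying the strong Markov property at $\tau^\ominus$ gives
\begin{align*}
C(\alpha)|x-y|^{\alpha-d}\,\d y
&= \mathbb{E}_x\left[\int_{\tau^\ominus}^\infty \mathbf{1}_{(X_t \in \d y)}\,\d t\right]\\
&= \int_{|z|\geq 1}\mathbb{P}_x(X_{\tau^\ominus}\in \d z)\int_0^\infty \mathbb{P}_z(X_t\in \d y)\,\d t\\
&= C(\alpha)\left(\int_{|z|\geq 1}|z-y|^{\alpha-d}\mu^\ominus_x(\d z)\right)\d y,
\end{align*}
which is precisely \eqref{functionalout}. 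For $\mu^\oplus_x$ with $|x|>1$, the same reasoning applies once one observes that for $|y|<1$, any trajectory contributing to the potential at $y$ must pass through $\tau^\oplus$ (so the relevant event $\{X_t\in \d y\}$ is contained in $\{\tau^\oplus<\infty\}$); the computation then yields \eqref{functionalin}.

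For uniqueness, suppose $\mu_1,\mu_2$ are two probability measures on $\{|z|\geq 1\}$ both solving \eqref{functionalout}, and set $\nu = \mu_1 - \mu_2$. Then $\int_{|z|\geq 1}|z-y|^{\alpha-d}\nu(\d z) = 0$ for every $|y|>1$. Since the no-creeping property of isotropic stable processes in $d\geq 2$ ensures neither $\mu_1$ nor $\mu_2$ charges $\mathbb{S}_{d-1}$, the measure $\nu$ is effectively supported on the open exterior $\{|y|>1\}$. Integrating the identity against $\nu(\d y)$ therefore gives
\[
\int_{|y|>1}\int_{|z|>1}|z-y|^{\alpha-d}\nu(\d z)\,\nu(\d y) = 0.
\]
Using the Fourier representation $\texttt{p}_t(x) = (2\pi)^{-d}\int_{\mathbb{R}^d}\mathrm{e}^{\mathrm{i}\theta\cdot x}\mathrm{e}^{-|\theta|^\alpha t}\d\theta$ and $C(\alpha)|z-y|^{\alpha-d}=\int_0^\infty \texttt{p}_t(z-y)\d t$ exactly as at the end of the proof of Lemma \ref{thecaseforuniqueness}, the displayed double integral rewrites as
\[
(2\pi)^{-d}\int_0^\infty \d t \int_{\mathbb{R}^d}\mathrm{e}^{-|\theta|^\alpha t}|\hat\nu(\theta)|^2\,\d\theta = 0,
\]
where $\hat\nu(\theta)=\int \mathrm{e}^{\mathrm{i}\theta\cdot z}\nu(\d z)$. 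This forces $\hat\nu\equiv 0$, hence $\nu\equiv 0$. The corresponding uniqueness for $\mu^\oplus_x$ is identical, with $\{|z|\leq 1\}$ replacing $\{|z|\geq 1\}$.

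The main obstacle I expect is the uniqueness step: one must justify testing the functional identity against $\nu$ itself when the identity is only asserted on the open set $\{|y|>1\}$, whereas $\nu$ a priori has support in the closed set $\{|y|\geq 1\}$. This is resolved by the observation above that the no-creeping property excludes atoms on $\mathbb{S}_{d-1}$, so the boundary contributes nothing. If one wishes to avoid invoking no-creeping, an alternative is to note that for $\alpha<d$ the function $y\mapsto \int|z-y|^{\alpha-d}\mu(\d z)$ is lower semi-continuous and in fact continuous off the support, so the identity on $\{|y|>1\}$ extends by passage to the limit onto the boundary, after which the Fourier argument proceeds unchanged.
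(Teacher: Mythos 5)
Your overall route coincides with the paper's, which simply defers to the argument of Lemma \ref{thecaseforuniqueness}: existence via the free potential of Theorem \ref{freepotential} and the strong Markov property at $\tau^\ominus$ (resp.\ $\tau^\oplus$), and uniqueness via the signed-measure/Fourier energy argument. The existence half of your proposal is fine, and is in fact cleaner than in the sphere case, since a neighbourhood of a point $y$ with $|y|>1$ (resp.\ $|y|<1$) genuinely cannot be occupied before $\tau^\ominus$ (resp.\ $\tau^\oplus$).

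The gap is in how you dispose of possible mass on $\mathbb{S}_{d-1}$ in the uniqueness step. The no-creeping property is a statement about the law of $X_{\tau^\ominus}$; it says nothing about an arbitrary probability measure $\mu_1$ or $\mu_2$ that merely solves \eqref{functionalout}, so it cannot be invoked to conclude that such solutions do not charge the sphere. Your fallback does not repair this: lower semicontinuity of $y\mapsto\int|z-y|^{\alpha-d}\mu(\d z)$ only yields, at a boundary point $y_0\in\mathbb{S}_{d-1}$, the one-sided bound $\int|z-y_0|^{\alpha-d}\mu(\d z)\leq |x-y_0|^{\alpha-d}$, and ``continuity off the support'' is useless precisely when $y_0$ lies in the support; so the identity is not extended to the sphere, and you are not entitled to test it against $\nu=\mu_1-\mu_2$ if $\nu$ charges $\mathbb{S}_{d-1}$ (nor does the energy $\int\int|z-y|^{\alpha-d}\nu(\d z)\nu(\d y)$ visibly vanish). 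The reading under which the reduction to Lemma \ref{thecaseforuniqueness} is genuinely verbatim is that the uniqueness class consists of probability measures carried by the open set $\{|z|>1\}$ (resp.\ $\{|z|<1\}$), i.e.\ assigning no mass to $\mathbb{S}_{d-1}$: then $\nu$ is supported where \eqref{functionalout} (resp.\ \eqref{functionalin}) is known to hold, the mixed energies are bounded by $(1-|x|)^{\alpha-d}$ (resp.\ $(|x|-1)^{\alpha-d}$) so the Fubini step is licit, and your Fourier computation closes the argument unchanged. Where a no-creeping-type fact is legitimately needed is in the existence half, to check that $\mu^\ominus_x$ itself belongs to this class, namely $\mathbb{P}_x(|X_{\tau^\ominus}|=1)=0$ (the ball is left by a jump); that is a statement about the exit law, not about arbitrary solutions, and should be stated as such.
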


We now turn our attention to showing that the unique solution to \eqref{functionalout} and \eqref{functionalin} are given by $\mu^{\ominus}_x(\d z) = g(x,z)\d z$, $|z|>1>|x|$ and $\mu^\oplus(\d z) = g(x,z)\d z$, $|z|< 1< |x|$.

\begin{proof}[Proof of Theorem \ref{outofballintoball} (i) \emph{($\heartsuit$)}] The proof is complete as soon as we can verify that 
\begin{equation}
|x-y|^{\alpha -d} = c_{\alpha,d}\int_{|z|\geq 1} |z-y|^{\alpha -d}\frac{|1- |x|^2|^{\alpha/2}}{|1-|z|^2|^{\alpha/2}}|x - z|^{-d}\d z
\label{needsverifying}
\end{equation}
for $|y|> 1>|x|$, where 
\[
c_{\alpha, d} = \pi^{-(1+d/2)}\,\Gamma(d/2)\,\sin(\pi\alpha/2).
\]

Starting with the integral on the right-hand side of \eqref{needsverifying},  we will appeal to the transformation \eqref{diamond} through the sphere $\mathbb{S}_{d-1}(x,(1-|x|^2)^{1/2})$, noting in particular that 
\begin{equation}
|z^\diamond - y^\diamond| = (1-|x|^2)\frac{|z-y|}{|z-x||y-x|}\,\,\text{ and }\,\, |z|^2- 1  = \frac{|z-x|^2}{1-|x|^2}(1- |z^\diamond|^2),
\label{usethis}
\end{equation}
where the second identity comes from \eqref{quadraticdistance}.
A similar analysis of the differential calculus associated to \eqref{diamond} gives us
\begin{equation}
\d z^\diamond 
 = (1-|x|^2)^{d} |z-x|^{-2d}\d z, \qquad z\in\mathbb{R}^d,
\label{usethat}
\end{equation}
just as in \eqref{usethat}.

%

Now we can use \eqref{usethis} and \eqref{usethat} to compute, for $|x|<1< |y|$,
\begin{align}
& c_{\alpha,d}\int_{|z|\geq 1} |z-y|^{\alpha -d}\frac{|1- |x|^2|^{\alpha/2}}{|1-|z|^2|^{\alpha/2}}|x - z|^{-d}\d z\notag\\
&= c_{\alpha,d}|y-x|^{\alpha-d}\int_{|z^\diamond|\leq 1}
\frac{|z^\diamond - y^\diamond|^{\alpha-d}}{|1-|z^\diamond|^2|^{\alpha/2}}\d z^\diamond.
\label{1/c}
\end{align}

Next we perform another transformation of the type \eqref{diamond}, albeit through the sphere $\mathbb{S}_{d-1}(y^\diamond, (1-|y^\diamond|^2)^{1/2})$. In a similar fashion to the calculation that led to the right-hand side of \eqref{1/c}, using \eqref{spheretospherediamond} and the second equality in \eqref{usethis}
we obtain
\begin{align}
& c_{\alpha,d}\int_{|z|\geq 1} |z-y|^{\alpha -d}\frac{|1- |x|^2|^{\alpha/2}}{|1-|z|^2|^{\alpha/2}}|x - z|^{-d}\d z\notag\\
&= c_{\alpha,d}|y-x|^{\alpha-d}\int_{|w|\geq 1} \frac{|1-|y^\diamond|^2|^{\alpha/2}}{|1-|w|^2|^{\alpha/2}}|w-y^\diamond|^{-d}\d w.
\label{1/2c}
\end{align}

The question now remains as to whether the integral on right-hand side of \eqref{1/2c} is equal to $1/c_{\alpha,d}$. We resolve this issue by recalling that the surface volume/area of a sphere of radius $r$ is given by ${2\pi^{d/2}}r^{d-1}/{\Gamma(d/2)}$. Moreover, writing, for $|y^\diamond|<1$,
\begin{align}
&\int_{|w|\geq 1} \frac{1}{|1-|w|^2|^{\alpha/2}}|w-y^\diamond|^{-d}\d w\notag\\
&= \frac{2\pi^{d/2}}{\Gamma(d/2)} \int_1^\infty r^{d-1}\d r\int_{\mathbb{S}_{d-1}(0,r)}
\frac{1}{|1-|z|^2|^{\alpha/2}}|z-y^\diamond|^{-d}\sigma_r(\d z)\notag\\
&= \frac{2\pi^{d/2}}{\Gamma(d/2)} \int_1^\infty \frac{r^{d-1}\d r}{|1-r^2|^{\alpha/2}}\int_{\mathbb{S}_{d-1}(0,r)}
|z-y^\diamond|^{-d}\sigma_r(\d z),
\label{1/3c}
\end{align}
where $\sigma_r(\d z)$ is the surface measure on $\mathbb{S}_{d-1}(0,r)$, normalised to have unit total mass. In order to continue, we remind the reader of a classical integral identity from  Newtonian $d$-dimensional potential theory. Specifically, the Poisson formula \eqref{provesNewton} gives us
\begin{equation}
\int_{\mathbb{S}_{d-1}(0,r)}
 \frac{r^{d-2}(r^2-|y^\diamond|^2)}{|z-y^\diamond|^{d}}\sigma_r(\d z)=1, \qquad |y^\diamond|<1< r.
\label{Poisson}
\end{equation}
(The reader will also note that the kernel in the integral above is the distribution of where a $d$-dimensional Brownian motion issued from $y^\diamond$ will hit  the sphere $\mathbb{S}_{d-1}(0,r)$.)

The identity \eqref{Poisson}  allows us to continue to develop the right-hand side of \eqref{1/3c} so that we have 
\begin{equation}
\int_{|v|\geq 1} \frac{1}{|1-|w|^2|^{\alpha/2}}|w-y^\diamond|^{-d}\d w
= \frac{\pi^{d/2}}{\Gamma(d/2)} \int_1^\infty \frac{2r}{(r^2-1)^{\alpha/2}(r^2-|y^\diamond|^2)}\d r.
\label{1/4c}
\end{equation}
A further change of variable, first $s = (r^2-1)/(1-|y^\diamond|^2)$ and the definition of the beta function
\begin{align*}
\int_1^\infty \frac{2r}{(r^2-1)^{\alpha/2}(r^2-|y^\diamond|^2)}\d r&
=\frac{1}{(1-|y^\diamond|^2)^{\alpha/2}}\int_0^\infty s^{-\alpha/2}(1+s)^{-1}\d s \\
&
 =\frac{1}{(1-|y^\diamond|^2)^{\alpha/2}}\Gamma(\alpha/2)\Gamma(1-\alpha/2)\\
 &=\frac{\pi}{\sin(\alpha\pi/2)}\frac{1}{(1-|y^\diamond|^2)^{\alpha/2}}.
 \end{align*}
Plugging back into \eqref{1/4c}, and then into \eqref{1/2c}, we end up with   
\[
\int_{|z|\geq 1} |z-y|^{\alpha -d}\frac{|1- |x|^2|^{\alpha/2}}{|1-|z|^2|^{\alpha/2}}|x - z|^{-d}\d z
=\frac{\pi^{1+d/2}}{\Gamma(d/2)\sin(\alpha\pi/2)}\\
=\frac{1}{c_{\alpha,d}}
\]
as required. 

The identity \eqref{needsverifying} is thus affirmed for all $|x|<1<|y|$ and hence the first part of Theorem \ref{outofballintoball} is proved. 
\end{proof}

\bigskip

\begin{proof}[Proof of Theorem  \ref{outofballintoball} (ii) \emph{($\diamondsuit$)}] We can appeal to the Riesz--Bogdan--\.Zak transform in Theorem \ref{BZ} and note that for Borel set $D\subseteq \{u: |u|\leq 1\}$ and $|x|>1$, 
\[
\mathbb{P}_x(X_{\tau^\oplus}\in D) = \mathbb{P}^\circ_{Kx}(KX_{\tau^\ominus}\in D),
\]
where we recall that $Kx = x/|x|^2$, $KD = \{Kx: x\in D\}$ and $\mathbb{P}^\circ_x$, $x\neq 0$, is the result of the Doob $h$-transform in \eqref{d-circ}. It follows that 
\begin{align}
&\mathbb{P}_x(X_{\tau^\oplus}\in D)  \notag\\
&= \int_{KD}\frac{|y|^{\alpha - d}}{|Kx|^{\alpha - d}}g(Kx, y)\d y \notag\\
&= c_{\alpha,d}\int_{KD}|z|^{d-\alpha}|Kx|^{d-\alpha}\frac{|1- |Kx|^2|^{\alpha/2}}{|1-|y|^2|^{\alpha/2}} |Kx - y|^{-d}\d y\notag\\
&= c_{\alpha,d}\int_{D}|z|^{2d}\frac{|1- |x|^2|^{\alpha/2}}{|1-|z|^2|^{\alpha/2}}|x - z|^{-d}\d(Kz),
\label{d(Kz)}
\end{align}
where we have used the change of variable $y = Kz$ together with \eqref{difference} in the final equality. 
Appealing to \eqref{useforsphereinversion}, we can now  complete the computation in \eqref{d(Kz)}. We have 
\begin{align*}
&\mathbb{P}_x(X_{\tau^\oplus}\in D) =c_{\alpha,d}\int_{D}\frac{|1- |x|^2|^{\alpha/2}}{|1-|z|^2|^{\alpha/2}}|x - z|^{-d}\d z = \int_{D}g(x, z)\d z,
\end{align*}
as required.
\end{proof}

Recalling that in dimensions $d\geq 2$, the stable process is transient. It thus makes sense to 
compute the probability that the unit ball  around the origin is never entered. That is to say, to compute the total mass of the measure $\mu^\oplus$. Naturally one can do this by marginalising the distribution density $g$, however it turns out to be simpler to make use of the Lamperti representation of $|X|$. We thus offer a new proof to a result of \cite{BGR}

\begin{lemma}[$\heartsuit$]\label{neverhit} We have for $|x|>1$,
\[
\mathbb{P}_x(\tau^\oplus = \infty) = \frac{\Gamma(d/2)}{\Gamma((d - \alpha)/2)\Gamma(\alpha/2)}\int_0^{|x|^2-1} (u+1)^{-d/2}u^{\alpha/2-1}\d u.
\]
\end{lemma}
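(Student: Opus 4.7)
The plan is to reduce the problem via the Lamperti transform of $|X|$ to a classical first-passage calculation for the underlying Lévy process, for which the Wiener--Hopf factorisation is already implicit in Theorem \ref{radialpsi}.

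First I would note that, by Corollary \ref{|Z|} and Theorem \ref{radialpsi}, the radial part $|X|$ is a pssMp whose Lamperti transform is a Lévy process $\xi$ with characteristic exponent
\[
\Psi(z)=2^\alpha\,\frac{\Gamma(\tfrac{1}{2}(-\mathrm{i}z+\alpha))}{\Gamma(-\tfrac{1}{2}\mathrm{i}z)}\,\frac{\Gamma(\tfrac{1}{2}(\mathrm{i}z+d))}{\Gamma(\tfrac{1}{2}(\mathrm{i}z+d-\alpha))}.
\]
Since the Lamperti time change is a strictly increasing homeomorphism, the event $\{\tau^\oplus=\infty\}$ under $\mathbb{P}_x$ coincides with the event $\{\underline\xi_\infty\geq 0\}$ under $\mathbf{P}_{\log|x|}$, i.e.\ the event that $\xi$ issued from $\log|x|>0$ never passes below zero. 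Because $X$ is transient for $d\geq 2$, the process $\xi$ drifts to $+\infty$, so the descending ladder height process $\hat H$ is killed at a strictly positive rate $\hat\kappa(0)$.

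Next I read off the factors of the Wiener--Hopf factorisation by inspection of $\Psi$: setting
\[
\kappa(\lambda)=2^{\alpha/2}\,\frac{\Gamma(\tfrac{1}{2}(\lambda+\alpha))}{\Gamma(\tfrac{1}{2}\lambda)},\qquad
\hat\kappa(\lambda)=2^{\alpha/2}\,\frac{\Gamma(\tfrac{1}{2}(\lambda+d))}{\Gamma(\tfrac{1}{2}(\lambda+d-\alpha))},
\]
one verifies $\Psi(z)=\kappa(-\mathrm{i}z)\hat\kappa(\mathrm{i}z)$, and the asymptotics of $\hat\kappa$ at infinity together with the sign of $\chi'(0)$ identifies these as the genuine Laplace exponents of $H$ and $\hat H$. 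By the dual of Lemma \ref{Utail} applied to $-\xi$ (and using that $\hat\kappa(0)\hat U[0,\infty)=1$), this yields
\[
\mathbb{P}_x(\tau^\oplus=\infty)=\mathbf{P}_{\log|x|}(\tau^-_0=\infty)=\hat\kappa(0)\,\hat U[0,\log|x|],
\]
with $\hat\kappa(0)=2^{\alpha/2}\Gamma(d/2)/\Gamma((d-\alpha)/2)$.

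The remaining task is to invert $\hat\kappa^{-1}$ to obtain the renewal density. Using the beta integral,
\[
\frac{1}{\hat\kappa(\lambda)}=\frac{2^{-\alpha/2}}{\Gamma(\alpha/2)}\,\frac{\Gamma(\tfrac{1}{2}(\lambda+d-\alpha))\Gamma(\alpha/2)}{\Gamma(\tfrac{1}{2}(\lambda+d))}
=\frac{2^{1-\alpha/2}}{\Gamma(\alpha/2)}\int_0^\infty e^{-\lambda u}e^{-(d-\alpha)u}(1-e^{-2u})^{\alpha/2-1}\mathrm{d}u,
\]
after the substitution $s=e^{-2u}$ in $B(\tfrac{1}{2}(\lambda+d-\alpha),\alpha/2)$. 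Hence
\[
\hat U(\mathrm{d}u)=\frac{2^{1-\alpha/2}}{\Gamma(\alpha/2)}\,e^{-(d-\alpha)u}(1-e^{-2u})^{\alpha/2-1}\,\mathrm{d}u,\qquad u\geq 0,
\]
and combining with $\hat\kappa(0)$ gives
\[
\mathbb{P}_x(\tau^\oplus=\infty)=\frac{2\Gamma(d/2)}{\Gamma((d-\alpha)/2)\Gamma(\alpha/2)}\int_0^{\log|x|}e^{-(d-\alpha)u}(1-e^{-2u})^{\alpha/2-1}\mathrm{d}u.
\]
Finally, the change of variable $v=e^{2u}-1$, for which $\mathrm{d}u=\mathrm{d}v/(2(1+v))$, $e^{-(d-\alpha)u}=(1+v)^{-(d-\alpha)/2}$ and $1-e^{-2u}=v/(1+v)$, collapses the integrand to $\tfrac{1}{2}(1+v)^{-d/2}v^{\alpha/2-1}$ with $v$ ranging over $[0,|x|^2-1]$, yielding exactly the claimed identity. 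The main obstacle is really only bookkeeping: correctly pinning down the factorisation and justifying that the analytic factors of $\Psi$ are indeed the ladder Laplace exponents (rather than differing by a harmless constant), after which the rest is a beta-integral and a one-line substitution.
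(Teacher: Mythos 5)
Your proposal is correct and takes essentially the same route as the paper's own proof: reduce via the Lamperti transform of $|X|$ to a first-passage-below-zero problem for $\xi$, read the descending ladder height exponent off the factorised form of $\Psi$ in Theorem \ref{radialpsi}, invert $1/\hat\kappa$ by a beta integral to get the ladder potential, apply Lemma \ref{Utail}, and finish with the substitution $u={\rm e}^{2y}-1$. The only cosmetic difference is that you express the survival probability directly as $\hat\kappa(0)\hat{U}[0,\log|x|]$, whereas the paper computes $\mathbb{P}_x(\tau^\oplus<\infty)=\hat\kappa(0)\hat{U}(\log|x|,\infty)$ and then subtracts from one using the total beta integral; the two are equivalent since $\hat\kappa(0)\hat{U}[0,\infty)=1$.
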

\begin{proof}[Proof \emph{($\clubsuit$)}] From Theorem \ref{radialpsi} we have the Wiener--Hopf factorisation of the characteristic exponent of $\xi$, the L\'evy process appearing in the Lamperti transform of $|X|$. In particular, its descending ladder height process has Laplace exponent  given by 
$
{\Gamma((\lambda + \alpha)/2)}/{\Gamma(\lambda/2)}
$,
$\lambda\geq 0$. If we denote its descending ladder height potential measure by $\hat{U}_\xi$, then, from the discussion following \eqref{ladderpotentials}, we have that 
\[
\int_{[0,\infty)}{\rm e}^{-\lambda x} \hat{U}_\xi(\d x) = \frac{\Gamma((\lambda+d)/2)}{\Gamma((\lambda +d- \alpha)/2)}, \qquad \lambda \geq 0.
\]
This transform can be inverted explicitly and, pre-emptively assuming that it has a density, denoted by $\hat{u}_\xi(x)$, $x\geq 0$,  we have
\[
\int_{[0,\infty)}{\rm e}^{-2\lambda x} \hat{u}_\xi( x)\d x = \int_0^\infty {\rm e}^{-\lambda x} \frac{1}{\Gamma(\alpha/2)}{\rm e}^{-(d-\alpha)x/2} (1-{\rm e}^{-x})^{\alpha/2-1}\d x,
\]
so that
\[
\hat{U}_\xi(\d x)  = \frac{2}{\Gamma((d-\alpha)/2)} {\rm e}^{-(d-\alpha)x}(1-{\rm e}^{-2x})^{\alpha/2-1}\d x.
\]

Next, with the help of Lemma \ref{Utail} we note that 
\begin{align*}
\mathbb{P}_x(\tau^\oplus < \infty) &=\frac{\Gamma((d-\alpha)/2)}{\Gamma(d/2)\Gamma(\alpha/2)}\int_{\log |x|}^\infty 2 {\rm e}^{-(d-\alpha)y}(1-{\rm e}^{-2y})^{\alpha/2-1}\d y \\
&= \frac{\Gamma((d-\alpha)/2)}{\Gamma(d/2)\Gamma(\alpha/2)}\int_{\log |x|}^\infty (u+1)^{-d/2}u^{\alpha/2-1}\d u
\end{align*}
where, in the second equality,  we have made the substitution $u =  {\rm e}^{2y}-1$. Recalling from the definition of the beta function that 
\[
\frac{\Gamma(d/2)\Gamma(\alpha/2)}{\Gamma((d-\alpha)/2)}= 
\int_0^\infty(u+1)^{-d/2}u^{\alpha/2-1}\d u,
\]
the proof is complete. 
\end{proof}

\subsection{Resolvent inside and outside $\mathbb{B}_d$}
The conclusion of Theorem \ref{outofballintoball} also gives us the opportunity to study the potentials
\[
U^\ominus(x, \d y) = \int_0^{\infty}\mathbb{P}_x(X_t\in\d y, \, t<\tau^\ominus) \d t\]
and
\begin{equation}
U^\oplus(x, \d y) =\int_0^{\infty}\mathbb{P}_x(X_t\in\d y, \, t<\tau^\oplus) \d t,
\label{hasad}
\end{equation}
for $|x|, |y|<1$ and $|x|, |y|>1$, respectively.
Again due to \cite{BGR}, we have the following classical result. Again, we keep to their original proof, albeit with a more explicit implementation of the Riesz--Bogdan--\.Zak transform. 

\begin{theorem}[$\heartsuit$]\label{potentials}
Define the function 
\begin{equation}
h^{\ominus}(x, y) = 2^{-\alpha}\pi^{-d/2}\frac{\Gamma(d/2)}{\Gamma(\alpha/2)^2}|x-y|^{\alpha - d} \int_0^{\zeta^\ominus(x,y)}  (u+1)^{-d/2}u^{\alpha/2-1}\d u
\label{BGR3}
\end{equation}
\begin{description}
\item[(i)] In the case that  $|x|,|y|<1$, 
\[
U^\ominus(x, \d y)= h^{\ominus}(x, y) \d y,
\]
where $ \zeta^\ominus(x,y)= (1-|x|^2)(1-|y|^2)/|x-y|^{2}$.
\item[(ii)] In the case that $|x|,|y|>1$, 
\[
U^\oplus(x, \d y)= h^{\oplus}(x, y)\d y,
\]
where $h^\oplus$ has the same definition as $h^\ominus$ albeit that $\zeta^\ominus(x,y)$ is replaced by $\zeta^\oplus(x,y) = (|x|^2-1)(|y|^2-1)/|x-y|^{2}$.
\end{description}
\end{theorem}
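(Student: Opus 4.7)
My plan is to prove part (i) by combining path counting at $\tau^\ominus$ with two successive sphere inversions from Section \ref{SI}, and then to derive part (ii) from (i) via the Riesz--Bogdan--\.{Z}ak transform (Theorem \ref{BZ}). For part (i), with $|x|,|y|<1$, the strong Markov property at $\tau^\ominus$ applied to the free potential from Theorem \ref{freepotential} yields the path-counting identity
\[
u(x,y) = u^\ominus(x,y) + \int_{|z|>1} g(x,z)\,u(z,y)\,\d z,
\]
where $u(x,y) = C_\alpha|x-y|^{\alpha-d}$ (with $C_\alpha = 2^{-\alpha}\pi^{-d/2}\Gamma((d-\alpha)/2)/\Gamma(\alpha/2)$) and $g$ is the first-passage density from Theorem \ref{outofballintoball}(i). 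The problem thereby reduces to evaluating $I(x,y) := \int_{|z|>1} g(x,z)|z-y|^{\alpha-d}\d z$.

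To compute $I(x,y)$, I first apply the $\diamond$-inversion through $\mathbb{S}_{d-1}(x,(1-|x|^2)^{1/2})$, which is $\diamond$-orthogonal to $\mathbb{S}_{d-1}$ (by $|x|^2+r^2=1$) and therefore maps $\mathbb{S}_{d-1}$ to itself with exterior and interior exchanged. Combining \eqref{differencediamond}, \eqref{spheretospherediamond}, \eqref{newidentity} with the Jacobian $\d z = (1-|x|^2)^d|z^\diamond-x|^{-2d}\d z^\diamond$, the $|z^\diamond-x|$ factors all telescope away and
\[
I(x,y) = c_{\alpha,d}\,|x-y|^{\alpha-d}\int_{|w|<1}(1-|w|^2)^{-\alpha/2}|w-y^\diamond|^{\alpha-d}\,\d w,
\]
where $c_{\alpha,d}$ is the constant in the density $g$ and $|y^\diamond|>1$ by \eqref{newidentity} since $|y|<1$. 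A second, $*$-type inversion through $\mathbb{S}_{d-1}(y^\diamond,(|y^\diamond|^2-1)^{1/2})$ (which is $*$-orthogonal to $\mathbb{S}_{d-1}$ and thus maps the unit ball to itself) reduces this further to $(|y^\diamond|^2-1)^{\alpha/2}\int_{|w|<1}(1-|w|^2)^{-\alpha/2}|w-y^\diamond|^{-d}\d w$. Spherical decomposition together with the classical exterior Newton--Poisson formula
\[
\int_{\mathbb{S}_{d-1}(0,\rho)}|w-y^\diamond|^{-d}\sigma_\rho(\d w) = \frac{1}{|y^\diamond|^{d-2}(|y^\diamond|^2-\rho^2)}\qquad (|y^\diamond|>\rho),
\]
followed by the substitution $s=\rho^2$, a Pfaff/Euler-type rearrangement of the resulting incomplete Beta integral, and the identity $|y^\diamond|^2-1 = \zeta^\ominus(x,y)$, recasts the final one-variable integral in the shape $\int_0^{\zeta^\ominus}(u+1)^{-d/2}u^{\alpha/2-1}\d u$. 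The constants align via $B(\alpha/2,(d-\alpha)/2) = \Gamma(\alpha/2)\Gamma((d-\alpha)/2)/\Gamma(d/2)$ and Euler's reflection formula, producing the stated prefactor $2^{-\alpha}\pi^{-d/2}\Gamma(d/2)/\Gamma(\alpha/2)^2$ of $h^\ominus$.

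For part (ii), I invoke the Riesz--Bogdan--\.{Z}ak transform together with the change of measure \eqref{d-circ}. For $|x|,|y|>1$, using the time change $\eta$ from \eqref{etatimechange} (so that $\d s/\d t = |X_s|^{2\alpha}$) followed by the substitution $z=Ky$ (with $\d y = |z|^{-2d}\d z$) in the resolvent yields the key transformation law
\[
u^\oplus(x,y) = |x|^{\alpha-d}|y|^{\alpha-d}\,u^\ominus(Kx, Ky).
\]
Substituting the formula from part (i) and using the elementary identities $|Kx-Ky|^{\alpha-d} = (|x||y|)^{d-\alpha}|x-y|^{\alpha-d}$ (from \eqref{difference}) and $\zeta^\ominus(Kx,Ky) = \zeta^\oplus(x,y)$ produces exactly $h^\oplus(x,y)$ once the reciprocal factors $(|x||y|)^{\alpha-d}$ and $(|x||y|)^{d-\alpha}$ cancel. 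The principal obstacle is the second step of (i): coordinating the two sphere inversions so that the product $g(x,z)|z-y|^{\alpha-d}$ collapses cleanly onto a radial Newton potential integral, and then locating the precise hypergeometric change of variable that rewrites the answer in the exact form $\int_0^{\zeta^\ominus}(u+1)^{-d/2}u^{\alpha/2-1}\d u$ with the prescribed Gamma-function prefactor.
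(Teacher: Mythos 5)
Your proposal follows essentially the same route as the paper: part (i) via path counting at $\tau^\ominus$ against the free potential, then the $\diamond$-inversion through $\mathbb{S}_{d-1}(x,(1-|x|^2)^{1/2})$ followed by the $*$-inversion through $\mathbb{S}_{d-1}(y^\diamond,(|y^\diamond|^2-1)^{1/2})$, the exterior Newtonian Poisson formula on spheres of radius $r$, the substitution $v=r^2$ with the Euler/hypergeometric and incomplete beta rearrangement, and the identity $|y^\diamond|^2-1=\zeta^\ominus(x,y)$; part (ii) via the Riesz--Bogdan--\.{Z}ak transform exactly as in the paper, with the same cancellation $|x|^{\alpha-d}|y|^{\alpha-d}|Kx-Ky|^{\alpha-d}=|x-y|^{\alpha-d}$ and $\zeta^\ominus(Kx,Ky)=\zeta^\oplus(x,y)$. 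The "principal obstacle" you flag at the end is resolved in the paper precisely by the three standard ${}_2F_1$ identities (Euler integral representation, the Euler transformation, and the incomplete beta representation), so your outline is on the paper's track throughout.
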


\begin{proof}[Proof \emph{($\diamondsuit$)}]
The basic pretext of the proof of the first part again boils down to counting paths.  More precisely, we have by the Strong Markov Property that,
\[
\kappa_{\alpha,d}|x-y|^{\alpha-d} = h^{\ominus}(x,y) + \int_{|z|>1} \kappa_{\alpha,d}|z-y|^{\alpha -d}\mu^{\ominus}_x(\d z),
\]
 for $|x|, |y|<1$, where $ \kappa_{\alpha,d}= 2^{-\alpha}\pi^{-d/2}{\Gamma((d-\alpha)/2)}/{\Gamma(\alpha/2)}$. We are thus obliged to show that 
 \begin{align}
& |x-y|^{\alpha-d}-  \int_{|z|>1} |z-y|^{\alpha -d} g(x,z)\d z \notag\\
&= 
\frac{\Gamma(d/2)}{\Gamma(\alpha/2)\Gamma((\alpha -d)/2)} |x-y|^{\alpha - d} \int_0^{\zeta^\ominus(x,y)}  (u+1)^{-d/2}u^{\alpha/2-1}\d u,
\label{provethisforpotential}
 \end{align}
 where we recall that the density $g$ comes from Theorem \ref{outofballintoball}.
 
 To this end, start with the integral on the left-hand side of \eqref{provethisforpotential}. From \eqref{1/c} we have already shown that, by performing the transformation \eqref{diamond} through the sphere $\mathbb{S}_{d-1}(x, (1-|x|^2)^{1/2})$,
 \begin{align}
 \int_{|z|>1} 
 &
 |z-y|^{\alpha -d} g(x,z)\d z\notag\\
 &=\pi^{-(d/2+1)}\,\Gamma(d/2)\,\sin(\pi\alpha/2)
 \int_{|z|\geq 1} |z-y|^{\alpha -d}\frac{|1- |x|^2|^{\alpha/2}}{|1-|z|^2|^{\alpha/2}}|x - z|^{-d}\d z\notag\\
&=\pi^{-(d/2+1)}\,\Gamma(d/2)\,\sin(\pi\alpha/2)|y-x|^{\alpha-d}
\int_{|w|\leq 1}
\frac{|w - y^\diamond|^{\alpha-d}}{|1-|w|^2|^{\alpha/2}}\d w.
\label{lastbutone}
 \end{align}
 Next, we want to apply the transformation \eqref{sphereinversion} through the sphere $\mathbb{S}_{d-1}(y^\diamond,( |y^
 \diamond|^2-1)^{1/2})$, noting that a similar calculation to the one in \eqref{quadraticdistance}
will give us that, if $w^* =y^\diamond +|w-y^\diamond |^{-2} (w-y^\diamond)(|y^\diamond|-1) $, then
\[
|w^*|^2 -1  = \frac{|w-y^\diamond|}{|y^\diamond|^2-1}(|w|^2-1)
\]
and also a similar calculation to the one in \eqref{usethat} shows us that 
\[
\d w^* =   (|y^\diamond|^2-1)^{d} |w-y^\diamond|^{-2d}\d w.
\]
Following the manipulations in \eqref{1/3c} and \eqref{1/4c}, albeit using \eqref{BMhitballfromoutside} in place of \eqref{Poisson}, recalling that $|y^\diamond|>1$, we get
\begin{align}
\int_{|w|\leq 1}
&\frac{|w - y^\diamond|^{\alpha-d}}{|1-|w|^2|^{\alpha/2}}\d w
\notag\\
 &= \int_{|w^*|\leq 1} \frac{(|y^\diamond|^2-1)^{\alpha/2}}{|1-|w^*|^2|^{\alpha/2}}
|w^*-y^\diamond|^{-d}\d w^*\notag\\
&= 
\frac{2\pi^{d/2}}{\Gamma(d/2)} (|y^\diamond|^2-1)^{\alpha/2}\int_0^1 \frac{r^{d-1}\d r}{(1-r^2)^{\alpha/2}}\int_{\mathbb{S}_{d-1}(0,r)}
|z-y^\diamond|^{-d}\sigma_r(\d z)
\notag\\
&=\frac{\pi^{d/2}}{\Gamma(d/2)} (|y^\diamond|^2-1)^{\alpha/2}\int_0^1 \frac{2r}{(1-r^2)^{\alpha/2}(|y^\diamond|^2-r^2)}
\left(\frac{|y^\diamond|}{r}\right)^{2-d}
\d r\notag\\
&=\frac{\pi^{d/2}}{\Gamma(d/2)} (|y^\diamond|^2-1)^{\alpha/2}|y^\diamond|^{2 - d} \int_0^1v^{d/2-1} (1-v)^{-\alpha/2}(|y^\diamond|^2-v)^{-1}
\d v,
\label{feedback}
\end{align}
where we have made the change of variable $v=r^2$ in the final equality.  To complete the computation in \eqref{feedback}, we need three dentities for the hypergeometric function. These are:
\[
 _{2}F_1(a,b,c;z) =\frac{\Gamma(c)}{\Gamma(b)\Gamma(c-b)} \int_0^1 t^{b-1}(1-t)^{c-b-1}(1-zt)^{-a} \d t,
\]
for $|z|<1$, $a,b,c\in\mathbb{C}$ and $\re(c)>\re(b)>0$, see for example Chapter 9 of \cite{Lebedev},
\[
 _{2}F_1(c-a, c-b, c; z) = (1-z)^{a+b-c}\,\, _{2}F_1(a, b, c; z),
 \qquad |\arg(1-z)|<\pi,
\]
see equation (9.5.3) of \cite{Lebedev},
and
\[
\int_0^x s^{a-1}(1-s)^{b-1} \D s = \frac{x^a}{a}{_2}{F}_1(a, 1- b; a+1; x), \qquad x\in(0,1),
\]
see for example 6.6.8 of \cite{AbSt}.
Combining them, it is easy to show that  the integral in the right-hand side  of \eqref{feedback} satisfies
\begin{align}
&\int_0^1v^{d/2-1} (1-v)^{-\alpha/2}(|y^\diamond|^2-v)^{-1}
\d v\notag\\
&=|y^\diamond|^{-2} 
\frac{\Gamma(d/2)\Gamma(1-{\alpha}/{2})}{\Gamma(1+(d-\alpha)/2)}
{_{2}F_1}(1,\, d/2,\, 1+({d}-\alpha)/2; \,|y^\diamond|^{-2})\notag\\
&=|y^\diamond|^{-2}(1- |y^\diamond|^{-2})^{-\alpha/2}
\frac{\Gamma(d/2)\Gamma(1-{\alpha}/{2})}{\Gamma(1+(d-\alpha)/2)}\notag\\
&\hspace{3cm}\times
{_{2}F_1}((d-\alpha)/2, \, 1- {\alpha}/{2},\, 1+(d-\alpha)/2 ;|y^\diamond|^{-2})\notag\\
&=\frac{\Gamma(d/2)\Gamma(1-{\alpha}/{2})}{\Gamma((d-\alpha)/2)}
|y^\diamond|^{d-2}( |y^\diamond|^2-1)^{-\alpha/2}
\int_0^{|y^\diamond|^{-2}} s^{\frac{(d-\alpha)}{2}-1}(1-s)^{\frac{\alpha}{2}-1} \D s 
\label{feedback2}
\end{align}
Now putting \eqref{feedback2} into \eqref{feedback}, then into \eqref{lastbutone} and the latter into the left-hand side of \eqref{provethisforpotential}, we get
\begin{align*}
 & |x-y|^{\alpha-d}\left(1 -\frac{\Gamma(d/2)|y^\diamond|^{-2}}{\Gamma(\alpha/2)\Gamma((d-\alpha)/2)}
\int_0^{|y^\diamond|^{-2}} s^{\frac{(d-\alpha)}{2}-1}(1-s)^{\frac{\alpha}{2}-1} \D s \right)\\
  &= \frac{\Gamma(d/2)}{\Gamma(\alpha/2)\Gamma((d-\alpha)/2)} |x-y|^{\alpha-d}\int_{|y^\diamond|^{-2}}^1s^{\frac{(d-\alpha)}{2}-1}(1-s)^{\frac{\alpha}{2}-1} \D s \\
 &= \frac{\Gamma(d/2)}{\Gamma(\alpha/2)\Gamma((d-\alpha)/2)} |x-y|^{\alpha-d}\int^{|y^\diamond|^{2}-1}_0 (1+u)^{-\frac{d}{2}}u^{\frac{\alpha}{2}-1}\D u
\end{align*}
where, in the final equality, we made the change of variables $u = (1-s)/s$.  Recalling from the second equality in \eqref{usethis} that $|y^\diamond|^2-1 =\zeta^\ominus(x,y)$,  we finally come to rest at the conclusion that the left-hand side of \eqref{provethisforpotential} agrees with the required right-hand side, thus completing the proof.

\medskip

For part (ii) of the theorem we appeal again to the Riesz--Bogdan--\.Zak transform in Theorem \ref{th:BZ}. Recall that this transform states that, for $x\neq 0$, $(KX_{\eta(t)}, t\geq 0)$ under $
\mathbb{P}_{Kx}$ is equal in law to $(X_t, t\geq 0)$ under $\mathbb{P}^\circ_{x}$, where
 $\eta(t) = \inf\{s>0: \int_0^s|X_u|^{-2\alpha}\d u>t\}$. 
Noting that, since $\int_0^{\eta(t)}|X_u|^{-2\alpha}\d u=t$, if we write $s = \eta(t)$, then 
\[
|X_{s}|^{-2\alpha}\d s =\d t, \qquad t>0,
\]
and hence we have that, for $|x|>1$,
\begin{align*}
 \int_{|y|>1}\frac{|z|^{\alpha-d}}{|x|^{\alpha -d}}h^\oplus_u(x,z)f(z)\d z&= \mathbb{E}^\circ_{x}\left[\int_0^{\tau^{\oplus}}f(X_t)\d t\right]\\
&=\mathbb{E}_{Kx}\left[\int_0^{\tau^\ominus} f(KX_{\eta(t)})
\d t\right] \\
&=\mathbb{E}_{Kx}\left[\int_0^{\tau^\ominus} f(KX_{s}) |X_s|^{-2\alpha}
\d s\right] \\
&=\int_{|y|<1} h^\ominus(Kx,y)f(K y)|y|^{-2\alpha}\d y,
\end{align*}
where we have pre-emptively assumed that \eqref{hasad} has a density, which we have denoted by $h^\oplus (x,y)$. In the integral on the left-hand side above, we can make the change of variables $y= Kz$, which is equivalent to $z= Ky$. Noting that $\d y = \d z/{|z|^{2d}}$ and appealing to the identity \eqref{BGR3}, we get
\begin{align*}
 \int_{|y|>1}\frac{|z|^{\alpha-d}}{|x|^{\alpha -d}}h^\oplus(x,z)f(z)\d z&=
 \int_{|z|>1} h^\ominus(Kx,Kz)f(z)
  \frac{|z|^{2\alpha}}{|z|^{2d}}\d z,
 \end{align*}
from which we can conclude that, for $|x|, |z|>1$,
\begin{align*}
h^\oplus(x,z)& = \frac{|x|^{\alpha -d}}{|z|^{\alpha-d}}h^\ominus(Kx,K z) \frac{ |z|^{2\alpha}}{|z|^{2d}}\notag\\
&=2^{-\alpha}\pi^{-d/2}\frac{\Gamma(d/2)}{\Gamma(\alpha/2)^2}\frac{|x|^{\alpha -d}}{|z|^{\alpha-d}}
\frac{ |z|^{2\alpha}}{|z|^{2d}}
|Kx-Kz|^{\alpha - d} \int_0^{\zeta^\ominus(Kx,Kz)}  (u+1)^{-d/2}u^{\alpha/2-1}\d u.
\end{align*}
Hence, after a little algebra, for $|x|, |z|>1$,
\begin{equation*}
h^\oplus(x,z)=2^{-\alpha}\pi^{-d/2}\frac{\Gamma(d/2)}{\Gamma(\alpha/2)^2}
|x-z|^{\alpha - d}
 \int_0^{\zeta^\oplus(x,z)}  (u+1)^{-d/2}u^{\alpha/2-1}\d u
\end{equation*}
where we have again used the fact that 
$|Kx - Kz| = |x-z|/|x||z|$ so that 
\[
 \zeta^\ominus(Kx,Kz)=(|x|^2-1)(|z|^{2}-1)/|x-z|^2 =: \zeta^\oplus(x,z)
\]
and the result is proved.
\end{proof}
\part*{Onward research} 

The results presented in this review article are no doubt interesting to anyone who has followed and enjoyed the explicit and rich nature of fluctuation identities that one can obtain for Brownian motion and, more generally, L\'evy processes.  
Given the historical applicability of such results in the general arena of applied probability, this in itself is a strong motivation to pursue more research in the setting of stable processes. The methods highlighted are indeed robust enough to deal with other scenarios that have not been covered in this article; we mention the questions addressed in \cite{BMR} and \cite{Luks}, who give the potential of the half-space and the complement of the hyperplane for stable processes with $\alpha>1$, as but one example thereof.

However, there is a stronger reason yet. Self-similarity is a very natural notion and self-similar Markov processes are talked about in abundance in scientific and mathematical  literature. It is therefore remarkable that the foundational theory of the aforesaid class has received relatively little attention since the principle treatment in \cite{L72}, where only positive processes were considered. Whilst there has been significant progression in the case of positive self-similar Markov processes in the last 10 or more years, see for example \cite{BY02, BC02, CP06, CC06b, CC06a, CKP09, P09, CR07, KP11, CKPR12, Patie}, less can be said in higher dimensions.  With only the sporadic works of e.g. \cite{GV, VG, Kiu} in the 1980s, only very recently has the bigger picture begun to emerge some 30 or more years later in \cite{X1, X2, CPR, KKPW, ACGZ, KRS, Deep1, Deep2, Deep3}.

As we have seen in this review article, what has been uncovered thus far in the literature  is an  intimate relationship between self-similar Markov processes and MAPs and, moreover, one that can be seen in clear detail for the stable case. To progress forward the theory of self-similar Markov processes, a much clearer understanding of the general theory of MAPs will be needed. The outlook is positive. We have already alluded to the fact that   discrete modulation MAPs have enjoyed prominence in a wide body of applied probability literature, most notably in queuing theory and storage processes, \cite{AsmussenQueue, Prabhu}. Moreover, some classical fluctuation theory for MAPs can also be found, \cite{Cinlar, Cinlar1, Cinlar2, Kaspi}, as well as the crucially associated Markov additive renewal theory, e.g. \cite{Kesten74, Alsmeyer1994, Alsmeyer2014} and references therein. 

There is  much more to do, both in the abstract and the concrete setting, in particular, when the modulation has an uncountable state space. The mathematical similarity of MAPs (conditional stationary and independent increments) to L\'evy processes (stationary and independent increments) offers significant insight. We predict that, with the right approach, a great deal of the expansive development of the abstract theory of L\'evy processes that has occurred in the last 25 years will find a natural generalisations in the MAP setting; see for example the Appendix of \cite{DDK} for a first step in this direction. Similarly, we predict that, outside of   stable processes and Brownian motion, just as  many natural families of L\'evy processes have been found in applications which exemplify in concrete terms the theory of L\'evy processes (e.g. Kuznetsov's $\theta$- and $\beta$-processes, meromorphic processes, hypergeometric processes, variance gamma processes, CGMY processes and  so on), tractable examples of MAPs will similarly emerge bearing relevance to a variety of applications where MAPs begin to emerge; see \cite{HS, S} to name but a couple of  very recent examples. 

\part*{Acknowledgements}
The author would like to thank Victor Perez Abreu for the invitation to submit this review article. He would also like to thank Alexey Kuznetsov, Mateusz Kwa\'snicki, Bo Li, Juan Carlos Pardo, Victor Rivero, Weerapat Satitkanitkul and Daniel Ng for several helpful discussions. I am deeply indebted to several anonymous referees whose extensive reading of an earlier draft of this document led to many improvements, as well as for the extra references that they pointed me towards.

    \addcontentsline{toc}{part}{References}
    \settocdepth{part}
\bibliographystyle{alea3}

\bibliography{references}

\begin{thebibliography}{81}
\providecommand{\natexlab}[1]{#1}
\providecommand{\url}[1]{\texttt{#1}}
\providecommand{\urlprefix}{URL }
\expandafter\ifx\csname urlstyle\endcsname\relax
  \providecommand{\doi}[1]{doi:\discretionary{}{}{}#1}\else
  \providecommand{\doi}{doi:\discretionary{}{}{}\begingroup
  \urlstyle{rm}\Url}\fi
\providecommand{\eprint}[2][]{\url{#2}}

\bibitem[{Abramowitz and Stegun(1964)}]{AbSt}
M.~Abramowitz and I.~A. Stegun.
\newblock \emph{Handbook of mathematical functions with formulas, graphs, and
  mathematical tables}, volume~55 of \emph{National Bureau of Standards Applied
  Mathematics Series}.
\newblock For sale by the Superintendent of Documents, U.S. Government Printing
  Office, Washington, D.C. (1964).

\bibitem[{Alili et~al.(2017)Alili, Chaumont, Graczyk and \.Zak}]{ACGZ}
L.~Alili, L.~Chaumont, P.~Graczyk and T.~\.Zak.
\newblock Inversion, duality and {D}oob {$h$}-transforms for self-similar
  {M}arkov processes.
\newblock \emph{Electron. J. Probab.} \textbf{22}, Paper No. 20, 18 (2017).

\bibitem[{Alsmeyer(1994)}]{Alsmeyer1994}
G.~Alsmeyer.
\newblock On the {M}arkov renewal theorem.
\newblock \emph{Stochastic Process. Appl.} \textbf{50}~(1), 37--56 (1994).

\bibitem[{Alsmeyer(2014)}]{Alsmeyer2014}
G.~Alsmeyer.
\newblock Quasistochastic matrices and {M}arkov renewal theory.
\newblock \emph{J. Appl. Probab.} \textbf{51A}~(Celebrating 50 Years of The
  Applied Probability Trust), 359--376 (2014).

\bibitem[{Asmussen(2003)}]{AsmussenQueue}
S.~Asmussen.
\newblock \emph{Applied probability and queues}, volume~51 of
  \emph{Applications of Mathematics (New York)}.
\newblock Springer-Verlag, New York, second edition (2003).
\newblock Stochastic Modelling and Applied Probability.

\bibitem[{Asmussen and Albrecher(2010)}]{AA}
S.~Asmussen and H.~Albrecher.
\newblock \emph{Ruin probabilities}, volume~14 of \emph{Advanced Series on
  Statistical Science \& Applied Probability}.
\newblock World Scientific Publishing Co. Pte. Ltd., Hackensack, NJ, second
  edition (2010).

\bibitem[{Bertoin(1996)}]{bertoin}
J.~Bertoin.
\newblock \emph{L\'evy processes}, volume 121 of \emph{Cambridge Tracts in
  Mathematics}.
\newblock Cambridge University Press, Cambridge (1996).

\bibitem[{Bertoin and Caballero(2002)}]{BC02}
J.~Bertoin and M.-E. Caballero.
\newblock Entrance from {$0+$} for increasing semi-stable {M}arkov processes.
\newblock \emph{Bernoulli} \textbf{8}~(2), 195--205 (2002).

\bibitem[{Bertoin and Werner(1996)}]{BW}
J.~Bertoin and W.~Werner.
\newblock Stable windings.
\newblock \emph{Ann. Probab.} \textbf{24}~(3), 1269--1279 (1996).

\bibitem[{Bertoin and Yor(2002)}]{BY02}
J.~Bertoin and M.~Yor.
\newblock The entrance laws of self-similar {M}arkov processes and exponential
  functionals of {L}\'evy processes.
\newblock \emph{Potential Anal.} \textbf{17}~(4), 389--400 (2002).

\bibitem[{Bliedtner and Hansen(1986)}]{BH}
J.~Bliedtner and W.~Hansen.
\newblock \emph{Potential theory}.
\newblock Universitext. Springer-Verlag, Berlin (1986).
\newblock An analytic and probabilistic approach to balayage.

\bibitem[{Blumenson(1960)}]{blum}
L.~E. Blumenson.
\newblock Classroom {N}otes: {A} {D}erivation of {$n$}-{D}imensional
  {S}pherical {C}oordinates.
\newblock \emph{Amer. Math. Monthly} \textbf{67}~(1), 63--66 (1960).

\bibitem[{Blumenthal and Getoor(1968)}]{BG}
R.~M. Blumenthal and R.~K. Getoor.
\newblock \emph{{M}arkov processes and potential theory}.
\newblock Pure and Applied Mathematics, Vol. 29. Academic Press, New
  York-London (1968).

\bibitem[{Blumenthal et~al.(1961)Blumenthal, Getoor and Ray}]{BGR}
R.~M. Blumenthal, R.~K. Getoor and D.~B. Ray.
\newblock On the distribution of first hits for the symmetric stable processes.
\newblock \emph{Trans. Amer. Math. Soc.} \textbf{99}, 540--554 (1961).

\bibitem[{Bogdan et~al.(2003)Bogdan, Burdzy and Chen}]{BBC}
K.~Bogdan, K.~Burdzy and Z.-Q. Chen.
\newblock Censored stable processes.
\newblock \emph{Probab. Theory Related Fields} \textbf{127}~(1), 89--152
  (2003).

\bibitem[{Bogdan and {\.Z}ak(2006)}]{BZ}
K.~Bogdan and T.~{\.Z}ak.
\newblock On {K}elvin transformation.
\newblock \emph{J. Theoret. Probab.} \textbf{19}~(1), 89--120 (2006).

\bibitem[{Bretagnolle(1971)}]{Bret}
J.~Bretagnolle.
\newblock R\'esultats de {K}esten sur les processus \`a accroissements
  ind\'ependants.
\newblock In \emph{S\'eminaire de {P}robabilit\'es, {V} ({U}niv. {S}trasbourg,
  ann\'ee universitaire 1969-1970)}, pages 21--36. Lecture Notes in Math., Vol.
  191. Springer, Berlin (1971).

\bibitem[{Byczkowski et~al.(2009)Byczkowski, Ma{\l}ecki and Ryznar}]{BMR}
T.~Byczkowski, J.~Ma{\l}ecki and M.~Ryznar.
\newblock Bessel potentials, hitting distributions and {G}reen functions.
\newblock \emph{Trans. Amer. Math. Soc.} \textbf{361}~(9), 4871--4900 (2009).

\bibitem[{Caballero and Chaumont(2006{\natexlab{a}})}]{CC06b}
M.~E. Caballero and L.~Chaumont.
\newblock Conditioned stable {L}\'evy processes and the {L}amperti
  representation.
\newblock \emph{J. Appl. Probab.} \textbf{43}~(4), 967--983
  (2006{\natexlab{a}}).

\bibitem[{Caballero and Chaumont(2006{\natexlab{b}})}]{CC06a}
M.~E. Caballero and L.~Chaumont.
\newblock Weak convergence of positive self-similar {M}arkov processes and
  overshoots of {L}\'evy processes.
\newblock \emph{Ann. Probab.} \textbf{34}~(3), 1012--1034 (2006{\natexlab{b}}).

\bibitem[{Caballero et~al.(2011)Caballero, Pardo and P{\'e}rez}]{CPP11}
M.~E. Caballero, J.~C. Pardo and J.~L. P{\'e}rez.
\newblock Explicit identities for {L}\'evy processes associated to symmetric
  stable processes.
\newblock \emph{Bernoulli} \textbf{17}~(1), 34--59 (2011).

\bibitem[{Chaumont et~al.(2009)Chaumont, Kyprianou and Pardo}]{CKP09}
L.~Chaumont, A.~E. Kyprianou and J.~C. Pardo.
\newblock Some explicit identities associated with positive self-similar
  {M}arkov processes.
\newblock \emph{Stochastic Process. Appl.} \textbf{119}~(3), 980--1000 (2009).

\bibitem[{Chaumont et~al.(2012)Chaumont, Kyprianou, Pardo and Rivero}]{CKPR12}
L.~Chaumont, A.~E. Kyprianou, J.~C. Pardo and V.~Rivero.
\newblock Fluctuation theory and exit systems for positive self-similar
  {M}arkov processes.
\newblock \emph{Ann. Probab.} \textbf{40}~(1) (2012).

\bibitem[{Chaumont et~al.(2013)Chaumont, Pant{\'\i} and Rivero}]{CPR}
L.~Chaumont, H.~Pant{\'\i} and V.~Rivero.
\newblock The {L}amperti representation of real-valued self-similar {M}arkov
  processes.
\newblock \emph{Bernoulli} \textbf{19}~(5B), 2494--2523 (2013).

\bibitem[{Chaumont and Pardo(2006)}]{CP06}
L.~Chaumont and J.~C. Pardo.
\newblock The lower envelope of positive self-similar {M}arkov processes.
\newblock \emph{Electron. J. Probab.} \textbf{11}, no. 49, 1321--1341 (2006).

\bibitem[{Chaumont and Rivero(2007)}]{CR07}
L.~Chaumont and V.~Rivero.
\newblock On some transformations between positive self-similar {M}arkov
  processes.
\newblock \emph{Stochastic Process. Appl.} \textbf{117}~(12), 1889--1909
  (2007).

\bibitem[{{\c{C}}inlar(1972)}]{Cinlar}
E.~{\c{C}}inlar.
\newblock {M}arkov additive processes. {I}, {II}.
\newblock \emph{Z. Wahrscheinlichkeitstheorie und Verw. Gebiete} \textbf{24},
  85--93; ibid. 24 (1972), 95--121 (1972).

\bibitem[{{\c{C}}inlar(1974/75)}]{Cinlar2}
E.~{\c{C}}inlar.
\newblock L\'evy systems of {M}arkov additive processes.
\newblock \emph{Z. Wahrscheinlichkeitstheorie und Verw. Gebiete} \textbf{31},
  175--185 (1974/75).

\bibitem[{{\c{C}}inlar(1976)}]{Cinlar1}
E.~{\c{C}}inlar.
\newblock Entrance-exit distributions for {M}arkov additive processes.
\newblock \emph{Math. Programming Stud.} ~(5), 22--38 (1976).
\newblock Stochastic systems: modeling, identification and optimization, I
  (Proc. Sympos., Univ. Kentucky, Lexington, Ky., 1975).

\bibitem[{Dereich et~al.(2017)Dereich, D\"oring and Kyprianou}]{DDK}
S.~Dereich, L.~D\"oring and A.~E. Kyprianou.
\newblock Real self-similar processes started from the origin.
\newblock \emph{Ann. Probab.} \textbf{45}~(3), 1952--2003 (2017).

\bibitem[{Doney(2007)}]{D}
R.~A. Doney.
\newblock \emph{Fluctuation theory for {L}\'evy processes}, volume 1897 of
  \emph{Lecture Notes in Mathematics}.
\newblock Springer, Berlin (2007).
\newblock Lectures from the 35th Summer School on Probability Theory held in
  Saint-Flour, July 6--23, 2005, Edited and with a foreword by Jean Picard.

\bibitem[{Doney and Kyprianou(2006)}]{DK}
R.~A. Doney and A.~E. Kyprianou.
\newblock Overshoots and undershoots of {L}\'evy processes.
\newblock \emph{Ann. Appl. Probab.} \textbf{16}~(1), 91--106 (2006).

\bibitem[{D\"oring and Kyprianou(2018)}]{DK18}
L.~D\"oring and A.~E. Kyprianou.
\newblock Entrance and exit at infinity for stable jump diffusions  (2018).
\newblock \texttt{ arXiv:1802.01672 [math.PR]}.

\bibitem[{Dynkin(1961)}]{Dynkin1961}
E.~B. Dynkin.
\newblock Some limit theorems for sums of independent random variables with
  infinite mathematical expectations.
\newblock In \emph{Select. {T}ransl. {M}ath. {S}tatist. and {P}robability,
  {V}ol. 1}, pages 171--189. Inst. Math. Statist. and Amer. Math. Soc.,
  Providence, R.I. (1961).

\bibitem[{Getoor(1966)}]{G}
R.~K. Getoor.
\newblock Continuous additive functionals of a {M}arkov process with
  applications to processes with independent increments.
\newblock \emph{J. Math. Anal. Appl.} \textbf{13}, 132--153 (1966).

\bibitem[{Graversen and Vuolle-Apiala(1986)}]{GV}
S.~E. Graversen and J.~Vuolle-Apiala.
\newblock {$\alpha$}-self-similar {M}arkov processes.
\newblock \emph{Probab. Theory Relat. Fields} \textbf{71}~(1), 149--158 (1986).

\bibitem[{Haas and Stephenson(2017)}]{HS}
B\'en\'edicte Haas and Robin Stephenson.
\newblock On the exponential functional of {M}arkov additive processes, and
  applications to multi-type self-similar fragmentation processes and trees
  (2017).
\newblock \texttt{arXiv:1612.06058 [math.PR]}.

\bibitem[{Horowitz(1972)}]{Horo}
J.~Horowitz.
\newblock Semilinear {M}arkov processes, subordinators and renewal theory.
\newblock \emph{Z. Wahrscheinlichkeitstheorie und Verw. Gebiete} \textbf{24},
  167--193 (1972).

\bibitem[{Jacod and Shiryaev(2003)}]{JS}
J.~Jacod and A.~N. Shiryaev.
\newblock \emph{Limit theorems for stochastic processes}, volume 288 of
  \emph{Grundlehren der Mathematischen Wissenschaften [Fundamental Principles
  of Mathematical Sciences]}.
\newblock Springer-Verlag, Berlin, second edition (2003).

\bibitem[{Janssen and Manca(2007)}]{JR}
J.~Janssen and R.~Manca.
\newblock \emph{Semi-{M}arkov risk models for finance, insurance and
  reliability}.
\newblock Springer, New York (2007).

\bibitem[{Kaspi(1982)}]{Kaspi}
H.~Kaspi.
\newblock On the symmetric {W}iener-{H}opf factorization for {M}arkov additive
  processes.
\newblock \emph{Z. Wahrsch. Verw. Gebiete} \textbf{59}~(2), 179--196 (1982).

\bibitem[{Kesten(1969{\natexlab{a}})}]{Kest2}
H.~Kesten.
\newblock A convolution equation and hitting probabilities of single points for
  processes with stationary independent increments.
\newblock \emph{Bull. Amer. Math. Soc.} \textbf{75}, 573--578
  (1969{\natexlab{a}}).

\bibitem[{Kesten(1969{\natexlab{b}})}]{Kest1}
H.~Kesten.
\newblock \emph{Hitting probabilities of single points for processes with
  stationary independent increments}.
\newblock Memoirs of the American Mathematical Society, No. 93. American
  Mathematical Society, Providence, R.I. (1969{\natexlab{b}}).

\bibitem[{Kesten(1969{\natexlab{c}})}]{Kestenpoint}
H.~Kesten.
\newblock \emph{Hitting probabilities of single points for processes with
  stationary independent increments}.
\newblock Memoirs of the American Mathematical Society, No. 93. American
  Mathematical Society, Providence, R.I. (1969{\natexlab{c}}).

\bibitem[{Kesten(1974)}]{Kesten74}
H.~Kesten.
\newblock Renewal theory for functionals of a {M}arkov chain with general state
  space.
\newblock \emph{Ann. Probability} \textbf{2}, 355--386 (1974).

\bibitem[{Kingman(1964)}]{Kingman64}
J.~F.~C. Kingman.
\newblock Recurrence properties of processes with stationary independent
  increments.
\newblock \emph{J. Austral. Math. Soc.} \textbf{4}, 223--228 (1964).

\bibitem[{Kiu(1980)}]{Kiu}
S.~W. Kiu.
\newblock Semistable {M}arkov processes in {${\bf R}^{n}$}.
\newblock \emph{Stochastic Process. Appl.} \textbf{10}~(2), 183--191 (1980).

\bibitem[{Kuznetsov et~al.(2014)Kuznetsov, Kyprianou, Pardo and Watson}]{KKPW}
A.~Kuznetsov, A.~E. Kyprianou, J.~C. Pardo and A.~R. Watson.
\newblock The hitting time of zero for a stable process.
\newblock \emph{Electron. J. Probab.} \textbf{19}, no. 30, 26 (2014).

\bibitem[{Kuznetsov and Pardo(2013)}]{KP}
A.~Kuznetsov and J.~C. Pardo.
\newblock Fluctuations of stable processes and exponential functionals of
  hypergeometric {L}\'evy processes.
\newblock \emph{Acta Appl. Math.} \textbf{123}, 113--139 (2013).

\bibitem[{Kyprianou(2014)}]{Kypbook}
A.~E. Kyprianou.
\newblock \emph{Fluctuations of {L}\'evy processes with applications}.
\newblock Universitext. Springer, Heidelberg, second edition (2014).
\newblock Introductory lectures.

\bibitem[{Kyprianou(2016)}]{Deep1}
A.~E. Kyprianou.
\newblock Deep factorisation of the stable process.
\newblock \emph{Electron. J. Probab.} \textbf{21}, Paper No. 23, 28 (2016).

\bibitem[{Kyprianou et~al.(2014)Kyprianou, Pardo and Watson}]{KPW}
A.~E. Kyprianou, J.~C. Pardo and A.~R. Watson.
\newblock Hitting distributions of {$\alpha$}-stable processes via path
  censoring and self-similarity.
\newblock \emph{Ann. Probab.} \textbf{42}~(1), 398--430 (2014).

\bibitem[{Kyprianou and Pardo(2019)}]{KPbook}
A.~E. Kyprianou and J.C. Pardo.
\newblock \emph{Stable L\'evy processes via Lamperti-type representations}.
\newblock Book project in progress. (2019).

\bibitem[{Kyprianou and Patie(2011)}]{KP11}
A.~E. Kyprianou and P.~Patie.
\newblock A {C}iesielski-{T}aylor type identity for positive self-similar
  {M}arkov processes.
\newblock \emph{Ann. Inst. Henri Poincar\'e Probab. Stat.} \textbf{47}~(3),
  917--928 (2011).

\bibitem[{Kyprianou et~al.(2016{\natexlab{a}})Kyprianou, Rivero and
  \c{S}engul}]{Deep2}
A.~E. Kyprianou, V.~Rivero and B.~\c{S}engul.
\newblock Deep factorisation of the stable process {II}; potentials and
  applications (2016{\natexlab{a}}).
\newblock \texttt{arXiv:1511.06356 [math.PR]}.

\bibitem[{Kyprianou et~al.(2015)Kyprianou, Rivero and Satitkanitkul}]{KRS}
A.~E. Kyprianou, V.~Rivero and W.~Satitkanitkul.
\newblock Conditioned real self-similar {M}arkov processes (2015).
\newblock \texttt{ arXiv:1510.01781 [math.PR]}.

\bibitem[{Kyprianou et~al.(2016{\natexlab{b}})Kyprianou, Rivero and
  Satitkanitkul}]{Deep3}
A.~E. Kyprianou, V.~Rivero and W.~Satitkanitkul.
\newblock Deep factorisation of the stable process iii: radial excursion theory
  and the point of closest reach (2016{\natexlab{b}}).
\newblock \texttt{arXiv:1706.09924 [math.PR]}.

\bibitem[{Kyprianou and Watson(2014)}]{KW}
A.~E. Kyprianou and A.~R. Watson.
\newblock Potentials of stable processes.
\newblock In \emph{S\'eminaire de {P}robabilit\'es {XLVI}}, volume 2123 of
  \emph{Lecture Notes in Math.}, pages 333--343. Springer, Cham (2014).

\bibitem[{Lamperti(1962)}]{Lamperti62}
J.~Lamperti.
\newblock An invariance principle in renewal theory.
\newblock \emph{Ann. Math. Statist.} \textbf{33}, 685--696 (1962).

\bibitem[{Lamperti(1972)}]{L72}
J.~Lamperti.
\newblock Semi-stable {M}arkov processes. {I}.
\newblock \emph{Z. Wahrscheinlichkeitstheorie und Verw. Gebiete} \textbf{22},
  205--225 (1972).

\bibitem[{Landkof(1972)}]{Landkof}
N.~S. Landkof.
\newblock \emph{Foundations of modern potential theory}.
\newblock Springer-Verlag, New York-Heidelberg (1972).
\newblock Translated from the Russian by A. P. Doohovskoy, Die Grundlehren der
  mathematischen Wissenschaften, Band 180.

\bibitem[{Lebedev(1972)}]{Lebedev}
N.~N. Lebedev.
\newblock \emph{Special functions and their applications}.
\newblock Dover Publications, Inc., New York (1972).
\newblock Revised edition, translated from the Russian and edited by Richard A.
  Silverman, Unabridged and corrected republication.

\bibitem[{Luks(2013)}]{Luks}
T.~Luks.
\newblock Boundary behavior of {$\alpha$}-harmonic functions on the complement
  of the sphere and hyperplane.
\newblock \emph{Potential Anal.} \textbf{39}~(1), 29--67 (2013).

\bibitem[{Nane et~al.(2010)Nane, Xiao and Zeleke}]{X2}
E.~Nane, Y.~Xiao and A.~Zeleke.
\newblock A strong law of large numbers with applications to self-similar
  stable processes.
\newblock \emph{Acta Sci. Math. (Szeged)} \textbf{76}~(3-4), 697--711 (2010).

\bibitem[{Patie(2009)}]{P09}
P.~Patie.
\newblock Infinite divisibility of solutions to some self-similar
  integro-differential equations and exponential functionals of {L}\'evy
  processes.
\newblock \emph{Ann. Inst. Henri Poincar\'e Probab. Stat.} \textbf{45}~(3),
  667--684 (2009).

\bibitem[{Patie(2012)}]{Patie}
P.~Patie.
\newblock Law of the absorption time of some positive self-similar {M}arkov
  processes.
\newblock \emph{Ann. Probab.} \textbf{40}~(2), 765--787 (2012).

\bibitem[{Port(1969)}]{Port69}
S.~C. Port.
\newblock The first hitting distribution of a sphere for symmetric stable
  processes.
\newblock \emph{Trans. Amer. Math. Soc.} \textbf{135}, 115--125 (1969).

\bibitem[{Port and Stone(1971)}]{PortStoneRT}
S.~C. Port and C.~J. Stone.
\newblock Infinitely divisible processes and their potential theory.
\newblock \emph{Ann. Inst. Fourier (Grenoble)} \textbf{21}~(2), 157--275; ibid.
  21 (1971), no. 4, 179--265 (1971).

\bibitem[{Port and Stone(1978)}]{PortStonebook}
S.~C. Port and C.~J. Stone.
\newblock \emph{Brownian motion and classical potential theory}.
\newblock Academic Press [Harcourt Brace Jovanovich, Publishers], New
  York-London (1978).
\newblock Probability and Mathematical Statistics.

\bibitem[{Prabhu(1965)}]{Prabhu}
N.~U. Prabhu.
\newblock \emph{Queues and inventories. {A} study of their basic stochastic
  processes}.
\newblock John Wiley \& Sons, Inc., New York-London-Sydney (1965).

\bibitem[{Profeta and Simon(2016)}]{PS}
C.~Profeta and T.~Simon.
\newblock On the harmonic measure of stable processes.
\newblock In \emph{S\'eminaire de {P}robabilit\'es {XLVIII}}, volume 2168 of
  \emph{Lecture Notes in Math.}, pages 325--345. Springer, Cham (2016).

\bibitem[{Protter(2005)}]{protter}
P.~E. Protter.
\newblock \emph{Stochastic integration and differential equations}, volume~21
  of \emph{Stochastic Modelling and Applied Probability}.
\newblock Springer-Verlag, Berlin (2005).
\newblock Second edition. Version 2.1, Corrected third printing.

\bibitem[{Ray(1958)}]{Ray}
D.~Ray.
\newblock Stable processes with an absorbing barrier.
\newblock \emph{Trans. Amer. Math. Soc.} \textbf{89}, 16--24 (1958).

\bibitem[{Riesz(1938)}]{Riesz}
M.~Riesz.
\newblock Int\'egrales de {R}iemann-{L}iouville et potentiels.
\newblock \emph{Acta. Sci. Math. Szeged.} \textbf{9}, 1--42 (1938).

\bibitem[{Rogozin(1972)}]{Rog}
B.~A. Rogozin.
\newblock Distribution of the position of absorption for stable and
  asymptotically stable random walks on an interval.
\newblock \emph{Teor. Verojatnost. i Primenen.} \textbf{17}, 342--349 (1972).

\bibitem[{Sato(2013)}]{Sato}
K-I Sato.
\newblock \emph{L\'evy processes and infinitely divisible distributions},
  volume~68 of \emph{Cambridge Studies in Advanced Mathematics}.
\newblock Cambridge University Press, Cambridge (2013).
\newblock Translated from the 1990 Japanese original, Revised edition of the
  1999 English translation.

\bibitem[{Schilling et~al.(2012)Schilling, Song and Vondra\v~cek}]{SSV}
R.~L. Schilling, R.~Song and Z.~Vondra\v~cek.
\newblock \emph{Bernstein functions}, volume~37 of \emph{De Gruyter Studies in
  Mathematics}.
\newblock Walter de Gruyter \& Co., Berlin, second edition (2012).
\newblock Theory and applications.

\bibitem[{Stephenson(2017)}]{S}
Robin Stephenson.
\newblock On the exponential functional of {M}arkov additive processes, and
  applications to multi-type self-similar fragmentation processes and trees
  (2017).
\newblock \texttt{arXiv:1706.03495 [math.PR]}.

\bibitem[{Vuolle-Apiala and Graversen(1986)}]{VG}
J.~Vuolle-Apiala and S.~E. Graversen.
\newblock Duality theory for self-similar processes.
\newblock \emph{Ann. Inst. H. Poincar\'e Probab. Statist.} \textbf{22}~(3),
  323--332 (1986).

\bibitem[{Widom(1961)}]{Widom}
H.~Widom.
\newblock Stable processes and integral equations.
\newblock \emph{Trans. Amer. Math. Soc.} \textbf{98}, 430--449 (1961).

\bibitem[{Xiao(1998)}]{X1}
Y.~Xiao.
\newblock Asymptotic results for self-similar {M}arkov processes.
\newblock In \emph{Asymptotic methods in probability and statistics ({O}ttawa,
  {ON}, 1997)}, pages 323--340. North-Holland, Amsterdam (1998).

\end{thebibliography}

\end{document}